\DeclareSymbolFont{bbold}{U}{bbold}{m}{n}
\DeclareSymbolFontAlphabet{\mathbbold}{bbold}
\def\qmod#1#2{{\hbox{}^{\displaystyle{#1}}}\!\big/\!\hbox{}_{
\displaystyle{#2}}}
 \def\psp#1#2%
 \def\psb#1#2%
 \def\pscr#1#2#3%
\def\C{{\mathbb C}}
\def\K{{\mathbb K}}
\def\N{{\mathbb N}}
\def\P{{\mathbb P}}
\def\R{{\mathbb R}}
\def\Z{{\mathbb Z}}
\def\cringle{\mathaccent23}
\def\union{\mathop{\bigcup}}
\def\textmap#1{\mathop{\vbox{\ialign{
                                  ##\crcr
      ${\scriptstyle\hfil\;\;#1\;\;\hfil}$\crcr
      \noalign{\kern 1pt\nointerlineskip}
      \rightarrowfill\crcr}}\;}}
\def\bigtextmap#1{\mathop{\vbox{\ialign{
                                  ##\crcr
      ${\hfil\;\;#1\;\;\hfil}$\crcr
      \noalign{\kern 1pt\nointerlineskip}
      \rightarrowfill\crcr}}\;}}
\newcommand{\cal}{\mathcal}
\def\textlmap#1{\mathop{\vbox{\ialign{
                                  ##\crcr
      ${\scriptstyle\hfil\;\;#1\;\;\hfil}$\crcr
      \noalign{\kern-1pt\nointerlineskip}
      \leftarrowfill\crcr}}\;}}
\def\ag{{\mathfrak a}}
\def\dg{{\mathfrak d}}
\def\fg{{\mathfrak f}}
\def\g{{\mathfrak g}}
\def\hg{{\mathfrak h}}
\def\kg{{\mathfrak k}}
\def\lg{{\mathfrak l}}
\def\sg{{\mathfrak s}}
\def\tg{{\mathfrak t}}
\def\Ag{{\mathfrak A}}
\def\Cg{{\mathfrak C}}
\def\Fg{{\mathfrak F}}
\def\Ig{{\mathfrak I}}
\def\Jg{{\mathfrak J}}
\def\Og{{\mathfrak O}}
\def\Qg{{\mathfrak Q}}
\def\Sg{{\mathfrak S}}
\def\Tg{{\mathfrak T}}
\theoremstyle{remark}
\newtheorem{ex}{Example}[section]
\newtheorem{pb}{Problem}
\newtheorem*{cl}{Claim}
\newtheorem*{cond}{Compatibility condition}
\newtheorem{prop}{Property}
\newtheorem{sz}{Satz}[section]
\theoremstyle{remark}
\newtheorem{re}[sz]{Remark} 
\theoremstyle{plain}
\newtheorem{thr}[sz]{Theorem}
\newtheorem{pr}[sz]{Proposition}
\newtheorem{co}[sz]{Corollary}
\newtheorem{dt}[sz]{Definition}
\newtheorem{lm}[sz]{Lemma}
\def\End{\mathrm {End}}
\def\Aut{\mathrm {Aut}}
\def\SU{\mathrm {SU}}
\def\GL{\mathrm {GL}}
\def\gl{\mathrm {gl}}
\def\SL{\mathrm {SL}}
\def\Pic{\mathrm {Pic}}
\def\deg{\mathrm {deg}}
\def\Hom{\mathrm{Hom}}
\def\id{ \mathrm{id}}
\def\im{\mathrm{im}}
\def\p{\mathrm{p}}
\newcommand\smvee{{\hskip -0.1ex \raise 0.2ex\hbox{$\scriptscriptstyle\vee$}}\hskip -0,3ex}
\newcommand{\extpw}{\mathchoice{{\textstyle\bigwedge}}%
    {{\bigwedge}}%
    {{\textstyle\wedge}}%
    {{\scriptstyle\wedge}}}
\def\extp{{\extpw}\hspace{-2pt}}
\def\Ad{\mathrm{Ad}}
\def\trp#1{\tensor[^{\mathrm{t}}]{#1}{}}
\def\edf{\coloneq}
\def\hb{\hbox}
\def\fr{\frac}
\def\p{\partial}
\def\bp{\bar\partial}
\begin{document}

\title[Holomorphic bundles framed along a real hypersurface]{Holomorphic bundles framed along a real hypersurface and the Riemann-Hilbert problem}

\author{Andrei Teleman}
\address{Aix Marseille Univ, CNRS, I2M, Marseille, France.}
\email[Andrei Teleman]{andrei.teleman@univ-amu.fr}

\begin{abstract}
Let $X$ be a connected, compact complex manifold,  $S\subset X$ a separating real hypersurface, so $X$ decomposes as a union of compact complex manifolds with boundary $\bar X^\pm$ with $\bar X^+\cap \bar X^-=S$.   Let $\mathcal{M}$ be the  moduli space of $S$-framed holomorphic bundles, i.e. of pairs $(E,\theta)$ of fixed topological type consisting of a {\it holomorphic} bundle $E$ on $X$ and a trivialization $\theta$ -- belonging to a fixed Hölder regularity class $\mathcal{C}^{\kappa+1}$ -- of its restriction to  $S$.    

Our problem: compare, via the obvious restriction maps, the moduli space  $\mathcal{M}$  to the corresponding Donaldson's moduli spaces $\mathcal{M}^\pm$ of boundary framed formally holomorphic bundles on $\bar X^\pm$. The restrictions to $\bar X^\pm$ of an $S$-framed holomorphic bundle $(E,\theta)$   are boundary framed formally holomorphic bundles $(E^\pm,\theta^\pm)$ which induce,  via $\theta^\pm$, the same   tangential Cauchy-Riemann operator on the trivial bundle on $S$. Therefore one obtains a natural map from $\mathcal{M}$ into the fiber product $\mathcal{M}^-\times_\mathcal{C}\mathcal{M}^+$ over the space $\mathcal{C}$ of Cauchy-Riemann operators on the trivial bundle on $S$. 
Our main result states:  this map is a homeomorphism for $\kappa\in (0,\infty]\setminus\mathbb{N}$.    Note that, by theorems due to S. Donaldson and Z. Xi,  the moduli spaces   $\mathcal{M}^\pm$ can be  further identified with moduli spaces of boundary framed Hermitian Yang-Mills connections.

 The proof of our isomorphism theorem is based on a gluing principle for formally holomorphic bundles along a real hypersurface. The same gluing theorem can be used to give a complex geometric interpretation of the space of solutions of a large class of Riemann-Hilbert type problems. 
 
 We generalize these results  in two directions: first, we will replace the decomposition $X=\bar X^-\cup\bar X^+$ associated with a separating hypersurface by the  manifold with boundary $\widehat X_S$ obtained by cutting $X$ along  any (not necessarily separating) oriented  hypersurface $S$. Second, instead of vector bundles, we will consider principal $G$ bundles for an arbitrary  complex Lie group $G$. 
 
 We give explicit examples of moduli spaces of (boundary) framed holomorphic bundles and explicit formulae for the homeomorphisms provided by the general results.

 \end{abstract}
 
 \subjclass[2020]{32L05, 32G13, 35Q15}

\maketitle

\tableofcontents

\setcounter{section}{-1}

\section*{Acknowledgemets}

The problems investigated in this article originate from a joint project in collaboration with Matei Toma devoted to the boundedness condition for torsion free coherent sheaves in the complex geometric (not necessarily algebraic) framework. I am indebted to him for the interesting questions he formulated, and also for his useful remarks  on preliminary versions of this article.     

I am also grateful to Alexander Borichev, Karl Oeljeklaus and Christine Laurent-Thiébaut for the time they invested in answering my questions, and  for their useful ideas, comments and suggestions. 

\section{Introduction}

 A fundamental problem in the theory of holomorphic bundles on compact complex manifolds is: understand, in the general (non-necessarily algebraic or Käherian) framework, the relation between convergence in the  space  of singular Hermitian-Einstein connections (Donaldson, Tian)  and convergence of sheaves in the sense of complex geometric deformation theory. Working on this problem in collaboration with Matei Toma, I noticed that Donaldson's article \cite{Do} -- which deals with the   correspondence between Hermitian-Einstein connections and holomorphic bundles on compact complex manifolds with boundary -- is relevant for our problem. Donaldson's article comes with a fundamental new idea: in the presence of a boundary, it's natural to consider infinite dimensional moduli spaces of {\it boundary framed} Hermitian Yang-Mills connections, respectively holomorphic bundles. 

A boundary framed Hermitian Yang-Mills connection on $\bar X$ is a triple $(E,A,\theta)$, where $E$ is a Hermitian vector bundle on $\bar X$, $A$  a Hermitian Yang-Mills connection on $E$, and $\theta$ a {\it unitary} trivialization of  $E_{\p\bar X}$. A boundary framed formally holomorphic vector bundle on $\bar X$ is a triple $(E,\delta,\theta)$, where $E$ is a differentiable vector bundle on $\bar X$, $\delta$ is a Dolbeault operator on $E$ satisfying the {\it formal integrability} condition $\delta^2=0$ (see \cite{Te} and section \ref{DolbeaultSect} in this article), and $\theta$ is a {\it differentiable} trivialization of  $E_{\p\bar X}$. 

Donaldson's theorem \cite[Theorem 1']{Do} yields an isomorphism between moduli spaces of gauge theoretical, respectively complex geometric boundary framed  objects. 
An  interesting  application of this isomorphism theorem: a  new proof of a fundamental factorization theorem in loop group theory  (see \cite[p. 100]{Do}). 

The manifolds with boundary which appear naturally in our complex geometry project are of the form $\bar X^\pm$ where $X^\pm\subset X$ are the open submanifolds obtained by cutting the given closed complex manifold $X$ along a separating real hypersurface $S\subset X$.  In our original joint project   we  focus on the case when $S$ is the boundary of a neighborhood of the bubbling locus of a weakly convergent sequence of Hermitian-Einstein connections. Relevant for the present article: in the presence of a  real hypersurface $S$ of a closed complex manifold $X$ it's natural to consider  moduli spaces of $S$-framed holomorphic bundles  {\it on the whole closed manifold} $X$, i.e. of holomorphic bundles $E$ on $X$ endowed with a differentiable trivialization $\theta$ on $S$. One should of  course fix the topological type of the pair $(E,\theta)$. 

Although infinite dimensional, such a moduli space can be constructed explicitly and studied using  techniques and methods from the classical deformation theory for analytic objects on compact complex spaces.  A joint article in preparation  \cite{TT} is dedicated to these moduli spaces and their role in our initial project.

The starting point of the present article is the natural problem: supposing that $S$ separates $X$, compare, via the obvious restriction maps, the moduli space  ${\cal M}$ of $S$-framed holomorphic bundles (of fixed topological type) on $X$, with the corresponding Donaldson's moduli spaces ${\cal M}^\pm$ of boundary framed holomorphic bundles on $\bar X^\pm$. The restrictions to $\bar X^\pm$ of an $S$-framed holomorphic bundle $(E,\theta)$ of rank $r$  are boundary framed formally holomorphic bundles $(E^\pm,\theta^\pm)$ which induce,  via $\theta^\pm$, the same   tangential Cauchy-Riemann operators on the trivial bundle of rank $r$ on $S$. Therefore one obtains a natural comparison map from ${\cal M}$ into the fiber product ${\cal M}^-\times_{\cal C}{\cal M}^+$ over the space ${\cal C}$ of Cauchy-Riemann operators on the trivial bundle of rank $r$ on $S$. 

At this point note that we will work in a more general framework: in the definitions of our framed objects  we will fix a regularity parameter $\kappa\in[0,\infty]$  and we will require ${\cal C}^\kappa$-regularity (see section \ref{Ckappa}) for $\delta$ in the definition of a boundary framed formally holomorphic vector bundle, and   ${\cal C}^{\kappa+1}$-regularity for $ \theta$ in the definitions of a (formally) holomorphic   $S$-framed (boundary framed) vector bundle.  The moduli spaces ${\cal M}$, ${\cal M}^\pm$ and the comparison map are defined in this more general framework.

The isomorphism  Theorem \ref{iso-moduli-vector-bundles-th} proved in this article states: {\it the comparison map ${\cal M}\to {\cal M}^-\times_{\cal C}{\cal M}^+$ is a homeomorphism for $\kappa\in(0,+\infty)\setminus\N$}. The results also holds for $\kappa=\infty$ if the considered moduli spaces  are endowed with  suitable topologies, see Remark \ref{kappa=infty}.    The meaning of this isomorphism theorem can be intuitively  expressed as a general principle: in the moduli theory for holomorphic bundles on closed complex manifolds, {\it framing} on a real hypersurface $S$ is equivalent to {\it cutting} along $S$.   Note that for $\dim_\C(X)=1$ the compatibility condition on the induced Cauchy-Riemann operators becomes void  so, on Riemann surfaces, the principle ``framing on $S$ is equivalent to cutting along $S$" becomes simply ${\cal M}={\cal M}^-\times {\cal M}^+$.

The difficult part of the isomorphism theorem  is the surjectivity, which follows from the gluing principle given by the crucial Theorem \ref{mainGnew}: let $U$ be a (not necessarily compact) complex manifold,  $S\subset U$ a closed, separating, smooth, real hypersurface, $\bar U^\pm$ be the corresponding manifolds with boundary,  $E$   a ${\cal C}^\infty$-bundle on $U$ and $\delta^\pm$ be {formally} integrable Dolbeault operators on $E_{\bar U^\pm}$ with coefficients in ${\cal C}^\kappa$ inducing the same tangential Cauchy-Riemann operators on $S$.  There exists  an automorphism $f_+$ of class ${\cal C}^{\kappa+1}$ of $E_{\bar U^+}$ which is the identity on $S$ such that $\delta^-$ and $f_+(\delta^+)$ glue together and give an integrable Dolbeault operator (so a holomorphic structure) on $E$. For $\kappa\in(0,+\infty)\setminus\N$ the proof makes use of Whitney's extension theorem for Lipschitz  spaces, which allows us to prove that  $f_+$ can be chosen to depend continuously on $(\delta^-,\delta^+)$. For $\kappa=\infty$ we use the ${\cal C}^\infty$ version of Whitney's extension theorem, which does {\it not} provide a continuous extension operator.

  Our gluing principle has other consequences: let   $E^\pm$ be ${\cal C}^\infty$ complex vector bundles on $\bar U^\pm$ and $\delta^\pm$  {formally}  integrable Dolbeault operators with coefficients in ${\cal C}^\kappa$ on $E^\pm$, and let $\upsilon:E^-_S\to E^+_S$ be a   bundle isomorphism of class ${\cal C}^{\kappa+1}$ (with  $\kappa\in(0,+\infty]\setminus\N$) such that the tangential Cauchy-Riemann operators $\delta^\pm_S$ induced by $\delta^\pm$  on $S$ agree via $\upsilon$.  Theorem \ref{new-main} shows, that, under these assumptions, the {\it topological}  bundle $E^\upsilon=E^-\coprod_\upsilon E^+$ on $U$  comes with a {\it canonical} holomorphic structure which extends the holomorphic structures defined by $\delta^\pm_S$ on $E^\pm_{U^\pm}$. Therefore, although the gluing isomorphism $\upsilon$ is only  of class ${\cal C}^{\kappa+1}$, if the above compatibility condition is satisfied, one can glue the {\it formally holomorphic} bundles $E^-$,  $E^+$ via $\upsilon$, and obtain a canonically defined {\it holomorphic} bundle on $U$. In particular, on Riemann surfaces, one can always (no compatibility condition needed) glue formally holomorphic bundles $E^\pm$ on $U^\pm$ via a  ${\cal C}^{\kappa+1}$ bundle isomorphism $\upsilon:E^-_S\to E^+_S$, and obtain a {\it holomorphic} vector bundle on $U$.

Consider the special case where $U=\P^1_\C=\C\cup\{\infty\}$, $S\subset \C$ is a closed curve, and $E^\pm$ are the trivial bundles on $\bar U^\pm$ (endowed with the standard Dolbeault operator $\bp$). An isomorphism $\upsilon$ as above is precisely the input data of the Riemann-Hilbert problem as stated in \cite[Kapitel X]{Hil}. Using this remark we show that a large class\footnote{Several authors state and study more general Riemann-Hilbert problems on $\P^1_\C$, where  $S$ is replaced by a piecewise differentiable, non-necessarily closed, "contour" in $\C$. These generalizations are related to Hilbert's 21-st problem \cite{Bo}. }    of   Riemann-Hilbert type problems, including Hilbert's original problem and matrix factorization problems (see Problem \ref{rho} in section \ref{RHonP1-section}),   can be reduced to a complex geometric problem for holomorphic vector bundles on $\P^1_\C$ (see Corollary \ref{RH-P1}).

Theorem \ref{new-main} can be easily extended to possibly non-separating closed, oriented real hypersurfaces $S$: one just replaces the disjoint union $\bar U^-\coprod\bar U^+$ by the manifold with boundary  $\widehat U_S$ obtained by cutting $U$ along $S$ (see section \ref{non-sep-section} and Fig. \ref{hatUS}).  This generalization  is Theorem \ref{new-main-non-sep}; it applies for instance when $S$ is a non-separating circle on an elliptic curve. This leads us to a  general Riemann-Hilbert type problem associated to a closed Riemann surface $X$ and an arbitrary (non-necessarily connected, non-necessarily separating)  smooth oriented closed curve $S\subset X$ (see Problem \ref{RHXD} in section \ref{RHonRiemannSurf-section})  and to a complex geometric approach to solve it (Corollary \ref{RH-X}).  In section \ref{RHXnD-section} we formulate and study a generalization of the Riemann-Hilbert problem for $n$ dimensional complex manifolds noting that, for $n\geq 2$,   the above compatibility condition   is needed.

Similarly, the moduli space isomorphism   ${\cal M}\simeq{\cal M}^-\times_{\cal C}{\cal M}^+$ can be generalized to the case of an oriented, not necessarily connected, not necessarily separating, real hypersurface $S\subset X$. The boundary $\widehat S$ of $\widehat X_S$ decomposes  as a disjoint union $S^-\cup S^+$ and comes with a canonical identification map $b:S^-\to S^+$. Let $E$ be a vector bundle on $X$ and   $\widehat{E}$ its pull back to $\widehat X_S$. A   formally  integrable Dolbeault operator $\dg$ on $\widehat E$  will be called {\it descendable}, if the tangential Cauchy-Riemann operators on $S^\pm\times \C^r$ induced via $\theta_{S^\pm}$ by   $\dg$ agree via $b$.  The first part of Theorem \ref{iso-moduli-vector-bundles-th} identifies the moduli space of $S$-framed holomorphic bundles (of a fixed topological type) on $X$ with the moduli space of descendable boundary framed formally holomorphic bundles (of the corresponding topological type) on $\widehat X_S$. The intuitive interpretation of this isomorphism is the same as in the separating case: framing on $S$ is equivalent to cutting along $S$.

 If $X$ is a Riemann surface, {\it any} boundary framed holomorphic bundle on $\widehat X_S$ is descendable. Therefore, {\it if $X$ is a closed Riemann surface, the moduli space of $S$-framed holomorphic bundles on $X$ (of a fixed topological type) can be identified with   the corresponding moduli space of boundary framed  formally holomorphic bundles on $\widehat X_S$.}

Suppose now that the  closed Riemann surface $X$ has been endowed with a Hermitian metric.  By Donaldson's isomorphism theorem \cite[Theorem 1']{Do}, the latter moduli space, in its turn,  can be identified with the corresponding moduli space of boundary framed  Hermitian Yang-Mills connections on $\widehat X_S$.  Composing the  two isomorphisms, one obtains an identification between the considered moduli space of $S$-framed holomorphic bundles on $X$ and the corresponding moduli space  of boundary framed   Hermitian Yang-Mills unitary connections on $\widehat X_S$. 
Theorem \ref{new-main-non-sep-G} generalizes  Theorems \ref{new-main},  \ref{new-main-non-sep}  to principal $G$-bundles $P$ endowed with (formally)  integrable bundle almost complex structures  (see section \ref{ACSsection}), where $G$ is an arbitrary complex Lie group. In this general framework the  role of the tangential Cauchy-Riemann operator $\delta_S$ is played by the  almost complex structure $J_S$ induced by a bundle almost complex structure $J$ on the pull back $\Tg_{P_S}\subset T_{P_S}$ of the canonical distribution $\Tg_S\edf T_S\cap J_U T_S$ of $S$.

The above results concerning Riemann-Hilbert problems and isomorphisms between moduli spaces of $S$-framed and boundary framed holomorphic bundles extend to the  framework of principal $G$-bundles.  Moreover, in the definition of our moduli spaces, one can use as framings on $S$ (or as boundary framings) differentiable bundle isomorphisms $\theta:\Phi\to P_S$ ($\theta:\Phi\to P_{\p\bar X}$), where $\Phi$ is a fixed, not necessarily trivial,  differentiable $G$-bundle on $S$ (on $\p\bar X$), see section  \ref{abstract-interpr}. In particular the isomorphism Theorem  \ref{iso-moduli-G-bundles-th} shows that the principle ``framing on $S$ is equivalent to cutting along $S$" generalizes to this framework.  In section \ref{ExamplesSection} we give explicit examples of isomorphisms provided by this theorem on Riemann surfaces and, in some cases, using classical theorems in complex analysis, we give explicit formulae for their inverses.

Of special interest is the case when $G$ is a complex reductive group, because, for such groups, we also have an analogue of Donaldson's isomorphism \cite[Theorem 1']{Do}: one just replaces the moduli space of boundary framed Hermitian Yang-Mills unitary connections by   the moduli space of boundary framed Hermitian Yang-Mills $K$-connections, where $K$ is a fixed maximal compact subgroup of $G$. Therefore, in this case  one can further identify the two moduli spaces     
 intervening in Theorem  \ref{iso-moduli-G-bundles-th} with a moduli space of boundary framed Hermitian Yang-Mills $K$-connections. Explicit examples of such identifications are given in section \ref{ExamplesSection}.  
\vspace{4mm}\\ 
{\bf Notations:}
For a differentiable manifold (possible with boundary) $M$,   a finite dimensional normed space $T$, a ${\cal C}^\infty$ vector bundle $E$ on $M$  and a locally trivial fiber bundle $\Phi$ on $M$ we will use the following notations:
\begin{itemize}
\item[-] ${\cal C}^\kappa(M,T)$: the space of $T$-valued maps of class ${\cal C}^\kappa$ on $M$,  see section \ref{Ckappa}.
 \item[-]	 $\Gamma^\kappa(M,E)$: the space of sections of class ${\cal C}^\kappa$ in $E$, see section \ref{Ckappa}.
  \item[-]	 $\Gamma^\kappa(M,\Phi)$: the space of sections of class ${\cal C}^\kappa$ in $\Phi$ in the sense of \cite[p. 38]{Pa}.
 \item[-] $\extp^{d}_{\;M}$:  the bundle of forms of degree $d$   on $M$. 
 \item[-]  $\extp^{p,q}_{\;M}$:  the bundle of forms of bidegree $(p,q)$ on a complex manifold $M$.
 \item[-] $A^d(M,E)\edf\Gamma^\infty(M,\extp^{d}_{\;M}\otimes E)$,   $A^{p,q}(M,E)\edf\Gamma^\infty(M,\extp^{p,q}_{\;M}\otimes E)$. 
 \item[-] $A^d(M,E)_\kappa\edf\Gamma^\kappa(M,\extp^{d}_{\;U}\otimes E)$, $A^{p,q}(M,E)_\kappa\edf\Gamma^\kappa(M,\extp^{p,q}_{\;U}\otimes E)$.

\end{itemize}

\section{Statement of  results}\label{results}
  
\subsection{Gluing holomorphic bundles along a real hypersurface }
Let $U$ be a differentiable manifold, and let $S\subset U$ be a  closed  real smooth hypersurface.
\subsubsection{Gluing holomorphic bundles along a separating real  hypersurface}

 Let $\kappa\in[0,+\infty]$. We will use the notation ${\cal C}^\kappa$  for the usual  $k$-th differentiability class  when $\kappa\in\N\cup\{\infty\}$, and the  Hölder  class ${\cal C}^{[\kappa],\kappa-[\kappa]}$ when $ \kappa\not\in\N\cup\{\infty\}$ (see section  \ref{Ckappa}).  Suppose  that $S$ separates $U$, i.e.   $U\setminus S$ decomposes as a disjoint union $U\setminus S=U^-\cup U^+$ with $\bar U^\pm=U^\pm\cup S$. Therefore $\bar U^\pm$ are   manifolds with boundary and $\p\bar U^+=\p\bar U^-=S$. Let $E^\pm$ be a  ${\cal C}^\infty$ complex vector bundle of rank $r$ on $\bar U^\pm$  and let $E^\pm_S$ be  its restriction to $S$.   
 Let  $\upsilon: E^-_S\to E^+_S$ be a bundle isomorphism of class ${\cal C}^{\kappa+1}$  and  $E^\upsilon\edf E^-\coprod_\upsilon E^+$ the {\it topological} bundle obtained by gluing $E^\pm$ along $S$ via $\upsilon$. 
 
 Suppose now that  $U$ is a complex manifold and $E^\pm$   have been endowed with  Dolbeault operators 
 $$\delta^\pm: \Gamma^{\kappa+1}(\bar U^\pm, E^\pm )\to \Gamma^{\kappa}(\bar U^\pm, \extp^{0,1}_{\;\bar U^\pm}\otimes E^\pm )$$
 with coefficients in ${\cal C}^\kappa$ which satisfy the {\it formal} integrability condition $F_{\delta^{\pm}}=0$, where $F_{\delta^{\pm}}$ is the $\End(E^\pm)$-valued (0,2)-form on $\bar U^\pm$ associated with $\delta^2$. When $\kappa\in[0,1)$, $F_{\delta^{\pm}}$ is a distribution supported by $\bar U^\pm$ in the sense of \cite[section I.1]{Me}, see section  \ref{FIC} in the appendix.

From now on throughout this section we will suppose $\kappa\in(0,+\infty]\setminus\N$. This condition is required  in several crucial arguments where we make use of the standard elliptic regularity for Hölder spaces, or of the Hölder version of the Newlander-Nirenberg theorem for principal bundles (see \cite{Te2} and section \ref{ACSsection} in this article).
 
 \begin{thr}\label{new-main}
 Let $\delta^\pm$ be a formally integrable Dolbeault operator with coefficients in ${\cal C}^\kappa$ on $E^\pm$ and let $\hg_\pm$ be the corresponding holomorphic structure on the underlying ${\cal C}^{\kappa+1}$ bundle of the restrictions $E^\pm_{U^\pm}$ to $U^\pm$. Suppose that the tangential Cauchy-Riemann operators $\delta^\pm_S$
  induced by $\delta^\pm$ agree via  $\upsilon$. Then 
  \begin{enumerate}
 \item  The topological bundle $E^\upsilon$ on $U$ admits a unique holomorphic reduction	$\hg^\upsilon$ extending $\hg^\pm$.
 \item For any local $\hg^\upsilon$-holomorphic section $U\stackrel{\hb{\tiny open}}{\supset}V\textmap{\sigma} E^\upsilon$, we have 
 $$\sigma|_{V\cap\bar U^\pm}\in \Gamma^{\kappa+1}(V\cap\bar U^\pm,E^\pm),$$
 i.e.  the restrictions $\sigma|_{V\cap\bar U^\pm}$ of $\sigma$ are of class ${\cal C}^{\kappa+1}$ up to the boundary.
 \end{enumerate}
 \end{thr}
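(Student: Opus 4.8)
The plan is to reduce the statement to the gluing Theorem \ref{mainGnew}, which is available for a single $\mathcal C^\infty$ bundle, and then to repair the two genuinely new features of the present situation: that the two halves carry \emph{different} smooth bundles $E^\pm$ glued by the merely $\mathcal C^{\kappa+1}$ map $\upsilon$, and that we want the reduction to restrict \emph{exactly} to $\hg^\pm$ on $U^\pm$. First I would pass to a smooth model of $E^\upsilon$: since $E^\upsilon$ is glued from the $\mathcal C^\infty$ bundles $E^\pm$ by the $\mathcal C^{\kappa+1}$ isomorphism $\upsilon$ over $S$, it carries a natural $\mathcal C^{\kappa+1}$ structure, so by the standard smoothing theory for bundles there are a $\mathcal C^\infty$ bundle $E$ on $U$ and a $\mathcal C^{\kappa+1}$ isomorphism $\Phi\colon E\to E^\upsilon$. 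Writing $\Phi^\pm\colon E|_{\bar U^\pm}\to E^\pm$ for its restrictions (using $E^\upsilon|_{\bar U^\pm}=E^\pm$), the fact that $\Phi$ is well defined on the $\upsilon$-glued bundle forces the compatibility $\Phi^+|_S=\upsilon\circ\Phi^-|_S$ over $S$. I would then transport the operators, setting $\tilde\delta^\pm:=(\Phi^\pm)^{-1}\circ\delta^\pm\circ\Phi^\pm$; these are formally integrable Dolbeault operators on $E|_{\bar U^\pm}$ with coefficients in $\mathcal C^\kappa$. A short computation using the above compatibility together with the hypothesis that $\delta^\pm_S$ agree via $\upsilon$ shows that $\tilde\delta^-$ and $\tilde\delta^+$ induce the \emph{same} tangential Cauchy-Riemann operator on $E|_S$. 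This is precisely the hypothesis of Theorem \ref{mainGnew}, which then produces a $\mathcal C^{\kappa+1}$ automorphism $f_+$ of $E|_{\bar U^+}$, equal to the identity on $S$, such that $\tilde\delta^-$ and $f_+(\tilde\delta^+)$ glue to an integrable operator $\mathbf d$ on $E$.

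The decisive point is that the \emph{naive} push-forward $\Phi_*\mathbf d$ restricts on $U^+$ to the operator $\Phi^+_*(f_+(\tilde\delta^+))$, which is gauge-equivalent to, but not equal to, $\hg^+$; so to obtain a structure literally extending $\hg^\pm$ I would not transport $\mathbf d$ directly but build adapted frames. Near a point $p\in S$ choose a $\mathbf d$-holomorphic frame $\tau$ of $E$ (Koszul--Malgrange for the integrable $\mathbf d$), and define a frame $\Sigma$ of $E^\upsilon$ by $\Sigma|_{\bar U^-}:=\Phi^-\tau$ and $\Sigma|_{\bar U^+}:=\Phi^+ f_+^{-1}\tau$. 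One checks directly that $\delta^-(\Phi^-\tau)=0$ and $\delta^+(\Phi^+f_+^{-1}\tau)=0$, so $\Sigma$ is $\delta^-$-holomorphic on $\bar U^-$ and $\delta^+$-holomorphic on $\bar U^+$; and \emph{precisely because $f_+|_S=\id$ together with $\Phi^+|_S=\upsilon\Phi^-|_S$}, the two halves agree via $\upsilon$ over $S$, so $\Sigma$ is a genuine continuous local frame of the topological bundle $E^\upsilon$. Declaring such straddling frames near $S$, together with $\hg^\pm$-holomorphic frames over $U^\pm$, to be holomorphic defines the candidate reduction $\hg^\upsilon$; by construction it restricts to $\hg^\pm$ on $U^\pm$.

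To see that this is a bona fide holomorphic reduction I must verify the transition functions are holomorphic. The transition $g$ between two overlapping straddling frames is a $\GL_r(\C)$-valued function which, on each open half, is the change of frame between two $\hg^\pm$-holomorphic frames, hence satisfies $\bp g=0$ there in the ordinary sense (by Leibniz, $\delta^\pm$ acts as $\bp$ on the change-of-frame matrix, the connection terms cancelling); and $g$ is continuous across $S$. The key lemma I would isolate is a removability statement: a continuous $\GL_r(\C)$-valued function on an open subset of $U$ that is holomorphic off the smooth real hypersurface $S$ is holomorphic across $S$. This follows by slicing with complex lines transverse to $S$ and a one-variable Morera/Cauchy argument on the two sides (the integrals along $S$ cancel), followed by Osgood's separate-holomorphy theorem. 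I expect this removability across the \emph{real} hypersurface, together with the observation that $f_+|_S=\id$ forces the straddling frames to match via $\upsilon$, to be the main conceptual obstacle here --- all the hard analysis having been absorbed into Theorem \ref{mainGnew}.

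Uniqueness in (1) follows from the same removability lemma: if $\hg'$ is any holomorphic reduction extending $\hg^\pm$, the change of frame between a $\hg^\upsilon$- and a $\hg'$-holomorphic frame is continuous and holomorphic on $U\setminus S$, hence holomorphic, so the two structures coincide. Finally, for (2) I would expand a local $\hg^\upsilon$-holomorphic section in a straddling frame as $\sigma=\sum_i\sigma_i\Sigma_i$ with the $\sigma_i$ honestly holomorphic, hence $\mathcal C^\infty$, functions; since $\Sigma_i|_{V\cap\bar U^\pm}$ is a product of the $\mathcal C^{\kappa+1}$ bundle maps $\Phi^\pm$, $f_+^{\mp 1}$ and the $\mathcal C^{\kappa+1}$ frame $\tau$, it lies in $\Gamma^{\kappa+1}$ up to the boundary, whence $\sigma|_{V\cap\bar U^\pm}\in\Gamma^{\kappa+1}(V\cap\bar U^\pm,E^\pm)$, as claimed. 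Equivalently, on each closed half $\sigma$ solves $\delta^\pm\sigma=0$ with coefficients in $\mathcal C^\kappa$, and one invokes elliptic regularity up to the boundary for Hölder spaces.
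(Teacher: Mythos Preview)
Your proof is correct and follows essentially the same approach as the paper: smooth the $\mathcal C^{\kappa+1}$ bundle $E^\upsilon$ to a $\mathcal C^\infty$ model (the paper packages this as the formalism of \emph{admissible $\mathcal C^\infty$-structures}, Proposition \ref{CinftyEGamma}), transport the two Dolbeault operators, apply Theorem \ref{mainGnew}, and then correct by the gauge transformation $f_+$ with $f_+|_S=\id$ so that the resulting holomorphic structure restricts exactly to $\hg^\pm$. The only cosmetic differences are that the paper pushes forward $\hg_J$ abstractly by the automorphism $(\id_{E^-},\tilde\sigma_+)$ of $E^\upsilon$ rather than writing down your ``straddling frames'' explicitly, and it proves the removability lemma (Theorem \ref{acrossS}) via the distributional $\bp$-equation rather than Morera plus separate holomorphy; both of your alternatives are fine and are in fact mentioned in the paper as proofs suggested by colleagues.
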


 Therefore, although the gluing bundle isomorphism $\upsilon$ is supposed to be only  of class ${\cal C}^{\kappa+1}$ and the required compatibility condition concerns only the tangential operators $\delta^\pm_S$, we can glue together the two {\it formally}  holmorphic bundles $(E^\pm, \delta^\pm)$  along $S$ via $\upsilon$ and obtain a holomorphic bundle on $U$.

 \begin{re}
For a Dolbeault operator $\delta$ on a bundle $E^+$ on a manifold with boundary $\bar U^+$,   the   formal integrability condition $\delta^2=0$ does {\it not} imply integrability (existence of local frames solving the $\delta$-equation) at non pseudo-convex boundary points. In \cite{Te} we gave an example  of a bundle $E^+$ on a compact manifold $\bar U^+$ with pseudo-concave boundary with the property that a {\it generic} formally integrable Dolbeault operator on $E^+$ is   integrable at {\it no} boundary point. Theorem \ref{new-main} shows that the compatibility condition required in its hypothesis  implies local integrability of  {\it both} $\delta^\pm$ at {\it all}  points of $S$, without any pseudo-convexity condition.
	
 \end{re}

 Theorem \ref{new-main} gives:
 \begin{co}\label{loc-free-sheaves} Under the assumptions of Theorem \ref{new-main}, the  ${\cal O}_U$-module ${\cal E}$ defined by
\begin{equation}\label{first-sheaf} 
W\mapsto \left\{\begin{pmatrix}f^-\\f^+\end{pmatrix} \in  \begin{array}{c}\Gamma^0 (W\cap\bar U^-  ,E^-)\\\times\\ \Gamma^0 (W\cap\bar U^+ ,E^+)\end{array}\,\vline  \begin{array}{c} f^+|_{W\cap S}=\upsilon f^-|_{W\cap S}\,, \\ f^\pm  \hb{ is    $\hg^\pm$-holomorphic on } W\cap U^\pm\end{array}\hspace{-2mm}\right\}\phantom{.} 
\end{equation}
is  locally free  of rank $r$, and coincides with the apparently smaller sheaf	
\begin{equation}\label{second-sheaf}
\hspace{-2mm}W\mapsto \left\{\begin{pmatrix}f^-\\f^+\end{pmatrix} \in \begin{array}{c} \Gamma^{\kappa+1} (W\cap\bar U^- ,E^-)\\ \times \\\Gamma^{\kappa+1} (W\cap\bar U^+ ,E^+)\end{array}\,\vline   \begin{array}{c} f^+|_{W\cap S}=\upsilon f^-|_{W\cap S}\,, \\ f^\pm  \hb{ is    $\hg^\pm$-holomorphic on } W\cap U^\pm\end{array}\hspace{-2mm}  \right\}.
\end{equation}
 \end{co}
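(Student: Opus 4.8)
The plan is to identify both the sheaf $\mathcal{E}$ of \eqref{first-sheaf} and the a priori smaller sheaf of \eqref{second-sheaf} — call it $\mathcal{E}'$ — with the sheaf $\mathcal{O}(E^\upsilon)$ of $\hg^\upsilon$-holomorphic sections of the holomorphic bundle $(E^\upsilon,\hg^\upsilon)$ produced by Theorem \ref{new-main}(1). The latter is automatically locally free of rank $r$, being the sheaf of holomorphic sections of a rank-$r$ holomorphic bundle, so the corollary will follow at once from the two identifications. The first step is to reformulate the two sheaves as sheaves of sections of the glued topological bundle $E^\upsilon=E^-\coprod_\upsilon E^+$: giving a pair $(f^-,f^+)$ with $f^+|_{W\cap S}=\upsilon\, f^-|_{W\cap S}$ is the same as giving a section $\sigma$ of $E^\upsilon$ over $W$, and under this dictionary $\mathcal{E}$ becomes the sheaf of continuous sections of $E^\upsilon$ that are $\hg^\pm$-holomorphic on $W\cap U^\pm$, while $\mathcal{E}'$ is the subsheaf of those whose restrictions to $W\cap\bar U^\pm$ lie in $\Gamma^{\kappa+1}(W\cap\bar U^\pm, E^\pm)$.

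The three easy inclusions $\mathcal{O}(E^\upsilon)\subseteq\mathcal{E}'\subseteq\mathcal{E}$ come directly from Theorem \ref{new-main}. Indeed, a section $\sigma$ of $\mathcal{O}(E^\upsilon)$ is $\hg^\pm$-holomorphic on $W\cap U^\pm$ since $\hg^\upsilon$ extends $\hg^\pm$; by part (2) of the theorem its restrictions to $W\cap\bar U^\pm$ are of class ${\cal C}^{\kappa+1}$; and, being a section of $E^\upsilon$, it automatically satisfies the gluing condition $f^+|_S=\upsilon f^-|_S$. The inclusion $\mathcal{E}'\subseteq\mathcal{E}$ is trivial since $\Gamma^{\kappa+1}\subseteq\Gamma^0$.

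The substance lies in the reverse inclusion $\mathcal{E}\subseteq\mathcal{O}(E^\upsilon)$, and this is the step I expect to be the main obstacle: one must upgrade ``continuous on $W$ and holomorphic off $S$'' to ``holomorphic across $S$''. I would argue locally near a point of $S$: choosing a local $\hg^\upsilon$-holomorphic frame $(e_j)$ of $E^\upsilon$ — which exists and, by Theorem \ref{new-main}(2), is of class ${\cal C}^{\kappa+1}$ up to $S$ on each side — one writes $\sigma=\sum_j g_j e_j$, where the coefficients $g_j$ are continuous on $W$ and holomorphic on $W\cap U^\pm$ (both $\sigma$ and the $e_j$ being $\hg^\upsilon$-holomorphic there). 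It then remains to establish the classical removable-singularity principle across the smooth real hypersurface $S$.

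I would prove this last point by a distributional $\bp$-computation. Testing $\bp g_j$ against a smooth compactly supported form and integrating by parts separately over $U^+$ and $U^-$, the interior terms vanish by holomorphicity off $S$, and the two boundary integrals along $S$ cancel because the outward conormals of $\bar U^+$ and $\bar U^-$ are opposite while $g_j$ has no jump across $S$ (continuity). Hence $\bp g_j=0$ in the sense of distributions, and since $g_j$ is continuous, mollification finishes: the regularizations $g_j*\rho_\varepsilon$ are holomorphic and converge locally uniformly to $g_j$, so $g_j$ is holomorphic. Therefore $\sigma$ is $\hg^\upsilon$-holomorphic, which gives $\mathcal{E}\subseteq\mathcal{O}(E^\upsilon)$ and closes the chain $\mathcal{E}=\mathcal{E}'=\mathcal{O}(E^\upsilon)$; the common sheaf is locally free of rank $r$, and the stated $\mathcal{O}_U$-module structure is preserved under this identification since multiplication by a local holomorphic function respects both holomorphicity on $U^\pm$ and the $\mathbb{C}$-linear gluing condition.
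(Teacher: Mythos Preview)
Your proposal is correct and follows essentially the same route as the paper. The paper invokes its extension Theorem \ref{acrossS} (continuous on $U$, holomorphic on $U\setminus S$ $\Rightarrow$ holomorphic) to identify $\mathcal{E}$ with the sheaf of $\hg^\upsilon$-holomorphic sections and then Theorem \ref{new-main}(2) to get $\mathcal{E}=\mathcal{E}'$; you have simply reproved Theorem \ref{acrossS} inline via the same distributional $\bp$/Stokes argument, so there is no real difference in strategy.
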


\vspace{2mm}

 \subsubsection{Gluing holomorphic bundles along  an oriented real hypersurface}\label{non-sep-section}

 Theorem \ref{new-main}, Corollary \ref{loc-free-sheaves} can be  extended to oriented, non-necessarily separating, non-necessarily connected, real hypersurfaces. Let $U$ be a complex manifold and $S\subset U$ be a closed, {\it oriented} real hypersurface. The  normal bundle $n_S\edf T_{U|S}/T_S$   of $S$ in $U$ comes with a distinguished orientation induced by the complex orientation of $U$ and the fixed orientation of $S$. Let $0_{n_S}$ be the zero section of $n_S$. The quotient $\widehat S\edf (n_S\setminus 0_{n_S})/\R_{>0}$ is a trivial  double cover of $S$,  so it decomposes as a disjoint union $\widehat S=S^+\cup S^-$, where $S^\pm$ are identified with $S$ via the cover map $\widehat S\to S$. Therefore we have an obvious identification $b:S^-\textmap{\simeq} S^+$.  The union 
 $$
 \widehat U_S\edf (U\setminus S)\cup \widehat S
 $$
 has a canonical structure of a complex manifold with boundary whose boundary is 
 $$\p \widehat U_S=\widehat S=S^-\cup S^+,$$
 and comes with an obvious surjective smooth map $p_S^U:\widehat U_S\to U$ extending the biholomorphic identification $\widehat U_S\setminus\widehat{S}=U\setminus S$; it will be called {\it the manifold with boundary obtained by cutting $U$ along $S$} (see fig. \ref{hatUS}).
 \begin{figure}[h]
\includegraphics[scale=0.5]{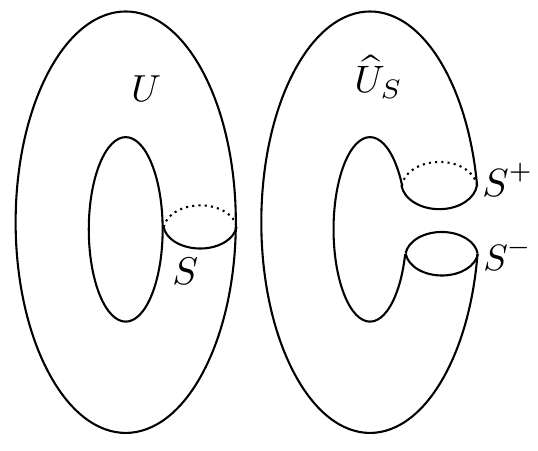}
\caption{$U$ and $\widehat U_S$.}
\label{hatUS}
\end{figure}
 In the special case considered above (when $S$ separates $U$) we have $\widehat U_S=\bar U^-\coprod \bar U^+$.

   Let $E$ be a  complex vector bundle of class ${\cal C}^\infty$ on $\widehat U_S$. We will denote by $E_{U\setminus S}$ the restriction of $E$ to $\widehat U_S\setminus\widehat{S}=U\setminus S$. Let $\upsilon:E_{S^-}\to b^*(E_{S^+})$ be a  bundle isomorphism of class ${\cal C}^{\kappa+1}$. 
 Identifying $E_{S^-}$ with $E_{S^+}$ via $\upsilon$ we obtain a topological bundle $E^\upsilon$ on $U$ whose pull back to $\widehat U_S$ is tautologically identified with $E$. \vspace{1mm}
 	
 Taking into account that Theorem \ref{new-main} has a local character with respect to $S$, we obtain:
 \begin{thr}
 \label{new-main-non-sep} Let $E$ be a ${\cal C}^\infty$ complex vector bundle on $\widehat U_S$, $\delta$ a formally integrable Dolbeault operator with coefficients in ${\cal C}^\kappa$ on $E$, and $\hg$ the corresponding holomorphic structure on the underlying ${\cal C}^{\kappa+1}$ bundle of $E_{U\setminus S}$. Let $\upsilon:E_{S^-}\to b^*(E_{S^+})$ be a   bundle isomorphism of class ${\cal C}^{\kappa+1}$.
 Suppose that the tangential Cauchy-Riemann operators $\delta^\pm_{S^\pm}$
  induced by $\delta$ agree via  $\upsilon$. Then 
  \begin{enumerate}
 \item  The topological bundle $E^\upsilon$ on $U$ admits a unique holomorphic reduction	$\hg^\upsilon$ extending $\hg$.
 \item For any local $\hg^\upsilon$-holomorphic section $U\stackrel{\hb{\tiny open}}{\supset}V\textmap{\sigma} E^\upsilon$, we have 
 $$\hat\sigma\edf\sigma\circ  p^V_S\in \Gamma^{\kappa+1}(\widehat{V}_{V\cap S},E),$$
 i.e.  the pull back $\hat \sigma$ of $\sigma$ via $p^V_S$ is of class ${\cal C}^{\kappa+1}$ up to the boundary.
 \end{enumerate}
  
 \end{thr}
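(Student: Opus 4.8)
The plan is to reduce everything to the separating case already settled in Theorem \ref{new-main}, via a localization argument in which the uniqueness clause of that theorem is used to patch the resulting local holomorphic structures. Away from $S$ there is nothing to prove: on $U\setminus S$ the map $p_S^U$ is a biholomorphism onto $\widehat U_S\setminus\widehat S$ and $E^\upsilon|_{U\setminus S}$ is tautologically $(E_{U\setminus S},\hg)$, so both assertions concern only a neighborhood of $S$.

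First I would fix $s\in S$ and choose an open neighborhood $W\subset U$ of $s$, small enough that $W\cap S$ is connected and separates $W$, i.e. $W\setminus S=W^-\cup W^+$ with $\bar W^\pm=W^\pm\cup(W\cap S)$; this is possible because $S$ is a smooth hypersurface (locally a coordinate hyperplane). The chosen orientation of $S$ together with the complex orientation of $U$ singles out which side is $W^+$ and which is $W^-$, and over $W$ the cut manifold restricts to the separating model $\widehat W_{W\cap S}=\bar W^-\coprod\bar W^+$ compatibly with $p_S^U$; under this identification the two boundary sheets of $\widehat W_{W\cap S}$ are $S^\pm\cap (p_S^U)^{-1}(W)$. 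Restricting $E$, $\delta$ and $\upsilon$ to this local picture yields ${\cal C}^\infty$ bundles $E^\pm\edf E|_{\bar W^\pm}$, formally integrable Dolbeault operators $\delta^\pm\edf\delta|_{\bar W^\pm}$ with coefficients in ${\cal C}^\kappa$, and a gluing isomorphism $\upsilon|_{W\cap S}$ of class ${\cal C}^{\kappa+1}$; the hypothesis that the tangential operators $\delta^\pm_{S^\pm}$ agree via $\upsilon$ restricts precisely to the compatibility condition of Theorem \ref{new-main}.

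Then I would apply Theorem \ref{new-main} on $W$: it provides a holomorphic reduction $\hg^\upsilon_W$ of the topological bundle $E^\upsilon|_W$ extending the structures $\hg^\pm=\hg|_{W^\pm}$, \emph{unique} with this property, together with the regularity statement for its local holomorphic sections. The decisive point is this uniqueness clause: on an overlap $W_1\cap W_2$ both $\hg^\upsilon_{W_1}$ and $\hg^\upsilon_{W_2}$ are holomorphic reductions of $E^\upsilon$ extending $\hg$, hence they agree there. Consequently the local reductions patch into a global holomorphic reduction $\hg^\upsilon$ on $U$ extending $\hg$, and global uniqueness follows immediately from local uniqueness, since a holomorphic structure is determined by its restrictions to the members of an open cover. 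This proves (1). For (2), given a local $\hg^\upsilon$-holomorphic section $\sigma$ on $V\subset U$, membership in $\Gamma^{\kappa+1}(\widehat V_{V\cap S},E)$ is a local property on $\widehat V_{V\cap S}$; covering $V$ by the charts $W$ above and applying part (2) of Theorem \ref{new-main} to each $\sigma|_{V\cap W}$ shows that $\hat\sigma=\sigma\circ p_S^V$ is of class ${\cal C}^{\kappa+1}$ up to the boundary on every $(p_S^V)^{-1}(W)$, hence on all of $\widehat V_{V\cap S}$.

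The only genuinely delicate step is the geometric bookkeeping in the second paragraph: one must check that the local cut $\widehat W_{W\cap S}$ embeds into the global $\widehat U_S$ with the correct matching of the boundary sheets $S^\pm$ and of the gluing data $\upsilon$, $b$ as dictated by the induced orientation of the normal bundle $n_S$. Since $\widehat S=(n_S\setminus 0_{n_S})/\R_{>0}$ is a \emph{trivial} double cover, this labelling is globally consistent, so the local separating models fit together coherently. Once this orientation compatibility is in place, the argument requires no new analytic input beyond Theorem \ref{new-main}, the remaining work being the standard patching just described.
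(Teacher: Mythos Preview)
Your proposal is correct and follows essentially the same route as the paper: reduce the non-separating case to the separating Theorem \ref{new-main} by exploiting that $S$ separates a sufficiently small neighborhood, then use uniqueness to patch. The paper phrases this even more tersely---observing that any oriented hypersurface separates a tubular neighborhood $U_0\supset S$ (so a single application of the separating result on $U_0$ suffices, rather than a cover by small charts $W$)---but the idea is the same and your more explicit patching argument via the uniqueness clause is a perfectly valid way to spell it out.
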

 This can also be reformulated in terms of sheaves:
 \begin{co}\label{loc-free-sheaves-non-sep} Under the assumptions of Theorem \ref{new-main-non-sep}, the  ${\cal O}_U$-module ${\cal E}$ defined by
\begin{equation*} 
W\mapsto \bigg\{f \in  \Gamma^0 (\widehat W_{W\cap S},E)\,\vline  \begin{array}{c} (f|_{(S\cap W)^+})\circ b|_{(W\cap S)^-}=\upsilon\circ  (f|_{(W\cap S)^-})\,, \\ f  \hb{ is    $\delta$-holomorphic on } \widehat W_{W\cap S}\setminus \widehat S\end{array}\hspace{-1mm}\bigg\} 
\end{equation*}
is  locally free  of rank $r$, and coincides with the apparently smaller sheaf	
\begin{equation*}
W\mapsto \bigg\{f \in  \Gamma^{\kappa+1} (\widehat W_{S\cap W},E)\,\vline   \begin{array}{c} (f|_{(W\cap S)^+})\circ b|_{(W\cap S)^-}=\upsilon\circ  (f|_{(W\cap S)^-})\,, \\ f  \hb{ is    $\delta$-holomorphic on } \widehat W_{W\cap S}\setminus \widehat S\end{array}\hspace{-1mm}\bigg\}.
\end{equation*}
 \end{co}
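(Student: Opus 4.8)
The plan is to deduce the statement from its separating counterpart, Corollary \ref{loc-free-sheaves}, using that both assertions---local freeness of rank $r$, and the coincidence of the two sheaves---are local on $U$. I would first observe that the two prescriptions genuinely define sheaves of $\mathcal{O}_U$-modules: continuity (resp.\ $C^{\kappa+1}$-regularity), the matching condition through $\upsilon$ on $\partial\widehat W_{W\cap S}$, and $\delta$-holomorphy on the interior are all local conditions, stable under restriction and gluing; and multiplication by $g\in\mathcal{O}_U(W)$ preserves them, since the pull-back $g\circ p^W_S$ is $b$-compatible on the boundary and holomorphic on the interior. It therefore suffices to compare the two sheaves, and to test local freeness, on a basis of small opens $W$, and every point of $U$ falls into one of the two cases below.

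For $x\in U\setminus S$ I would take $W$ with $W\cap S=\varnothing$. Then the cutting is trivial, $\widehat W_{W\cap S}=W$, $E^\upsilon|_W=E|_W$ and the matching condition is vacuous, so $\mathcal{E}|_W$ is the sheaf of $\hg$-holomorphic sections of $E$, which is locally free of rank $r$ because $\hg$ is a holomorphic structure. The identity of the $\Gamma^0$ and $\Gamma^{\kappa+1}$ versions is here the hypoellipticity of the Cauchy--Riemann system: a continuous weak solution of $\delta f=0$ is automatically holomorphic, hence of class $C^{\kappa+1}$.

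The substantive case is $x\in S$. As $S$ is a smooth oriented hypersurface it is locally two-sided, so I would choose a connected $W\ni x$ in which $S\cap W$ separates $W$ into $W^-\cup W^+$, the labelling fixed by the distinguished orientation of $n_S$. The map $p^W_S$ then identifies $\widehat W_{W\cap S}$ with $\overline{W^-}\coprod\overline{W^+}$, the boundary copies $S^\pm$ with $S\cap W$, and $b\colon S^-\to S^+$ with $\id_{S\cap W}$. Under this identification a section $f$ becomes a pair $(f^-,f^+)$, the condition $(f|_{S^+})\circ b=\upsilon\circ(f|_{S^-})$ becomes $f^+|_S=\upsilon\, f^-|_S$, and $\delta$-holomorphy on the interior becomes $\hg^\pm$-holomorphy of $f^\pm$ on $W^\pm$. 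Hence $\mathcal{E}|_W$ is precisely the sheaf \eqref{first-sheaf} of Corollary \ref{loc-free-sheaves} for the data $(E|_{\overline{W^\pm}},\,\delta|_{\overline{W^\pm}},\,\upsilon)$, and the $\Gamma^{\kappa+1}$-variant matches \eqref{second-sheaf}; that corollary then supplies local freeness of rank $r$ and the equality of the two sheaves on $W$.

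The one real obstacle I anticipate is bookkeeping rather than analysis: one must verify that the orientation convention defining $S^\pm$ through $n_S$ is consistent with the local $\pm$-splitting of $W$, and that the datum $\upsilon\colon E_{S^-}\to b^*(E_{S^+})$ transforms into the datum $\upsilon\colon E^-_S\to E^+_S$ of the separating case, so that the local comparison is an \emph{equality} of sheaves and not merely an isomorphism. Once these identifications are pinned down, the global conclusion is immediate, since both properties are testable on the basis of such $W$'s.
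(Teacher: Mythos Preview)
Your proposal is correct and follows essentially the same route as the paper: the paper's proof is the single sentence ``Corollary \ref{loc-free-sheaves-non-sep} follows from Corollary \ref{loc-free-sheaves} taking into account again that any oriented smooth hypersurface $S\subset U$ is locally (with respect to $S$) separating,'' and you have written out precisely this reduction, handling the off-$S$ and on-$S$ cases separately. Your additional remarks on the sheaf/$\mathcal{O}_U$-module structure and the orientation bookkeeping are correct and simply make explicit what the paper takes for granted.
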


 Let now $G$ be an arbitrary complex Lie  group. In the presence of a  principal $G$-bundle $P$ on $\widehat U_S$ and a    bundle isomorphism  $\upsilon:P_{S^-}\to b^*(P_{S^+})$ of class ${\cal C}^{\kappa+1}$, one can define the topological bundle $P^\upsilon$  as in the vector bundle case. 
Let  $\Tg_{P_{S^\pm}}\subset T_{P_{S^\pm}}$ be the pull-back of the canonical almost complex distribution $\Tg_S\edf T_S\cap J_U(T_S)$ of $S$. Using the definitions and notations explained in section  \ref{ACSsection} (see also \cite{Te2}) we have: 
 \begin{thr}
 \label{new-main-non-sep-G}
 Let $p:P\to \widehat U_S$ be a principal $G$-bundle on $\widehat U_S$ and $J$  a formally integrable bundle almost complex structure (bundle ACS) of class ${\cal C}^\kappa$ on $P$. Let $\upsilon:P_{S^-}\to  b^*(P_{S^+})$ be a   bundle isomorphism of class ${\cal C}^{\kappa+1}$. Suppose that the tangential almost complex structures $J_{S^\pm}$  induced by $J$ on the distributions $\Tg_{P_{S^\pm}}$ agree via $\upsilon$. Then
 \begin{enumerate}
\item  The topological bundle $P^\upsilon$ admits a unique holomorphic reduction	$\hg^\upsilon$ extending the holomorphic structure $\hg$ induced by $J$ on $P_{X\setminus S}$.
 \item The pull-back $\hat \tau$ of any local $\hg^\upsilon$-holomorphic section $\tau:V\to P^\upsilon$ is of class ${\cal C}^{\kappa+1}$ up to the boundary.
	
 \end{enumerate}

 \end{thr}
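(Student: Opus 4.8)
The plan is to reduce the statement to the separating, locally trivial case and there combine a non-abelian gluing lemma with the Hölder Newlander--Nirenberg theorem for principal bundles.

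\emph{Localization and local description.} Integrability of a bundle ACS, the gluing datum $\upsilon$ and the holomorphic reduction are all local conditions along $S$, so the statement has a local character with respect to $S$, exactly as in the passage from Theorem~\ref{new-main} to Theorem~\ref{new-main-non-sep}. Hence it suffices to treat the separating case and, after shrinking $U$ around a point of $S$, to assume $P$ trivial. Straightening $S$ to $\{\mathrm{Im}\,z_1=0\}$ in a holomorphic chart and fixing trivializations $P_{\bar U^\pm}\simeq \bar U^\pm\times G$, the dictionary of section~\ref{ACSsection} turns $J$ into $\g$-valued $(0,1)$-forms $A^\pm$ of class ${\cal C}^\kappa$ satisfying the Maurer--Cartan integrability equation, and $\upsilon$ into a ${\cal C}^{\kappa+1}$ map $u\colon S\to G$. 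The hypothesis on the tangential structures $J_{S^\pm}$ becomes: the components of $A^-$ and of the $u$-gauge transform of $A^+$ along the CR distribution $\Tg_S=T_S\cap J_U T_S$ coincide on $S$; only the normal components $A^\pm_{\bar 1}|_S$ remain a priori unconstrained.

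\emph{Gluing by a gauge transformation (the main step).} The heart of the proof is a principal-bundle gluing lemma, established by the same gauge-transformation technique as Theorem~\ref{mainGnew}. After absorbing $u$, one produces a gauge transformation $g_+\colon \bar U^+\to G$ of class ${\cal C}^{\kappa+1}$ which is the identity on $S$ and such that $A^-$ and $g_+\cdot A^+$ have matching normal jets along $S$; these then glue to a single formally integrable $\g$-valued $(0,1)$-form $\hat A$ of class ${\cal C}^\kappa$ on the topological bundle $P^\upsilon$ over $U$. The point is that, since $g_+|_S=\id$, its successive normal derivatives along $S$ are free parameters acting on the normal component essentially by $A_{\bar 1}\mapsto \Ad_{g_+}A_{\bar 1}-(\partial_{\bar z_1}g_+)g_+^{-1}$, while the formal integrability equation expresses the normal jets of the tangential components through tangential derivatives of the normal one. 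The tangential parts already agreeing, one solves recursively, order by order up to $[\kappa]$, for the jets of $g_+$ equalizing all normal jets, and Whitney's extension theorem for Hölder classes, applied to $G$-valued maps via charts (or $\exp$), realizes these prescribed jets by an actual ${\cal C}^{\kappa+1}$ gauge transformation. I expect this recursive jet-matching, together with the non-commutative bookkeeping absent in the vector-bundle case, to be the main obstacle.

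\emph{Integrability and regularity.} The glued form $\hat A$ defines a formally integrable bundle ACS $\hat J$ of class ${\cal C}^\kappa$ on $P^\upsilon$ over the \emph{open} manifold $U$, where $S$ is now an interior hypersurface rather than a boundary. The Hölder Newlander--Nirenberg theorem for principal bundles (\cite{Te2}; section~\ref{ACSsection}) then yields local $\hat J$-holomorphic sections, hence a holomorphic reduction $\hg^\upsilon$ of $P^\upsilon$, which by construction restricts to $\hg$ on $U\setminus S$; uniqueness follows because any two reductions extending $\hg$ agree on the dense open set $U\setminus S$, hence everywhere. Finally, for (2), a local $\hg^\upsilon$-holomorphic section $\tau$ pulls back to a solution of the $\hat J$-equation on $\widehat U_S$; since $g_+$ and $\upsilon$ are of class ${\cal C}^{\kappa+1}$ and $\hat A$ of class ${\cal C}^\kappa$, Hölder elliptic regularity up to the boundary (as in Theorem~\ref{new-main}(2)) gives $\hat\tau\in\Gamma^{\kappa+1}(\widehat V_{V\cap S},P)$.
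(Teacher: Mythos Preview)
Your approach is essentially the paper's: localize to reduce to the separating case, produce a ${\cal C}^{\kappa+1}$ gauge transformation on $\bar U^+$ (identity on $S$) matching normal jets order by order via Whitney extension for Hölder classes (this is exactly Theorem~\ref{mainGnew}), then invoke the Hölder Newlander--Nirenberg theorem for principal bundles to get $\hg^\upsilon$. The paper packages the gluing through the formalism of admissible ${\cal C}^\infty$ structures on $P^\upsilon$ (Proposition~\ref{CinftyEGamma}) rather than local trivializations, but the content is the same.

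One point needs more care: your uniqueness argument ``agree on the dense open set $U\setminus S$, hence everywhere'' is not sufficient as stated. Two holomorphic reductions of a topological bundle agreeing on a dense open subset need not coincide without a continuity input across $S$. The paper closes this by taking local holomorphic sections $\tau',\tau''$ for the two structures, noting that the comparison map $g_{\tau'\tau''}:V'\cap V''\to G$ is continuous everywhere and holomorphic on $(V'\cap V'')\setminus S$, and then applying the extension Theorem~\ref{acrossS} to conclude $g_{\tau'\tau''}$ is holomorphic on all of $V'\cap V''$. You should make this step explicit.
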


 Note that $\upsilon$  can be regarded as a section in a locally trivial fiber bundle over $S^-$. The ${\cal C}^{\kappa+1}$ condition on $\upsilon$ in Theorem \ref{new-main-non-sep-G} is meant in the sense of \cite[p. 38]{Pa}.
 
  \begin{re}
If  $G$ is a closed complex subgroup of $\GL(r,\C)$, any bundle ACS on $P$ will induce a Dolbeault operator  on the associated rank $r$ vector bundle, and the  compatibility condition ``the bundle ACS $J_{S^\pm}$ agree via $\upsilon$" required in Theorem \ref{new-main-non-sep-G} can be replaced by a compatibility condition for tangential Cauchy-Riemann operators as in Theorem \ref{new-main-non-sep}. We preferred  a  formulation which is general and intrinsic  in  terms of abstract complex Lie groups $G$ and principal $G$-bundles. \end{re}

 In the special case when $S$ separates $U$, we obtain as a special case the following generalization of Theorem \ref{new-main}:
 \begin{thr}\label{new-main-sep-G}  Let $P^\pm$ be a  ${\cal C}^\infty$ principal $G$-bundle  on $\bar U^\pm$  and let $P^\pm_S$ be  its restriction to $S$. Let $\upsilon:P^-_{S}\to P^+_{S}$ be a   bundle isomorphism of class ${\cal C}^{\kappa+1}$.
 Let $J^\pm$ be a formally integrable bundle ACS of class ${\cal C}^\kappa$ on $P^\pm$ and let $\hg_\pm$ be the corresponding holomorphic structure on the underlying ${\cal C}^{\kappa+1}$ bundle of the restrictions $P^\pm_{U^\pm}$ to $U^\pm$. Suppose that the tangential  almost complex structures $J^\pm_S$
  induced by $J^\pm$ on $\Jg_{P^\pm_S}$ agree via  $\upsilon$. Then 
  \begin{enumerate}
 \item  The topological bundle $P^\upsilon$ on $U$ admits a unique holomorphic reduction	$\hg^\upsilon$ extending $\hg^\pm$.
 \item For any local $\hg^\upsilon$-holomorphic section $U\stackrel{\hb{\tiny open}}{\supset}V\textmap{\tau} P^\upsilon$, we have 
 $$\tau|_{V\cap\bar U^\pm}\in \Gamma^{\kappa+1}(V\cap\bar U^\pm,P^\pm),$$
 i.e.  the restrictions $\tau|_{V\cap\bar U^\pm}$ of $\tau$ are of class ${\cal C}^{\kappa+1}$ up to the boundary.
 \end{enumerate}	
 \end{thr}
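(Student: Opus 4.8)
The plan is to derive Theorem \ref{new-main-sep-G} as the \emph{separating specialization} of Theorem \ref{new-main-non-sep-G}; in this way the entire analytic content---the Hölder Newlander--Nirenberg theorem for principal bundles and the Whitney extension argument---is imported from the non-separating statement rather than reproved, and the only work is to match the data and hypotheses of the two theorems under the cutting construction.

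First I would use the canonical identification $\widehat U_S = \bar U^- \coprod \bar U^+$, already recorded above for a separating $S$. After choosing the orientation of $S$ so that the two sheets $S^+$, $S^-$ of $\widehat S = \p\widehat U_S$ sit over the sides $U^+$, $U^-$, the boundary components $S^\pm$ become the two boundary copies $\p\bar U^\pm$ of $S$, and the canonical identification $b:S^-\to S^+$ becomes the identity of $S$. Should the distinguished orientation of $n_S$ force the opposite labeling, one simply replaces $\upsilon$ by $\upsilon^{-1}$, which changes neither the glued bundle nor the conclusions; this is the only orientation bookkeeping involved.

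Next I would assemble the data on $\widehat U_S$. Since $\bar U^-$ and $\bar U^+$ are disjoint in $\widehat U_S$, the pair $(P^-,P^+)$ defines a single principal $G$-bundle $P$ and the pair $(J^-,J^+)$ a single bundle almost complex structure $J$ on $P$; formal integrability and the ${\cal C}^\kappa$-regularity of $J$ are verified component by component and hold by hypothesis. Under the identifications of the previous paragraph $P_{S^-}=P^-_S$ and $b^*(P_{S^+})=P^+_S$, so the gluing isomorphism $\upsilon:P^-_S\to P^+_S$ is precisely a ${\cal C}^{\kappa+1}$ isomorphism $\upsilon:P_{S^-}\to b^*(P_{S^+})$, and the glued topological bundle $P^\upsilon$ agrees in both formulations. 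The compatibility hypothesis transcribes verbatim: $J^\pm_S$ agree on $\Jg_{P^\pm_S}$ via $\upsilon$ exactly when the induced structures $J_{S^\pm}$ on $\Tg_{P_{S^\pm}}$ agree via $\upsilon$.

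With the data matched, Theorem \ref{new-main-non-sep-G} applies and gives conclusion (1) at once, the holomorphic reduction $\hg^\upsilon$ extending $\hg^\pm$ being the same object in both pictures. For conclusion (2), the projection $p^V_S:\widehat V_{V\cap S}\to V$ restricts to the tautological biholomorphism on each component $V\cap\bar U^\pm$; hence the assertion of Theorem \ref{new-main-non-sep-G} that $\hat\tau = \tau\circ p^V_S$ is of class ${\cal C}^{\kappa+1}$ up to the boundary reads, on each piece, as the assertion $\tau|_{V\cap\bar U^\pm}\in\Gamma^{\kappa+1}(V\cap\bar U^\pm,P^\pm)$. I expect no genuine obstacle beyond the orientation bookkeeping of the second paragraph: all the analytic difficulty already lies in Theorem \ref{new-main-non-sep-G}.
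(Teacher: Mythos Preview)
Your reduction is formally valid---the separating case really is the special case $\widehat U_S=\bar U^-\coprod\bar U^+$ of the non-separating statement---but in the paper's logical architecture it is circular. The paper proves Theorem~\ref{new-main-sep-G} \emph{first}, directly from the key gluing Theorem~\ref{mainGnew}, and only afterwards derives Theorem~\ref{new-main-non-sep-G} from it by the observation that any oriented hypersurface separates a small tubular neighborhood and that the problem is local along $S$. So invoking Theorem~\ref{new-main-non-sep-G} to prove Theorem~\ref{new-main-sep-G} assumes what you are trying to establish.

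The paper's direct argument runs as follows. One first chooses an admissible ${\cal C}^\infty$ structure $\Sg$ on the topological bundle $P^\upsilon$ (Proposition~\ref{CinftyEGamma}); the tautological identifications $o^\pm:P^\pm\to P^\upsilon_{\Sg\,\bar U^\pm}$ are then ${\cal C}^{\kappa+1}$, and the pushed-forward structures $J'^\pm$ on $P^\upsilon_{\Sg\,\bar U^\pm}$ satisfy $J'^-_S=J'^+_S$ precisely by the compatibility hypothesis. Theorem~\ref{mainGnew} now produces $\sigma_+$ with $\sigma_+|_S=e$ and an integrable $J$ on $P^\upsilon_\Sg$ agreeing with $J'^-$ on $\bar U^-$ and with $J'^+\cdot\sigma_+$ on $\bar U^+$; the Newlander--Nirenberg Theorem~\ref{NNG} gives a holomorphic structure $\hg_J$, and transporting it by the admissible automorphism $f=(\id,\tilde\sigma_+)$ yields $\hg^\upsilon$ extending $\hg^\pm$. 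Uniqueness comes from the extension Theorem~\ref{acrossS}: the transition map between two candidate holomorphic structures is continuous across $S$ and holomorphic off $S$, hence holomorphic everywhere. Part (2) follows because $\hg^\upsilon$-holomorphic sections are ${\cal C}^{\kappa+1}$ in an admissible ${\cal C}^\infty$ structure, and admissibility forces the restrictions to $\bar U^\pm$ to be ${\cal C}^{\kappa+1}$.
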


 \begin{re}\label{dim=1}
 The compatibility conditions on the tangential Cauchy-Riemann operators or tangential almost complex structures in Theorems \ref{new-main}, \ref{new-main-non-sep}, \ref{new-main-non-sep-G}, \ref{new-main-sep-G} are void when $\dim_\C(U)=1$ (i.e. when $U$ is a Riemann surface).  \end{re}

 \subsection{First applications: The Riemann-Hilbert problem}
 
The first applications of Theorems \ref{new-main}, \ref{new-main-non-sep}, \ref{new-main-non-sep-G} and their corollaries concern generalizations of the classical Riemann-Hilbert problem.
 
 \subsubsection{The Riemann-Hilbert problem on $\P^1_\C$} \label{RHonP1-section}
 
  We first illustrate Theorem \ref{new-main} in a  simple special case: let $U=\P^1_\C=\C\cup\{\infty\}$ and $U^+$ ($U^-$) be the connected component of $\P^1_\C\setminus S$ which contains (does not contain) $\infty$, where    $S\subset\C$ is a compact, connected smooth  curve. Let $\upsilon:S\to \GL(r,\C)$ be a  map of class ${\cal C}^{\kappa+1}$. Such a pair $(S,\upsilon)$ is the input data of a   Riemann-Hilbert   problem.
 \vspace{2mm}
 
 In the renowned  book chapter  \cite[Kapitel X. Riemanns Probleme in der Theorie der Funktionen einer komplexen Veränderlichen]{Hil}, Hilbert states and studies the following problem\footnote{In Hilbert's original problem, as stated in loc. cit,    $\upsilon$ is supposed to be  of class ${\cal C}^2$ and $S$ real analytic.}
 :
 \begin{pb}  [Riemann-Hilbert]  \label{RH}   Find the space of pairs $(f^-,f^+)$ of continuous  maps $f^\pm:\bar U^\pm\to \C$ which are holomorphic on $U^\pm$ and whose restrictions to $S$ satisfy the  condition $f^+_S=\upsilon f^-_S$.	
 \end{pb}
 
Hilbert also states and studies a meromorphic version of the problem: $f^+$ is still required to be holomorphic, but $f^-$ is allowed to be meromorphic with poles in $U^-$. 
%
%
%
%
 Several authors have stated interesting versions of the  Riemann-Hilbert problem; for instance in \cite{Ple}, \cite{Bo} one can find:
\begin{pb}\label{Ple}
Find the space of solutions $(f^-,f^+)$ of the Riemann-Hilbert problem with $f^-$ holomorphic on $U^-$ and $f^+$ meromorphic on $U^+$ with a single pole with prescribed singularity type (Laurent  coefficients of non-positive index) at $\infty$. 	
\end{pb}
Other authors (see for instance \cite{It}) are interested in matrix factorisation problems of the form:
\begin{pb}\label{It}    Find the space of pairs $(Y^-,Y^+)$  of  continuous  maps $Y^\pm:\bar U^\pm\to \gl(r,\C)$ which are holomorphic on $U^\pm$,  whose restrictions to $S$ satisfy the  condition $Y^+_S=  \upsilon Y^-_S$ and such that $Y_+(\infty)=I_r$.	
 \end{pb}
 
 More generally, let  $\rho:G\to \GL(V)$ be a representation of $G$ on  a finitely dimensional complex vector space $V$, $m\in\Z$,   and  $\gamma\in V[z]=\sum_{s\geq 0}\gamma_s z^s$ a $V$-valued polynomial.  Put $d\edf\deg(\gamma)\in\Z_{\geq -1}$ (we use the convention $\deg(\gamma)=-1$ for $\gamma=0$). 
Let $\zeta:\P^1_\C\setminus\{0\}\to \C$ be the standard coordinate of $\P^1_\C$ around $\infty$;  replacing formally $z$ by $\zeta^{-1}$ in the expression of $\gamma$, we obtain a 
  Laurent polynomial $\tilde\gamma=\sum_{s=-d}^0\tilde\gamma_s\zeta^s\in V[\zeta^{-1}]$  with $\tilde\gamma_s=\gamma_{-s}$. Regarding $\infty$ as an effective divisor on $\P^1_\C$, $\tilde\gamma$   can be interpreted as an  element of $H^0({\cal O}(d\infty)_{(d+1)\infty}\otimes V)$ with empty zero locus on the effective divisor  $(d+1)\infty$. Let $\upsilon:S\to G$ be a map of class ${\cal C}^{\kappa+1}$. We ask:
 \begin{pb}\label{rho}    Find the space of pairs $(Y^-,Y^+)$ of continuous maps 
 $$Y^-:\bar U^-\to V,\ Y^+:\bar U^+\setminus\{\infty\}\to V$$
   with $Y^-$ holomorphic on $U^-$, $Y^+$ holomorphic on $U^+\setminus \{\infty\}$ such  that $Y^+_S=\rho(\upsilon) Y^-_S$ and %
  $$\lim_{z\to\infty}(z^{d-m}Y^+(z)-\gamma(z))=0.\eqno{(C_\infty)}$$	
 \end{pb} 
 
The latter condition implies that $\infty$ is a non-essential singularity of $Y_+$; it is equivalent to the following condition on the Laurent series $\sum_{k\in\Z}b_k\zeta^k$ of $Y_+$ at $\infty$:
$$
b_s=\tilde \gamma_{m-d+s} \hb { for } s\leq d-m .\eqno{(C'_\infty)}
$$
Therefore the analytic condition $(C_\infty)$ has a purely complex geometric interpretation:
\begin{itemize}
\item For $d\geq 0$ (i.e. $\gamma\ne 0$)	  it requires that $Y^+$ extends as a section of the sheaf ${\cal O}(m\infty )\otimes_\C V$   on $U^+$ whose image  in   $H^0({\cal O}(m\infty)_{(d+1)\infty}\otimes V)$
%
via the obvious morphism is $z^{m-d}\otimes\tilde\gamma$.  
\item For $d=-1$ (i.e. $\gamma=0$) it just requires that $\tilde Y^+$ extends as a section of the sheaf ${\cal O}(m\infty )\otimes_\C V$   on $U^+$. This is the ``homogenous case", the case when the solution space is naturally a vector space.
\end{itemize}

Hilbert's original problem is obtained taking $\rho$  to be the canonical representation of $\GL(r,\C)$ on $\C^r$, $m=0$, and $\gamma=0$. Problem \ref{Ple} is obtained with the same $\rho$ taking $m=d$, and problem \ref{It} corresponds to the representation of $\GL(r,\C)$ on $\gl(r,\C)$ given by left multiplication, taking $m=0$ and $\gamma=$  the degree 0 polynomial $I_r$.

\vspace{2mm}

Let $P^\pm\edf \bar U^\pm\times G $ be the trivial $G$-bundle endowed with the standard (trivial) bundle ACS. A  map $\upsilon: S\to G$ of class ${\cal C}^{\kappa+1}$  can be regarded as a bundle isomorphism $P^-_S\to P^+_S$ of this class. By Theorem \ref{new-main-sep-G} and Remark \ref{dim=1}, for any such $\upsilon$ we have a well defined  holomorphic structure $\hg^\upsilon$ on the bundle $P^\upsilon$ over $\P^1_\C$. The obtained holomorphic bundle, which will still be denoted by $P^\upsilon$ to save on notations, comes with ${\cal C}^{\kappa+1}$-trivializations $\theta^\pm_\upsilon$ on $\bar U^\pm$ which are holomorphic on $U^\pm$. By Corollary \ref{loc-free-sheaves}, the  locally free sheaf ${\cal V}^\upsilon$ associated with the holomorphic vector bundle $P^\upsilon\times_\rho V$ is given by the {\it equivalent} formulae 
\begin{equation}\label{EGamma(W)}
\begin{split} 
  W&\mapsto\left\{\begin{pmatrix} f^-\\f^+\end{pmatrix} \in  \begin{array}{c} \Gamma^0(W\cap \bar U^-,V)\\ \times \\ \Gamma^0(W\cap \bar U^+,V)\end{array} \,\vline   \begin{array}{c} f^+|_S=\rho(\upsilon) f^-|_S,	\\
 f^\pm \hbox{ is holomorphic on }W\cap U^\pm\end{array}\hspace{-2mm}\right\}\phantom{.}\\
&= \left\{\begin{pmatrix} f^-\\f^+\end{pmatrix} \in \begin{array}{c} \Gamma^{\kappa+1}(W\cap \bar U^-,V)\\ \times \\ \Gamma^{\kappa+1}(W\cap \bar U^+,V)\end{array}\,\vline  \begin{array}{c} f^+|_S=\rho(\upsilon) f^-|_S,	\\
 f^\pm \hbox{ is holomorphic on }W\cap U^\pm\end{array}\hspace{-2mm}\right\}.
\end{split}
\end{equation}
Note also that  the trivialization $\theta^+$ induces  isomorphisms
$$H^0 ({\cal O}(d\infty)_{(d+1)\infty}\otimes V )\textmap{\simeq}H^0 ({\cal V}^\upsilon(d\infty)_{(d+1)\infty}),$$
so   $\tilde\gamma$ gives an element $\nu_{\tilde\gamma}^\upsilon\in H^0 ({\cal V}^\upsilon(d\infty)_{(d+1)\infty})$.
With these remarks we obtain:
\begin{co}\label{RH-P1}
Let $S\subset \C$ be a compact, connected smooth  curve. 
\begin{enumerate}
\item The map $\upsilon\mapsto (P^\upsilon,\theta^-_\upsilon,\theta^+_\upsilon)$ gives a bijection between the group ${\cal C}^{\kappa+1}(S,G)$ and the set of isomorphism classes of triples $(Q,\theta^-,\theta^+)$ consisting of a holomorphic principal $G$-bundle $Q$ on $\P^1_\C$ and   ${\cal C}^{\kappa+1}$-trivializations  $\theta^\pm$ of $Q$ on $\bar U^\pm$ which are holomorphic on $U^\pm$.	
\item  If $\gamma= 0$, the space of solutions of the  general Riemann-Hilbert Problem \ref{rho}  can be naturally identified with $H^0 (\P^1_\C, {\cal V}^\upsilon(m\infty))$.
\item  If $\gamma\ne 0$, the space of solutions of the  general Riemann-Hilbert Problem \ref{rho} is non-empty if and only if the image of $z^{m-d}\otimes\nu^\upsilon_{\tilde\gamma}$   via the connecting morphism %
\begin{equation*}
\begin{split}
\hspace*{12mm} H^0 ({\cal V}^\upsilon(m\infty)_{(d+1)\infty})=H^0\big(\P^1_\C, {\cal V}^\upsilon(m\infty)/{\cal V}^\upsilon(&(m-d-1)\infty)\big)\to \\ &\to H^1(\P^1,{\cal V}^\upsilon((m-d-1)\infty))	
\end{split}	
\end{equation*}
 vanishes. If this is the case, this space   has the natural structure of an affine space with model space $H^0\big(\P^1_\C, {\cal V}^\upsilon((m-d-1)\infty)\big)$, and can be naturally identified with the pre-image of $z^{m-d}\otimes\nu^\upsilon_{\tilde\gamma}$ via the natural morphism 
$$H^0(\P^1_\C, {\cal V}^\upsilon(m\infty))\to H^0 ({\cal V}^\upsilon(m\infty)_{(d+1)\infty}).
$$
\end{enumerate}
\end{co}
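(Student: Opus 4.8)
The plan is to obtain part~(1) formally from the uniqueness clause of Theorem~\ref{new-main-sep-G}, and to reduce parts~(2) and~(3) to the cohomology of coherent sheaves on $\P^1_\C$ using the dictionary already assembled in \eqref{EGamma(W)} and in the analysis of the conditions $(C_\infty)$, $(C'_\infty)$. For part~(1), the map $\upsilon\mapsto(P^\upsilon,\theta^-_\upsilon,\theta^+_\upsilon)$ is well defined by Theorem~\ref{new-main-sep-G} together with Remark~\ref{dim=1}. To invert it I would attach to an arbitrary triple $(Q,\theta^-,\theta^+)$ the comparison map $\upsilon\edf\theta^+\circ(\theta^-)^{-1}$ along $S$, an element of ${\cal C}^{\kappa+1}(S,G)$; it is manifestly invariant under isomorphisms of triples, since such isomorphisms respect both trivializations. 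Using $\theta^\pm$ to trivialize $Q$ over $\bar U^\pm$ turns its holomorphic structure into the standard one on each $U^\pm$, so $(Q,\theta^-,\theta^+)$ is isomorphic, as a triple, to a holomorphic reduction of the topological bundle $P^\upsilon$ extending $\hg^\pm$; by the uniqueness in Theorem~\ref{new-main-sep-G}(1) this reduction is $\hg^\upsilon$, whence $(Q,\theta^-,\theta^+)\cong(P^\upsilon,\theta^-_\upsilon,\theta^+_\upsilon)$. This gives surjectivity, and since the same $\upsilon$ is recovered from $(P^\upsilon,\theta^-_\upsilon,\theta^+_\upsilon)$, also injectivity.

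For parts~(2) and~(3) I would first record the complex-geometric translation of Problem~\ref{rho}. Away from $\infty$ the conditions that $f^\pm$ be holomorphic on $U^\pm$ and satisfy $f^+_S=\rho(\upsilon)f^-_S$ on $S$ are, by the equivalent formulae \eqref{EGamma(W)}, precisely the conditions defining a section of ${\cal V}^\upsilon$ (the optional ${\cal C}^{\kappa+1}$ regularity being automatic). Near $\infty\in U^+$ the trivialization $\theta^+$, holomorphic on $U^+$, identifies ${\cal V}^\upsilon$ with ${\cal O}\otimes_\C V$, so the pole part of $(C'_\infty)$, namely $b_s=0$ for $s<-m$, says exactly that $Y^+$ extends to a section of ${\cal V}^\upsilon(m\infty)$. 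When $\gamma=0$ this exhausts $(C'_\infty)$, and a solution is nothing but a global section of ${\cal V}^\upsilon(m\infty)$; this proves part~(2).

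For part~(3) ($\gamma\neq0$, hence $d\geq0$) the remaining content of $(C'_\infty)$ prescribes the coefficients $b_s=\tilde\gamma_{m-d+s}$ for $-m\leq s\leq d-m$, which under $\theta^+$ is exactly the requirement that the image of $Y^+$ in $H^0({\cal V}^\upsilon(m\infty)_{(d+1)\infty})$ be $z^{m-d}\otimes\nu^\upsilon_{\tilde\gamma}$. Thus the solution set is the fibre over $z^{m-d}\otimes\nu^\upsilon_{\tilde\gamma}$ of the restriction map $H^0(\P^1_\C,{\cal V}^\upsilon(m\infty))\to H^0({\cal V}^\upsilon(m\infty)_{(d+1)\infty})$. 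Feeding the short exact sequence
\[
0\to{\cal V}^\upsilon((m-d-1)\infty)\to{\cal V}^\upsilon(m\infty)\to{\cal V}^\upsilon(m\infty)_{(d+1)\infty}\to0
\]
into the long exact cohomology sequence, the image of this restriction map is the kernel of the connecting morphism, so the fibre is non-empty iff $z^{m-d}\otimes\nu^\upsilon_{\tilde\gamma}$ is killed by that morphism; and when non-empty it is a coset of $\ker=\im\big(H^0({\cal V}^\upsilon((m-d-1)\infty))\to H^0({\cal V}^\upsilon(m\infty))\big)=H^0(\P^1_\C,{\cal V}^\upsilon((m-d-1)\infty))$, the inclusion being injective on sections. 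This yields the asserted affine structure, and part~(2) is recovered as the degenerate case $d=-1$.

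I expect the main obstacle to be the index bookkeeping at $\infty$ in part~(3): matching the range and values of the coefficients in $(C'_\infty)$ with the $(d+1)$-jet encoded by the skyscraper ${\cal V}^\upsilon(m\infty)_{(d+1)\infty}$, and verifying that $z^{m-d}\otimes\tilde\gamma$ genuinely lies in ${\cal O}(m\infty)_{(d+1)\infty}\otimes V$ before it is transported by $\theta^+$ to $\nu^\upsilon_{\tilde\gamma}$. In part~(1) the sole non-formal ingredient is the uniqueness of the holomorphic reduction supplied by Theorem~\ref{new-main-sep-G}; the rest is transition-cocycle bookkeeping and the definition of isomorphism of triples.
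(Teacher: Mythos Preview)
Your proposal is correct and follows essentially the same approach as the paper, which presents the corollary as an immediate consequence of the preceding remarks (the complex-geometric interpretation of $(C_\infty)$, $(C'_\infty)$ and formula~\eqref{EGamma(W)}). You have simply made explicit what the paper leaves implicit: for part~(1) the inverse map via the transition cocycle $\upsilon=(\theta^+_S)^{-1}\circ\theta^-_S$ together with the uniqueness clause of Theorem~\ref{new-main-sep-G}, and for parts~(2)--(3) the standard long exact sequence argument applied to the skyscraper quotient at $\infty$.
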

In particular, the space of solutions of the  original Riemann-Hilbert problem (of Problem \ref{RH}) is naturally isomorphic to the space $H^0(\P^1_\C, {\cal V}^\upsilon)$ associated with the canonical representation of $\GL(r,\C)$ on $\C^r$.
Hilbert's results   \cite[Sätze 27-30]{Hil} follow easily from Corollary \ref{RH-P1}.  Taking into account  formula (\ref {EGamma(W)}) we also obtain the following general regularity result: 
\begin{re}\label{regularity}
Any solution of a Riemann-Hilbert problem  with  $\upsilon$ of class ${\cal C}^{\kappa+1}$ is also of class ${\cal C}^{\kappa+1}$ up to the boundary. 	
\end{re}

By Grothendieck's  classification theorem \cite{Gro}, the sheaf  ${\cal V}^\upsilon$  splits as a direct sum of invertible sheaves, so we have 
$
{\cal V}^\upsilon\simeq \bigoplus_{j=1}^r {\cal O}(n_j)
$
with $n_j\in\Z$ and $\sum_{j=1}^r n_j= \deg({\cal V}^\upsilon)$. For the canonical representation of $\GL(r,\C)$ on $\C^r$ we have $\deg({\cal V}^\upsilon)=-\deg(\det(\upsilon))$. 
Therefore, once in possession of the complex geometric objects $(Q,\nu^\upsilon_\gamma)$ associated  with the input data $(\rho,\upsilon,\gamma)$, the corresponding Riemann-Hilbert problem can be approached  using elementary complex geometric methods. 
For instance,  one can  easily give examples of such data  for which the space of solutions is empty and, at least for small $r$ and standard representations, one can compute all  possible dimensions  of the space of solutions for a given  Grothendieck decomposition of  ${\cal V}^\upsilon$.

 A difficulty remains: make the bijection provided by  Corollary \ref{RH-P1} effective, i.e., for given $\upsilon$, determine {\it explicitly} a Grothendieck direct sum decomposition of ${\cal V}^\upsilon$ and the "position`` of $\nu_{\tilde\gamma}^\upsilon$ with respect to the summands. 
\subsubsection{The Riemann-Hilbert problem on Riemann surfaces}\label{RHonRiemannSurf-section}

The formalism and the results of section \ref{non-sep-section} allows us to formulate and approach with complex geometric methods a very general Riemann-Hilbert problem: Let $X$ be a closed Riemann surface, $S\subset X$ an {\it arbitrary} (non-necessarily connected, non-necessarily separating) closed,  oriented real 1-dimensional submanifold,  $\upsilon:S^-\to G$  a  map of class ${\cal C}^{\kappa+1}$, and $\rho:G\to \GL(V)$ a representation.
%
%
%
Let also $D$, $\Delta$ be   divisors on $X\setminus S$, with $\Delta\geq 0$, and fix a section $\gamma\in H^0({\cal O}(D)_\Delta\otimes V)$ which is nowhere vanishing\footnote{If $\gamma$ has non-empty zero locus on $\Delta$, one will obtain an equivalent problem associated with   a smaller pair $(D,\Delta)$.} on $\Delta$. 
\begin{pb}\label{RHXD}
Find the space of  meromorphic maps $Y:\widehat X_S\setminus\widehat{S} \dasharrow V$ which extend continuously around $\widehat S$, such that: 
\begin{enumerate}
	\item\label{Gamma-comp-5} $Y|_{S^+}\circ b=\rho(\upsilon) Y|_{S^-}$,
	\item   Via the obvious identification $\widehat X_S\setminus\widehat{S}=X\setminus S$, $Y$   extends as a section of ${\cal O}(D)\otimes V$, and the image of this extension in $H^0({\cal O}(D)_\Delta\otimes V)$ via the obvious morphism, is $\gamma$.
\end{enumerate}
\end{pb}
The ``homogenous case"  corresponds to the case $\Delta=0$ (the empty divisor).
For $D=\Delta=0$ one just obtains the space of continuous maps $Y:\widehat X_S\to V$ which satisfy  the $\upsilon$-compatibility condition (\ref{Gamma-comp-5}) and are holomorphic on $X\setminus S$.

Taking into account Theorem \ref{new-main-non-sep}, Corollary \ref{loc-free-sheaves-non-sep} and Remark  \ref{dim=1}, we obtain, as in Corollary \ref{RH-P1}, a map
$$
\upsilon\mapsto (P^\upsilon,\theta^\upsilon)
$$
which gives a bijection  between  the group ${\cal C}^{\kappa+1}(S^-,G)$ and the set of isomorphism classes of pairs $(Q,\theta)$, where $Q$ is a holomorphic principal  $G$-bundle   on the Riemann surface $X$, and $\theta$ is a  trivialization of class ${\cal C}^{\kappa+1}$ of the pull-back $(p_S^X)^*(Q)$ of $Q$ to $\widehat{X}_S$, which is holomorphic on $\widehat{X}_S\setminus\widehat{S}$.  We define the locally free sheaf ${\cal V}^\upsilon$  as in the previous section, and note that, via the trivialization $\theta^\upsilon$, $\gamma$ gives an element $\nu_\gamma^\upsilon\in H^0({\cal V}^\upsilon(D)_\Delta)$.
Using the explicit formulae for the sheaf ${\cal V}^\upsilon$ given by Corollary \ref{loc-free-sheaves-non-sep}, we obtain the following complex geometric interpretation of the space of solutions of the general Riemann-Hilbert Problem \ref{RHXD}:
\begin{co}\label{RH-X}
The space of solutions of   Problem \ref{RHXD} is non-empty if and only if the image of $ \nu^\upsilon_\gamma$   via the connecting morphism 
$$H^0({\cal V}^\upsilon(D)_\Delta)=H^0\big(X, {\cal V}^\upsilon(D)/{\cal V}^\upsilon(D-\Delta)\big)\to H^1(X,{\cal V}^\upsilon(D-\Delta))$$
 vanishes. If this is the case, this space   has the natural structure of an affine space with model space $H^0\big(X, {\cal V}^\upsilon(D-\Delta))$, and can be identified with the pre-image of $ \nu^\upsilon_\gamma$ via the natural morphism 
$H^0\big(X, {\cal V}^\upsilon(D)\big)\to H^0({\cal V}^\upsilon(D)_\Delta)$.
\end{co}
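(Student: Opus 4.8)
The plan is to reduce the problem to the long exact cohomology sequence attached to the effective divisor $\Delta$, once the analytic solution set of Problem \ref{RHXD} has been matched with a set of global holomorphic sections of a twist of $\mathcal{V}^\upsilon$. First I would invoke Theorem \ref{new-main-non-sep}, Corollary \ref{loc-free-sheaves-non-sep} and Remark \ref{dim=1} exactly as in the discussion preceding the statement: on the Riemann surface $X$ the compatibility condition on tangential Cauchy-Riemann operators is void, so $\upsilon$ produces a holomorphic $G$-bundle $P^\upsilon$ on $X$ together with the $\mathcal{C}^{\kappa+1}$-trivialization $\theta^\upsilon$ of its pull-back to $\widehat X_S$, the associated locally free sheaf $\mathcal{V}^\upsilon$ of the holomorphic vector bundle $P^\upsilon\times_\rho V$, and the element $\nu_\gamma^\upsilon\in H^0(\mathcal{V}^\upsilon(D)_\Delta)$. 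The whole purpose of these results is to make the following dictionary precise, and this dictionary is the real content of the corollary.

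Second, I would establish the dictionary: the solution space of Problem \ref{RHXD} is canonically identified with the fiber
$$r^{-1}(\nu_\gamma^\upsilon), \qquad r: H^0(X,\mathcal{V}^\upsilon(D))\map H^0(\mathcal{V}^\upsilon(D)_\Delta),$$
of the restriction-to-$\Delta$ map. Given a solution $Y$, the trivialization $\theta^\upsilon$ turns $Y$ into a $V$-valued map on $\widehat X_S\setminus\widehat S$; condition (\ref{Gamma-comp-5}) is exactly the $\upsilon$-descent condition of Corollary \ref{loc-free-sheaves-non-sep}, so $Y$ descends to a meromorphic section of $\mathcal{V}^\upsilon$ on $X\setminus S$, and the pole bound built into condition (2) (recall that $D$ and $\Delta$ are supported away from $S$) makes it a global section $s\in H^0(X,\mathcal{V}^\upsilon(D))$. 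Its prescribed image $\gamma$ on $\Delta$ becomes, via $\theta^\upsilon$, the prescribed value $r(s)=\nu_\gamma^\upsilon$. Conversely, every such section is a meromorphic section of the holomorphic bundle $\mathcal{V}^\upsilon$ with poles confined to $D$, hence holomorphic near $S$, and by the regularity part of Theorem \ref{new-main-non-sep} (compare Remark \ref{regularity}) its pull-back to $\widehat X_S$ is continuous up to $\widehat S$; it therefore yields a genuine solution $Y$. This is the same argument already used for Corollary \ref{RH-P1}, with the pair $(m\infty,(d{+}1)\infty)$ replaced by the general pair $(D,\Delta)$.

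Third, with this identification in hand the conclusion is pure homological algebra. I would write the short exact sequence of $\mathcal{O}_X$-modules
$$0\map \mathcal{V}^\upsilon(D-\Delta)\map \mathcal{V}^\upsilon(D)\map \mathcal{V}^\upsilon(D)_\Delta\map 0,$$
where $\mathcal{V}^\upsilon(D)_\Delta=\mathcal{V}^\upsilon(D)/\mathcal{V}^\upsilon(D-\Delta)$ is the torsion sheaf supported on $\Delta$ whose $H^0$ is the space appearing in the statement, and pass to its long exact cohomology sequence
$$0\map H^0(\mathcal{V}^\upsilon(D-\Delta))\map H^0(\mathcal{V}^\upsilon(D))\textmap{r} H^0(\mathcal{V}^\upsilon(D)_\Delta)\textmap{\partial} H^1(\mathcal{V}^\upsilon(D-\Delta)).$$
Exactness at $H^0(\mathcal{V}^\upsilon(D)_\Delta)$ gives $\nu_\gamma^\upsilon\in\im(r)$ if and only if $\partial(\nu_\gamma^\upsilon)=0$, with $\partial$ the connecting morphism of the statement; this is the non-emptiness criterion. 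When $\partial(\nu_\gamma^\upsilon)=0$, the fiber $r^{-1}(\nu_\gamma^\upsilon)$ is a torsor under $\ker r$; exactness identifies $\ker r$ with the image of the injective map $H^0(\mathcal{V}^\upsilon(D-\Delta))\hookrightarrow H^0(\mathcal{V}^\upsilon(D))$, so the solution space is an affine space with model space $H^0(X,\mathcal{V}^\upsilon(D-\Delta))$, naturally identified with the pre-image $r^{-1}(\nu_\gamma^\upsilon)$ as claimed. The homogeneous case $\Delta=0$ degenerates correctly: then $\mathcal{V}^\upsilon(D)_\Delta=0$, the obstruction vanishes automatically, and the solution space is the vector space $H^0(X,\mathcal{V}^\upsilon(D))$.

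On the division of labor: the genuinely hard work is concentrated in the dictionary of the second step, and it is not re-proved here — it rests entirely on the gluing Theorem \ref{new-main-non-sep} (which guarantees that $\upsilon$-glued formally holomorphic data produce an honest holomorphic bundle $P^\upsilon$, so that $\mathcal{V}^\upsilon$ is locally free) and on the explicit sheaf description of Corollary \ref{loc-free-sheaves-non-sep} together with its boundary regularity. Granting those inputs, the corollary is a formal consequence of the long exact sequence, so I expect no further obstacle beyond carefully matching the analytic side conditions (continuity up to $\widehat S$, meromorphy controlled by $D$, and the prescribed jets $\gamma$ on $\Delta$) with their sheaf-theoretic counterparts.
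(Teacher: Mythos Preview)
Your proposal is correct and follows exactly the approach the paper intends: the paper does not give an explicit proof of this corollary but simply says it is obtained ``as in Corollary \ref{RH-P1}'' using the explicit sheaf description of $\mathcal{V}^\upsilon$ from Corollary \ref{loc-free-sheaves-non-sep}, and your three steps (invoke the gluing/sheaf machinery, set up the dictionary with $r^{-1}(\nu_\gamma^\upsilon)$, apply the long exact sequence of $0\to\mathcal{V}^\upsilon(D-\Delta)\to\mathcal{V}^\upsilon(D)\to\mathcal{V}^\upsilon(D)_\Delta\to 0$) spell this out faithfully. If anything, you have written out more detail than the paper does.
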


An interesting special case:
\begin{ex}\label{RHOnElliptic}
Let $X$ be a Riemann surface of genus 1, and $S$  a non-separating circle, as in Fig. \ref{hatUS}. We can assume that $X=\C^*/\langle  \alpha\rangle$, where $\alpha\in \C^*$ with $|\alpha|<1$, and $S$ is the image in $X$  of  $\Sigma\edf\p\bar D$, where $\bar D\subset\C$ is a smooth compact disk such that $0\in D$ and $\alpha\bar D\subset D$. Therefore, we can identify $\widehat X_S$, the Riemann surface with boundary obtained by cutting $X$ along $S$, with the annulus $\bar \Omega\edf\bar D\setminus \alpha D$ whose boundary is $\alpha \Sigma\cup \Sigma$. 
In this case the unknown of the Riemann-Hilbert  Problem \ref{RHXD} is a meromorphic map $Y:\Omega\dasharrow V$ on the open annulus $\Omega\edf D\setminus \alpha\bar D$ extending continuously around $\partial\bar\Omega$ and satisfying the compatibility condition:
$$\forall z\in \Sigma,\ Y(\alpha z)=\rho(\upsilon) Y(z).$$

Note that the holomorphic vector bundles on elliptic curves have been classified \cite{At}, so Corollary \ref{RH-X} allows one (in principle) to solve any Riemann-Hilbert problem of the considered type on an elliptic curve.
	
\end{ex}

\subsubsection{The Riemann-Hilbert problem in arbitrary dimension} \label{RHXnD-section}
  
Theorem \ref{new-main-non-sep-G} suggests a natural generalization of the  Riemann-Hilbert problem in arbitrary dimension (again for any complex Lie group $G$), and also a general complex geometric method to approach   it.

 Let $X$ be a connected, closed complex manifold and $S\subset X$ a general (non-necessarily connected, non necessarily separating) closed, oriented real hypersurface. Let $P$ be a differentiable principal $G$-bundle on the manifold with boundary $\widehat X_S$ obtained by cutting $X$ along $S$ (see section \ref{non-sep-section}), and let $J$ be a formally integrable bundle ACS on $P$ (see sections \ref{ACSsection}, \ref{FIC}).  Therefore, compared with previous generalizations, we start with an arbitrary, not necessarily trivial, formally holomorphic principal $G$-bundle $(P,J)$ on $\widehat X_S$.
 
  Let $\upsilon:P_{S^-}\to b^*(P_{S^+})$ be a   bundle isomorphism of class ${\cal C}^{\kappa+1}$.
 Suppose that the following compatibility condition holds: 
\begin{cond} The tangential almost complex structures $J_{S^\pm}$  induced by $J$ on the distributions $\Tg_{P_{S^\pm}}\subset T_{P_{S^\pm}}$  agree via  $\upsilon$. 
 \end{cond} 
 By Theorem \ref{new-main-non-sep-G} we obtain a holomorphic principal $G$-bundle $P^\upsilon$ on $X$  whose pull back $p_S^{X*}(P^\upsilon)$ to $\widehat X_S$ comes with a tautological    bundle isomorphism $P\stackrel{\theta^\upsilon}{\to} p_S^{X*}(P^\upsilon)$ of class ${\cal C}^{\kappa+1}$ which is   holomorphic  on  $\widehat X_S\setminus\widehat S$.  The map
$$
\upsilon\mapsto (P^\upsilon, \theta^\upsilon)
$$
defines a bijection between the set of     bundle isomorphisms $\upsilon:P_{S^-}\to b^*(P_{S^+})$ of class ${\cal C}^{\kappa+1}$ satisfying the above   compatibility condition on $S$ and the set of isomorphism classes of pairs $(Q,\theta)$ consisting of a holomorphic  $G$-bundle $Q$ on the   closed complex manifold $X$, and a   bundle isomorphism $\theta: P\to p_S^{X*}(Q)$ of class ${\cal C}^{\kappa+1}$ which is   holomorphic  on  $\widehat X_S\setminus\widehat S$.

  Let $\rho:G\to \GL(V)$ be a representation of $G$ on $V$, and ${\cal V}$ the locally free sheaf on $X\setminus S$ corresponding to the associated bundle $P_{X\setminus S}\times_\rho V$. 
  Let $Z\subset X\setminus S$ a (possibly empty) compact complex subspace and let $\gamma\in H^0({\cal V}_Z)$.    

\begin{pb}\label{RHXnD}
Find the space of continuous sections $Y\in \Gamma^0(\widehat X_S,P_E\times_\rho V) $	which are holomorphic on $\widehat X\setminus \widehat S$ such that 
\begin{enumerate}
	\item\label{Gamma-comp-6} $Y|_{S^+}\circ b=\rho(\upsilon) Y|_{S^-}$,
	\item the image of $Y$ in  $H^0({\cal V}_Z)$ via the obvious morphism is $\gamma$.
\end{enumerate}
\end{pb}

 Denoting by ${\cal V}^\upsilon$  the locally free sheaf on $X$ associated with the holomorphic bundle $P^\upsilon\times_\rho V$, we obtain an obvious identification ${\cal V}^\upsilon|_{X\setminus S}={\cal V}$. With these notations and remarks we obtain: 

\begin{co}\label{RH-coro-n-dim}
Suppose the above compatibility condition holds. The space of solutions of   Problem \ref{RHXnD} is non-empty if and only if the image of $ \gamma$   via the connecting morphism %
$$H^0\big({\cal V}^\upsilon_Z)=H^0\big(X, {\cal V}^\upsilon /{\cal V}^\upsilon\otimes {\cal I}_Z)\to H^1(X,{\cal V}^\upsilon\otimes {\cal I}_Z)$$
 vanishes. If this is the case, this space   has the natural structure of an affine space with model space $H^0\big(X, {\cal V}^\upsilon\otimes {\cal I}_Z)$, and can be identified with the pre-image of $\gamma$ via the natural morphism 
$H^0 (X, {\cal V}^\upsilon\big)\to H^0\big({\cal V}^\upsilon_Z)$.	
\end{co}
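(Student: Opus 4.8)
The plan is to use Theorem~\ref{new-main-non-sep-G} to transport Problem~\ref{RHXnD}, which is posed on the cut manifold $\widehat X_S$, into a purely cohomological question for the coherent sheaf $\mathcal{V}^\upsilon$ on the closed manifold $X$, and then to read off all three assertions from the long exact sequence attached to the ideal sheaf $\mathcal{I}_Z$.

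First I would identify the solution set with a fiber of a restriction map. As recalled before the statement, $\upsilon$ produces via Theorem~\ref{new-main-non-sep-G} the holomorphic bundle $P^\upsilon$ on $X$ together with the tautological bundle isomorphism $\theta^\upsilon\colon P\to p_S^{X*}(P^\upsilon)$ of class $\mathcal{C}^{\kappa+1}$, holomorphic on $\widehat X_S\setminus\widehat S$. Composing a candidate solution $Y$ with the vector-bundle isomorphism $P\times_\rho V\to p_S^{X*}(P^\upsilon\times_\rho V)$ induced by $\theta^\upsilon$, I would observe that the compatibility condition~(\ref{Gamma-comp-6}) is precisely the descent condition defining the sheaf of Corollary~\ref{loc-free-sheaves-non-sep} (applied to the associated bundle $P\times_\rho V$ with gluing $\rho(\upsilon)$). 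Thus a continuous section $Y$ on $\widehat X_S$, holomorphic on the interior and satisfying~(\ref{Gamma-comp-6}), corresponds exactly to a global section of the locally free sheaf $\mathcal{V}^\upsilon$ on $X$. This is the one place where genuine content is used: the a priori merely continuous ($\Gamma^0$) descent sheaf is asserted by Corollary~\ref{loc-free-sheaves-non-sep} to coincide with the holomorphic locally free sheaf $\mathcal{V}^\upsilon$, so that $Y$ is truly an element of $H^0(X,\mathcal{V}^\upsilon)$.

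Under this identification, condition~(\ref{Gamma-comp-6}) is absorbed into membership in $H^0(X,\mathcal{V}^\upsilon)$, while the remaining constraint~(2) becomes the requirement that the image of the section under the restriction $H^0(X,\mathcal{V}^\upsilon)\to H^0(\mathcal{V}^\upsilon_Z)$ be $\gamma$ (note $\mathcal{V}^\upsilon_Z=\mathcal{V}_Z$, since $Z\subset X\setminus S$ and $\mathcal{V}^\upsilon|_{X\setminus S}=\mathcal{V}$). Hence the solution space of Problem~\ref{RHXnD} is canonically the fiber over $\gamma$ of this restriction map. To analyze that fiber I would write down the short exact sequence
$$0\to \mathcal{V}^\upsilon\otimes\mathcal{I}_Z\to \mathcal{V}^\upsilon\to \mathcal{V}^\upsilon_Z\to 0$$
and pass to its long exact cohomology sequence, whose relevant segment is
$$H^0(X,\mathcal{V}^\upsilon\otimes\mathcal{I}_Z)\to H^0(X,\mathcal{V}^\upsilon)\to H^0(\mathcal{V}^\upsilon_Z)\xrightarrow{\ \partial\ } H^1(X,\mathcal{V}^\upsilon\otimes\mathcal{I}_Z).$$
Exactness then yields the three claims at once: the fiber over $\gamma$ is non-empty iff $\gamma$ lies in the image of the restriction map, i.e. iff $\partial(\gamma)=0$; when non-empty it is a coset of $\ker\big(H^0(X,\mathcal{V}^\upsilon)\to H^0(\mathcal{V}^\upsilon_Z)\big)=H^0(X,\mathcal{V}^\upsilon\otimes\mathcal{I}_Z)$, giving the affine structure with the stated model space; and it is the preimage of $\gamma$ by construction.

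Since Theorem~\ref{new-main-non-sep-G} and Corollary~\ref{loc-free-sheaves-non-sep} are already available, the cohomological step is formal and identical to the argument for Corollary~\ref{RH-X}. I expect the only genuine point to verify to be the bijectivity of the correspondence $Y\leftrightarrow s\in H^0(X,\mathcal{V}^\upsilon)$ and its compatibility with the two maps into $H^0(\mathcal{V}^\upsilon_Z)$; everything after that is the standard long-exact-sequence bookkeeping for sections with prescribed value along $Z$.
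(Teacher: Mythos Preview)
Your proposal is correct and follows exactly the approach the paper intends: the corollary is stated immediately after the phrase ``With these notations and remarks we obtain'', with no further proof given, because the content is precisely the identification of the solution space with $H^0(X,\mathcal{V}^\upsilon)$ via Corollary~\ref{loc-free-sheaves-non-sep} (applied to the associated vector bundle $P\times_\rho V$ with gluing $\rho(\upsilon)$), followed by the standard long exact sequence for $\mathcal{I}_Z$, just as in Corollaries~\ref{RH-P1} and~\ref{RH-X}.
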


\subsection{Gauge theoretical applications: Isomorphisms of moduli spaces}
\label{IsoModuliSectionState}

In this article by complex manifold with boundary we will always mean  a submanifold with boundary $\bar X$ of a complex manifold $U$. In other words, the complex manifolds with boundary  we consider have a collar neighborhood in the sense of \cite{HiNa}. 

\subsubsection{Isomorphisms of moduli spaces of framed vector bundles} \label{iso-moduli-vector-bundles-section}

For a ${\cal C}^\infty$ vector bundle $E$ on a {\it compact} complex manifold with boundary $\bar X$, we denote by ${\cal I}^\kappa_E$ the space of {\it formally integrable} Dolbeault operators with coefficients in ${\cal C}^\kappa$ on $E$ and we define the moduli space
$$
{\cal M}_{\p\bar X}(E)\edf {{\cal I}^\kappa_E}/{{\cal G}_{\p\bar X}^E},
$$  
where ${\cal G}_{\p\bar X}^E$ is the gauge group
$$
{\cal G}_{\p\bar X}^E\edf \{f\in \Gamma^{\kappa+1}(X,\GL(E))|\ f_{\p\bar X}=\id_{E_{\p\bar X}}\}.   $$

Let now $X$ be a connected, closed complex manifold, $S\subset X$  an oriented closed,   smooth real hypersurface,  and $E$  a ${\cal C}^\infty$ vector bundle   on $X$. In this case  ${\cal I}^\kappa_E$ will stand for the space of {\it integrable} Dolbeault operators with coefficients in ${\cal C}^\kappa$ on $E$ and we define the moduli space
$$
{\cal M}_S(E)\edf {{\cal I}^\kappa_E}/{{\cal G}_S^E},
$$  
where  ${\cal G}_{S}^E$ is the gauge group
$$
{\cal G}_S^E\edf \{f\in \Gamma^{\kappa+1}(X,\GL(E))|\ f_S=\id_{E_S}\}.   $$
We denote by $\widehat E$   the pull-back of $E$ to the manifold with boundary $\widehat X_S$ obtained by cutting $X$ alongs $S$  via the canonical map $p^X_S:\widehat X_S\to X$ (see section \ref{non-sep-section}). In the special case when $S$ separates $X$, $\widehat X_S$ reduces to the disjoint union $\bar X^-\coprod \bar X^+$ of the corresponding pieces, and $\widehat E$ reduces to the the disjoint union $E^-\coprod E^+$ of the restrictions  $E^\pm\edf E|_{\bar X^\pm}$. The bundle $\widehat E$ comes with a  canonical   bundle isomorphism  
$\upsilon: \widehat E_{S^-}\to b^*(\widehat E_{S^+})$
of class  ${\cal C}^\infty$ induced by the obvious isomorphisms $\widehat E_{S^\pm}\to E_S$ which cover the identifications $S^\pm\stackrel{\simeq}{\to} S$.

\begin{dt}\label{descendable}
A formally integrable Dolbeault operator $\dg\in  {\cal I}^\kappa_{\widehat E}$ will be called descendable, if the tangential Cauchy-Riemann operators $\dg_{S^\pm}$ on $\widehat E_{S^\pm}$  agree via $\upsilon$ (are $\upsilon$-compatible). 	
\end{dt}
The pull-back $\widehat \delta$ to $\widehat E$ of any  integrable Dolbeault operator $\delta\in{\cal I}^\kappa_E$ is obviously descendable. 
 Let ${\cal I}^\kappa_{\widehat{E}\downarrow} \subset {\cal I}^\kappa_{\widehat{E}}$ 
 be the  (obviously ${\cal G}_{\p\widehat{X}_S}^{\widehat{E}}$-invariant) 
 subspace of descendable formally integrable Dolbeault operators on $\widehat E$, and let 
${\cal M}^\downarrow_{\p\widehat{X}_S}(\widehat E)$ be the corresponding closed subspace of ${\cal M}_{\p\widehat{X}_S}(\widehat E)$. In the special case when $S$ separates $X$, we have ${\cal I}^\kappa_{\widehat E}={\cal I}^\kappa_{E^-}\times{\cal I}^\kappa_{E^+}$, where  $E^\pm\edf E_{\bar X^\pm}$, and a pair $(\delta^-,\delta^+)$ is descendable if and only if the equality $\delta^-_S=\delta^+_S$ holds in the space, denoted ${\cal C}$, of Cauchy-Riemann operators with coefficients in ${\cal C}^\kappa$ on $E_S$.

\begin{thr}\label{iso-moduli-vector-bundles-th}
Suppose $\kappa\in(0,+\infty)\setminus\N$. With the above notations and assumptions, the pull-back map $\delta\mapsto \widehat \delta$ induces a homeomorphism
$$
{\cal M}_S(E)\to {\cal M}^\downarrow_{\p\widehat{X}_S}(\widehat E).
$$
In the special case when $S$ separates $X$, the restriction map $\delta\mapsto(\delta^-,\delta^+)$ induces a homeomorphism
$$
{\cal M}_S(E)\to {\cal M}_{\p\bar X^-}(E^-)\times_{\cal C}{\cal M}_{\p\bar X^+}(E^+).
$$	
\end{thr}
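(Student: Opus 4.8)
The plan is to prove the general statement for the pull-back map $\Phi:{\cal M}_S(E)\to {\cal M}^\downarrow_{\p\widehat X_S}(\widehat E)$, $[\delta]\mapsto[\widehat\delta]$, and then read off the separating case as the special case $\widehat X_S=\bar X^-\coprod\bar X^+$, $\widehat E=E^-\coprod E^+$, in which ${\cal I}^\kappa_{\widehat E}={\cal I}^\kappa_{E^-}\times{\cal I}^\kappa_{E^+}$ and descendability of a pair $(\delta^-,\delta^+)$ is exactly the condition $\delta^-_S=\delta^+_S$ in ${\cal C}$ that defines the fibre product. I would first dispatch well-definedness and continuity of $\Phi$. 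The pull-back $\delta\mapsto\widehat\delta$ is continuous ${\cal I}^\kappa_E\to{\cal I}^\kappa_{\widehat E}$ in the ${\cal C}^\kappa$-topology (it is a restriction), its image lies in ${\cal I}^\kappa_{\widehat E\downarrow}$ since the tangential operators of $\widehat\delta$ on $S^-$ and $S^+$ both descend from the single tangential operator of $\delta$ on $E_S$ and hence agree via $\upsilon$, and it intertwines the gauge groups: an $f\in{\cal G}_S^E$ pulls back to $\widehat f\in{\cal G}_{\p\widehat X_S}^{\widehat E}$ precisely because $f_S=\id$ forces $\widehat f=\id$ on $\widehat S=S^-\cup S^+$. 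Thus $\Phi$ is the descent of a continuous equivariant map and is continuous.

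For injectivity I would use in an essential way that over the closed manifold $X$ the space ${\cal I}^\kappa_E$ consists of genuinely \emph{integrable} operators, so each $(E,\delta_i)$ is a holomorphic bundle on all of $X$. Suppose $\widehat{\delta_1}$ and $\widehat{\delta_2}$ are ${\cal G}_{\p\widehat X_S}^{\widehat E}$-equivalent, say $\widehat f(\widehat{\delta_1})=\widehat{\delta_2}$. As $\widehat f=\id$ on $\widehat S$, it descends to a \emph{continuous} automorphism $f$ of $E$ over $X$ with $f_S=\id$, and $f$ conjugates $\delta_1$ to $\delta_2$ on $X\setminus S$; equivalently $f$ is a continuous section of the holomorphic bundle $\Hom((E,\delta_1),(E,\delta_2))$ on $X$ that is holomorphic off the real hypersurface $S$. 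A removable-singularity (Painlevé-type) argument now applies: the distributional $\bp f$ is supported on $S$ but carries no jump term because $f$ is continuous across $S$, so $\bp f=0$ on $X$ and $f$ is holomorphic everywhere. Hölder elliptic regularity for the $\bp$-operator with ${\cal C}^\kappa$ coefficients (valid precisely for $\kappa\notin\N$) then yields $f\in\Gamma^{\kappa+1}(X,\End E)$; being invertible with $f_S=\id$, we get $f\in{\cal G}_S^E$, whence $[\delta_1]=[\delta_2]$.

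For surjectivity I would invoke the gluing principle of Theorem \ref{mainGnew}. Given a descendable $\widehat\delta\in{\cal I}^\kappa_{\widehat E\downarrow}$, its tangential operators on $S^\pm$ agree via $\upsilon$, so Theorem \ref{mainGnew} supplies $\widehat f\in{\cal G}_{\p\widehat X_S}^{\widehat E}$ (of class ${\cal C}^{\kappa+1}$, equal to $\id$ on $\widehat S$) such that $\widehat f(\widehat\delta)$ descends to an integrable operator $\delta$ of class ${\cal C}^\kappa$ on $E$ over $X$; that is, $\widehat f(\widehat\delta)=\widehat\delta$ with the right-hand side now the pull-back of $\delta\in{\cal I}^\kappa_E$. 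Hence $\Phi([\delta])=[\widehat f(\widehat\delta)]=[\widehat\delta]$, proving surjectivity. (The modification $\widehat f$ is genuinely needed: gluing $\delta^-$ and $\delta^+$ without it produces only a continuous, not a ${\cal C}^\kappa$, operator across $S$; on the closed $X$ formal integrability of $\delta$ upgrades to integrability by the Hölder Newlander–Nirenberg theorem, but Theorem \ref{mainGnew} already delivers an integrable $\delta$.)

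The main obstacle is continuity of $\Phi^{-1}$. This is exactly where the non-integrality of $\kappa$ is used: through Whitney's extension theorem for Hölder (Lipschitz) classes, Theorem \ref{mainGnew} guarantees that the gluing automorphism $\widehat f$, and therefore the integrable operator $\delta$, can be chosen to depend \emph{continuously} on $\widehat\delta$, giving a continuous map $\Psi_0:{\cal I}^\kappa_{\widehat E\downarrow}\to{\cal I}^\kappa_E$. Composing with the projection to ${\cal M}_S(E)$ produces a continuous map that is constant on ${\cal G}_{\p\widehat X_S}^{\widehat E}$-orbits: if $\widehat\delta$ and $\widehat g(\widehat\delta)$ are gauge-equivalent, their $\Psi_0$-images have gauge-equivalent pull-backs and hence, by the injectivity established above, define the same class in ${\cal M}_S(E)$. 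Therefore $\Psi_0$ factors through a continuous map ${\cal M}^\downarrow_{\p\widehat X_S}(\widehat E)\to{\cal M}_S(E)$, which is by construction the inverse of $\Phi$. Thus $\Phi$ is a homeomorphism, and the separating statement follows as the special case noted above. I expect all the analytic weight of the argument to sit in this last paragraph, since the bijectivity is essentially formal once the gluing Theorem \ref{mainGnew} and its regularity corollaries are in hand, whereas the continuous dependence of the glueing data is the substantive input.
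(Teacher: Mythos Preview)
Your proposal is correct and follows essentially the same approach as the paper (which in fact deduces the vector-bundle statement as the $G=\GL(r,\C)$ case of Theorem~\ref{iso-moduli-G-bundles-th}): extension across $S$ for injectivity via the holomorphic-extension/regularity results (the paper's Theorem~\ref{acrossS} and Corollary~\ref{kappa-regularity}), and Theorem~\ref{mainGnew} with its continuous-dependence clause for surjectivity and continuity of the inverse. The one place where you are imprecise is the invocation of Theorem~\ref{mainGnew} for surjectivity: that theorem is stated for a \emph{separating} hypersurface $S\subset U$ and produces a gauge transformation $\sigma_+$ on $\bar U^+$ only; the paper applies it inside a tubular neighborhood $U\simeq S\times\R$ of $S$ in $X$, uses part~(2) of Theorem~\ref{mainGnew} to make $\sigma_+$ equal to $e$ outside a smaller collar, and then extends by the identity to obtain a global $\sg\in{\cal G}_{\p\widehat X_S}^{\widehat E}$---you should make this localization-and-extension step explicit rather than asserting that Theorem~\ref{mainGnew} directly ``supplies $\widehat f\in{\cal G}_{\p\widehat X_S}^{\widehat E}$''.
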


\subsubsection{Isomorphisms of moduli spaces of framed principal bundles}
\label{Iso-Moduli-G-section}

Let $G$ be a complex Lie group.  With the notations and under the assumptions of  section \ref{iso-moduli-vector-bundles-section} we replace:
 \begin{itemize}
 \item[-] $E$ by a ${\cal C}^\infty$ principal $G$ bundle $P$ on $X$ ($\bar X$).
 \item[-] ${\cal I}^\kappa_E$ by the space ${\cal I}^\kappa_P$ of (formally) integrable  bundle ACS of class ${\cal C}^\kappa$ on $P$.
 \item [-]	${\cal G}_S^E$ (${\cal G}_{\p\bar X}^E$) by respectively the gauge groups 
 \begin{equation*}
 \begin{split}
 {\cal G}_S^P&\edf \{f\in \Gamma^{\kappa+1}(X,P\times_\iota G)|\ f_S=\id\},\\
 {\cal G}_{\p\bar X}^P&\edf \{f\in \Gamma^{\kappa+1}(\bar X,P\times_\iota G)|\ f_{\p\bar X}=\id\}.
 \end{split}	
 \end{equation*}
 \item[-] ${\cal M}_S(E)$ (${\cal M}_{\p\bar X}(E)$) by respectively the moduli spaces 
 $${\cal M}_S(P)\edf {\cal I}^\kappa_P/{\cal G}_S^P, \hb{ respectively } {\cal M}_{\p\bar X}(P)\edf {\cal I}^\kappa_P/{\cal G}_{\p\bar X}^P.$$
  \end{itemize}

We also replace $\widehat E$ by its pull back $\widehat P$   to $\widehat X_S$, and, if $S$ separates $X$, we replace the restrictions $E^\pm$ by $P^\pm\edf P_{\bar X^\pm}$. 

In this principal bundle framework we also have a canonical   bundle isomorphism $\upsilon:\widehat P_{S^-}\to b^*(\widehat P_{S^+})$ of class  ${\cal C}^\infty$. An element $\Jg\in  {\cal I}^\kappa_{\widehat P}$ will be called descendable, if the induced tangential almost complex structures $\Jg_{S^\pm}$ on the distributions $\Tg_{\widehat{P}_{S^\pm}}\subset T_{\widehat{P}_{S^\pm}}$ (see Remark \ref{JS}) agree via $\upsilon$.  We denote by ${\cal I}^\kappa_{\widehat P\downarrow}\subset {\cal I}^\kappa_{\widehat P}$ the subspace of descendable formally integrable bundle ACS on ${\widehat P}$ and by ${\cal M}^\downarrow_{\p\widehat X_S}(\widehat P)$ its quotient by the gauge group ${\cal G}_{\p\widehat{X}_S}^{\widehat{P}}$. 
\begin{thr}\label{iso-moduli-G-bundles-th}
$\kappa\in(0,+\infty)\setminus\N$. With the above notations and assumptions, the pull-back map $J\mapsto \widehat J$ induces a homeomorphism
$$
{\cal M}_S(P)\to  {\cal M}^\downarrow_{\p\widehat X_S}(\widehat P).
$$
In the special case when $S$ separates $X$, the restriction map $J\mapsto(J_{P-},J_{P^+})$ induces a homeomorphism
$$
{\cal M}_S(P)\to {\cal M}_{\p\bar X^-}(P^-)\times_{\cal I}{\cal M}_{\p\bar X^+}(P^+).
$$
onto the fiber product of the moduli spaces ${\cal M}_{\p\bar X^\pm}(P^\pm)$ over the space ${\cal I}$ of almost complex structures of class ${\cal C}^\kappa$ on $\Tg_{P_S}$ which are $G$-invariant,  make the bundle epimorphism $\Tg_{P_S}\to \Tg_S$ $\C$-linear, and the parametrization of the $G$-orbits  pseudo-holomorphic.
	
\end{thr}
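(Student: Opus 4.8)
The plan is to exhibit an explicit inverse of the pull-back map and to verify that both maps are continuous; I will argue in the general (possibly non-separating) case, the separating statement then being a matter of unwinding definitions. Indeed, when $S$ separates $X$ one has $\widehat X_S=\bar X^-\coprod\bar X^+$, $\widehat P=P^-\coprod P^+$, ${\cal G}_{\p\widehat X_S}^{\widehat P}={\cal G}_{\p\bar X^-}^{P^-}\times{\cal G}_{\p\bar X^+}^{P^+}$, and ${\cal I}^\kappa_{\widehat P\downarrow}=\{(J^-,J^+)\ :\ J^-_S=J^+_S\ \hb{ in }\ {\cal I}\}$, so that ${\cal M}^\downarrow_{\p\widehat X_S}(\widehat P)$ is tautologically the fiber product ${\cal M}_{\p\bar X^-}(P^-)\times_{\cal I}{\cal M}_{\p\bar X^+}(P^+)$; hence the second assertion follows from the first.

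First I would check that $\Phi\colon[J]\mapsto[\widehat J]$ is well defined and continuous. The pull-back of an integrable bundle ACS is formally integrable, and its tangential almost complex structures on $S^-$ and $S^+$ both descend from the single structure induced by $J$ on $\Tg_{P_S}$, hence agree via the canonical $\upsilon=\id$; thus $\widehat J$ is descendable. Since pulling back intertwines the gauge actions and carries ${\cal G}_S^P$ into ${\cal G}_{\p\widehat X_S}^{\widehat P}$ (a gauge transformation equal to $\id$ on $S$ pulls back to one equal to $\id$ on $\p\widehat X_S=S^-\cup S^+$), the map $\Phi$ descends to the quotients, and it is continuous because restriction and pull-back of tensors are continuous on the space of bundle ACS. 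The inverse $\Psi$ is built from the gluing principle: given a descendable $\widehat J$, Theorem \ref{new-main-non-sep-G} applied with the canonical identification $\upsilon=\id$ (so that $P^\upsilon=P$ as a ${\cal C}^\infty$ bundle) yields a holomorphic reduction $\hg^\upsilon$ of $P$ extending the holomorphic structure induced by $\widehat J$ on $P_{X\setminus S}$; by part (2) of that theorem the associated bundle ACS $J$ is of class ${\cal C}^\kappa$ across $S$, so $J\in{\cal I}^\kappa_P$, and I set $\Psi([\widehat J])\edf[J]$.

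I would then show $\Phi$ and $\Psi$ are mutually inverse. The identity $\Phi\circ\Psi=\id$ holds because $\widehat J$ and $\widehat{(J)}$ induce the same holomorphic structure on the interior $X\setminus S$ and hence differ only by the boundary-trivial gluing gauge furnished by the theorem, an element of ${\cal G}_{\p\widehat X_S}^{\widehat P}$; and $\Psi\circ\Phi=\id$ is the uniqueness clause of Theorem \ref{new-main-non-sep-G} applied to $\widehat{(J)}$. The regularity content hidden in these identities — and the reason the descent lands in the correct gauge group — is the following: if $\widehat g\in{\cal G}_{\p\widehat X_S}^{\widehat P}$ intertwines $\widehat{(J_1)}$ and $\widehat{(J_2)}$, then on $X\setminus S$ the section $\widehat g$ is holomorphic for the holomorphic structure induced by the integrable $J_1,J_2$ on the gauge-group bundle $P\times_\iota G$, a structure defined on all of $X$; being continuous across $S$ with boundary value $\id$ and annihilated by the corresponding $\bar\partial$-type operator off the measure-zero hypersurface $S$, it is a weak solution across $S$, and Hölder (Schauder) elliptic regularity — available precisely because $\kappa\notin\N$ — promotes it to $g\in{\cal G}_S^P$ of class ${\cal C}^{\kappa+1}$ with $g_S=\id$ and $J_1=g\cdot J_2$.

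The main obstacle is the continuity of $\Psi$, i.e. upgrading the continuous bijection $\Phi$ to a homeomorphism. This reduces to the continuous dependence on $\widehat J$ of the gluing gauge produced in Theorem \ref{new-main-non-sep-G}, which is exactly the step where the proof of the gluing principle invokes Whitney's extension theorem for Hölder (Lipschitz) spaces; a continuous extension operator is available there precisely for $\kappa\in(0,+\infty)\setminus\N$, which is the reason the hypothesis rules out integer and infinite $\kappa$ (for $\kappa=\infty$ one must change the topologies, as in Remark \ref{kappa=infty}). Granting this continuous dependence, $\Psi$ is continuous and the proof is complete. The entire scheme is the principal-bundle transcription of the vector-bundle Theorem \ref{iso-moduli-vector-bundles-th}, with the Hölder Newlander--Nirenberg theorem for principal bundles (see \cite{Te2} and section \ref{ACSsection}) replacing its vector-bundle counterpart and the tangential almost complex structures $\Jg_{S^\pm}$ of Remark \ref{JS} playing the role of the tangential Cauchy--Riemann operators.
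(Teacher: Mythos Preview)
Your overall strategy matches the paper's: injectivity via extension of the descended gauge across $S$ (continuous and holomorphic off $S$ $\Rightarrow$ holomorphic, then H\"older regularity), and surjectivity via the gluing principle. But there is a genuine gap in the way you invoke that principle.

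For surjectivity and for the continuity of the inverse you cite Theorem \ref{new-main-non-sep-G}. As stated, that theorem produces a holomorphic structure $\hg^\upsilon$ on the \emph{topological} bundle $P^\upsilon$; it does not hand you an integrable bundle ACS $J\in{\cal I}^\kappa_P$ on the original ${\cal C}^\infty$ bundle $P$, nor a gauge transformation $\sg\in{\cal G}_{\p\widehat X_S}^{\widehat P}$ with $\Jg\cdot\sg=\widehat J$, nor any continuity clause. Your appeal to part (2) does not close the gap: that clause only says that local $\hg^\upsilon$-holomorphic sections are ${\cal C}^{\kappa+1}$ \emph{after pull-back to the cut manifold}, i.e.\ separately on the two sides $\bar V^\pm$; this is not the same as being ${\cal C}^{\kappa+1}$ on $V$, since the normal jets from the two sides need not match. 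Likewise, the assertion that $\widehat J$ and $\widehat{(J)}$ ``differ only by the boundary-trivial gluing gauge furnished by the theorem'' is not justified by the statement of Theorem \ref{new-main-non-sep-G}, which furnishes no gauge.

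The paper resolves this by citing the more primitive gluing Theorem \ref{mainGnew} directly. Working in a tubular neighborhood $U$ of $S$ (which $S$ separates), that theorem outputs (i) an integrable bundle ACS $J_0$ of class ${\cal C}^\kappa$ on $P_U$, (ii) a gauge $\sigma_+\in\Gamma^{\kappa+1}(\bar U^+,\iota(P))$ with $\sigma_+|_S=e$ and support in $U$, and (iii) continuous dependence of $(\sigma_+,J_0)$ on $(J^-,J^+)$ for $\kappa\ne\infty$. Extending $\sigma_+$ by the identity gives the required $\sg\in{\cal G}_{\p\widehat X_S}^{\widehat P}$, extending $J_0$ by $\Jg$ away from $U$ gives $J\in{\cal I}^\kappa_P$, the relation $\Jg\cdot\sg=\widehat J$ holds by construction, and continuity of the inverse follows from (iii). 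Once you replace Theorem \ref{new-main-non-sep-G} by Theorem \ref{mainGnew} in your argument, the remainder of your outline (including the injectivity step) is correct and coincides with the paper's proof.
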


\begin{re}\label{kappa=infty}
Taking $\kappa=+\infty$ in Theorems \ref{iso-moduli-vector-bundles-th}, \ref{iso-moduli-G-bundles-th} we still obtain  homeomorphisms provided any moduli space ${\cal M}$ intervening in these theorems (but constructed with objects of class ${\cal C}^\infty$) is endowed with the initial topology  associated with the family of maps  $({\cal M}\to {\cal M}^\kappa)_{\kappa\in(0,+\infty)\setminus\N}$; here ${\cal M}^\kappa$ stands for the similar moduli space constructed using objects of class ${\cal C}^\kappa$.  
\end{re}

As pointed out in Remark \ref{dim=1}, the required compatibility conditions above $S$ become void on Riemann surfaces, so the isomorphisms Theorems \ref{iso-moduli-vector-bundles-th}, \ref{iso-moduli-G-bundles-th}	give:
\begin{re}\label{IsoModuliDim=1}
Suppose  $\dim(U)=1$. With the notations and under the assumptions above we have homeomorphisms of moduli spaces: 
$$
{\cal M}_S(E)\to {\cal M}_{\p\widehat{X}_S}(\widehat{E}), \ {\cal M}_S(E)\to {\cal M}_{\p\bar X^-}(E^-)\times{\cal M}_{\p\bar X^+}(E^+),
$$
$$
{\cal M}_S(P)\to  {\cal M}_{\p\widehat X_S}(\widehat P),\ {\cal M}_S(P)\to {\cal M}_{\p\bar X^-}(P^-)\times{\cal M}_{\p\bar X^+}(P^+).
$$
\end{re}
\begin{re}
In 	section \ref{abstract-interpr} we will identify the moduli spaces ${\cal M}_S(E)$, ${\cal M}_S(P)$, ${\cal M}_{\p\bar X}(E)$, ${\cal M}_{\p\bar X}(P)$ intervening in Theorems \ref{iso-moduli-vector-bundles-th}, \ref{iso-moduli-G-bundles-th} with moduli spaces of {\it framed bundles} defined (in an abstract way, see Definition \ref{abstract-framed}) as pairs consisting of a holomorphic bundle on $X$ ($\bar X$) and a framing of class ${\cal C}^{\kappa+1}$ on $S$ (respectively $\p\bar X$).
\end{re}
In section  \ref{ExamplesSection} we will consider explicit examples of (boundary) framed moduli spaces and give explicit formulae for the homeomorphisms given by Theorem \ref{iso-moduli-G-bundles-th} and their inverses in the special cases :
\begin{itemize}
\item 	$X=\P^1_\C$ and $S\subset\C$ is a closed curve.
\item $X$ is an elliptic curve and $S\subset X$ is a non-separating closed curve.
\end{itemize}

\section{Gluing holomorphic bundles  along a real hypersurface}

\subsection{The tangential Cauchy-Riemann operator}
\label{tangentialCR}
Let $U$ be a differentiable manifold, $S\subset U$  a closed real hypersurface and $\eta_S\subset T^{*\C}_{U|S}$ be the annihilator of $T_S$ (or, equivalently, of $T_S^\C$) in the restriction $T^{*\C}_{U|S}$ of the complex cotangent bundle $T_U^{*\C}$ of $U$ to $S$; $\eta_S$ can be identified with the complexification of the  conormal real line bundle $n^*_S$ of $S$ in $U$.

Suppose now that $U$ is an $n$-dimensional complex manifold.   The image $\eta^{0,1}_S$ of $\eta_S$ in $\extp^{0,1}_{\;U|S}$ is a line subbundle of $\extp^{0,1}_{\;U|S}$, which can be identified with the annihilator of the canonical distribution 
$$\Tg_S=T_S\cap J_U(T_S)\subset T_S$$
(or, equivalently, of the hyperplane $\Tg_S^{0,1}\subset T_{U|S}^{0,1}$) in $\extp^{0,1}_{\;U|S}$. Here $J_U\in \Gamma(U,\End(T_U))$ stands for the integrable almost complex structure on $U$ induced by its complex manifold structure. The projection $T^{*\C}_{U|S}\to \extp^{0,1}_{\;U|S}$ induces a line bundle isomorphism $\psi_S:\eta_S\to \eta^{0,1}_S$.

 Put  $\extp^{0,q}_{\;S}\edf \extp^{0,q}\Tg_S^{*\C}$. The fiber $\extp^{0,q}_{\;S,x}$ of this bundle over   $x\in S$ can be identified with the space of alternate $\R$-multilinear forms $\Tg_{S,x}^q\to\C$  which are anti-linear with respect to each argument. We have an obvious bundle epimorphism 
 $$r_{\Tg_S}:\extp^{0,q}_{\;U|S}\to \extp^{0,q}_{\;S}$$ induced by the inclusion $\Tg_S\subset T_{U|S}$.
We obtain a commutative diagram of bundle morphisms on $S$ with exact horizontal rows 
\begin{equation}\label{diagfordelta_S}
\begin{tikzcd}[column sep = large]
&\eta_S   \ar[d, " \underset{\simeq}{\psi_S}"']&\\
0\ar[r]&\eta^{0,1}_S\ar[r, hook ]\ar[d, equal]& \extp^{0,1}_{\;U|S}\ar[r,   "\sqcap_S	"]\ar[d, equal]&  \eta^{0,1*} \otimes\extp^{0,2}_{\; U|S}\\	
0\ar[r]&\eta^{0,1}_S\ar[r, hook ]& \extp^{0,1}_{\;U|S}\ar[r,   "r_{\Tg_S}"]&   \extp^{0,1}_{\;S}\ar[r]\ar[u, hook', "{[\sqcap_S	]}"]&0\,,
\end{tikzcd}
\end{equation}
 where, via the identification $\extp^{0,1}_{\;U|S}= \eta^{0,1*}\otimes(\eta^{0,1}\otimes \extp^{0,1}_{\;U|S})$, we have put   
$$\sqcap_S	\edf \id_{\eta^{0,1*}}\otimes \wedge\,,$$
and $[\sqcap_S	]$ is induced by $\sqcap_S	$. \begin{re}
By the definition of $\sqcap_S	$ we have the identity:  
\begin{equation}\label{id-otimes-wedge}
\forall x\in S,\ \forall (a^{0,1},b^{0,1})\in \eta^{0,1}_x\times\extp^{0,1}_{\;U,x},\ a^{0,1}\otimes  \sqcap_S	 (b^{0,1})=a^{0,1}\wedge b^{0,1}.
\end{equation} 
Taking $a^{0,1}=\psi_S(a)$ with $a\in \eta_x$, we obtain
$$
\forall x\in S,\ \forall (a,b^{0,1})\in \eta_x\times\extp^{0,1}_{\;U,x},\ \psi_S(a)\otimes \sqcap_S	(b^{0,1})=\psi_S(a)\wedge b^{0,1}.
$$
This formula can be written as
\begin{equation}\label{psi-otimes-wedge}
\big(\psi_S \otimes  \id_{\extp^{0,2}_{\,U|S}})\circ\big( \id_{\eta_S}\otimes  \sqcap_S	\big)= \wedge(\psi_S\otimes\id_{\extp^{0,1}_{\,U|S}}),	
\end{equation}
where, on the right, ${\wedge}$ stands for the bundle morphism $\eta^{0,1}_S\otimes \extp^{0,1}_{\;U|S}\to \extp^{0,2}_{\;U|S}$ induced by the wedge product.	
\end{re}

Similarly, for any $q\geq 1$ we obtain a commutative diagram of bundles on $S$
\begin{equation}\label{diagfordelta_S-q}
\begin{tikzcd}[]
0\ar[r]&\eta^{0,1}_S\wedge\extp^{0,q-1}_{\;U|S}\ar[r, hook ]\ar[d, equal]& \extp^{0,q}_{\;U|S}\ar[r,   "\sqcap_S	"]\ar[d, equal]&  \eta^{0,1*} \otimes\extp^{0,q+1}_{\; U|S}\\	
0\ar[r]&\eta^{0,1}_S\wedge\extp^{0,q-1}_{\;U|S}\ar[r, hook ]& \extp^{0,q}_{\;U|S}\ar[r,   "r_{\Tg_S}"]&   \extp^{0,q}_{\;S}\ar[r]\ar[u, hook', "{[\sqcap_S	]}"]&0
\end{tikzcd}
\end{equation}
 with exact rows. 
\vspace{2mm}

Let $E$ be a complex vector bundle of class ${\cal C}^{\infty}$   on $U$. For $ \gamma\in [0,+\infty]$ put
\begin{align}\label{defI0q}
{\cal I}^{\gamma}_S(U, \extp^{0,q}_{\; U}\otimes E)\edf & \ker\big[\Gamma^\gamma(U,\extp^{0,q}_{\; U} \otimes E)\to \Gamma^\gamma(S,\extp^{0,q}_{\; U|S}\otimes E)  \to  \Gamma^\gamma(S,\extp^{0,q}_{\;S}\otimes E)\big]	\nonumber\\
=&\left\{\begin{array}{ccc}\ker(\Gamma^\gamma(U,E)\to  \Gamma^\gamma(S,E))&\rm if&q=0,\\ \{\beta\in \Gamma^\gamma(U,\extp^{0,q}_{\; U}\otimes E)|\  (\sqcap_S	\otimes\id_E)\beta_S=0\} &\rm if&q>0.\end{array}\right. 
\end{align}
\begin{re}\label{Q0q}
The two restriction maps  in  the definition of ${\cal I}^{\gamma}_S(U, \extp^{0,q}_{\;U}\otimes E)$ are surjective, so their composition 	$\Gamma^\gamma(U,\extp^{0,q}_{\;U} \otimes E)\to \Gamma^\gamma(S,\extp^{0,q}_{\;S}\otimes E)$ induces an isomorphism
$$
 \qmod{\Gamma^\gamma(U,\extp^{0,q}_{\; U} \otimes E)}{{\cal I}^{\gamma}_S(U, \extp^{0,q}_{\; U}\otimes E)}\textmap{\simeq}\Gamma^\gamma(S,\extp^{0,q}_{\; S}\otimes E).
$$
\end{re}
\begin{proof}
  The surjectivity of $\Gamma^\gamma(U,\extp^{0,q}_{\; U} \otimes E)\to \Gamma^\gamma(S,\extp^{0,q}_{\; U|S}\otimes E)$ follows taking $m=0$ in	 Corollary \ref{extensionCoroCkappa} (1),  (2). The  map $\Gamma^\gamma(S,\extp^{0,q}_{\; U|S}\otimes E)  \to  \Gamma^\gamma(S,\extp^{0,q}_{\; S}\otimes E)$ is induced by an epimorphism of ${\cal C}^\infty$ bundles on $S$, so is surjective.   
\end{proof}

 Let ${\cal C}^\gamma_U$ (respectively ${\cal C}^\gamma(\extp^{0,q}_{\;U} \otimes E)$, ${\cal C}^{\gamma}(\extp^{0,q}_{\;U|S}\otimes E))$) be the sheaves of locally defined $\C$-valued functions (sections of the bundles $\extp^{0,q}_{\;U} \otimes E$, $\extp^{0,q}_{\;U|S}\otimes E)$ on $U$, respectively $S$, of class ${\cal C}^\gamma$ (see section \ref{Ckappa}).  The assignment 
$$
U\stackrel{\hb{\tiny open}}{\supset} V\mapsto {\cal I}^{\gamma}_{S\cap V}(V, \extp^{0,q}_{\;V}\otimes E)
$$
defines a sheaf on $U$ which will be denoted ${\cal I}^{\gamma}_{S}(\extp^{0,q}_{\;U}\otimes E)$; it is a   ${\cal C}^\gamma_U$-submodule  of ${\cal C}^\gamma(\extp^{0,q}_{\;U} \otimes E)$ which coincides with ${\cal C}^\gamma(\extp^{0,q}_{\;U} \otimes E)$ on the complement of $S$. Let $x\in X$,  $\rho\in{\cal C}^\infty(V,\R)$ be a local defining function for $S$ around $x$ (i.e. we have $x\in S\cap V=\rho^{-1}(0)$ and $d\rho$ is nowhere vanishing on $S\cap V$), and $\rho_x$ its germ at $x$.

\begin{re}\label{descriptionI}
The stalk ${\cal I}^{\gamma}_S(\extp^{0,q}_{\;U}\otimes E)_x$ of ${\cal I}^{\gamma}_S(\extp^{0,q}_{\;U}\otimes E)$ at $x\in S$ is given by: 
$${\cal I}^{\gamma}_S(\extp^{0,q}_{\;U}\otimes E)_x=\ker({\cal C}^{\gamma}(\extp^{0,q}_{\;U}\otimes E)_x\to {\cal C}^{\gamma}(\extp^{0,q}_{\;U|S}\otimes E)_x)+ \bp\rho_x\wedge {\cal C}^{\gamma}(\extp^{0,q-1}_{\;U}\otimes E)_x.
$$
 \end{re}
 \begin{proof}
Diagram (\ref{diagfordelta_S-q}) shows that a form $\beta\in \Gamma^\gamma (V,\extp^{0,q}_{\;V}\otimes E)$ belongs to  ${\cal I}^\gamma_S(V,\extp^{0,q}_{\;V}\otimes E)$ if and only if its restriction to $V\cap S$  is a section of the subbundle 
$$\eta^{0,1}_{V\cap S}\wedge\extp^{0,q-1}_{\;V|S}\otimes E_S\subset \extp^{0,q}_{\;V|S}\otimes E_S.$$
  It suffices to note that the restriction of the sub-bundle $\bp\rho\wedge\extp^{0,q-1}_{\;V}\subset \extp^{0,q}_{\;V}$ to $S\cap V$ coincides with $\eta^{0,1}_{V\cap S}\wedge\extp^{0,q-1}_{\;V|S}$.
 \end{proof}

Note that, for $\gamma=\infty$, Remark \ref{descriptionI} gives:
$${\cal I}^{\infty}_S(\extp^{0,q}_{\;U}\otimes E)_x=\rho_x\,{\cal C}^{\infty}(\extp^{0,q}_{\;U}\otimes E)_x+ \bp\rho_x\wedge {\cal C}^{\infty}(\extp^{0,q-1}_{\;U}\otimes E)_x.
$$
This description does not extend to  the case $\gamma<\infty$. For instance, for $q=0$,  an element of the stalk ${\cal C}^\gamma(E)_x$ which vanishes on $S$ is not necessarily divisible by $\rho_x$ in this ${\cal C}^\gamma_{U,x}$-module.
\begin{re}\label{deltaIsubsetI}
Let $\kappa\in [0,\infty]$,  $\delta$  a Dolbeault operator with coefficients in ${\cal C}^\kappa$ on $E$ and   $0\leq \gamma\leq \kappa$. Then $
\delta  {\cal I}^{\gamma+1}_S(U, \extp^{0,q}_{\;U}\otimes E)\subset {\cal I}^{\gamma}_S(U, \extp^{0,q+1}_{\;U}\otimes E)$.	
\end{re}
\begin{proof} Let $\beta\in {\cal I}^{\gamma+1}_S(U, \extp^{0,q}_{\;U}\otimes E)_x$. With respect to   a holomorphic chart of $U$ and a local trivialization  of $E$  around $x$, $\delta$ is given   by
$$
\delta \psi =\bp \psi+\alpha\psi
$$
for a germ $\alpha\in {\cal C}^{\kappa}(\extp^{0,1}_{\;U}\otimes\gl(r,\C))_x$.
By Remark	\ref{descriptionI} we have $\beta=\beta^0+\bp \rho_x \wedge \nu$ where 
$$\beta^0\in \ker({\cal C}^{\gamma+1}(\extp^{0,q}_{\;U}\otimes E)_x\to {\cal C}^{\gamma+1}(\extp^{0,q}_{\;U|S}\otimes E)_x),\ \nu\in {\cal C}^{\gamma+1}(\extp^{0,q-1}_{\;U}\otimes E)_x.$$
 Writing $\beta^0 = \sum_{|I|=q}  d\bar z^I\otimes \beta_{I}$, where all  the germs $\beta_{I}\in {\cal C}^{\gamma+1}_{Ux}$ vanish on $S$, we have
$$
\delta\beta^0=(-1)^q\sum_{|I|=q} d\bar z^I\wedge (\bp \beta_{I}+\alpha \beta_I).
$$
Since $\beta_I$ vanishes on $S$, it follows that  $d\beta_I$ vanishes on $T_S$  around $x$, so  $\bp \beta_I$ vanishes on $\Tg_S$ around $x$. This proves that the terms $d\bar z^I\wedge\bp \beta_I$ belong to ${\cal I}^{\gamma}_S(\extp^{0,q+1}_{\;U}\otimes E)_x$.  
The terms $d\bar z^I\wedge  \alpha \beta_I$, $\delta(\bp\rho\wedge\nu)=-\bp\rho\wedge\delta\nu$ obviously belong to ${\cal I}^{\gamma}_S(\extp^{0,q+1}_{\;U}\otimes E)_x$, which completes the proof.
\end{proof}

Using Remarks \ref{Q0q}, \ref{deltaIsubsetI} it follows.

\begin{co}
Let $\delta$ be a Dolbeault operator with coefficients in ${\cal C}^\kappa$ on $E$. The associated operator $\Gamma^{\kappa+1}(U,\extp^{0,q}_{\;U} \otimes E)\to \Gamma^{\kappa}(U,\extp^{0,q+1}_{\;U} \otimes E)$ induces  a first order differential operator $\Gamma^{\kappa+1}(S,\extp^{0,q}_{\;S} \otimes E)\to \Gamma^{\kappa}(S,\extp^{0,q+1}_{\;S} \otimes E)$   with   coefficients in ${\cal C}^\kappa$. 	
\end{co}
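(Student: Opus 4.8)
The plan is to descend $\delta$ to the quotient spaces of Remark \ref{Q0q}. First I would apply Remark \ref{deltaIsubsetI} with $\gamma=\kappa$, which yields the inclusion
$$\delta\,{\cal I}^{\kappa+1}_S(U, \extp^{0,q}_{\;U}\otimes E)\subset {\cal I}^{\kappa}_S(U, \extp^{0,q+1}_{\;U}\otimes E).$$
Thus the first-order operator $\delta\colon \Gamma^{\kappa+1}(U,\extp^{0,q}_{\;U}\otimes E)\to \Gamma^{\kappa}(U,\extp^{0,q+1}_{\;U}\otimes E)$ carries the subspace ${\cal I}^{\kappa+1}_S$ into ${\cal I}^{\kappa}_S$, and therefore induces a well-defined $\C$-linear map on the corresponding quotients. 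Invoking the isomorphisms of Remark \ref{Q0q} for $\gamma=\kappa+1$ on the source and for $\gamma=\kappa$ on the target, these quotients are identified with $\Gamma^{\kappa+1}(S,\extp^{0,q}_{\;S}\otimes E)$ and $\Gamma^{\kappa}(S,\extp^{0,q+1}_{\;S}\otimes E)$ respectively; this gives the tangential operator $\delta_S$ and, in particular, the claimed loss of exactly one derivative.

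Next I would verify that $\delta_S$ is a genuine first-order differential operator with coefficients of class ${\cal C}^\kappa$, and not merely an abstract $\C$-linear map. Since the objects ${\cal I}^\gamma_S(\extp^{0,q}_{\;U}\otimes E)$ are sheaves on $U$ and the relevant restriction maps are surjective, the whole construction is local, so it suffices to read off the local form of $\delta_S$ from the computation already performed in the proof of Remark \ref{deltaIsubsetI}. In a holomorphic chart of $U$ and a local trivialization of $E$ one has $\delta\psi=\bp\psi+\alpha\psi$ with $\alpha\in{\cal C}^{\kappa}(\extp^{0,1}_{\;U}\otimes\gl(r,\C))$, and for a representative $\beta=\sum_{|I|=q}d\bar z^I\otimes\beta_I$ one finds
$$\delta\beta=(-1)^q\sum_{|I|=q}d\bar z^I\wedge(\bp\beta_I+\alpha\beta_I).$$
Projecting the restriction of this form onto $\extp^{0,q+1}_{\;S}\otimes E$ via $r_{\Tg_S}$ discards the components along $\bp\rho$ and keeps only the derivatives along the distribution $\Tg_S$. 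These tangential derivatives enter linearly, so $\delta_S$ has order one; its principal part is the ${\cal C}^\infty$ tangential part of $\bp$, while its zeroth-order term is multiplication by the image of $\alpha$ under $r_{\Tg_S}$, which is of class ${\cal C}^\kappa$. This establishes both the order and the coefficient-regularity assertions.

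The only genuinely delicate point is this last passage from a $\C$-linear map on quotients to a differential operator with the stated regularity: one must be sure that the descent loses \emph{exactly} one derivative and that no loss of Hölder regularity below $\kappa$ is incurred in the zeroth-order term. Both are controlled by the explicit local formula above, whose only non-smooth ingredient is the connection-type coefficient $\alpha\in{\cal C}^\kappa$. The independence of the value $\delta_S$ on the choice of representative $\beta$ lifting a given tangential form is precisely what the inclusion of Remark \ref{deltaIsubsetI} guarantees, so no separate well-definedness argument is needed, and the statement follows.
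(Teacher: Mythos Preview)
Your proof is correct and follows exactly the paper's approach: the paper's argument is the single sentence ``Using Remarks \ref{Q0q}, \ref{deltaIsubsetI} it follows,'' and you have carried out precisely this descent to the quotients. Your additional paragraph verifying explicitly, via the local formula $\delta\psi=\bp\psi+\alpha\psi$, that the induced tangential map is a first-order operator with ${\cal C}^\kappa$ coefficients is a useful elaboration that the paper leaves implicit.
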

Taking $q=0$, we obtain a first order differential operator 
$$
\delta_S:\Gamma^{\kappa+1}(S,E)\to \Gamma^{\kappa}(S,\extp^{0,1}_{\;S} \otimes E),
$$
with coefficients in ${\cal C}^\kappa$, which is called {\it the tangential Cauchy-Riemann operator associated with} $\delta$.  
\begin{re}\label{variationdeltaS}
For a form $\alpha\in \Gamma^\kappa(U,\extp^{0,1}_{\;U}\otimes \End(E))$ we have  $(\delta+\alpha)_S=\delta_S+\alpha_S$
where $\alpha_S$ is the image of $\alpha$ under  $\Gamma^\kappa(U,\extp^{0,q}_{\;U} \otimes E)\to \Gamma^\kappa(S,\extp^{0,q}_{\;U|S}\otimes E)  \to  \Gamma^\kappa(S,\extp^{0,q}_{\:S}\otimes E)$.
\end{re}

In a similar way one obtains a tangential Cauchy-Riemann operator 
$$
\delta_{\p\bar U^+}:\Gamma^{\kappa+1}({\p\bar U^+},E)\to \Gamma^{\kappa}({\p\bar U^+},\extp^{0,1}_{\;\p\bar U^+} \otimes E),
$$
 associated to any Dolbeault operator with coefficients in ${\cal C}^\kappa$ on a ${\cal C}^\infty$ vector bundle $E$ on a submanifold with boundary $\bar U^+\subset U$, where $U^+\subset U$ is open. Note that the correspondence $\delta\mapsto \delta_{\p\bar U^+}$ plays an important role in  \cite[section 3.5]{Do}.  
 
\begin{re}\label{JS} 
The tangential Cauchy-Riemann operator has an analogue in the framework of principal bundles (see section \ref{ACSsection} in the appendix):	 Let $p:P\to U$ ($p^+:P^+\to \bar U^+$) be a principal $G$-bundle on a complex manifold (with boundary) $U$ ($\bar U^+\subset U$), and let $S\subset U$ be a closed, oriented real hypersurface in $U$ (respectively let $S\edf\p\bar U^+=\bar U^+\setminus U^+$). 
A bundle ACS $J$ of class ${\cal C}^\kappa$ on $P$   defines an ACS $J_S$ of the same class on the pull-back distribution $\Tg_{P_S}\edf p_{S*}^{-1}(\Tg_S)\subset T_{P_S}$; $J_S$ is $G$-invariant, makes the vector bundle epimorphism $\Tg_{P_S}\to p_S^*(\Tg_S)$ $\C$-linear, and the parametrization of the $G$-orbits  pseudo-holomorphic. \end{re}

\subsection{Gluing theorems} 

Let $\iota:G\to\Aut(G)$ be the group morphism which assigns to $g\in G$ the interior automorphism $\iota_g$, ${\cal C}^{\kappa+1}_\iota(P,G)$ the space of $\iota$-equivariant maps $P\to G$ of class ${\cal C}^{\kappa+1}$, and $A^{p,q}_{\Ad}(P,\g^{1,0})_\kappa$ the space of tensorial $\g^{1,0}$-valued forms of type $\Ad$, bidegree $(p,q)$ and class ${\cal C}^\kappa$ on $P$. In section \ref{ACSsection} we associated with a bundle ACS $J\in {\cal J}^\kappa_P$ the  maps
$$\bar\lg_J:{\cal C}^{\kappa+1}_\iota(P,G)\to A^{0,1}_{\Ad}(P,\g^{1,0})_\kappa=A^{0,1}(U,P\times_\Ad\g^{1,0})_\kappa\simeq A^{0,1}(U,\Ad(P))_\kappa,
$$ 
$$
\bar \kg_J:A^{0,1}_\Ad(P,\g^{1,0})_\kappa \to  A^{0,2}_\Ad(P,\g^{1,0})_{\kappa-1} \hb{ (for $\kappa\geq 1$)}.
$$
We identify $\g^{1,0}$ with $\g$ and $\theta^{1,0}$ with $\theta$ in the standard way (see section \ref{ACSsection}), so $\bar\kg_J$ becomes a map $A^{0,1}(U,\Ad(P))_\kappa\to A^{0,2}(U,\Ad(P))_{\kappa-1}$.  For $s\in \Gamma^{l+1}(U,\Ad(P))$ put $\bar\dg_J(s)\edf \bar\lg_J(\exp(s))$.
 
\begin{lm}\label{newlm}
Let $0\leq l\leq k$. Let $s\in \Gamma^{l+1}(U,\Ad(P))$  with $j^l_S s=0$, so that the intrinsic differential $D_S^{l+1}s\in \Gamma^0(S,\eta_S^{\otimes(l+1)}\otimes \Ad(P))$ is well defined (see section \ref{IntrinsicDiff}). Let $J$ be a bundle ACS of class ${\cal C}^k$ on $P$. 	Then
\begin{enumerate}
	\item $j_S^{l-1} (\bar\dg_J(s))=0$ (if $l\geq 1$).
	\item $D_S^l(\bar \dg_J(s))= (\id_{\eta_S}^{\otimes l}\otimes\psi_S\otimes \id_{\Ad(P)})(D_S^{l+1}s)$.
\end{enumerate}
\end{lm}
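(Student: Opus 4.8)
The plan is to reduce both assertions to the leading‑order behaviour of the map $s\mapsto\bar\dg_J(s)$, exploiting that its linearization is the $(0,1)$‑covariant derivative $\bar\partial_J$ induced by $J$ on $\Ad(P)$, whose symbol in the conormal direction is precisely $\psi_S$. First I would localize: fix a point of $S$, a holomorphic chart of $U$, a local trivialization of $P$, and a local defining function $\rho$ for $S$. Since $D_S^{l+1}$ and $D_S^l$ are intrinsic (section \ref{IntrinsicDiff}), it suffices to verify both identities in one such chart. There $\bar\partial_J s=\bp s+[\alpha^{0,1},s]$ for a germ $\alpha^{0,1}\in{\cal C}^\kappa(\extp^{0,1}_{\;U}\otimes\Ad(P))$, and the logarithmic‑derivative formula from section \ref{ACSsection} gives the convergent expansion
\begin{equation*}
\bar\dg_J(s)=\bar\lg_J(\exp(s))=\bar\partial_J s+R(s),\qquad R(s)=\sum_{k\geq 1}c_k\,(\ad_s)^k(\bar\partial_J s)
\end{equation*}
in which every summand of the remainder $R(s)$ carries at least one \emph{undifferentiated} factor $s$.

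Next I would run the order‑of‑vanishing bookkeeping. The hypothesis $j^l_S s=0$ means $s$ vanishes along $S$ to order $l+1$. Differentiating once lowers this by one, so the principal part satisfies $j^{l-1}_S(\bp s)=0$, while the zeroth‑order term $[\alpha^{0,1},s]$ still has $j^l_S=0$; hence $j^{l-1}_S(\bar\partial_J s)=0$. By the Leibniz rule every summand of $R(s)$ inherits the vanishing of its factor $s$, so each has a $\leq l$‑order derivative of $s$ in every Leibniz term and therefore $j^l_S R(s)=0$, a fortiori $j^{l-1}_S R(s)=0$. Combining these gives $j^{l-1}_S(\bar\dg_J(s))=0$, which is assertion (1). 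The regularity is tight but sufficient: with $s\in\Gamma^{l+1}$ and $\alpha^{0,1}\in{\cal C}^\kappa$, $l\leq k=\kappa$, all the products lie in ${\cal C}^l$, exactly the class in which $D^l_S$ is defined.

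Finally, for (2) I would extract the leading coefficient. Since $j^l_S R(s)=0$ and the connection term $[\alpha^{0,1},s]$ also has $j^l_S=0$, neither contributes to the order‑$l$ intrinsic differential, so $D_S^l(\bar\dg_J(s))=D_S^l(\bp s)$. Writing the Taylor expansion $s=\tfrac{\rho^{l+1}}{(l+1)!}\,\sigma+O(\rho^{l+2})$, normalized so that $D_S^{l+1}s=(d\rho|_S)^{\otimes(l+1)}\otimes\sigma|_S$, one computes $\bp s=\tfrac{\rho^{l}}{l!}\,(\bp\rho)\,\sigma+O(\rho^{l+1})$, whence $D_S^l(\bp s)=(d\rho|_S)^{\otimes l}\otimes(\bp\rho|_S)\otimes\sigma|_S$. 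Because $\bp\rho|_S=\psi_S(d\rho|_S)$ by the very definition of $\psi_S$ as the $(0,1)$‑projection on the conormal $\eta_S$, this equals $(\id_{\eta_S}^{\otimes l}\otimes\psi_S\otimes\id_{\Ad(P)})(D_S^{l+1}s)$, which is (2). The main obstacle throughout is the borderline regularity: one must justify that the nonlinear remainder $R(s)$ and the connection term genuinely have vanishing $l$‑jet along $S$ in the ${\cal C}^l$ sense, so that the intrinsic operator $D_S^l$ annihilates them; this is the one point where the Leibniz estimate must be applied with care, and verifying it is where the real work lies.
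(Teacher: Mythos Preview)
Your argument is correct and close in spirit to the paper's, but the two proofs are organized around different decompositions of $\bar\dg_J(s)$. You split $\bar\dg_J(s)=\bar\partial_J s+R(s)$ via the power series $\frac{1-e^{-\ad_s}}{\ad_s}(\bar\partial_J s)$ (which is indeed the right closed form, though it is not stated explicitly in section~\ref{ACSsection} and has to be derived from formula~(\ref{bar-lg-sigma})), and then use Leibniz/Lemma~\ref{ABLemma}(1) termwise on the remainder. The paper instead keeps the two-term local expression $\tau^*(\bar\dg_J(s))=s_\tau^*(\exp^*\theta)^{0,1}+(\Ad_{\exp(-s_\tau)}-\id)(\alpha_J^\tau)$, kills the second term in one shot with the composition Lemma~\ref{ABLemma}(2) (the map $y\mapsto(\Ad_{\exp(-y)}-\id)(\alpha)$ vanishes at $y=0$), and for the first term applies Lemma~\ref{Dls0-3}(\ref{Dls3}) with $\omega=\exp^*(\theta)$, observing that $\omega^f_S=\id_\g$ because $s_\tau|_S=0$. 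What the paper's route buys is that no series expansion or convergence issue ever appears; what yours buys is a cleaner conceptual statement (only the linearization $\bar\partial_J$ contributes to $D_S^l$). Both collapse to the same endpoint $D_S^l(\bar\dg_J(s))=D_S^l(\bp s)$, which is precisely formula~(\ref{Dls1}).

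One small inaccuracy: for $s\in\Gamma^{l+1}$ the Taylor remainder is only $o(\rho^{l+1})$, not $O(\rho^{l+2})$, so the displayed expansion for $\bp s$ is formal. This does not affect the conclusion, since the identity $D_S^l(\bp s)=(\id_{\eta_S}^{\otimes l}\otimes\psi_S\otimes\id)(D_S^{l+1}s)$ is exactly Lemma~\ref{Dls0-3}(\ref{Dls1}), whose proof (via formula~(\ref{dlbpf})) works at the stated regularity without needing the extra order of vanishing.
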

\begin{proof}
(1) The section $s$ can be identified with an element, denoted by the same symbol, of ${\cal C}^{l+1}_\Ad(P,\g)$. Using this interpretation of $s$, we obtain an element $\sigma=\exp(s)\in {\cal C}^{l+1}_\iota(P,G)$. Let  $\tau\in \Gamma^\infty(W,P)$ be a local section of $P$, and put 
$$s_\tau\edf s\circ\tau\in {\cal C}^{l+1}(W,\g),\ \sigma_\tau\edf \sigma\circ\tau=\exp(s_\tau)\in {\cal C}^{l+1}(W,G).$$
 Using formula  (\ref{bar-lg-sigma}) explained in section \ref{ACSsection},  we have: 
\begin{equation}\label{tau*bar-dfJ(s)}
\begin{split}
\tau^*(\bar\dg_J(s))&=\tau^*(\bar\lg_J(\sigma))=\bar \lg^\tau_J(\sigma)=\sigma_\tau^*(\theta)^{0,1}+(\Ad_{\sigma_\tau^{-1}}-\id)(\alpha_J^\tau)=\\ 
&=s_\tau^*(\exp^*(\theta))^{0,1}+(\Ad_{\exp(-s_\tau)}-\id)(\alpha_J^\tau).
\end{split}	
\end{equation}
We may suppose that $U$ is an open subset of $\C^n$.  Since $j^l_S s=0$, it follows by the composition Lemma \ref{ABLemma} (2)  that 
\begin{equation}\label{jlSjlS}
j^l_S((\Ad_{\exp(-s_\tau)}-\id)(\alpha_J^\tau))=0,
\end{equation}
and by Lemma \ref{Dls0-3} that $j^{l-1}_S(s_\tau^*(\exp^*(\theta))^{0,1})=0$.
\vspace{2mm}\\
(2) It suffices to prove that for any local section $\tau\in \Gamma^\infty(W,P)$ we have
$$
D_S^l(\bar \dg_J(s_\tau))= (\id_{\eta_S}^{\otimes l}\otimes\psi_S\otimes \id_{\Ad(P)})(D_S^{l+1}s_\tau).
$$
Taking into account (\ref{tau*bar-dfJ(s)}) and (\ref{jlSjlS}), it suffices to compute	$D^l_S\big(s^*_\tau(\exp^*(\theta))^{0,1}\big)$. We use formula (\ref{Dls3}) of Lemma \ref{Dls0-3}, taking $V=F=\g$, $f=s_\tau:U\to \g$,  and $\omega\edf \exp^*(\theta)$, which is a holomorphic $(1,0)$ form on $\g$, because $\exp$ is a holomorphic map.  We have to specify the map $\omega^f_S$ intervening  on the right in (\ref{Dls3}). Regarded as a map ${\cal C}^\infty(\g,\Hom(\g,\g))$, $\omega$ is given by 
$$
\omega(a)(v)=(l_{\exp(a)^{-1}})_{*}((\exp_{*,a}(v)),
$$
so, for $x\in U$, we have
$$
\omega^f(x)(v)=(l_{\exp(f(x))^{-1}})_{*}((\exp_{*,f(x)}(v)).
$$

Since we assumed $j^l_Ss=0$, we have $f(x)=s_\tau(x)=0$ for any $x\in S$, so $\omega^f_S(x)=\id_\g$ for any $x\in S$.  
\end{proof}
Let $\kappa\in (0,\infty]\setminus\N$ and $k\edf[\kappa]$.
\begin{lm}\label{sigma-beta'}
Let $J$ be a bundle ACS of class ${\cal C}^{\kappa}$ on $P$. Let $l\leq k$ be a non-negative integer, and let $\beta\in \Gamma^{\kappa}(U,\extp^{0,1}_{\; U}\otimes \Ad(P))$ be such that 
$$j_S^{l-1}\beta=0\ (\hb{required only if $l\geq 1$}),\ (\id_{\eta_S}^{\otimes l}\otimes \sqcap_S\otimes\id_{\Ad(P)_S})(D^l_S\beta)=0.$$
\begin{enumerate}
\item 	There exists $s\in \Gamma^{\kappa+1}(U,\Ad(P))$ such that, putting 
$$\beta'\edf  \beta-\bar\dg_J(s)\in \Gamma^{\kappa}(U,\extp^{0,1}_{\; U}\otimes \Ad(P)),$$
we have
$$
j_S^{l} s=0,\ j_S^l\beta'=0.
$$
\item  Suppose that the considered pair $(J,\beta)$ also satisfies:
\begin{enumerate}[(i)]
\item $j_S^{k-1}(\kg_J(\beta))=0$.
\item $j_S^{k-2-l}(\fg_J)=0$ (required only if $l\leq k-2$).
\end{enumerate}
Then, for any such $s$, putting $\tilde\beta\edf \Ad_{\exp(s)}(\beta')$, we have:
\begin{enumerate}
\item $j_S^l\tilde\beta=0$.
\item  $j_S^{k-1}(\kg_J(\tilde\beta))=0$.
\item If $l\leq k-1$, we also have $(\id_{\eta_S}^{\otimes (l+1)}\otimes\sqcap_S	\otimes\id_{E_S})(D_S^{l+1}\tilde\beta)=0$. 
\end{enumerate}
\item If $\kappa\ne\infty$,  $s$ can be chosen to depend continuously on $\beta$.
\end{enumerate}
\end{lm}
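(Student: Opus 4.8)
The plan is to manufacture $s$ by prescribing its $(l+1)$-jet along $S$ and extending it off $S$ by Whitney's theorem, and then to deduce every assertion from the two jet-formulae of Lemma \ref{newlm} together with the gauge-equivariance of the curvature. First I would settle (1). The hypothesis $(\id_{\eta_S}^{\otimes l}\otimes\sqcap_S\otimes\id)(D^l_S\beta)=0$ says precisely that the $\extp^{0,1}_{\;U|S}$-component of $D^l_S\beta$ lies in $\ker\sqcap_S=\eta^{0,1}_S$, by exactness of the top row of (\ref{diagfordelta_S-q}). Since $\psi_S:\eta_S\to\eta^{0,1}_S$ is an isomorphism, there is a well-defined $\xi\edf(\id_{\eta_S}^{\otimes l}\otimes\psi_S^{-1}\otimes\id)(D^l_S\beta)\in\Gamma^{\kappa-l}(S,\eta_S^{\otimes(l+1)}\otimes\Ad(P))$, where $\kappa-l\ge\kappa-k>0$. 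I would then invoke Whitney's extension theorem for the Hölder class ${\cal C}^{\kappa+1}$ to produce $s\in\Gamma^{\kappa+1}(U,\Ad(P))$ with $j_S^l s=0$ and $D_S^{l+1}s=\xi$; this is legitimate exactly because the top jet-coefficient $\xi$ has class ${\cal C}^{\kappa-l}={\cal C}^{(\kappa+1)-(l+1)}$. Putting $\beta'\edf\beta-\bar\dg_J(s)$, Lemma \ref{newlm}(1) gives $j_S^{l-1}\beta'=0$, and Lemma \ref{newlm}(2) with the definition of $\xi$ gives $D^l_S\beta'=D^l_S\beta-(\id_{\eta_S}^{\otimes l}\otimes\psi_S\otimes\id)(D^{l+1}_Ss)=D^l_S\beta-(\id_{\eta_S}^{\otimes l}\otimes\psi_S\otimes\id)(\xi)=0$, whence $j_S^l\beta'=0$. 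This proves (1).

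For (2), property (a) is immediate: since $j_S^l s=0$, the composition Lemma \ref{ABLemma} shows $\Ad_{\exp(s)}-\id$ vanishes to order $l+1$ along $S$, so $\tilde\beta-\beta'=(\Ad_{\exp(s)}-\id)\beta'$ vanishes to order $\ge 2l+2$ and $j_S^l\tilde\beta=j_S^l\beta'=0$. The decisive point is (b). The full curvature $\fg_J+\kg_J(\cdot)$ of the deformed operator is gauge-equivariant, and $\tilde\beta$ is precisely the gauge transform of $\beta$ by $\exp(s)$ (this is what the formula defining $\bar\dg_J$ encodes); consequently
\[
\kg_J(\tilde\beta)=\Ad_{\exp(s)}\,\kg_J(\beta)+(\Ad_{\exp(s)}-\id)\,\fg_J .
\]
In the first summand, $j_S^{k-1}(\kg_J(\beta))=0$ by hypothesis (i) while $\Ad_{\exp(s)}-\id$ vanishes to order $l+1$, so the whole summand has vanishing $(k-1)$-jet. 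In the second summand the orders of vanishing add: $\Ad_{\exp(s)}-\id$ vanishes to order $l+1$ and, by hypothesis (ii), $\fg_J$ vanishes to order $k-1-l$, so the product vanishes to order $(l+1)+(k-1-l)=k$, i.e.\ $j_S^{k-1}$ of it is $0$. (When $l=k-1$ the factor $\Ad_{\exp(s)}-\id$ already vanishes to order $k$, which is why (ii) is needed only for $l\le k-2$.) This gives (b).

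Property (c) follows from (b). Since $j_S^l\tilde\beta=0$, the part of $\kg_J(\tilde\beta)$ quadratic in $\tilde\beta$ vanishes to order $\ge 2l+2>l$, while a local computation identical to the one in the proof of Lemma \ref{newlm}(2) (via Lemma \ref{Dls0-3}) — where differentiating the top-order vanishing factor of $\tilde\beta$ produces a factor $\bp\rho$, and $\bp\rho\wedge(\cdot)$ realizes $\sqcap_S$ through (\ref{psi-otimes-wedge}) — identifies $D^l_S$ of the part of $\kg_J(\tilde\beta)$ linear in $\tilde\beta$ with the datum $(\id_{\eta_S}^{\otimes(l+1)}\otimes\sqcap_S\otimes\id)(D^{l+1}_S\tilde\beta)$ up to $\psi_S$. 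As $l\le k-1$, property (b) forces $D^l_S(\kg_J(\tilde\beta))=0$, and therefore $(\id_{\eta_S}^{\otimes(l+1)}\otimes\sqcap_S\otimes\id)(D^{l+1}_S\tilde\beta)=0$, which is (c).

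Finally, for (3) I would let $s$ be the output of a \emph{fixed} continuous linear extension operator. For $\kappa\notin\N$ Whitney's extension theorem for Hölder classes furnishes a bounded linear operator extending $(l+1)$-jets with top coefficient of class ${\cal C}^{(\kappa+1)-(l+1)}$ to functions of class ${\cal C}^{\kappa+1}$; composing it with the continuous linear maps $\beta\mapsto D^l_S\beta$ and $\psi_S^{-1}$ shows that $\beta\mapsto s$ is continuous. This is exactly the step that degenerates at $\kappa=\infty$, where the ${\cal C}^\infty$ Whitney operator is not continuous, consistent with the hypothesis $\kappa\ne\infty$. I expect (2b) to be the main obstacle: one must pin down the gauge-transformation law of the curvature in the principal-bundle ACS setting and then carry out the jet-order bookkeeping that makes condition (ii) precisely — and only just — sufficient.
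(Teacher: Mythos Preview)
Your argument is correct and mirrors the paper's proof almost step for step: Whitney extension (the paper's Corollary \ref{extensionCoroCkappa}) produces $s$ from $b=(\id_{\eta_S}^{\otimes l}\otimes\psi_S^{-1}\otimes\id)(D^l_S\beta)$, Lemma \ref{newlm} yields (1), the gauge-equivariance identity $\bar\kg_J(\tilde\beta)=\Ad_{\exp(s)}\bar\kg_J(\beta)+(\Ad_{\exp(s)}-\id)\fg_J$ (Lemma \ref{kgJlgJ}(2)) gives (2)(b), and the continuity in (3) comes from the bounded Whitney operator for H\"older classes. The only imprecision is in (2)(c): the identification of $D^l_S(\bp_J\tilde\beta)$ with $(\id_{\eta_S}^{\otimes l}\otimes\psi_S\otimes\id)\big((\id_{\eta_S}^{\otimes(l+1)}\otimes\sqcap_S\otimes\id)(D^{l+1}_S\tilde\beta)\big)$ is the content of Lemma \ref{Dl-delta-beta} together with (\ref{psi-otimes-wedge}), not of Lemma \ref{newlm}(2) --- though the mechanism you describe (the emergent $\bp\rho$ factor realizing $\sqcap_S$) is exactly the right one.
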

\begin{proof}
(1) Using the first horizontal exact sequence in (\ref{diagfordelta_S}), we see that the hypothesis implies 
$$
D^l_S\beta\in \Gamma^{\kappa-l}(S,\eta_S^{\otimes l}\otimes \eta_S^{0,1}\otimes \Ad(P)),
$$   
so, since $\psi_S$ is a line bundle isomorphism, there exists 
$b\in \Gamma^{\kappa-l}(S,\eta_S^{\otimes (l+1)}\otimes \Ad(P)_S)$ 
such that
\begin{equation}\label{DlSbeta}
D^l_S\beta=(\id_{\eta_S}^{\otimes l}\otimes \psi_S\otimes\id_E)(b).
\end{equation}

The key argument in the proof: by the  extension Corollary  \ref{extensionCoroCkappa}, there exists $s\in \Gamma^{\kappa+1}(U,\Ad(P))$ such that 
\begin{equation}\label{ConditionsForSigma}
j^l_Ss=0,\ D^{l+1}_Ss=b.	
\end{equation}
 It follows that $\beta'\edf \beta-\bar\dg_J(s)$ belongs to $\Gamma^{\kappa}(U,\extp^{0,1}_{\; U}\otimes \Ad(P))$. Moreover, by Lemma \ref{newlm} (1), we have $j_S^{l-1}(\bar\dg_J(s))=0$, so, since $j_S^{l-1}\beta=0$, we obtain   $j_S^{l-1}\beta'=0$ and 
 $$D^l_S\beta'=D^l_S\beta-D^l_S(\bar\dg_J(s)).$$
   Using (\ref{DlSbeta}), (\ref{ConditionsForSigma}) and Lemma \ref{newlm} (2),  we obtain $D^l_S\beta-D^l_S(\bar\dg_J(s))=0$, so $j_S^{l}\beta'=0$.
\vspace{2mm}\\ 
(2)(a)   follows from $j^l_S(\beta')=0$ using the composition Lemma   \ref{ABLemma} (2).
\vspace{2mm}\\
(2)(b) By Lemma \ref{kgJlgJ} (2) proved in section \ref{ACSsection}, we have:
$$
\bar\kg_J(\tilde\beta)=\bar\kg_J(\Ad_{\exp(s)}(\beta-\bar\dg_J(s))=\Ad_{\exp(s)}(\bar\kg_J(\beta))+(\Ad_{\exp(s)}-\id)(\fg_J).
$$
Since we assumed $j^{k-1}_S(\bar\kg_J(\beta))=0$, it follows again by Lemma   \ref{ABLemma} (2) that $j^{k-1}_S(\Ad_{\exp(s)}(\bar\kg_J(\beta)))=0$.

On the other hand, since $j_S^{ k-2-l}(\fg_J)=0$ by hypothesis and $j^l_Ss=0$ by (1), it follows   by Lemma \ref{ABLemma} (1), (2) that
$$j^{k-1}_S((\Ad_{\exp(s)}-\id)(\fg_J))=0.$$
Therefore $j_S^{k-1}(\bar\kg_J(\tilde\beta))=0$ as claimed.  
 \vspace{2mm}\\
 (2)(c) Suppose $l\leq k-1$. Formula (\ref{psi-otimes-wedge}) shows that
$$
\big(\psi_S \otimes  \id_{\extp^{0,2}_{U|S}\otimes E_S})\circ\big( \id_{\eta_S}\otimes  \sqcap_S	\otimes\id_{E_S}\big)= \wedge(\psi_S\otimes\id_{\extp^{0,1}_{\,U|S}\otimes E_S})
$$
on $\eta_S\otimes \extp^{0,1}_{\;U|S}\otimes E_S$. Therefore
\begin{equation}\label{TTTold}
\begin{split}
\big(\id_{\eta_S}^{\otimes l}\otimes \psi_S \otimes  \id_{\extp^{0,2}_{U|S}\otimes E_S}&\big)\big((\id_{\eta_S}^{\otimes (l+1)}\otimes \sqcap_S	\otimes\id_{E_S})(D^{l+1}_S\tilde\beta)\big)=\\
&=\big(\id_{\eta_S}^{\otimes l}\otimes  \wedge(\psi_S\otimes\id_{\extp^{0,1}_{\,U|S}\otimes E_S})\big)(D^{l+1}_S\tilde\beta)=D_S^l(\bp_J\tilde\beta),\end{split}
\end{equation}
where, for the last equality we used formula (\ref{Dl-delta-beta-eq}) of Lemma \ref{Dl-delta-beta}  for $q=1$.
Since $l\leq k-1$  and we have $j_S^{k-1}(\kg_j (\tilde\beta))=0$ by (2)(b), it follows that $j_S^l(\kg_j (\tilde\beta))=0$, in particular $D_S^l(\kg_j (\tilde\beta))=0$. 

But
$$
\kg_j (\tilde\beta)=\bp_J\tilde\beta+\frac{1}{2}[\tilde\beta\wedge\tilde\beta]
$$
with $j^l_S(\tilde\beta)=0$, which  implies $j_S^{2l+1}([\tilde\beta\wedge\tilde\beta])=0$ by Lemma \ref{ABLemma} (1). It  follows $D_S^l(\bp_J\tilde\beta)=0$.
Since the linear map $\id_{\eta_S}^{\otimes l}\otimes \psi_S \otimes  \id_{\extp^{0,2}_{U|S}\otimes E}$ is injective, formula (\ref{TTTold}) shows that
$$
(\id_{\eta_S}^{\otimes (l+1)}\otimes \sqcap_S	\otimes\id_{E_S})(D^{l+1}_S\tilde\beta)=0,
$$
as claimed.
\vspace{2mm}\\
(3) For $\kappa\ne \infty$,  the extension Corollary \ref{extensionCoroCkappa}  provides a solution $s\in  \Gamma^{\kappa+1}(U,\Ad(P))$ of the equations (\ref{ConditionsForSigma}) which depends continuously on $b$, so on $\beta$. 

\end{proof}

\begin{pr}
\label{prop-beta-v-new}   
Let $J$ be a bundle ACS of class ${\cal C}^{\kappa}$ on $P$ such that $j_S^{k-2}(\fg_J)=0$. Let $\beta\in {\cal I}_S^{\kappa}(U,\extp^{0,1}_{\; U}\otimes \Ad(P))$ be such that
$j_S^{k-1}(\bar\kg_J(\beta))=0$.
Then 
\begin{enumerate}
\item There exists $s\in {\cal I}_S^{\kappa+1}(U,\Ad(P))$ such that  $j_S^k (\beta-\bar\dg_J (s))=0$.
\item If $\kappa\ne \infty$, $s$ can be chosen to depend continuously on $(J,\beta)$.
\end{enumerate}
\end{pr}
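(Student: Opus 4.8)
The plan is to establish the vanishing statement (1) by iterating Lemma \ref{sigma-beta'}, which at each step raises by one the order of contact with $S$ of a suitably gauge-transformed form while preserving the two structural constraints it carries, and then to reassemble the successive corrections into a single section $s$. Write $k=[\kappa]$. The case $k=0$ is immediate: the hypothesis $\beta\in{\cal I}^\kappa_S(U,\extp^{0,1}_{\;U}\otimes\Ad(P))$ says exactly $(\sqcap_S\otimes\id)\beta_S=0$, i.e. the $l=0$ hypothesis of Lemma \ref{sigma-beta'} (condition (ii) being vacuous as $0>k-2$), so part (1) of that lemma gives $s\in\Gamma^{\kappa+1}(U,\Ad(P))$ with $j^0_Ss=0$ and $j^0_S(\beta-\bar\dg_J(s))=0$; since $j^0_Ss=0$ means $s\in{\cal I}^{\kappa+1}_S(U,\Ad(P))$, this settles (1).

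Assume $k\geq1$ and set $\beta_0\edf\beta$. I construct inductively, for $l=0,\dots,k-1$, a section $s_l\in\Gamma^{\kappa+1}(U,\Ad(P))$ with $j^l_Ss_l=0$ and the form $\beta_{l+1}\edf\Ad_{\exp(s_l)}(\beta_l-\bar\dg_J(s_l))$, in such a way that throughout the induction
$$j^{l-1}_S\beta_l=0,\quad (\id_{\eta_S}^{\otimes l}\otimes\sqcap_S\otimes\id)(D^l_S\beta_l)=0,\quad j^{k-1}_S(\bar\kg_J(\beta_l))=0.$$
For $l=0$ these hold by hypothesis. Given them at step $l$, the first two are the hypothesis of Lemma \ref{sigma-beta'}(1), the third is its condition (i), and its condition (ii)---needed only when $l\leq k-2$---reads $j^{k-2-l}_S(\fg_J)=0$ and follows from the standing assumption $j^{k-2}_S(\fg_J)=0$ since $k-2-l\leq k-2$. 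Hence the lemma applies and yields $s_l$ and $\beta_{l+1}$; parts (2)(a),(2)(b) give $j^l_S\beta_{l+1}=0$ and $j^{k-1}_S(\bar\kg_J(\beta_{l+1}))=0$, and part (2)(c), available because $l\leq k-1$, gives $(\id_{\eta_S}^{\otimes(l+1)}\otimes\sqcap_S\otimes\id)(D^{l+1}_S\beta_{l+1})=0$. These are precisely the three conditions at step $l+1$, so the induction runs. After the step $l=k-1$ the form $\beta_k$ satisfies $j^{k-1}_S\beta_k=0$ and $(\id_{\eta_S}^{\otimes k}\otimes\sqcap_S\otimes\id)(D^k_S\beta_k)=0$, so Lemma \ref{sigma-beta'}(1) applies once more with $l=k$ (its conditions (i),(ii) are irrelevant for part (1)) and produces $s_k$ with $j^k_Ss_k=0$ and $j^k_S(\beta_k-\bar\dg_J(s_k))=0$.

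It remains to turn $s_0,\dots,s_k$ into one section. By definition $\beta_{l+1}=\Ad_{\exp(s_l)}(\beta_l-\bar\lg_J(\exp(s_l)))$ is the gauge action of $\exp(s_l)$ on $\beta_l$; the cocycle identity $\bar\lg_J(g'g)=\bar\lg_J(g)+\Ad_{g^{-1}}\bar\lg_J(g')$ of section \ref{ACSsection} shows that $g\cdot\beta\edf\Ad_g(\beta-\bar\lg_J(g))$ defines a left action, whence $\beta_k=g\cdot\beta$ with $g\edf\exp(s_{k-1})\cdots\exp(s_0)$. Set $h\edf\exp(s_k)\,g$. Since $\exp(s_k)\cdot\beta_k=\Ad_{\exp(s_k)}(\beta_k-\bar\dg_J(s_k))=h\cdot\beta$ and conjugation by an invertible ${\cal C}^{\kappa+1}$ automorphism preserves $k$-th order vanishing along $S$ (i.e. $j^k_S(\Ad_g\eta)=0\Leftrightarrow j^k_S\eta=0$), the identity $j^k_S(\beta_k-\bar\dg_J(s_k))=0$ gives $j^k_S(h\cdot\beta)=0$, and then, writing $h\cdot\beta=\Ad_h(\beta-\bar\lg_J(h))$, also $j^k_S(\beta-\bar\lg_J(h))=0$. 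Every $s_l$ vanishes on $S$, so $h|_S=\id$ and $h$ sends a neighbourhood of $S$ into a neighbourhood of $\id\in G$, on which $s\edf\log h$ is a ${\cal C}^{\kappa+1}$ section with $s|_S=0$; extending it by a cutoff equal to $1$ near $S$ keeps $s|_S=0$, so $s\in{\cal I}^{\kappa+1}_S(U,\Ad(P))$. Finally $\bar\dg_J(s)=\bar\lg_J(\exp(s))=\bar\lg_J(h)$ near $S$, hence $j^k_S(\beta-\bar\dg_J(s))=j^k_S(\beta-\bar\lg_J(h))=0$, which is (1).

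For (2), suppose $\kappa\neq\infty$. By Lemma \ref{sigma-beta'}(3) each $s_l$ depends continuously on $\beta_l$, and $\beta_{l+1}=\Ad_{\exp(s_l)}(\beta_l-\bar\dg_J(s_l))$ depends continuously on $(J,s_l,\beta_l)$; inductively $h$, and therefore $s=\log h$, depend continuously on $(J,\beta)$, using continuity of the multiplication of ${\cal C}^{\kappa+1}_\iota(P,G)$ and of $\log$ near $\id$. The analytic heart of the matter is Lemma \ref{sigma-beta'}, engineered so that a single correction $s_l$ at once raises the order of vanishing of $\beta_l$ along $S$ and preserves both the tangential constraint expressed through $\sqcap_S$ and $D_S$ and the integrability constraint $j^{k-1}_S(\bar\kg_J)=0$. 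Granting the lemma, the one delicate point in the present proof is the passage from the iterated conjugations to a single $\bar\dg_J(s)$: this is legitimate exactly because every $s_l$ vanishes on $S$, so the accumulated gauge transformation is the identity on $S$ and admits a logarithm near $S$; this is the step I would treat most carefully.
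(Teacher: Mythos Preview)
Your argument for finite $\kappa$ is correct and is essentially the paper's own proof, phrased a bit more cleanly through the left action $g\cdot\beta=\Ad_g(\beta-\bar\lg_J(g))$; the paper instead unfolds this action into the telescoping identity $\beta=\bar\lg_J(\sg_k)+\Ad_{\sg_{k-1}^{-1}}(\beta'_k)$ with $\sg_l=\sigma_l\cdots\sigma_0$, but your $h$ is exactly the paper's $\sg_k$ and the conclusions coincide.

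There is, however, a genuine gap: you do not treat $\kappa=\infty$. In that case $k=\infty$, the claim $j^k_S(\beta-\bar\dg_J(s))=0$ means vanishing of \emph{all} jets along $S$, and your finite induction $l=0,\dots,k$ never terminates; there is no infinite product $\prod_{l\geq0}\exp(s_l)$ to take the logarithm of. The paper handles this separately: it runs the iteration to obtain infinite sequences $(s_l)_{l\geq0}$, sets $s^l=\log(\sg_l)$, checks that the differences $s'_l=s^l-s^{l-1}$ satisfy $j^l_S(s'_l)=0$, and then invokes the $\mathcal{C}^\infty$ Whitney-type extension result (Corollary \ref{extensionCoroCkappa}(3)) to produce a single $s\in\Gamma^\infty(U,\Ad(P))$ with $j^{m+1}_S(s^m-s)=0$ for every $m$, from which $j^m_S(\beta-\bar\dg_J(s))=0$ for every $m$ follows. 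This asymptotic summation step is the missing ingredient; without it your proof covers only $\kappa\in(0,\infty)\setminus\N$.
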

\begin{proof}
(1) Suppose   that $\kappa\ne \infty$. Our assumption $\beta\in {\cal I}_S^{\kappa}(U,\extp^{0,1}_{\;U}\otimes E)$ means that $\beta$ satisfies the hypothesis of  Lemma 	\ref{sigma-beta'}   for $l=0$. Applying successively this Lemma 	to 
$$\beta_0\edf\beta,\   \beta_1\edf\tilde\beta_0,\ \dots,\ \beta_{k}\edf\tilde\beta_{k-1},$$
  we obtain sequences $(\beta'_i)_{0\leq i\leq k}, \ (s_i)_{0\leq i\leq k}$
in  $\Gamma^{\kappa}(U,\extp^{0,1}_{\; U}\otimes \Ad(P))$,  $\Gamma^{\kappa+1}(U,\Ad(P))$ respectively such that
\begin{equation}\label{jls,jlbeta'}
j^l_S(s_l)=0,\  j^l_S(\beta'_l)=0 \hb{ for } 0\leq l\leq k,	
\end{equation}
and, putting $\sigma_i\edf\exp(s_i)$, one has:
\begin{alignat}{3}
 \beta_l&=\Ad_{\sigma_{l-1}}(\beta_{l-1}') & \hb{ for } 1\leq l\leq k, 
 \label{beta-l-beta'-(l-1)} \\ 
 \beta_l&=\bar\lg_J(\sigma_l)+\beta'_l &\hb{ for } 0\leq l\leq k. 
 \label{beta-l-sigma-l}
\end{alignat}
Combining (\ref{beta-l-beta'-(l-1)}) and (\ref{beta-l-sigma-l}) we obtain:
\begin{equation}
 \begin{array}{ccl}
\beta&=&\bar\lg_J(\sigma_0) 	+\beta_0'\\
\beta'_0&=&\Ad_{\sigma_0^{-1}}(\bar\lg_J(\sigma_1)+\beta_1')\\
\beta'_1&=&\Ad_{\sigma_1^{-1}}(\bar\lg_J(\sigma_2)+\beta_2')\\
\vdots &\vdots& \vdots\\
\beta'_{k-1}&=&\Ad_{\sigma_{k-1}^{-1}}(\bar\lg_J(\sigma_k)+\beta_k').
 \end{array}
\end{equation}
This implies:
\begin{equation}
 \begin{array}{ccl}
\beta&=&\bar\lg_J(\sigma_0) 	+\beta_0'\\
\beta'_0&=&\Ad_{\sigma_0^{-1}}(\bar\lg_J(\sigma_1)+\beta_1')\\
\Ad_{\sigma_0^{-1}}(\beta'_1)&=&\Ad_{\sigma_0^{-1}}\Ad_{\sigma_1^{-1}}(\bar\lg_J(\sigma_2)+\beta_2')\\
\vdots &\vdots& \vdots\\
\Ad_{\sigma_0^{-1}}\Ad_{\sigma_1^{-1}}\dots\Ad_{\sigma_{k-2}^{-1}} (\beta'_{k-1})&=&\Ad_{\sigma_0^{-1}}\Ad_{\sigma_1^{-1}}\dots\Ad_{\sigma_{k-1}^{-1}}(\bar\lg_J(\sigma_k)+\beta_k').
 \end{array}
\end{equation}
For $0\leq l\leq k$ put $\sg_l\edf\sigma_l\dots\sigma_0$. We obtain
\begin{equation}
 \begin{array}{ccl}
\beta&=&\bar\lg_J(\sigma_0) 	+\beta_0'\\
\beta'_0&=&\Ad_{\sg_0^{-1}}(\bar\lg_J(\sigma_1)+\beta_1')\\
\Ad_{\sg_0^{-1}}(\beta'_1)&=&\Ad_{\sg_1^{-1}}(\bar\lg_J(\sigma_2)+\beta_2')\\
\vdots &\vdots& \vdots\\
\Ad_{\sg_{k-2}^{-1}} (\beta'_{k-1})&=&\Ad_{\sg_{k-1}^{-1}}(\bar\lg_J(\sigma_k)+\beta_k').
 \end{array}
\end{equation}
Therefore
\begin{equation}\label{formula-for-beta}
\beta=\bar\lg_J(\sigma_0)+\sum_{l=1}	^k\Ad_{\sg_{l-1}^{-1}}(\bar\lg_J(\sigma_l))+  \Ad_{\sg_{k-1}^{-1}}(\beta'_k).
\end{equation}
But $\sg_l=\sigma_l\sg_{l-1}$ so, by Lemma \ref{kgJlgJ} (1), we have
$$
\bar\lg_J(\sg_l)=\bar\lg_J(\sg_{l-1})+\Ad_{\sg_{l-1}^{-1}}(\bar\lg_J(\sigma_l)) \hbox{ for }1\leq l\leq k,
$$
so
$$
\bar\lg_J(\sg_k)=\bar\lg_J(\sigma_0)+\sum_{l=1}	^k\Ad_{\sg_{l-1}^{-1}}(\bar\lg_J(\sigma_l)).
$$
Formula (\ref{formula-for-beta}) becomes
\begin{equation}\label{new-formula-for-beta}
\beta=\bar\lg_J(\sg_k) +\Ad_{\sg_{k-1}^{-1}}(\beta'_k).
\end{equation}

 Making use of Proposition \ref{fiberwise-exp}, let  $\Ad(P)_0$, $\iota(P)_0$ be  neighborhoods of the zero section (identity section) in the two bundles such that $\exp$ induces a diffeomorphism $\Ad(P)_0\to \iota(P)_0$.  

We can assume that $\sg_l$ takes values in $\iota(P)_0$ for $0\leq l\leq k$, so we can write $\sg_l=\exp(s^l)$ for a section $s^l\in \Gamma^{\kappa+1}(U,\Ad(P)_0)\subset\Gamma^{\kappa+1}(U,\Ad(P))$. It suffices to put $s\edf s^k$ and to take into account that $j_S^k\big(\Ad_{\sg_{k-1}^{-1}}(\beta'_k)\big)=0$ because $j_S^k(\beta'_k)=0$.
\vspace{2mm}\\
 Suppose $\kappa=\infty$. In this case Lemma 	\ref{sigma-beta'} yields  infinite sequences $(\beta'_i)_{i\geq 0}$, $(s_i)_{i\geq0}$ in  $\Gamma^{\infty}(U,\extp^{0,1}_{\; U}\otimes \Ad(P))$,  $\Gamma^{\infty}(U,\Ad(P))$ satisfying (\ref{jls,jlbeta'}) and (\ref{beta-l-sigma-l}) for $l\geq 0$ and (\ref{beta-l-beta'-(l-1)}) for $l\geq 1$. We define in the same way $\sigma_l$, $\sg_l\in \Gamma^\infty_\iota(U,\iota(P)_0)$, $s^l\in \Gamma^\infty(U,\Ad(P)_0)$.

For $l\geq 1$ put $s'_l\edf s^l-s^{l-1}$. Denoting by $\log: \iota(P)_0\textmap{\simeq} \Ad(P)_0$ the inverse of the fiber bundle isomorphism $\exp|_{\Ad(P)_0}:\iota(P)_0\textmap{\simeq} \Ad(P)_0$, we have
$$
s'_l=\log(\sg_l)-s^{l-1}=\log(\sigma_l\sg_{l-1})-s^{l-1}=\log(\exp(s_l)\exp(s^{l-1}))-s^{l-1}.
$$
Since the map $y\mapsto \log(\exp(y)\exp(s^{l-1}))-s^{l-1}$ vanishes at $y=0$ and $j_S^l(s_l)=0$, it follows by Lemma \ref{ABLemma} that $j_S^l(s'_l)=0$. Recalling that $j^0_S(s)=0$ and using Corollary \ref{extensionCoroCkappa} (3), it follows that there exists $s\in \Gamma^\infty(U,\Ad(P))$ such that for any $k\in\N$, $j_S^{k+1}((s_0+\sum_{l=1}^k s'_l)-s)=0$, i.e.
$$
j_S^{k+1}(s^k-s)=0.
$$
Using formula (\ref{tau*bar-dfJ(s)}) for $s^k$ and $s$ together with the second formula in (\ref{Omegaf-0-1}), we see that this implies 
\begin{equation}\label{jSk(barlg(exp(sk)-barlg(s))}
j_S^{k}(\bar\lg_J(\exp(s^k))-\bar\lg_J(\exp(s)))=0
\end{equation}
i.e. $j_S^{k}(\bar\lg_J(\sg_k)-\bar\lg_J(\exp(s)))=0$. By (\ref{new-formula-for-beta}) we infer
$$
j_S^{k}\big(\beta-\bar\lg_J(\exp(s))-\Ad_{\sg_{k-1}^{-1}}(\beta'_k))\big)=0,
$$
so $j_S^{k}\big(\beta-\bar\lg_J(\exp(s))=0$ because $j_S^{k}(\beta'_k)=0$.
\vspace{2mm}\\
 (2) For $l\geq 1$ the form 
$$\beta_l=\tilde\beta_{l-1}=\Ad_{\exp(s_{l-1})}(\beta'_{l-1})=
\Ad_{\exp(s_{l-1})}(\beta_{l-1}-\bar\dg_J(s_{l-1}))$$
 depends continuously on $\beta_{l-1}$,  $s_{l-1}$ and $J$.  On the other hand, by   \ref{sigma-beta'} (3), $s_{l-1}$ can be chosen to depend continuously   on $\beta_{l-1}$. By induction it follows that all $s_l$ (hence also $\sg_l$, $s^l$) can be chosen to depend continuously on the initial data $(J,\beta)$.

  \end{proof}

Suppose  that $S$ separates $U$, and let $U=\bar U^-\cup \bar U^+$ be the corresponding decomposition of $U$ as union of manifolds with boundary.
\begin{thr}\label{mainGnew}
Let $G$ be a complex  Lie  group   and $p:P\to U$  a principal $G$-bundle on $U$. Let $\kappa\in(0,+\infty]\setminus\N$ and $J^\pm$  formally integrable bundle ACS of class ${\cal C}^\kappa$ on $P_{\bar U^\pm}$ such that $J^+_S=J^-_S$. 
\begin{enumerate}
\item 	There exists 
\begin{enumerate}
\item  $\sigma_+\in\Gamma^{\kappa+1}(\bar U^+,\iota(P))$ with $\sigma_+|_S=e$, the unit element of $G$.
\item   an integrable bundle ACS $J$ of class ${\cal C}^\kappa$ on $P$, 
\end{enumerate}
such that $J|_{P^+}= J^+\cdot\sigma_+$ and $J|_{P^-}= J^-$.	
\item For any open neighborhood $V$ of $S$ in $U$, there exists  a pair $(\sigma_+,J)$ as above such that, moreover, $\sigma_+$ is constantly $e$ on $U^+\setminus V$.
\item If $\kappa\ne\infty$ the pair $(\sigma_+,J)$ can be chosen to depend continuously on $(J^-,J^+)$.
\end{enumerate}

\end{thr}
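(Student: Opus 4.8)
The plan is to reduce the whole gluing to a single application of Proposition~\ref{prop-beta-v-new} on the piece $\bar U^+$, using $J^+$ itself as reference bundle ACS and the obstruction to the gluing as the deformation form $\beta$. First I would fix a ${\cal C}^\infty$ reference bundle ACS $J_0$ on $P$ over $U$ and write, relative to $J_0$, the deformation forms $\alpha^\pm\in A^{0,1}(U,\Ad(P))_\kappa$ of $J^\pm$ on $\bar U^\pm$ (the space of bundle ACS being affine over $A^{0,1}(\Ad(P))$). Using the extension Corollary~\ref{extensionCoroCkappa} I extend $\alpha^-$ to a form $\hat\alpha^-$ of class ${\cal C}^\kappa$ on $\bar U^+$, set $\hat J^-\edf J_0+\hat\alpha^-$, and put $\beta\edf \hat\alpha^- -\alpha^+\in A^{0,1}(\bar U^+,\Ad(P))_\kappa$, so that $J^+$ deformed by $\beta$ is exactly $\hat J^-$.

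Next I would verify the hypotheses of Proposition~\ref{prop-beta-v-new} with reference $J\edf J^+$. Since $J^+$ is formally integrable, $\fg_{J^+}\equiv 0$ on $\bar U^+$, so the condition $j_S^{k-2}(\fg_{J^+})=0$ is automatic. The assumption $J^+_S=J^-_S=\hat J^-_S$ means that the tangential parts of $\alpha^+$ and $\hat\alpha^-$ coincide on $S$, i.e. $\beta\in{\cal I}_S^\kappa(\bar U^+,\extp^{0,1}_{\;\bar U^+}\otimes \Ad(P))$. Finally, $\bar\kg_{J^+}(\beta)$ is the formal-integrability obstruction $\fg_{\hat J^-}$ of $\hat J^-$; on $\bar U^-$ we have $\hat J^-=J^-$, hence $\fg_{\hat J^-}=\fg_{J^-}=0$ there, so the one-sided $S$-jet of $\fg_{\hat J^-}$ from $U^-$ vanishes to all orders. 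As $\hat J^-$ is of class ${\cal C}^\kappa={\cal C}^{k,\kappa-k}$, the $(k-1)$-jet of $\fg_{\hat J^-}$ along $S$ is well defined and two-sidedly consistent, so $j_S^{k-1}(\bar\kg_{J^+}(\beta))=0$. Proposition~\ref{prop-beta-v-new} then yields $s\in{\cal I}_S^{\kappa+1}(\bar U^+,\Ad(P))$ with $j_S^k(\beta-\bar\dg_{J^+}(s))=0$; I set $\sigma_+\edf\exp(s)$, which satisfies $\sigma_+|_S=e$ since $j_S^0 s=0$.

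I would then glue. Because $\bar\dg_{J^+}(s)=\bar\lg_{J^+}(\sigma_+)$ is exactly the deformation form of $J^+\cdot\sigma_+$ relative to $J^+$, the relation $j_S^k(\beta-\bar\dg_{J^+}(s))=0$ says that $J^+\cdot\sigma_+$ and $\hat J^-$ have the same $k$-jet along $S$ from the $U^+$ side; since $\hat J^-$ coincides with $J^-$ on $\bar U^-$ and is ${\cal C}^k$ across $S$, its $k$-jet along $S$ equals that of $J^-$. Hence the ACS defined by $J|_{P^-}\edf J^-$ and $J|_{P^+}\edf J^+\cdot\sigma_+$ have matching $k$-jets along $S$, and as each side is of class ${\cal C}^{k,\kappa-k}$ up to $S$, the resulting $J$ is a bundle ACS of class ${\cal C}^\kappa$ on all of $P$. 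It is formally integrable, being so on $\bar U^-$ and a gauge transform of the formally integrable $J^+$ on $\bar U^+$. Now the crucial point: $U$ is a complex manifold \emph{without} boundary and every point of $S$ is an interior point of $U$, so the Hölder Newlander--Nirenberg theorem for principal bundles (\cite{Te2}, see section~\ref{ACSsection}) applies uniformly and shows that the formally integrable ${\cal C}^\kappa$ bundle ACS $J$ is in fact integrable. This proves~(1).

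For~(2) I would multiply $s$ by a cutoff $\chi\in{\cal C}^\infty(U,[0,1])$ equal to $1$ near $S$ and supported in $V$: replacing $s$ by $\chi s$ changes neither the germ of $s$ along $S$ nor the condition $j_S^k(\beta-\bar\dg_{J^+}(\chi s))=0$, while $\sigma_+=\exp(\chi s)$ is now constantly $e$ on $U^+\setminus V$ (there $J=J^+$, still formally integrable, so the global $J$ and the integrability step are unaffected). For~(3), when $\kappa\neq\infty$ the extension operator of Corollary~\ref{extensionCoroCkappa} is continuous, so $\hat\alpha^-$ and hence $\beta$ depend continuously on $(J^-,J^+)$; by Proposition~\ref{prop-beta-v-new}(2) the solution $s$, and therefore $\sigma_+$ and $J$, can be chosen to depend continuously on $(J^-,J^+)$. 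The expected main obstacle is not the gluing bookkeeping but the verification that formal integrability forces the curvature jet condition $j_S^{k-1}(\bar\kg_{J^+}(\beta))=0$, where the two-sided consistency of the extension's jets is essential, together with the passage from \emph{formal} to genuine integrability; the latter is precisely what is gained by gluing into the boundaryless manifold $U$, where no pseudo-convexity hypothesis at $S$ is required.
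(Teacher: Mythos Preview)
Your strategy is the same as the paper's: extend across $S$, form the difference $\beta$, invoke Proposition~\ref{prop-beta-v-new} to produce $s$ with $j_S^k(\beta-\bar\lg_{J^+}(\exp s))=0$, set $\sigma_+=\exp(s)$, glue, and apply the Newlander--Nirenberg theorem on the boundaryless $U$. Two technical points need tightening.

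First, Proposition~\ref{prop-beta-v-new} (and the lemmas feeding it) is stated for a bundle ACS $J$ on $P$ over the \emph{boundaryless} manifold $U$ with $S$ a closed hypersurface; you invoke it with $J=J^+$, which lives only on $\bar U^+$. The paper avoids this by extending \emph{both} $J^\pm$ to bundle ACS $J_\pm$ of class ${\cal C}^\kappa$ on all of $U$ (via Corollary~\ref{glueCkappa}(1), not Corollary~\ref{extensionCoroCkappa}, which only extends prescribed jets from $S$). Then $\beta=J_--J_+$ is defined on $U$, and $\bar\kg_{J_+}(\beta)=\fg_{J_-}-\fg_{J_+}$ with each $\fg_{J_\pm}$ vanishing on $\bar U^\pm$, so both have vanishing $(k-1)$-jet along $S$; likewise $j_S^{k-2}(\fg_{J_+})=0$. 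Your one-sided variant does work (the Whitney-type extensions in the proof of Proposition~\ref{prop-beta-v-new} adapt to $\bar U^+$), but as written you are applying the proposition outside its stated hypotheses. Also note that once you extend $J^-$ you are implicitly using it on $\bar U^-$ (``on $\bar U^-$ we have $\hat J^-=J^-$''), so the extension must be to $U$, not to $\bar U^+$ as you wrote.

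Second, the case $\kappa\in(0,1)$ deserves a separate sentence. There $\fg_{J^\pm}$ are distributions supported by $\bar U^\pm$, and ``$J$ is formally integrable on each side $\Rightarrow$ $\fg_J=0$ on $U$'' is exactly Proposition~\ref{(J-,J+)->Jintegrable} together with the gauge-equivariance of Remark~\ref{equiv-formula-for-C0}; neither is automatic from the ${\cal C}^\kappa$ bookkeeping you describe. The final gluing ``matching $k$-jets $\Rightarrow$ ${\cal C}^\kappa$ across $S$'' is Corollary~\ref{glueCkappa}(2), which you should cite rather than assert.
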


\begin{proof}
(1) 
Let $J_\pm$ a (not necessarily integrable) extension of $J^\pm$ to a bundle ACS of class ${\cal C}^\kappa$ on $P$.  The existence of such an extension is obtained using the affine  structure with model space $\Gamma^\kappa(\bar U^\pm,\extp^{0,1}_{\;\bar U^\pm}\otimes\Ad(P))$ of the space ${\cal J}^\kappa_{P_{\bar U^\pm}}$ and the extension principle given by  Corollary \ref{glueCkappa} (1).
Using the affine space structure of the space ${\cal J}^\kappa_P$ put $\beta\edf J_--J_+$ and note that the assumption $J^+_S=J^-_S$ implies $\beta\in  {\cal I}_S^{\kappa}(U,\extp^{0,1}_{\; U}\otimes \Ad(P))$.
\vspace{2mm}\\ 
Case (i): $\kappa>1$:  By formula   (\ref{fg-(J+b)}) of  section \ref{ACSsection} we have
$$
\kg_{J_+}(\beta)=\fg_{J_-}-\fg_{J_+}.
$$
We have $\fg_{J_\pm}|_{\bar U^\pm}=\fg_{J^\pm}=0$, because $J^\pm$ are assumed formally integrable. Since $\fg_{J_\pm}$ are of class ${\cal C}^{\kappa-1}$, this implies that  $j_x^{k-1}(\fg_{J_\pm})=0$ for any $x\in \bar U^\pm$, in particular for any $x\in S$. This proves that $j_S^{k-1}(\bar\kg_{J_+}(\beta))=0$ and $j_S^{k-1}(\fg_{J_+})=0$, in particular $j_S^{k-2}(\fg_{J_+})=0$.  Therefore Proposition \ref{prop-beta-v-new}   applies to the pair $(J_+,\beta)$ and gives a section $s\in\Gamma^{\kappa+1}(U,\Ad(P))$ with $s|_S=0$  such that, putting  $\sigma=\exp(s)$, we have $j^k_S(\beta-\bar\lg_{J_+}(\sigma))=0$. On the other hand we have:
$$
j^k_S(J_--J_+\cdot \sigma)=j^k_S(J_--J_++J_+-J_+\cdot \sigma)=j^k_S(\beta-\lg_{J_+}(\sigma)),
$$
where, for the last equality we have used formula   (\ref{Jsigma-J=lgsigma}) of section \ref{ACSsection}. Therefore $j^k_S(J_--J_+\cdot \sigma)=0$. By Corollary   \ref{glueCkappa} (2), there exists $J\in {\cal J}^\kappa_P$ which coincides with $J_-$ ((hence with $J^-$) on $\bar U^-$ and with $J_+\cdot \sigma$ (hence with $J^+\cdot \sigma|_{\bar U^+}$) on $\bar U^+$. $J$ is integrable, because $\fg_{J}$ coincides with $\fg_{J^-}$ on $\bar U^-$ and, by formula (\ref{fg-Jsigma}),  with  $\Ad_{\sigma^{-1}}(\fg_{J^+})$ on $\bar U^+$. It suffices to put $\sigma_+\edf\sigma|_{\bar U^+}$.
\vspace{2mm}\\ 
Case (ii): $\kappa\in (0,1)$. In this case the assumption ``$J^\pm$ is a formally integrable bundle ACS on $P_{\bar U^\pm}$" means that $\fg_{J^\pm}$ vanishes as distribution supported by $\bar U^\pm$, see section \ref{ACSsection}.
We apply Lemma \ref{sigma-beta'} (1) for $l=0$ recalling that the conditions imposed on $\beta$ in the hypothesis of this lemma reduce to $\beta\in  {\cal I}_S^{\kappa}(U,\extp^{0,1}_{\; U}\otimes \Ad(P))$. We obtain  as above $s\in\Gamma^{\kappa+1}(U,\Ad(P))$ with $s|_S=0$  such that $j^0_S(\beta-\bar\lg_{J_+}(\sigma))=0$ with   $\sigma=\exp(s)$. We conclude as in the Case (i), but making use of 
\begin{itemize}
\item[-] Remark \ref{equiv-formula-for-C0} to show that  $J^+\cdot \sigma|_{\bar U^+}$ is formally integrable, 
\item[-]   Proposition \ref{(J-,J+)->Jintegrable} to infer that $J$ is  integrable. 	
\end{itemize}
(2) Let $V'$ be an open neighborhoods of $S$ in $U$ such that $\bar V'\subset V$. By  the smooth version of Urysohn's lemma \cite[Lemma 1.3.2]{Pe}, it follows that there exists a ${\cal C}^\infty$ function $\lambda: U\to [0,1]$ such that $\lambda|_{\bar V'}\equiv 1$ and $\lambda|_{U\setminus V}\equiv 0$. It suffices to replace in the proof of (1) $s$ by $\lambda s$. 
\vspace{2mm}\\
(3) Using Proposition \ref{prop-beta-v-new} (2), Lemma \ref{sigma-beta'} (3) and the continuity properties of the extension operators in Corollary \ref{glueCkappa} it follows that the objects $J_\pm$, $s$, $J$ introduced in the proof of (1)  can be chosen to depend continuously on the input data $(J^-,J^+)$.	
\end{proof}
%
%
%
%
%
 We can now prove  Theorem \ref{new-main-sep-G} stated in the introduction:
 \begin{proof}
(1) Making use of Proposition \ref{CinftyEGamma}, let $\Sg\in \mathscr{S}_a$ be an admissible ${\cal C}^\infty$-structure on $P^\upsilon$ and $P^\upsilon_\Sg$ the corresponding ${\cal C}^\infty$ bundle. The obvious isomorphisms $o^\pm:P^\pm\to P^\upsilon_{\bar U^\pm}$ become isomorphisms $P^\pm\to P^\upsilon_{\Sg\bar U^\pm}$ of class ${\cal C}^{\kappa+1}$ between ${\cal C}^\infty$ bundles on $\bar U^\pm$, so the given formally integrable bundle ACS  $J^\pm$ of class ${\cal C}^\kappa$ on $P^\pm$ induce via $o^\pm$ formally integrable bundle ACS $J'^\pm$ of class ${\cal C}^\kappa$ on $P^\upsilon_{\Sg\bar U^\pm}$. The hypothesis ``$J^\pm_S$ agree via   $\upsilon$" in Theorem \ref{new-main-sep-G} is equivalent to the condition  $J'^-_S=J'^+_S$. 

Theorem \ref{mainGnew}  applies and gives    $\sigma_+\in \Gamma^{\kappa+1}(\bar U^+,P^\upsilon_{\Sg\,\bar U^+})$  with $\sigma_+|_S=e$, and an integrable bundle ACS $J$ of class ${\cal C}^\kappa$ on $P^\upsilon_\Sg$  which coincides with $J'^-$  on $P^\upsilon_{\Sg\bar U^-}$ and with $J'^+\cdot\sigma_+=(\tilde\sigma_+)^{-1}(J'^+)$ (see section \ref{ACSsection}) on $P^\upsilon_{\Sg\bar U^+}$.

By Theorem \ref{NNG},  $J$ defines a  holomorphic structure $\hg_J$ on  the underlying ${\cal C}^{\kappa+1}$ bundle   of $P^\upsilon_\Sg$; a local section is holomorphic with respect to $\hg_J$ if and only if it is $J$-pseudo-holomorphic (see also   \cite[Corollary 1.4]{Te2}).

The pair $(\id_{P^-},\tilde\sigma_+)$ defines an element $f\in\Aut^0(P^\upsilon)_a$, so $\Sg'\edf f(\Sg)$ also belongs to $\mathscr{S}_a$ by Lemma \ref{CinftyEGamma}. Therefore $f$ becomes a ${\cal C}^\infty$ bundle isomorphism   $P^\upsilon_{\Sg}\to P^\upsilon_{\Sg'}$.  The  direct image $\hg^\upsilon\edf f(\hg_J)$ will be a holomorphic structure on the  underlying ${\cal C}^{\kappa+1}$ bundle  of $P^\upsilon_{\Sg'}$ which   coincides with $\hg_\pm$ on $U^\pm$ via $o^\pm$, because $f(J)$ coincides with $J'^\pm$ on $U^\pm$. 

We now prove the unicity property claimed  in (1): Let $\hg'$, $\hg''$ be    holomorphic structures (see \cite[Definition 1.3]{Te2}) on  the topological bundle $P^\upsilon$  which extend $\hg^\pm$. Let $\tau': V'\to P^\upsilon_{V'}$, $\tau'':  V'' \to P^\upsilon_{V''}$ be local sections which are holomorphic with respect to $\hg'$, respectively $\hg''$.  The restrictions
$$
\tau': V'\cap  U^\pm\to   P^\pm_{V'\cap  U^\pm}, \ \tau'':V''\cap  U^\pm\to P^\pm_{V''\cap  U^\pm}	$$
are  holomorphic sections  of the holomorphic bundle $(P^\pm_{U^\pm},\hg^\pm)$. The corresponding comparison map
$$
g_{\tau'\tau''}:V'\cap V''\to G 
$$
is continuous on the whole $V'\cap V''$ and holomorphic on both $V'\cap V''\cap U^{\pm}$, i.e. on $(V'\cap V'')\setminus S$. By the extension Theorem \ref{acrossS} it follows that $g_{\tau'\tau''}$ is holomorphic on the whole $V'\cap V''$, so $\tau'$, $\tau''$ are holomorphically compatible, so they belong to the same holomorphic structure on $P^\upsilon$. 	
\vspace{2mm}

(2) Any $\hg^\upsilon$-holomorphic local section $\tau:V\to P^\upsilon$ is a section of class ${\cal C}^{\kappa+1}$ of $P^{\Sg'}$, so, since $\Sg'\in \mathscr{S}_a$, its restrictions to $V\cap\bar U^\pm$ will be of class ${\cal C}^{\kappa+1}$.
 \end{proof}

 Theorem \ref{new-main-non-sep-G} follows easily from Theorem \ref{new-main-sep-G} taking into account that: 
 \begin{enumerate}
 \item 	Any oriented smooth hypersurface $S\subset U$ separates a sufficiently small open neighborhood $U_0\supset S$ of $S$ in $U$.
 \item The problem has a local character with respect to $S$.
 \end{enumerate}
 Theorems \ref{new-main-non-sep}, \ref{new-main}  follow from Theorems \ref{new-main-non-sep-G}, respectively \ref{new-main-sep-G} taking $G=\GL(r,\C)$. Using Theorem \ref{new-main} we can prove now Corollary \ref{loc-free-sheaves}.
\begin{proof}(of Corollary \ref{loc-free-sheaves}):

By the extension Theorem \ref{acrossS}, the locally free sheaf ${\cal E}^\upsilon$ of locally defined holomorphic sections of $(E^\upsilon,\hg^\upsilon)$ coincides with the sheaf
$$
U\stackrel{\hb{\tiny open}}{\supset} W\mapsto \{f\in \Gamma^0(W,E^\upsilon)|\ \sigma|_{W\setminus S} \hb{ is }\hg^\upsilon-\hb{holomorphic}\}, 
$$
which (taking into account that $\hg^\upsilon$ extends $\hg^\pm$ and the definition of $E^\upsilon$) coincides with the sheaf defined by formula (\ref{first-sheaf}).  By Theorem \ref{new-main} (2), the restrictions of any local $\hg$-holomorphic section $f\in \Gamma^0(W,E^\upsilon)$ to $W\cap\bar U^\pm$ are of class ${\cal C}^{\kappa+1}$ up to the boundary, so formulae  (\ref{first-sheaf}), (\ref{second-sheaf}) define the same sheaf, as claimed.
\end{proof}

Corollary \ref{loc-free-sheaves-non-sep} follows from Corollary \ref{loc-free-sheaves} taking into account again that any oriented smooth hypersurface $S\subset U$ is locally (with respect to $S$) separating.

\section{Isomorphism theorems. Interpretation in terms  of framed bundles}\label{iso-moduli-section}

In this section we come back to the objects considered in section \ref{Iso-Moduli-G-section}: let $X$ be a {\it closed} complex manifold, $S\subset X$   a closed, smooth, oriented real hypersurface, $P$ a principal $G$ bundle on $X$ and $\widehat{P}$ its pull back to $\widehat{X}_S$.

\subsection{The proofs of the isomorphism theorems}
\label{proofs-section}

We begin with the following remark which will be used in  the proof of Theorem \ref{iso-moduli-G-bundles-th}:

\begin{re}\label{cont-desc}
Any gauge transformation $\fg\in   {\cal G}_{\p\widehat{X}}(\widehat{P})$ descends to a continuous gauge transformation $\check{\fg}$ on $P$ which  is of class ${\cal C}^{\kappa+1}$ on $X\setminus S$ and is identity on $S$.	
\end{re}

\begin{proof} (of  Theorem \ref{iso-moduli-G-bundles-th}). The second claim of the theorem is a special case of the first, so we will prove only the first.\\
{\it Injectivity:} Let $J_1$, $J_2\in {\cal I}^\kappa_P$ and $\fg\in {\cal G}_{\p\widehat{X}}^{\widehat{P}}=\Gamma^{\kappa+1}(\widehat{X},\iota(\widehat{P}))$ be such that $\widehat J_2= \widehat J_1\cdot\fg$, where $\widehat J_i$ is the pull back of $J_i$ to $\widehat{P}$.  It follows that $J_2= J_1\cdot \check{\fg}$ on $X\setminus S$. 

Let $G\times G$ act on $G$ from the left by
$$
\mu((a,b),g)=agb^{-1}
$$
and  note that $\iota(P)\edf P\times_\iota G$ can be identified with the associated bundle 
$$
\mu(P\times_X P)\edf (P\times_X P)\times_\mu G.
$$
The pair $(J_1,J_2)$ defines an integrable bundle ACS of class ${\cal C}^\kappa$ on $P\times_X P$, so a  holomorphic structure $\hg_{(J_1,J_2)}$  on the principal $G\times G$-bundle $P\times_X P$. The known   
\begin{prop}  $J_2= J_1\cdot \check{\fg}$ on $X\setminus S$.	
\end{prop}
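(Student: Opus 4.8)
The plan is to obtain this identity as a pure functoriality statement: it is nothing more than the restriction of the hypothesis $\widehat J_2=\widehat J_1\cdot\fg$ to the interior of $\widehat X_S$, transported through the canonical identification of $\widehat P$ over that interior with $P$ over $X\setminus S$. No analysis is involved, only a naturality check for the gauge action on bundle ACS's.

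First I would recall from section \ref{non-sep-section} that the cutting map $p_S^X:\widehat X_S\to X$ restricts to a biholomorphism $\widehat X_S\setminus\widehat S\textmap{\simeq} X\setminus S$, and that $\widehat P\edf (p_S^X)^*(P)$. Consequently, over the interior there is a canonical $G$-equivariant isomorphism of principal bundles $\Phi:\widehat P_{\widehat X_S\setminus\widehat S}\textmap{\simeq} P_{X\setminus S}$ covering this biholomorphism. By the definition of the pulled-back bundle ACS, $\Phi$ carries $\widehat J_i$ to $J_i$ over the interior for $i=1,2$; and, by the very construction of $\check\fg$ in Remark \ref{cont-desc}, $\Phi$ carries the restriction $\fg|_{\widehat X_S\setminus\widehat S}$ to $\check\fg|_{X\setminus S}$ (this restriction is precisely how $\check\fg$ is defined, prior to checking its continuity across $S$).

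Second, I would invoke that the gauge action $(J,\fg)\mapsto J\cdot\fg$ on bundle ACS's is defined fiberwise (the gauge transformation acts as a bundle automorphism and $J\cdot\fg$ is the pullback of $J$ along it), hence is natural with respect to $G$-equivariant bundle isomorphisms: if $\Phi$ transports $J$ to $J'$ and $\fg$ to $\fg'$, it transports $J\cdot\fg$ to $J'\cdot\fg'$. Restricting the hypothesis $\widehat J_2=\widehat J_1\cdot\fg$ to $\widehat X_S\setminus\widehat S$ and transporting through $\Phi$ then yields exactly $J_2=J_1\cdot\check\fg$ on $X\setminus S$.

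The one point that deserves a word of care—and hence the only conceivable obstacle—is the compatibility of $\Phi$ with the gauge action. Since a bundle ACS is encoded by its $(0,1)$-part and $\Phi$ covers a \emph{biholomorphism} (not merely a diffeomorphism), $\Phi$ intertwines the $(0,1)$-decompositions, so the pointwise $\Ad$-twist defining $J\cdot\fg$ is preserved; this is the step where holomorphicity of $p_S^X$ on the interior, rather than mere smoothness, is used. Everything else is bookkeeping.
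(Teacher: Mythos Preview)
Your argument is correct and matches the paper's treatment: the paper simply states ``It follows that $J_2= J_1\cdot \check{\fg}$ on $X\setminus S$'' immediately after the hypothesis $\widehat J_2=\widehat J_1\cdot\fg$, treating this as an obvious restriction through the identification $\widehat X_S\setminus\widehat S\simeq X\setminus S$. You have faithfully unpacked that one-line implication via naturality of the gauge action under the pullback isomorphism, which is exactly the content the paper takes for granted.
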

\noindent is equivalent to:  
\begin{prop} $\check{\fg}$, regarded as a section in the  bundle $\mu(P\times_X P)$, is holomorphic   with respect to the holomorphic structure (induced by) $\hg_{(J_1,J_2)}$, on $X\setminus S$.
	
\end{prop}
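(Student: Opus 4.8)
The plan is to establish the equivalence of the two statements by working locally on $X\setminus S$, reducing both to the holomorphicity of one and the same $G$-valued comparison map. First I would make the identification $\iota(P)\cong\mu(P\times_X P)$ explicit at the level of sections. Writing the gauge transformation as $\fg(p)=p\cdot g_\fg(p)$ with $g_\fg:P\to G$ satisfying $g_\fg(p\cdot h)=h^{-1}g_\fg(p)h$, I claim $\check{\fg}$ corresponds to the $(G\times G)$-equivariant map $\Psi:P\times_X P\to G$ given by $\Psi(p_1,p_2)\edf g_\fg(p_1)\,c$, where $c\in G$ is determined by $p_2=p_1 c$. A direct check gives $\Psi((p_1,p_2)\cdot(a,b))=a^{-1}\Psi(p_1,p_2)b=\mu((a,b)^{-1},\Psi(p_1,p_2))$, so $\Psi$ is exactly the equivariant map representing a section of $\mu(P\times_X P)$, and $\Psi(p,p)=g_\fg(p)$ shows it restricts on the diagonal to the section $\check{\fg}$ of $\iota(P)$.

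Next I would use the holomorphic structure $\hg_{(J_1,J_2)}$ already produced from the integrable product ACS $(J_1,J_2)$. Over any sufficiently small $U\subset X\setminus S$, the Newlander--Nirenberg theorem for principal bundles (\cite{Te2}, Theorem \ref{NNG}) provides $J_i$-pseudo-holomorphic frames $\tau_i:U\to P$, and $(\tau_1,\tau_2)$ is then a $\hg_{(J_1,J_2)}$-holomorphic frame of $P\times_X P$. Since $G$ is a complex Lie group on which $G\times G$ acts holomorphically through $\mu$, the bundle $\mu(P\times_X P)$ is a holomorphic fiber bundle, and a section is holomorphic precisely when its representing map in such a frame is a holomorphic $G$-valued map. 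Writing $\tau_2=\tau_1\cdot c$ for $c:U\to G$, the representing map of $\check{\fg}$ in the frame $(\tau_1,\tau_2)$ is $\phi\edf\Psi(\tau_1,\tau_2)=(g_\fg\circ\tau_1)\,c$; hence the second statement holds on $U$ if and only if $\phi$ is holomorphic.

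It then remains to see that the first statement encodes the same condition. Using the paper's convention for the gauge action on bundle ACS (formula (\ref{Jsigma-J=lgsigma}) of section \ref{ACSsection}), the equality $J_2=J_1\cdot\check{\fg}$ on $X\setminus S$ says exactly that $\fg$ intertwines $J_1$ and $J_2$, i.e. carries $J_1$-pseudo-holomorphic frames to $J_2$-pseudo-holomorphic ones. Applying this to $\tau_1$ and comparing with $\tau_2$ through $c$ yields once more the holomorphicity of $\phi=(g_\fg\circ\tau_1)\,c$. As both conditions are local on $X\setminus S$ and reduce, in every $(\tau_1,\tau_2)$-frame, to the holomorphicity of the same $\phi$, the two statements are equivalent on $X\setminus S$; together with the already-established equality $J_2=J_1\cdot\check{\fg}$ this proves the second statement.

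The step I expect to be the main obstacle is this last identification: one must match the paper's precise side and sign conventions for $J\cdot\sigma$ and for the associated operator $\bar\lg_J$ so that ``$J_2=J_1\cdot\check{\fg}$'' and ``$\phi$ holomorphic'' are recognized as literally the same first-order equation on $\phi$, not merely parallel statements. The remainder is the standard principle that a section of an associated holomorphic fiber bundle is holomorphic if and only if its equivariant representing map is, here applied to the holomorphic $\mu$-action of $G\times G$ on $G$.
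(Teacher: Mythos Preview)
The paper does not actually give a proof of this statement: in the injectivity argument for Theorem \ref{iso-moduli-G-bundles-th} it simply asserts that the known Property ``$J_2=J_1\cdot\check{\fg}$ on $X\setminus S$'' \emph{is equivalent to} the Property you are asked to prove, and then immediately applies Corollary \ref{ContSect} and Corollary \ref{kappa-regularity}. So there is no argument to compare against; the equivalence is treated as a standard fact about sections of associated bundles.

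Your proposal supplies exactly the justification the paper omits, and the approach is correct: make the identification $\iota(P)\simeq\mu(P\times_X P)$ explicit via the equivariant map $\Psi$, choose local $J_i$-pseudo-holomorphic frames $\tau_i$ so that $(\tau_1,\tau_2)$ is $\hg_{(J_1,J_2)}$-holomorphic, and observe that both conditions reduce to holomorphicity of the same comparison map $\phi=(g_{\check\fg}\circ\tau_1)\,c$. One minor convention check: with the paper's definition $J\cdot\sigma=\tilde\sigma_*^{-1}\circ J\circ\tilde\sigma_*$ (section \ref{ACSsection}), the relation $J_2=J_1\cdot\check{\fg}$ means $\tilde{\check{\fg}}:(P,J_2)\to(P,J_1)$ is pseudo-holomorphic, so it carries $J_2$-frames to $J_1$-frames rather than the reverse; tracing this through still gives $\tilde{\check{\fg}}\circ\tau_2=\tau_1\cdot\phi$ with the same $\phi$, so your conclusion is unaffected. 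You correctly flagged this as the only real obstacle.
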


By Remark \ref{cont-desc}, $\check{\fg}$ is continuous on $X$, and by Corollary \ref{ContSect}  it follows that $\check{\fg}$ is in fact holomorphic with respect to  $\hg_{(J_1,J_2)}$ on whole $X$. Using Corollary \ref{kappa-regularity} we infer that $\check{\fg}$ is of class ${\cal C}^{\kappa+1}$ on $X$, and the relation $J_2=J_1\cdot \check{\fg}$ extends to $X$.
\vspace{2mm}\\
{\it Surjectivity:} Let $\Jg\in {\cal I}^\kappa_{\widehat{P}\downarrow}$ be a descendable formally integrable bundle ACS on $\widehat{P}$.  We have to prove the existence of a pair $(J,\sg)\in {\cal I}^\kappa_P\times  {\cal G}_{\p\widehat{X}}^{\widehat{P}}$  such that $\Jg\cdot \sg=\widehat J$.

 Let 
$$
S\times\R\textmap{\nu\simeq }U\hookrightarrow X
$$
be a tubular neighborhood of $S$ which is compatible with the orientation of its normal bundle associated with the complex orientation of $U$ and the fixed orientation of  $S$. Put 
$$U^\pm=\nu(S\times\R^*_\pm),\ \bar U^\pm=\nu(S\times\R_\pm).$$
 The disjoint union $\bar U^-\coprod\bar U^+=\widehat U_S$ is a neighborhood of $\widehat S=\p\widehat X_S$ in $\widehat X_S$, so the restriction of $\Jg$ to this neighborhood gives   formally integrable bundle ACS $J^\pm$ of class ${\cal C}^\kappa$ on $P_{\bar U^\pm}$. The       assumption ``$\Jg$ is descendable" is equivalent to the condition $J^-_S=J^+_S$. 
 
 By Theorem \ref{mainGnew} there exists  $\sigma_+\in \Gamma^{\kappa+1}(\bar U^+, \iota(P_{\bar U^+}))$ which is constantly $e$ on $S\cup  \nu([1,+\infty))$ and an integrable bundle ACS $J_0$ of class ${\cal C}^\kappa$ on $P_U$ which coincides with $J^-$ on $E_{\bar U^-}$ and with $J^+\cdot \sigma_+$ on $P_{\bar U^+}$.
 
 We define  $\sigma_0\in\Gamma^{\kappa+1}(\widehat U_S,\iota(\widehat{P}))$ using the constant section $e$ on $\bar U^-$ and $\sigma_+$ on $\bar U^+$. Since $\sigma_0$ is  constantly $e$ above $\widehat U_S\setminus \widehat{\nu(-1,1)}_S$, it extends to $\widehat{X}$ giving  a global   section $\sg\in \Gamma^{\kappa+1}(\widehat{X},\iota(\widehat{P}))$ which is constantly $e$ on $S$ and satisfying
 $$
 \Jg\cdot \sg =\widehat J
 $$
 for an integrable bundle ACS $J$ of class ${\cal C}^\kappa$ on $P$ which coincides with $J_0$ on $P_U$.   
 
The pull-back map $J\mapsto \widehat J$ is obviously continuous. Using Theorem \ref{mainGnew} we also infer that, for $\kappa\ne\infty$, $\sigma_+$ (so also $\sigma_0$  and $\sg$) can be chosen to depend continuously on $(J^-,J^+)$, so, with this choice, $J$ will depend continuously on $\Jg$. This proves the continuity of the inverse of the pull-back map.
 \end{proof}

 Remark \ref{kappa=infty} concerning the case $\kappa=\infty$ follows from the following simple
 \begin{lm}\label{XYXiYi}
Let $X$, $Y$ be sets and, for any $i\in I$, let $X_i$, $Y_i$ be topological spaces, and $f_i:X_i\to Y_i$,  $a_i:X\to X_i$, $b_i:Y\to Y_i$ be maps such that the diagrams 
 $$\begin{tikzcd}
 X_i \ar[r, "f_i"]& Y_i\\
 X\ar[r,"f"]\ar[u, "a_i"] &Y	\ar[u,"b_i"']
 \end{tikzcd}$$
are commutative. Endow $X$ ($Y$) with the coarsest topology which makes all maps $a_i$ (respectively $b_i$) continuous. Then
\begin{enumerate}
\item If all $f_i$ are continuous, then $f$ is continuous.
\item If all $f_i$ are homeomorphisms and $f$ is bijective, then $f$ is a homeomorphism.	
\end{enumerate}	
 \end{lm}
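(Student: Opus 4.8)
The plan is to invoke the universal property of the initial (coarsest) topology twice, once for each direction. Recall that if a set $X$ carries the coarsest topology making a family of maps $(a_i:X\to X_i)_{i\in I}$ continuous, then a map $g:Z\to X$ from an arbitrary topological space $Z$ is continuous if and only if $a_i\circ g$ is continuous for every $i$. The only input from the hypotheses is the commutativity relation $f_i\circ a_i=b_i\circ f$ for all $i$, which I will combine with this universal property.

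For part (1), I would argue that to check continuity of $f:X\to Y$, where $Y$ carries the coarsest topology making all the $b_i$ continuous, it suffices by the universal property to verify that $b_i\circ f$ is continuous for each $i$. By commutativity, $b_i\circ f=f_i\circ a_i$; since $a_i$ is continuous (being one of the defining maps of the topology on $X$) and $f_i$ is continuous by hypothesis, this composition is continuous. As this holds for every $i$, the map $f$ is continuous.

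For part (2), first note that $f$ is continuous by part (1), so it remains to show that the inverse $f^{-1}:Y\to X$ (well defined since $f$ is bijective) is continuous. Again by the universal property, this time for the topology on $X$, it suffices to check that $a_i\circ f^{-1}$ is continuous for each $i$. Here I would use that each $f_i$ is invertible: from $f_i\circ a_i=b_i\circ f$ one obtains $a_i=f_i^{-1}\circ b_i\circ f$, hence $a_i\circ f^{-1}=f_i^{-1}\circ b_i$. Since $b_i$ is continuous and $f_i^{-1}$ is continuous (as $f_i$ is a homeomorphism), this composition is continuous for every $i$, whence $f^{-1}$ is continuous and $f$ is a homeomorphism.

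The argument is a routine diagram chase, so there is essentially no obstacle; the only point requiring care is that in part (2) one must exploit the invertibility of the \emph{individual} maps $f_i$ in order to rewrite $a_i\circ f^{-1}$ as the composite $f_i^{-1}\circ b_i$ of continuous maps, rather than attempting to verify the continuity of $f^{-1}$ directly.
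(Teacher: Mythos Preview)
Your proof is correct and follows essentially the same approach as the paper: both parts are handled via the universal property of the initial topology, and for part (2) you derive the key identity $a_i\circ f^{-1}=f_i^{-1}\circ b_i$ just as the paper does (the paper writes it as $a_i\circ g=g_i\circ b_i$ and then invokes part (1) applied to the reversed diagram).
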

 \begin{proof}
 (1) follows from the universal property of the initial topology on $Y$ defined by the family of maps $(b_i)_{i\in I}$.	For (2) put $g\edf f^{-1}$, $g_i\edf f_i^{-1}$ and note that for any $i\in I$, 
 $$f_i\circ a_i\circ g=b_i\circ f\circ g=b_i=f_i\circ g_i\circ b_i,$$
 so, since $f_i$ is injective,  we have $a_i\circ g=g_i\circ b_i$. Therefore, since all $g_i$ are continuous, it follows by (1) that $g$ is continous. 
 \end{proof}

 Theorem \ref{iso-moduli-vector-bundles-th}  is a special case of Theorem \ref{iso-moduli-G-bundles-th}.

\subsection{Interpretation in terms of framed bundles} 
\label{abstract-interpr}

The moduli spaces ${\cal M}_S(P)$, ${\cal M}_{\p\bar X}(P)$  intervening    in Theorem \ref{iso-moduli-G-bundles-th} have ``abstract" interpretations in terms of isomorphism classes of framed (formally) holomorphic bundles: 
 \begin{dt}\label{abstract-framed}
 Let $X$ be a closed complex manifold,  $S\subset X$ a closed real hypersurface and  $\Phi$  a fixed ${\cal C}^\infty$  $G$-bundle on $S$ (a framing bundle).

 An $S$-framed $G$-bundle of type $(\Phi, \kappa+1)$ on $X$  is a pair $({\cal P},\theta)$, where ${\cal P}$ is a holomorphic  $G$-bundle on $X$ and $\theta:\Phi\to {\cal P}_S$ is a  bundle isomorphism of class ${\cal C}^{\kappa+1}$ on $S$. 	
 
 An    isomorphism $({\cal P},\theta)\to ({\cal P}',\theta')$ of $S$-framed holomorphic bundles   of type $(\Phi, \kappa+1)$ is a  holomorphic   isomorphism $f:{\cal P}\to  {\cal P}'$ such that $f_S\circ\theta=\theta'$. 
 
Let $\Phi$ be ${\cal C}^\infty$ $G$-bundle on  the boundary $\p\bar X
$ of a compact complex manifold with boundary $\bar X$. 

A boundary framed  formally holomorphic   bundle  of type $(\Phi, \kappa+1)$ on $\bar X$ is a triple $(P,J,\theta)$, where $P$ is a ${\cal C}^\infty$ $G$-bundle on $\bar X$, $J$ is a formally integrable  bundle ACS of class ${\cal C}^\kappa$ on $P$, and $\theta:\Phi\to P_{\p\bar X}$ is a  bundle isomorphism of class ${\cal C}^{\kappa+1}$ on $\p\bar X$. 

An  isomorphism $(P,J,\theta)\to (P',J',\theta')$ of  boundary framed formally holomorphic bundles   of type $(\Phi, \kappa+1)$ is a  pseudo-holomorphic   isomorphism $f:(P,J)\to (P',J')$ of class ${\cal C}^{\kappa+1}$ on $\bar X$ such that $f_{\p\bar X}\circ\theta=\theta'$. 
 \end{dt}

 In the special case when $G=\GL(r,\C)$ and  $\Phi=S\times\C^r$, one recovers the notions of an $S$-framed, respectively boundary framed bundle as used in \cite[Theorem 1']{Do} and explained in the introduction of this article.
 
Comparing the two definitions note that, whereas a holomorphic $G$-bundle on a closed complex manifold has a canonical ${\cal C}^\infty$-structure and any holomorphic isomorphism of holomorphic bundles is ${\cal C}^\infty$, this is no longer true for formally holomorphic bundles  and formally holomorphic isomorphisms on manifolds with boundary.
 
Let $P$ be a ${\cal C}^\infty$ $G$-bundle on $X$,  $\Phi$ a ${\cal C}^\infty$ $G$-bundle on $S$ which is isomorphic to $P_S$, and $\theta_0:\Phi\to P_S$ a fixed bundle isomorphism of class ${\cal C}^\infty$.

By Theorem \ref{NNG} (see also   \cite{Te2}), a bundle ACS $J$ of class ${\cal C}^\kappa$ on $P$ defines a holomorphic reduction $\hg_J$ of the underlying  ${\cal C}^{\kappa+1}$-bundle of $P$.  We obtain a holomorphic bundle ${\cal P}_J=(P,\hg_J)$ and the identity isomorphism $\id_P:P\to {\cal P}_J$ is an isomorphism of class ${\cal C}^{\kappa+1}$ between ${\cal C}^\infty$-bundles, so $\theta_0:\Phi\to P_S$ becomes a bundle isomorphism of class ${\cal C}^{\kappa+1}$ if $P_S$ is endowed with the ${\cal C}^\infty$ structure induced by the holomorphic structure of ${\cal P}_J$.
 The pair $({\cal P}_J,\theta_0)$ is an $S$-framed holomorphic bundle  of type $(\Phi,{\kappa+1})$ on $X$.

 Similarly, let  $P$ be a ${\cal C}^\infty$ $G$-bundle on $\bar X$, $\Phi$ be a ${\cal C}^\infty$ $G$-bundle on $\p\bar X$ which is isomorphic to $P_{\p\bar X}$ and $\theta_0:\Phi\to P_{\p\bar X}$ a fixed bundle isomorphism of class ${\cal C}^\infty$.
  \begin{pr}\label{abstr-to-concrete}
   With the notations and definitions above
  \begin{enumerate}
  \item Let $P$ be a ${\cal C}^\infty$ $G$-bundle on $X$. The assignment
  $$
  J\cdot {\cal G}_S^P\mapsto \hb{ the isomorphism class of } ({\cal P}_{J},\theta_0)
  $$
  gives a bijection  between   the moduli space ${\cal M}_S(P)$   and the set ${\cal M}_S(P,\theta_0)$ of isomorphism  classes of $S$-framed holomorphic bundles of type $(\Phi,{\kappa+1})$  on $X$ which are topologically isomorphic to $(P,\theta_0)$.
  \item Let $P$ be a ${\cal C}^\infty$ $G$-bundle on $\bar X$. The assignment
  $$
J\cdot   {\cal G}_{\p\bar X}^P \mapsto \hb{ the isomorphism class of } (P,J,\theta_0)
  $$
  gives a bijection  between   the moduli space ${\cal M}_{\p\bar X}(P)$ and the set ${\cal M}_{\p\bar X}(P,\theta_0)$ of isomorphism  classes of boundary framed holomorphic bundles of type $(\Phi,{\kappa+1})$  on $\bar X$ which are topologically isomorphic to $(P,\theta_0)$.
	
  \end{enumerate}

 \end{pr}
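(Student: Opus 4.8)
The plan is to prove both statements by the same three-step scheme---well-definedness, injectivity, surjectivity---running (1) and (2) in parallel and flagging where the closed case and the boundary case diverge. Throughout I write $\theta_0$ also for the framing $(\id_P)_S\circ\theta_0$ it induces on ${\cal P}_J$ (resp. on $(P,J)$), which is of class ${\cal C}^{\kappa+1}$ by the discussion preceding the proposition. \emph{Well-definedness} is immediate: if $J_2=J_1\cdot f$ with $f\in{\cal G}_S^P$ (resp. $f\in{\cal G}_{\p\bar X}^P$), then, by the definition of the gauge action, $f$ is a pseudo-holomorphic isomorphism intertwining $J_1$ and $J_2$ which restricts to the identity on $S$ (resp. $\p\bar X$); since $f_S\circ\theta_0=\theta_0$ it is an isomorphism of framed bundles, so the assignment depends only on the gauge orbit. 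Moreover $\id_P$ exhibits $({\cal P}_J,\theta_0)$ (resp. $(P,J,\theta_0)$) as topologically isomorphic to $(P,\theta_0)$, so the map indeed lands in ${\cal M}_S(P,\theta_0)$ (resp. ${\cal M}_{\p\bar X}(P,\theta_0)$).

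For \emph{injectivity}, suppose the two images coincide, so there is a framed isomorphism $f$ between them; because $\theta_0$ is invertible, $f_S=\id$ (resp. $f_{\p\bar X}=\id$). In case (2) such an $f$ is, by Definition \ref{abstract-framed}, already a pseudo-holomorphic isomorphism $(P,J_1)\to(P,J_2)$ of class ${\cal C}^{\kappa+1}$, hence an element of ${\cal G}_{\p\bar X}^P$ intertwining $J_1$ and $J_2$, so the orbits agree. In case (1), $f$ is a priori only a holomorphic isomorphism of the bundles ${\cal P}_{J_i}$, whose holomorphic structures have coefficients of class ${\cal C}^\kappa$; here I invoke the regularity Corollary \ref{kappa-regularity} (exactly as in the injectivity argument of Theorem \ref{iso-moduli-G-bundles-th}) to conclude that $f$, read through $\id_P$ as a section of $\iota(P)$, is of class ${\cal C}^{\kappa+1}$, hence lies in ${\cal G}_S^P$, and again $J_2=J_1\cdot f$.

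The substance of the proof is \emph{surjectivity}. Let $(Q,\theta)$ (resp. $(Q,J_Q,\theta)$) be a framed (formally) holomorphic bundle of type $(\Phi,\kappa+1)$ topologically isomorphic to $(P,\theta_0)$; the topological isomorphism gives in particular a continuous $G$-bundle isomorphism of the underlying bundles, so $P$ and the ${\cal C}^\infty$-bundle underlying $Q$ are ${\cal C}^\infty$-isomorphic, say by $g_0$. I aim to realise $(Q,\theta)$ as $({\cal P}_J,\theta_0)$, i.e. to produce $J\in{\cal I}^\kappa_P$ together with a framed isomorphism onto $(Q,\theta)$. First I build a ${\cal C}^{\kappa+1}$ bundle isomorphism $F\colon P\to Q$ with $F_S\circ\theta_0=\theta$. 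Since this condition forces $F_S=\theta\circ\theta_0^{-1}$, a ${\cal C}^{\kappa+1}$ isomorphism over $S$, I set $a\edf (g_0)_S^{-1}\circ\theta\circ\theta_0^{-1}$, a ${\cal C}^{\kappa+1}$ automorphism of $P_S$, extend it by the extension Corollary \ref{extensionCoroCkappa} to a ${\cal C}^{\kappa+1}$ section $A$ of $\iota(P)$ over $X$ (resp. $\bar X$), and put $F\edf g_0\circ A$, so that $F_S=\theta\circ\theta_0^{-1}$ as wanted. Finally I set $J\edf F^*(J_Q)$, where $J_Q$ is the bundle ACS of $Q$ (of class ${\cal C}^\infty$ in case (1), of class ${\cal C}^\kappa$ in case (2)). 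The pull-back of a formally integrable ACS of class ${\cal C}^\kappa$ by the ${\cal C}^{\kappa+1}$ isomorphism $F$ is again formally integrable of class ${\cal C}^\kappa$, so $J\in{\cal I}^\kappa_P$; by Theorem \ref{NNG} the map $F$ becomes a (pseudo-)holomorphic isomorphism onto $Q$, and $F_S\circ\theta_0=\theta$ makes it a framed isomorphism. Thus the orbit of $J$ maps to the class of $(Q,\theta)$.

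The main obstacle is the bookkeeping of regularity classes along $S$ in the surjectivity step: the reference isomorphism $g_0$ is only ${\cal C}^\infty$ while the prescribed framing $\theta$ is merely ${\cal C}^{\kappa+1}$, so the correction $a$ must be extended \emph{within} the ${\cal C}^{\kappa+1}$ category, without gaining regularity, which is precisely what Corollary \ref{extensionCoroCkappa} supplies, and $F^*J_Q$ must be verified to remain ${\cal C}^\kappa$ and formally integrable. In case (1) one must also keep straight the distinction, stressed in the remark preceding the proposition, between the canonical ${\cal C}^\infty$-structure of a holomorphic bundle on the closed manifold $X$ and the ${\cal C}^{\kappa+1}$-identification $\id_P\colon P\to{\cal P}_J$; it is this distinction that makes the regularity Corollary \ref{kappa-regularity} indispensable for injectivity. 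The boundary case (2) is formally easier, since isomorphisms of framed formally holomorphic bundles are ${\cal C}^{\kappa+1}$ by definition, so no elliptic regularity enters there.
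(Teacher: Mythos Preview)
Your overall architecture---well-definedness, injectivity via Corollary~\ref{kappa-regularity}, surjectivity by correcting a ${\cal C}^\infty$ isomorphism along $S$---matches the paper's exactly, and the injectivity and well-definedness steps are fine.

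There is, however, a genuine gap in the surjectivity step. You set $a=(g_0)_S^{-1}\circ\theta\circ\theta_0^{-1}$, a section of the \emph{group} bundle $\iota(P_S)=P_S\times_\iota G$, and then invoke Corollary~\ref{extensionCoroCkappa} to extend it to a ${\cal C}^{\kappa+1}$ section $A$ of $\iota(P)$ over $X$. But Corollary~\ref{extensionCoroCkappa} is stated and proved for sections of \emph{vector} bundles; it does not apply to sections of $\iota(P)$, whose fibre is the group $G$. Extending an arbitrary section of $\iota(P_S)$ even continuously can be obstructed (think of $G=\C^*$ and a map $S\to\C^*$ of nonzero degree), and even when a continuous extension exists, producing one of class ${\cal C}^{\kappa+1}$ is not automatic.

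The paper closes this gap with the exp/log trick of Proposition~\ref{fiberwise-exp}: starting from the \emph{topological} framed isomorphism $g:P\to Q$ with $g_S\circ\theta_0=\theta$ (which exists by hypothesis), one chooses the ${\cal C}^\infty$ isomorphism $g_0$ close to $g$ in ${\cal C}^0$, so that $a=(g_0^{-1}g)_S$ takes values in the disk neighbourhood $\iota(P_S)_0$ of the identity. There one writes $a=\exp(\lambda)$ with $\lambda\in\Gamma^{\kappa+1}(S,\Ad(P))$, extends $\lambda$ to $\mu\in\Gamma^{\kappa+1}(X,\Ad(P))$ by Corollary~\ref{extensionCoroCkappa} (now legitimately, since $\Ad(P)$ is a vector bundle), and sets $F=g_0\exp(\mu)$. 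Your argument becomes correct once you insert this step; the rest stands.
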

 
 \begin{proof}

 (1) Injectivity: Let $J$, $J'\in {\cal J}^\kappa_P$.  An isomorphism $f:({\cal P}_J,\theta_0)\to ({\cal P}_{J'},\theta_0)$ in the sense of Definition \ref{abstract-framed}  is an holomorphic isomorphism $f:{\cal P}_J\to {\cal P}_{J'}$ such that $f_S\circ\theta_0=\theta_0$, i.e. such that $f_S=\id_{P_S}$. On the other hand, using Corollary \ref{kappa-regularity} as in the proof of  Theorem \ref{iso-moduli-G-bundles-th}, we see that, since $J$, $J'$ are of of class ${\cal C}^\kappa$,  $f$ is of class ${\cal C}^{\kappa+1}$. Therefore $f\in {\cal G}_S^P$. On the other hand, the holomorphy of $f:{\cal P}_J\to {\cal P}_{J'}$ means $J=J'\cdot f$, so $J\cdot {\cal G}_S^P= J'\cdot {\cal G}_S^P$.
  
 For the surjectivity, let $({\cal P},\theta)$ be an $S$-framed holomorphic $G$-bundle of type $(\Phi, \kappa+1)$ on $X$ which is topologically isomorphic to   $(P,\theta_0)$. Therefore  there exists a topological bundle isomorphism $g: P\to {\cal P}$ such that $g_S\circ\theta_0=\theta$. 

Recall that the differentiable and topological classifications of principal bundles on differentiable manifolds coincide, so $P$, ${\cal P}$ are also isomorphic as differentiable bundles. Let $g_0:P\to {\cal P}$ be a ${\cal C}^\infty$ isomorphism which is sufficiently close to $g$ in the ${\cal C}^0$-topology such that $(g_0^{-1}\circ g)_S$ takes values in the disk bundle $\iota(P_S)_0$ obtained  by applying Proposition \ref{fiberwise-exp} to the bundle $P_S$. Since $(g_0^{-1}\circ g)_S=g_{0S}^{-1}\circ \theta\circ\theta_0^{-1}$ is of class  ${\cal C}^{\kappa+1}$, it follows by this proposition that $(g_0^{-1}\circ g)$ can be written as $\exp(\lambda)$ for a section $\lambda\in \Gamma^{\kappa+1}(S,\Ad(P))$.

 By Corollary \ref{extensionCoroCkappa} (for $m=0$) there exists an extension $\mu\in \Gamma^{\kappa+1}(X,\Ad(P))$ of $\lambda$. The bundle isomorphism $f=g_0\exp(\mu): P\to {\cal P}$ is of class ${\cal C}^{\kappa+1}$ and extends $g_S=\theta\circ\theta_0^{-1}$. The pull back $J\edf f^{-1}(J_{\cal P})$ of the canonical bundle ACS $J_{\cal P}$ of ${\cal P}$ is an integrable bundle ACS of class ${\cal C}^\kappa$ on $P$ and $f$ gives and isomorphism $(P_J,\theta_0)\to ({\cal P},\theta)$ of $S$-framed holomorphic $G$-bundles of type $(\Phi,\kappa+1)$ on $X$.
 \vspace{2mm}\\
 (2) The injectivity is clear. For the surjectivity let $(Q,I,\theta)$ be a  boundary framed formally holomorphic bundle of type $(\Phi, \kappa+1)$ on $\bar X$ which is topologically isomorphic to   $(P,\theta_0)$. Therefore there exists a topological  bundle isomorphism $g:P\to Q$ such that $g_{\p\bar X}\circ\theta_0=\theta$; in other words $g$ is a continuous extension of $\theta\circ \theta_0^{-1}$. We use the same method as above to replace $g$ by an 	extension  $f:P\to Q$ of $\theta\circ \theta_0^{-1}$ which is of class ${\cal C}^{\kappa+1}$. Putting $J\edf f^{-1}(I)$ we see that $f$ is an isomorphism $(P,J,\theta_0)\to (Q,I,\theta)$ of boundary framed formally holomorphic bundles of type $(\Phi,\kappa+1)$.
 \end{proof}
 \begin{re} In terms of  abstract boundary framed formally holomorphic bundles,  the descendibility condition if Definition \ref{descendable} becomes:   Let $\Phi$ be a ${\cal C}^\infty$ bundle on $S$ and $\Phi_{\widehat S}=\Phi_{S^-}\cup \Phi_{S^+}$ its bull-back to $\p\widehat{X}_S=\widehat S=S^-\cup S^+$. A boundary framed formally holomorphic bundle  $(\Qg,\Ig,\theta)$ of type $(\Phi_{\widehat S},\kappa+1)$ on $\widehat{X}_S$ is  descendable  if and only if the tangential almost complex structures $\Ig_{S^\pm}$  induced by $\Ig$ on $\Phi_{S^\pm}$  via $\theta$ agree via the obvious bundle  isomorphism $\Phi_{S^-}\to b^*(\Phi_{S^+})$.
 
Similarly, if $S$ separates $X$, a pair $((Q_-,I_-,\theta_-),(Q_+,I_+,\theta_+))$ of boundary framed formally holomorphic bundles of type $(\Phi,\kappa+1)$ on $\bar X^\pm$ corresponds to a point in the  fiber product ${\cal M}_{\p\bar X^-}(P^-)\times_{\cal I}{\cal M}_{\p\bar X^+}(P^+)$ intervening in Theorem \ref{iso-moduli-G-bundles-th}, if and only if $I_\pm$ induce the same  tangential almost complex structures on $\Phi$ via $\theta_\pm$ and the identifications $\p\bar X^\pm=S$.
 \end{re}

\section{Examples}
\label{ExamplesSection}

Throughout this section we fix $\kappa\in (0,+\infty]\setminus\N$ and   a connected complex Lie group $G$. Let   $X$ a Riemann surface, and $Y\subset X$ a connected open subset whose closure $\bar Y$ is a compact surface with smooth, non-empty boundary $\p\bar Y=\bar Y\setminus Y$.  

\begin{pr}	
\label{BundleACSOnDisk}
For any ${\cal C}^\infty$ principal $G$-bundle $P$ on $\bar Y$ and bundle ACS  $J$ of class ${\cal C}^\kappa$  on $P$, there exists a $J$-pseudo-holomorphic section $\tau_0\in \Gamma(\bar Y,P)_{\kappa+1}$. In other words, for any such pair $(P,J)$ there exists a pseudo-holomorphic bundle isomorphism $(\bar Y\times G, J_0)\to (P,J)$ of class ${\cal C}^{\kappa+1}$, where $J_0$ is the standard  bundle ACS on the trivial bundle  $\bar Y\times G$.
\end{pr}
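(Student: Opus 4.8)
The plan is to reduce the existence of a global pseudo-holomorphic section to the Oka--Grauert principle on an open Stein neighbourhood of $\bar Y$, which simultaneously disposes of the boundary-regularity issue. First I would enlarge the base. Since $\partial\bar Y\neq\emptyset$, near a boundary point $X$ contains points outside $\bar Y$, so $\bar Y\neq X$, and a thin outward collar of $\partial\bar Y$ produces a connected open neighbourhood $Y'$ of $\bar Y$ that is a proper open subset of the component of $X$ containing $\bar Y$; being a proper open subset of a connected Riemann surface, $Y'$ is non-compact. I would extend $P$ to a ${\cal C}^\infty$ principal $G$-bundle $P'$ on $Y'$ (possible since $\bar Y$ is a deformation retract of $Y'$) and, fixing a ${\cal C}^\infty$ bundle ACS $J'_0$ on $P'$ and writing $J=J'_0|_{\bar Y}+\beta$ with $\beta\in\Gamma^\kappa(\bar Y,\extp^{0,1}_{\;\bar Y}\otimes\Ad(P))$, extend $\beta$ by Corollary \ref{glueCkappa} (1) to obtain a bundle ACS $J'$ of class ${\cal C}^\kappa$ on $P'$ restricting to $J$ over $\bar Y$. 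Because $\dim_\C(Y')=1$ there are no $(0,2)$-forms on $Y'$, so the formal integrability obstruction $\fg_{J'}$ vanishes identically and $J'$ is formally integrable.

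By the Behnke--Stein theorem the connected non-compact Riemann surface $Y'$ is Stein. Applying Theorem \ref{NNG} (the Hölder Newlander--Nirenberg theorem for principal bundles) to the formally integrable $J'$, I obtain a holomorphic structure $\hg_{J'}$ on the underlying ${\cal C}^{\kappa+1}$-bundle of $P'$, making $(P',\hg_{J'})$ a holomorphic principal $G$-bundle over the Stein manifold $Y'$. The key step is then triviality: by the Oka--Grauert principle the holomorphic and the topological classifications of principal $G$-bundles over a Stein space coincide, so it suffices to see that $(P',\hg_{J'})$ is topologically trivial. As an open surface, $Y'$ is homotopy equivalent to a one-dimensional CW complex, whence the set $[Y',BG]$ of isomorphism classes is a product of copies of $\pi_0(G)$; since $G$ is connected this set is trivial, and $(P',\hg_{J'})$ therefore admits a global holomorphic section $\tau\colon Y'\to P'$.

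Finally I would restrict and read off the isomorphism. As $\bar Y$ lies in the boundaryless open surface $Y'$, the section $\tau_0\edf\tau|_{\bar Y}$ is holomorphic for $\hg_{J'}$ on a set interior to $Y'$; by the regularity result Corollary \ref{kappa-regularity} it is of class ${\cal C}^{\kappa+1}$ for the original ${\cal C}^\infty$-structure of $P$, and since $J'=J$ over $\bar Y$ it is $J$-pseudo-holomorphic, so $\tau_0\in\Gamma(\bar Y,P)_{\kappa+1}$. The map $\bar Y\times G\to P$, $(y,g)\mapsto\tau_0(y)\cdot g$, is then a $G$-equivariant bundle isomorphism of class ${\cal C}^{\kappa+1}$ carrying the standard ACS $J_0$ to $J$, which is the asserted isomorphism. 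The main obstacle is the triviality step: one must ensure that the Oka--Grauert principle applies to the arbitrary (possibly non-reductive, non-linear) complex Lie group $G$ in the present ${\cal C}^\kappa$/${\cal C}^{\kappa+1}$ regularity setting. Passing to the open Stein surface $Y'$---rather than working on $\bar Y$ directly, where holomorphic sections need not extend to the boundary---is exactly what makes both this step and the up-to-the-boundary regularity available.
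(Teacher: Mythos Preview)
Your proof is correct and follows essentially the same approach as the paper: extend $(P,J)$ to a bundle with a bundle ACS of class ${\cal C}^\kappa$ over an open Stein neighbourhood of $\bar Y$, apply the Newlander--Nirenberg theorem for principal bundles (Theorem~\ref{NNG}), use Grauert's theorem to obtain a global holomorphic section, and restrict. The only cosmetic differences are that the paper trivializes $P$ on $\bar Y$ before extending (so it works directly with $\bar Y\times G$ and extends only the form $\alpha_J$), and it reads off the ${\cal C}^{\kappa+1}$-regularity of the restricted section directly from the last clause of Theorem~\ref{NNG} rather than from Corollary~\ref{kappa-regularity}.
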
 
\begin{proof} Since $\p\bar Y\ne\emptyset$, $\bar Y$ has the homotopy type of a bouquet of circles. Taking into account that $G$ is connected, it follows that any topological (differentiable) $G$ bundle on $\bar Y$ is trivial, so we may suppose that $P=\bar Y\times G$. Let $N$ be a tubular neighborhood of $\p\bar Y$ in $X$ and $\tilde Y\edf \bar Y\cup N$. Therefore $\tilde Y$ is an open neighborhood of $\bar Y$ in $X$ which is homotopically equivalent to $\bar Y$.

 The bundle ACS $J$ is defined by a form $\alpha_J\in \Gamma^\kappa(\bar Y,\extp^{0,1}_{\;\tilde Y}\otimes\g)$ (see section \ref{ACSsection}).  
By the extension Corollary \ref{glueCkappa} there exists an extension $\tilde \alpha\in \Gamma^\kappa(\tilde Y,\extp^{0,1}_{\;\tilde Y}\otimes\g)$ of $\alpha_J$. The form $\tilde \alpha$ corresponds to a bundle ACS $\tilde J$ of class ${\cal C}^\kappa$ on $\tilde Y\times G$ which extends $J$.  By the Newlander-Nirenberg Theorem \ref{NNG}, $\tilde J$ defines a holomorphic structure on the underlying ${\cal C}^{\kappa+1}$ bundle of $\tilde Y\times G$. This structure is trivial by Grauert's classification theorem of holomorphic bundles on Stein manifolds \cite{Gra}, so it admits a global holomorphic section $\tilde\tau_0$. It suffices to put $\tau_0\edf \tilde \tau_0|_{\bar Y}$.
\end{proof}

Note that any topological $G$-bundle on $\p\bar Y$ is also trivial so, with the notations of section \ref{abstract-interpr}, it's natural to take $\Phi=\p\bar Y\times G$ as framing bundle on $\p\bar Y$. In other words, in this section, by a boundary framing of a $G$ bundle on $\bar Y$ we will always mean a trivialization, or, equivalently, a section of its restriction to  $\p\bar Y$. 

Consider now the special case when  $\bar Y$ is a disk $\bar D\subset X$. Isomorphism classes of  boundary framed topological $G$-bundles on $\bar D$ correspond bijectively to  homotopy classes  $\chi\in [\p\bar D,G]$  of maps $\theta:\p\bar D\to G$. Since $\pi_1(G,e)$ is Abelian, the obvious map $\pi_1(G,e)\to [\p\bar D,G]$ is injective, so $[\p\bar D,G]$ has a natural Abelian group structure.  Endowing $\p\bar D$ with its boundary orientation (induced by the complex orientation of $\bar D$), this set can be further identified with $H_1(G,\Z)$ via the map
$$
[\theta]\mapsto \deg(\theta)\edf H_1(\theta)([\p\bar D]).
$$

For a class   $h\in H_1(G,\Z)$ we will denote by   $h_{\bar D}$ the corresponding isomorphism class of  boundary framed topological $G$-bundles on $\bar D$ and by ${\cal M}_{\p\bar D}^{\bar D}(h)$ the moduli space  of boundary framed formally holomorphic $G$ bundles of class ${\cal C}^\kappa$ in this class.  By Proposition \ref{BundleACSOnDisk} we obtain:
\begin{co}\label{M{pbar D}{bar D}}
Let $h\in H_1(G,\Z)$. We have a natural identification $$
{\cal M}_{\p\bar D}^{\bar D}(h)\simeq \qmod{{\cal C}^{\kappa+1}_h(\p\bar D,G)}{H^{\kappa+1}(\bar D,G)},
$$ 
where ${\cal C}^{\kappa+1}_h(\p\bar D,G)$ is the space of ${\cal C}^{\kappa+1}$ maps $\p\bar D\to G$  of degree $h$, and $H^\kappa(\bar D,G)$ is the group of ${\cal C}^{\kappa+1}$ maps $\bar D\to G$ which are holomorphic on $D$.
\end{co}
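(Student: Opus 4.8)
The plan is to realize the asserted identification as the bijection induced by the assignment $\theta\mapsto[(\bar D\times G,J_0,\theta)]$, where $J_0$ denotes the standard bundle ACS on the trivial bundle $\bar D\times G$ and $\theta$ is read, via the standard trivialization, as a map $u_\theta\in{\cal C}^{\kappa+1}(\p\bar D,G)$. Two things must be checked: that every isomorphism class in ${\cal M}_{\p\bar D}^{\bar D}(h)$ admits such a representative (surjectivity), and that $(\bar D\times G,J_0,\theta_1)$ and $(\bar D\times G,J_0,\theta_2)$ are isomorphic precisely when $u_{\theta_1}$, $u_{\theta_2}$ lie in one $H^{\kappa+1}(\bar D,G)$-orbit (injectivity and well-definedness).

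For surjectivity, let $(P,J,\theta)$ represent a class in ${\cal M}_{\p\bar D}^{\bar D}(h)$. Since $\bar D$ is a Riemann surface we have $\extp^{0,2}_{\;\bar D}=0$, so $J$ is automatically formally integrable and Proposition \ref{BundleACSOnDisk} applies, furnishing a pseudo-holomorphic isomorphism $g:(\bar D\times G,J_0)\to(P,J)$ of class ${\cal C}^{\kappa+1}$. Reading $g$ as an isomorphism of boundary framed bundles in the sense of Definition \ref{abstract-framed} gives $(P,J,\theta)\cong(\bar D\times G,J_0,g_{\p\bar D}^{-1}\circ\theta)$; here $g_{\p\bar D}^{-1}\circ\theta$ is a ${\cal C}^{\kappa+1}$ automorphism of $\p\bar D\times G$, i.e. a map $u\in{\cal C}^{\kappa+1}(\p\bar D,G)$ whose homotopy class, by the very definition of the topological invariant $h_{\bar D}$, is $h\in[\p\bar D,G]=H_1(G,\Z)$. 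Thus every class is represented with $J=J_0$ and with framing in ${\cal C}^{\kappa+1}_h(\p\bar D,G)$.

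For the second point I would use that a pseudo-holomorphic automorphism of $(\bar D\times G,J_0)$ of class ${\cal C}^{\kappa+1}$ is exactly a gauge transformation $\phi:\bar D\to G$ of class ${\cal C}^{\kappa+1}$ which is holomorphic on $D$, that is, an element of $H^{\kappa+1}(\bar D,G)$. An isomorphism $(\bar D\times G,J_0,\theta_1)\to(\bar D\times G,J_0,\theta_2)$ is then such a $\phi$ with $\phi_{\p\bar D}\cdot u_{\theta_1}=u_{\theta_2}$, so the two framings give isomorphic objects iff $u_{\theta_1}$ and $u_{\theta_2}$ lie in the same orbit of the action of $H^{\kappa+1}(\bar D,G)$ by boundary restriction and pointwise multiplication. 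Every $\phi\in H^{\kappa+1}(\bar D,G)$ restricts on $\p\bar D$ to a map that extends over $\bar D$ and is therefore null-homotopic, so this action preserves the degree; consequently it restricts to an action on ${\cal C}^{\kappa+1}_h(\p\bar D,G)$, and the induced map on orbit spaces is the claimed bijection. (The same null-homotopy remark also shows that the reduction to $J_0$ in the previous step does not disturb the degree, since $g_{\p\bar D}$ extends over $\bar D$.)

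The genuinely substantive input is Proposition \ref{BundleACSOnDisk}: it collapses the modulus coming from the holomorphic structure itself to a point and isolates the framing as the only remaining degree of freedom, which is what makes the quotient description possible. Beyond this I expect no hard analysis: the delicate part is purely bookkeeping, namely verifying that the reduction to $J_0$ and the change of framing compose correctly (the right side and order of the $G$-action) and that the degree is preserved at each step, so that the $H^{\kappa+1}(\bar D,G)$-action is genuinely well defined on the degree-$h$ stratum. No extra regularity argument is needed, since in Definition \ref{abstract-framed} the isomorphisms of framed bundles are already required to be of class ${\cal C}^{\kappa+1}$, and for the fixed smooth structure $J_0$ the pseudo-holomorphic ones are exactly the ${\cal C}^{\kappa+1}$ gauge transformations holomorphic on $D$.
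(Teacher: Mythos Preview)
Your proposal is correct and follows exactly the route the paper intends: the corollary is stated immediately after Proposition \ref{BundleACSOnDisk} with only the phrase ``By Proposition \ref{BundleACSOnDisk} we obtain'', and your argument spells out precisely this derivation---use the pseudo-holomorphic trivialization to reduce every $(P,J)$ to $(\bar D\times G,J_0)$, identify the remaining framing with a map in ${\cal C}^{\kappa+1}_h(\p\bar D,G)$, and observe that the automorphism ambiguity is exactly $H^{\kappa+1}(\bar D,G)$ acting by boundary restriction. Your bookkeeping on degree preservation and on the ${\cal C}^{\kappa+1}$ regularity of isomorphisms (via Definition \ref{abstract-framed}) is accurate and more detailed than what the paper provides.
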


\begin{re}\label{LoopsRem}
Suppose that $G$ is reductive, and let $K\subset G$ be a maximal compact subgroup of $G$. In this case the canonical map
$$\qmod{{\cal C}^{\kappa+1}_h(\p\bar D,K)}{K}\to \qmod{{\cal C}^{\kappa+1}_h(\p\bar D,G)}{H^{\kappa+1}(\bar D,G)}$$
is an isomorphism. For the standard disk this is a well known factorization theorem in loop group theory \cite[chapter 8]{PS}, whereas the general case follows using \cite[Theorem 1']{Do}.
\end{re}

\subsection{Holomorphic bundles framed along a circle in \texorpdfstring{$\P^1_\C$}{P1C}} 
\label{framed-along-S-in-P1}

Let  now $S\subset \C$ be a smooth closed curve and $\P^1_\C=\bar U^-\cup\bar U^+$ be the corresponding decomposition of $\P^1_\C$ as union of closed disks, where  $\bar U^-\cap\bar U^+=S$, $0\in U^-$, $\infty\in U^+$. Note that the identifications $S=\p\bar U^\pm$ induce on $S$ opposite orientations.

Let $(P,\theta)$ be a topological $S$-framed principal $G$-bundle on $\P^1_\C$. 
Choose sections $\tau^\pm$ of the restrictions  $P^\pm\edf P_{\bar U^\pm}$ and let $g:\p\bar U^-\to G$ be the comparison map defined by $\tau^+_S=\tau^-_S g$.  

The homotopy degree $e(P)\edf\deg(g)\in H_1(G,\Z)$   is independent of the pair $(\tau^-,\tau^+)$; it is a topological invariant of $P$;   isomorphism classes of topological $G$-bundles over $\P^1_\C$ correspond bijectively to elements $e\in  H_1(G,\Z)$ via this invariant. For a section $\theta\in \Gamma(S,P)$ we define the maps $f^\pm_\theta:S\to G$  by   $\theta= \tau^\pm_S f^\pm_\theta$; these maps satisfy the identity $f^+_\theta=g^{-1}f^-_\theta$ and $h^\pm_\theta\edf\deg(f^\pm_\theta)\in H_1(G,\Z)$  are topological invariants of the  framed bundle $(P,\theta)$.
\begin{re}
Isomorphism classes of $S$-framed topological $G$-bundles on $\P^1_\C$ correspond bijectively to pairs $(e,h)\in H_1(G,\Z)\times H_1(G,\Z)$ via the map
$$
(P,\theta)\mapsto (e(P),h^-_\theta).
$$
\end{re}
For a pair   $(e,h)\in H^1(G,\Z)\times H^1(G,\Z)$ we denote by $(e,h)_{\P^1_\C}$ the corresponding isomorphism class of $S$-framed topological bundles on $\P^1_\C$, and by  ${\cal M}^{\P^1_\C}_S(e,h)$  the moduli space of $S$-framed holomorphic bundles of class ${\cal C}^\kappa$ on $\P^1_\C$ belonging to this isomorphism class.
By  Theorem \ref{iso-moduli-G-bundles-th} and Proposition \ref {abstr-to-concrete} we obtain the decomposition:
\begin{equation}\label{M(e,h)}
{\cal M}^{\P^1_\C}_S(e,h)={\cal M}_{\p\bar U^-}^{\bar U^-}(h)\times {\cal M}^{\bar U^+}_{\p\bar U^+}(e-h).
\end{equation}
Consider now the case $G=\C^*$ and identify $H_1(\C^*,\Z)$ with $\Z$ in the standard way.   ${\cal M}^{\P^1_\C}_S(e,h)$ is just the moduli space of pairs $(L,\theta)$ consisting of  a holomorphic line bundle $L$ of degree $e$ on $\P^1_\C$ and a nowhere vanishing section $\theta$ of class ${\cal C}^{\kappa+1}$ and degree $h$ (with respect to a trivialization on $\bar U^-$) of $L_S$. 

Any holomorphic line bundle  of degree $e$ on $\P^1_\C$ is isomorphic to $|{\cal O}_{\P^1_\C}(e)|$. We trivialize  over $\P^1_\C\setminus\{\infty\}$  (respectively $\P^1_\C\setminus\{0\}$) the line bundle $|{\cal O}_{\P^1_\C}(e)|$ using $\varphi_0^{\otimes e}$ (respectively $\varphi_1^{\otimes e}$), where $\varphi_i:\C^2\to \C$ is the linear form  defined by $\varphi_i(Z_0,Z_1)=Z_i$. Since $\Aut(|{\cal O}_{\P^1_\C}(e)|)=\C^*\id$, we obtain an obvious identification
\begin{equation}
{\cal M}^{\P^1_\C}_S(e,h)\simeq \qmod{{\cal C}^{\kappa+1}_h(\p\bar U^-,\C^*)}{\C^*}.	
\end{equation}
This isomorphism combined with the decomposition (\ref{M(e,h)}) and Corollary \ref{M{pbar D}{bar D}} gives the homeomorphism  
\begin{equation}\label{iso-quotients}
\begin{tikzcd} 
\qmod{{\cal C}^{\kappa+1}_h(\p\bar U^-,\C^*)}{\C^*}\ar[r, "\Psi_{e,h}", "\simeq"'] & \begin{array}{c}	  {\cal C}^{\kappa+1}_h(\p\bar U^-,\C^*)/ H^{\kappa+1}(\bar U^-,\C^*)\\ \bigtimes \\   {\cal C}^{\kappa+1}_{e-h}(\p\bar U^+,\C^*) / H^{\kappa+1}(\bar U^+,\C^*) 	
 \end{array}
 \end{tikzcd}
\end{equation}
given explicitly by
$$
 [f]_{\C^*}\mapsto \left( \big[f\big]_{H^{\kappa+1}(\bar U^-,\C^*)}\,,\,   \big[(\varphi_1\varphi_0^{-1})^{-e} f\big]_{H^{\kappa+1}(\bar U^+,\C^*)}\right).
$$
We are interested in an explicit formula for the inverse of this map.  Note that the map $f\mapsto (\varphi_1\varphi_0^{-1})^{-e} f$ induces an isomorphism 
$${\cal C}^{\kappa+1}_{e-h}(\p\bar U^+,\C^*) / H^{\kappa+1}(\bar U^+,\C^*)\to {\cal C}^{\kappa+1}_{-h}(\p\bar U^+,\C^*) / H^{\kappa+1}(\bar U^+,\C^*),$$
so it suffices to consider the case $e=0$. We will see that the inverse of $\Psi_{0,g}$ can be written down explicitly using the Cauchy transform  and classical results in holomorphic function theory. Recall first that the Cauchy transform 
$$u\mapsto C^S(u), \ C^S(u)(z)=\frac{1}{2\pi i}\int_S \frac{u(\zeta)}{\zeta-z}d\zeta$$
 associated with the oriented closed curve $S=\p\bar U^-$ defines continuous operators 
 $$C^S_\pm: {\cal C}^{\kappa+1}(S,\C)\to {\cal C}^{\kappa+1}(\bar U^\pm,\C)$$
 (see \cite[Section 2.22]{Mu} and \cite[Theorem 1.10 p. 22, formula (3.3) p. 23]{Ve}) satisfying the Plemelj-Privalov formula
 \begin{equation}\label{PlemPriv}
 C^S_-(u)|_S-C^S_+(u)|_S=u	
 \end{equation}
(see \cite[formula (17.3) p. 43]{Mu}).

Let $f^-$, $f^+\in {\cal C}^{\kappa+1}(S,\C^*)$ be  maps  of   degree $h$ (respectively $-h$) with respect to 0 (respectively $\infty$). Therefore   $\deg(f^\pm)=h$ with respect to 0. Let $\varphi\in   {\cal C}^{\kappa+1}(S,\C)$ be such that $\exp(\varphi)=f^+/f^-$. By (\ref{PlemPriv}) we obtain $e^{C^S_-(\varphi)|_S- C^S_+(\varphi)|_S}=f^+/f^-$, so
$$
e^{C^S_-(\varphi)}|_Sf^-= e^{C^S_+(\varphi)}|_Sf^+.
$$
Noting that $e^{C^S_-(\varphi)}\in H^{\kappa+1}(\bar U^\pm,\C^*)$ and putting   $f\edf e^{C^S_-(\varphi)}|_S f^-=e^{C^S_+(\varphi)}|_S f^+$ we see that $\C^* f$ is the pre-image of the pair $\big([f^-]_{H^{\kappa+1}(\bar U^-,\C^*)},  [f^+]_{H^{\kappa+1}(\bar U^+,\C^*)}\big)$ via $\Psi_{0,h}$. Therefore $\Psi_{0,h}^{-1}$ is given by the explicit formula:
\begin{equation}\label{InverseOfPsi}
\Psi_{0,h}^{-1}\big(\big[f^-\big]_{H^{\kappa+1}(\bar U^-,\C^*)}\,,\,  \big[f^+\big]_{H^{\kappa+1}(\bar U^+,\C^*)}\big)=\big[e^{C^S_\pm(\log(f^+/f_-)}|_S f^\pm\big]_{\C^*}.	
\end{equation}

\begin{re}
Combining the isomorphism (\ref{iso-quotients}) with Remark \ref{LoopsRem}, 	we obtain an isomorphism
$$
\qmod{{\cal C}^{\kappa+1}_h(\p\bar U^-,\C^*)}{\C^*}\textmap{\simeq}\qmod{{\cal C}^{\kappa+1}_h(\p\bar U^-,S^1)}{S^1} \times \qmod{{\cal C}^{\kappa+1}_{e-h}(\p\bar U^+,S^1)}{S^1}.
$$
\end{re}
This is a typical example of identification obtained by combining the isomorphism Theorem \ref{iso-moduli-G-bundles-th} with Donaldson's Theorem 1'.

\subsection{Holomorphic bundles framed along a circle in an elliptic curve}
\label{Framed-Circle-Elliptic}

As in Example \ref{RHOnElliptic} of section \ref{RHonRiemannSurf-section}, let $\alpha\in\C^*$ with $|\alpha|<1$  and $X=\C^*/\langle \alpha\rangle$ be the associated elliptic curve; let $\bar D\subset \C$ be a smooth compact disk such that $\alpha\bar D\subset D$, $\Omega\edf D\setminus \alpha\bar D$, $\bar\Omega\edf \bar D\setminus \alpha  D$. Endow the curves $S^+ \edf\p\bar D$, $S^-\edf\alpha S^+ =\p  (\alpha \bar D)$ with their boundary orientations. 

As noticed above, since we assumed $G$ connected, any differentiable $G$ bundle on $\bar\Omega$ ($\p\bar\Omega$) is trivial.  Taking as in the previous section  $\Phi=\p\bar\Omega\times G$ as framing bundle, we see that the data of a topological boundary framing of the trivial bundle $\bar\Omega\times G$ is equivalent to the data of a pair $(\tau^+,\tau^-)$ of continuous maps $\tau^\pm: S^\pm \to G$. %
\begin{re}
The formula $[(\bar\Omega\times G,\tau^+,\tau^-)]\mapsto \deg(\tau_+)-\deg(\tau_-)$ gives a bijection between isomorphism classes of boundary framed topological $G$-bundles on $\bar\Omega$ and $H_1(G,\Z)$. 	
\end{re}

For a class   $h\in H_1(G,\Z)$ we  denote by   $h_{\bar \Omega}$ the corresponding isomorphism class of  boundary framed topological $G$-bundles on $\bar \Omega$ and by ${\cal M}_{\p\bar \Omega}^{\bar \Omega}(h)$ the moduli space  of boundary framed formally holomorphic $G$ bundles of class ${\cal C}^\kappa$ in this class.

 Let $H^\kappa(\bar\Omega,G)$   be the the group of ${\cal C}^{\kappa+1}$ maps $\bar \Omega\to G$ which are holomorphic on $\Omega$ and $H^\kappa_m(\bar\Omega,G)\edf\{f\in H^\kappa(\bar\Omega,G)|\ \deg(f)=m\}$. Using  Proposition \ref{BundleACSOnDisk} again we obtain:
\begin{co}\label{M{pbar Omega}{bar Omega}}
Let $h\in H_1(G,\Z)$ and $n\in\Z$. We have  natural identifications 
\begin{equation}
\begin{split}
{\cal M}_{\p\bar \Omega}^{\bar \Omega}(h)&\simeq
 \big({\coprod_{m\in\Z}{\cal C}^{\kappa+1}_m(S^+,G)\times {\cal C}^{\kappa+1}_{m-h}(S^-,G)}\big)/{H^{\kappa+1}(\bar \Omega,G)}\\
& =\big({{\cal C}^{\kappa+1}_n(S^+,G)\times   {\cal C}^{\kappa+1}_{n-h}(S^-,G)}\big)/{H^{\kappa+1}_0(\bar \Omega,G)}.	
\end{split}	
\end{equation}
\end{co}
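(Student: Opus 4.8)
The plan is to run, on the annulus $\bar\Omega$, the same argument that proves the disk case (Corollary \ref{M{pbar D}{bar D}}), the only new features being that $\p\bar\Omega$ now has two components $S^+,S^-$ and that the residual holomorphic gauge group acts on both of them at once. First I would reduce every object to the trivialized model. Since $G$ is connected and $\bar\Omega$ is homotopy equivalent to a circle, every ${\cal C}^\infty$ $G$-bundle in the class $h_{\bar\Omega}$ is topologically $\bar\Omega\times G$. Given a boundary framed formally holomorphic bundle $(\bar\Omega\times G, J,\theta)$ of class ${\cal C}^\kappa$, Proposition \ref{BundleACSOnDisk} supplies a $J$-pseudo-holomorphic isomorphism $\phi\colon(\bar\Omega\times G,J_0)\to(\bar\Omega\times G,J)$ of class ${\cal C}^{\kappa+1}$; transporting $\theta$ through $\phi^{-1}$ turns the framing into a ${\cal C}^{\kappa+1}$ trivialization of the trivial bundle over $\p\bar\Omega$, i.e. a pair $(\tau^+,\tau^-)$ with $\tau^\pm\colon S^\pm\to G$. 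Exactly as in Proposition \ref{abstr-to-concrete}, the only ambiguity is the choice of $\phi$, which is determined up to a pseudo-holomorphic automorphism of $(\bar\Omega\times G,J_0)$; by Theorem \ref{NNG} these automorphisms are precisely the elements of $H^{\kappa+1}(\bar\Omega,G)$, acting on $(\tau^+,\tau^-)$ by restriction to $\p\bar\Omega$ and pointwise multiplication. Writing $m\edf\deg(\tau^+)$ and using that the topological invariant recorded above is $\deg(\tau^+)-\deg(\tau^-)=h$, so $\deg(\tau^-)=m-h$, I obtain the first identification
\[
{\cal M}_{\p\bar \Omega}^{\bar \Omega}(h)\simeq\Big(\coprod_{m}{\cal C}^{\kappa+1}_m(S^+,G)\times {\cal C}^{\kappa+1}_{m-h}(S^-,G)\Big)\big/H^{\kappa+1}(\bar \Omega,G).
\]

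The geometric heart of the passage to the single-slice description is the identity $\deg(u|_{S^+})=\deg(u|_{S^-})$ for every $u\in H^{\kappa+1}(\bar\Omega,G)$. Indeed $\bar\Omega$ deformation retracts onto its core circle, so the two boundary circles $S^+$ and $S^-$, each carrying its counterclockwise boundary orientation, are freely homotopic in $\bar\Omega$; since $\pi_1(G)$ is abelian, free homotopy classes of loops in $G$ coincide with elements of $H_1(G,\Z)$, whence the two restricted degrees agree. Consequently the action of $u$ shifts both $\deg(\tau^+)$ and $\deg(\tau^-)$ by the same element $\deg(u)\in H_1(G,\Z)$: it preserves the invariant $h$ and carries the piece indexed by $m$ onto the piece indexed by $m+\deg(u)$.

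Finally I would slice. Fixing one index $n$, the inclusion of ${\cal C}^{\kappa+1}_n(S^+,G)\times{\cal C}^{\kappa+1}_{n-h}(S^-,G)$ into the disjoint union descends to a map into the first quotient. Injectivity is automatic, since a $u$ relating two points of the fixed slice must preserve the index, forcing $\deg(u)=0$, i.e. $u\in H^{\kappa+1}_0(\bar\Omega,G)$; thus two such points are identified in the first quotient precisely when they lie in one $H^{\kappa+1}_0(\bar\Omega,G)$-orbit. Surjectivity is the one point requiring genuine input: I must show that $\deg\colon H^{\kappa+1}(\bar\Omega,G)\to H_1(G,\Z)$ is onto, so that every orbit meets the slice $m=n$. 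For this I would realize an arbitrary class of $[S^1,G]=\pi_1(G)=H_1(G,\Z)$ by a holomorphic map on a slightly larger open annulus containing $\bar\Omega$ — possible by Grauert's Oka principle (\cite{Gra}), $G$ being a complex Lie group and $\Omega$ being Stein — and restrict it to $\bar\Omega$, obtaining an element of $H^{\kappa+1}(\bar\Omega,G)$ of the prescribed degree. Granting this, the slice meets every orbit in exactly one $H^{\kappa+1}_0(\bar\Omega,G)$-orbit, which gives the second identification; continuity of both identifications and their inverses is inherited from the continuity statements in Proposition \ref{BundleACSOnDisk} and the extension operators, so these are homeomorphisms.

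The main obstacle is precisely the surjectivity of the degree homomorphism $\deg\colon H^{\kappa+1}(\bar\Omega,G)\to H_1(G,\Z)$, which is what makes the collapse of the disjoint union onto a single slice legitimate; everything else is the disk argument of Corollary \ref{M{pbar D}{bar D}} applied on each boundary circle, together with the elementary homotopy fact that the two boundary circles of the annulus carry equal degree.
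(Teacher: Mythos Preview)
Your argument is correct and follows exactly the approach the paper intends: the paper simply says ``Using Proposition \ref{BundleACSOnDisk} again we obtain'' the corollary, and you have correctly fleshed this out, including the degree bookkeeping on the two boundary circles and the slice argument for the second equality. The surjectivity of $\deg\colon H^{\kappa+1}(\bar\Omega,G)\to H_1(G,\Z)$ via Grauert's Oka principle is the one substantive point the paper leaves implicit, and your treatment of it is sound (and consistent with the paper's own appeal to \cite{Gra} in the proof of Proposition \ref{BundleACSOnDisk}). One minor remark: your final sentence about homeomorphisms is unnecessary, since the corollary only asserts natural identifications, and your justification is imprecise---Proposition \ref{BundleACSOnDisk} carries no continuity statement.
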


Suppose now that $G$ is reductive,  let $K\subset G$ be a maximal compact subgroup of $G$ and let $M_{\p\bar \Omega}^{\bar \Omega}(h)$ be the moduli space of boundary framed flat $K$-connections of topological type $h_{\bar\Omega}$ and class ${\cal C}^\kappa$ modulo the gauge group ${\cal C}^{\kappa+1}(\bar\Omega,K)$.
Using \cite[Theorem 1']{Do} it follows that the canonical map $M_{\p\bar \Omega}^{\bar \Omega}(h)\to {\cal M}_{\p\bar \Omega}^{\bar \Omega}(h)$ is a homeomorphism. The moduli space $M_{\p\bar \Omega}^{\bar \Omega}(h)$ can be easily described as follows: 

Identify $\Omega$ with the quotient $\bar\Og/H$, where $c:\bar\Og\to \bar\Omega$ is a universal cover of $\bar\Omega$ and $H\edf\Aut_{\bar\Omega}(\bar\Og)$. Let $h_0\in H$ be the generator of $H$ which corresponds to the generator of positive degree of the fundamental group of $\bar\Omega$. For any $a\in K$ let $\tilde a:H\to K$ be the group morphism  which maps $h_0$ to $a$. Put $\Sg^\pm\edf c^{-1}(S^\pm)$ and note that the space
$$
\Ag\edf  \{(a, \tg^+,\tg^-)|\ \tg^\pm\in {\cal C}^{\kg+1}_{\tilde a}(\Sg^\pm, K)\}
%
$$
%
%
comes with a natural free $K$-action given by:
$$
b\cdot (a, \tg^+,\tg^-)=(bab^{-1}, b\tg^+,b\tg^-).
$$
Let $\fg:\bar\Sg^+\to\bar\Sg^-$ be a $H$-equivariant lift of the diffeomorphism $S^+\ni z\mapsto\alpha z\in  S^-$ and note that, for $\tg^\pm\in {\cal C}^{\kg+1}_{\tilde a}(\Sg^\pm, K)$, the product  $(\tg^-\circ \fg)^{-1}\tg^+:\Sg^+\to K $ is $H$-invariant, so it descends to a map $[(\tg^-\circ \fg)^{-1}\tg^+]:S^+\to K$ whose degree $\deg([(\tg^-\circ \fg)^{-1}\tg^+])\in H_1(K,\Z)=H_1(G,\Z)$ is independent of the choice of $\fg$. The subspace
$$
\Ag(h)\edf \big\{(a, \tg^+,\tg^-)|\ \tg^\pm\in {\cal C}^{\kg+1}_{\tilde a}(\Sg^\pm, K),\ \deg([(\tg^-\circ \fg)^{-1}\tg^+])=h\big\}\subset \Ag
$$
is  $K$-invariant.
Let  $(a, \tg^+,\tg^-)\in \Ag$. The principal $K$-bundle $P_a\edf \bar\Og\times_{\tilde a} K$ comes with a canonical flat connection $A_a$ and the maps $\tg^\pm$ can be interpreted as  sections of class ${\cal C}^{\kappa+1}$ of $P_a|_{S^\pm}$. 
\begin{re} The map %
$${\Ag(h)}/{K}\to M^{\bar\Omega}_{\p\bar\Omega}(h),\ K\cdot(a,\tg^+,\tg^-)\mapsto [A_a,\tg^+,\tg^-]$$
 is a homeomorphism. 
 \end{re}
 This remark gives a simple description of the Donaldson moduli space $M^{\bar\Omega}_{\p\bar\Omega}(h)$ of boundary framed flat $K$-connections on an annulus.

On the other hand, note that the $G$-bundle $P_a^\C\edf \bar\Og\times_{\tilde a} G$ comes also with a flat connection, so with a canonical bundle ACS $J_a$.  Making use of Proposition  \ref{BundleACSOnDisk}, let $\tg\in \Gamma(\bar\Omega, P_a^\C)_{\kappa+1}$ be a $J_a$-pseudo-holomorphic section and let $\tau^\pm:\bar\Omega\to G$ be the maps defined by the formulae $\tg^\pm =\tg \tau^\pm$.  The pair $(\tau^+,\tau^-)$ is independent of the choice of $\tau$ up to the $H^{\kappa+1}(\bar \Omega,G)$ action. In conclusion,  combining Corollary \ref{M{pbar Omega}{bar Omega}} with \cite[Theorem 1']{Do} we obtain

\begin{re}
We have a natural homeomorphism
$$
 {\Ag(h)}/{K}\textmap{\simeq}  \bigg(\coprod_{m\in\Z}{\cal C}^{\kappa+1}_m(S^+,G)\times {\cal C}^{\kappa+1}_{m-h}(S^-,G)\bigg)/{H^{\kappa+1}(\bar \Omega,G)}
$$ 
given explicitly by $K\cdot (A_a,\tg^+,\tg^-)\mapsto H^{\kappa+1}(\bar \Omega,G)(\tau^+,\tau^-)$.
\end{re}

Our next goal is to make explicit the isomorphism given by   Theorem \ref{iso-moduli-G-bundles-th} and its inverse in the special case when $X$ is the elliptic curve $\C^*/\langle \alpha\rangle$, $S$ is the image of $S^\pm$ in $X$ and $G=\C^*$. Note that $\widehat X_S$ can be identified with $\bar\Omega$. Isomorphism classes of $S$-framed $\C^*$-bundles over $X$ correspond bijectively to isomorphism classes of $\C^*$-bundles on $X$.  This follows taking into account that  the restriction map ${\cal C}(X,\C^*)\to {\cal C}(S,\C^*)$ is surjective, so the automorphism group of a topological $\C^*$-bundle $P$ on $X$ acts transitively on the space of continuous sections of $P_S$. For $e\in\Z$ let $e_{\P^1_\C}$ be the isomorphism class of $S$-framed topological $\C^*$-bundles $(P,s)$ with $\deg(P)=e$, and let ${\cal M}_S^{X}(e)$ be the corresponding moduli space. 

Putting $\Pic^e(X)\edf\{[L]\in\Pic(X)|\ \deg(L)=e\}$,  we have:
\begin{re}
The natural map ${\cal M}_S^{X}(e)\to\Pic^e(X)$, $[(P,s)]\mapsto [P\times_{\C^*}\C]$	is a principal bundle with structure group ${\cal C}^{\kappa+1}(S,\C^*)/\C^*$.
\end{re}
Taking into account Theorem \ref{iso-moduli-G-bundles-th}, Proposition \ref {abstr-to-concrete} and Corollary \ref{M{pbar Omega}{bar Omega}}, we obtain:
\begin{co}
We have a natural homeomorphism 	
$$\Psi_e:{\cal M}_S^{X}(e)\textmap{\simeq} \big({\coprod_{m\in\Z}{\cal C}^{\kappa+1}_m(S^+,\C^*)\times {\cal C}^{\kappa+1}_{m-e}(S^-,\C^*)}\big)/{H^{\kappa+1}(\bar \Omega,\C^*)}$$
defined as follows: for an $S$-framed holomorphic $\C^*$-bundle $(P,s)$ on $X$, let $(\hat P,\hat s)$ be the pull-back boundary framed  formally holomorphic bundle on $\bar\Omega$, $\hat s^\pm\edf \hat s|_{S^\pm}$ and let $\tau$ be a pseudo-holomorphic section of $\hat P$. Then $\Psi_e([P,s])= (f_+,f_-)$, where $f^\pm:S^\pm\to \C^*$ are defined by the formulae $\hat s^\pm=\tau f^\pm$. \end{co}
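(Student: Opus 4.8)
The plan is to realize $\Psi_e$ as a composition of identifications already at hand and then to trace a single framed bundle through it. Since $\dim_\C X=1$, Remark~\ref{dim=1} makes the descendability condition void, so Theorem~\ref{iso-moduli-G-bundles-th} in the form of Remark~\ref{IsoModuliDim=1} gives a homeomorphism ${\cal M}_S(P)\textmap{\simeq}{\cal M}_{\p\widehat X_S}(\widehat P)$, which under $\widehat X_S=\bar\Omega$ reads ${\cal M}_S(P)\textmap{\simeq}{\cal M}_{\p\bar\Omega}(\widehat P)$. Proposition~\ref{abstr-to-concrete} reinterprets the two sides as moduli of abstract framed bundles, so that ${\cal M}_S^X(e)$ on the left matches the moduli space ${\cal M}_{\p\bar\Omega}^{\bar\Omega}(h)$ of boundary framed formally holomorphic $\C^*$-bundles of the corresponding topological type $h$ on the right. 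Finally Corollary~\ref{M{pbar Omega}{bar Omega}} renders the target explicitly as $\big(\coprod_m{\cal C}^{\kappa+1}_m(S^+,\C^*)\times{\cal C}^{\kappa+1}_{m-h}(S^-,\C^*)\big)/H^{\kappa+1}(\bar\Omega,\C^*)$. The map $\Psi_e$ is the composite of these three homeomorphisms; continuity of $\Psi_e$ and of its inverse is furnished at each stage by the cited result, so no extra topological argument is needed.

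Next I would check that the stated recipe computes this composite and is well defined. Given $(P,s)$, pull it back to the boundary framed formally holomorphic bundle $(\widehat P,\widehat s)$ on $\bar\Omega$. As $G=\C^*$ is connected, $\widehat P$ is differentiably trivial, and Proposition~\ref{BundleACSOnDisk} provides a global pseudo-holomorphic section $\tau\in\Gamma(\bar\Omega,\widehat P)_{\kappa+1}$; writing $\widehat s^\pm=\tau f^\pm$ on $S^\pm$ defines maps $f^\pm\colon S^\pm\to\C^*$ of class ${\cal C}^{\kappa+1}$. Any other pseudo-holomorphic trivialization is of the form $\tau h$ with $h\in H^{\kappa+1}(\bar\Omega,\C^*)$, and it replaces $(f^+,f^-)$ by $\big((h|_{S^+})^{-1}f^+,(h|_{S^-})^{-1}f^-\big)$, i.e. by the same class in the quotient; an isomorphism of $S$-framed holomorphic bundles likewise lifts to a pseudo-holomorphic isomorphism of the pull-backs preserving the framing, and so does not change the class either. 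Hence $[f^+,f^-]$ depends only on $[P,s]$, and unwinding the three identifications of the first paragraph shows that it is exactly the image of $[P,s]$ under the composite, which is the asserted formula $\Psi_e([P,s])=(f^+,f^-)$.

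It remains to match the index $h$ of $(\widehat P,\widehat s)$ with $\deg(P)=e$, which is the one genuinely computational point and the main obstacle. The canonical gluing isomorphism $\upsilon\colon\widehat P_{S^-}\to b^*(\widehat P_{S^+})$ of the pull-back is the identity, and the pulled-back framing satisfies $\widehat s|_{S^-}=b^*(\widehat s|_{S^+})$ tautologically. Expressing this in the trivialization $\tau$ and writing $\tau|_{S^-}=b^*(\tau|_{S^+})\,c$ for the clutching map $c\colon S^-\to\C^*$ yields the relation $f^+\circ b=c\cdot f^-$ on $S^-$. The degree of $c$ is, by the clutching description of bundles on the torus, precisely $\deg(P)=e$, and since $b\colon S^-\to S^+$ preserves the boundary orientations fixed in the statement, counting degrees gives $\deg(f^+)-\deg(f^-)=\deg(c)=e$; equivalently $\deg(f^+)=m$ and $\deg(f^-)=m-e$ for some $m\in\Z$, so $(\widehat P,\widehat s)$ lies in the component $h=e$, as required. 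The delicate point throughout is this orientation bookkeeping — confirming that $b$ is orientation-preserving and that $\deg(c)=e$ for the chosen generator of $H_1$ — which I would settle directly and by comparison with the analogous clutching computation in the $\P^1_\C$ case of Section~\ref{framed-along-S-in-P1}.
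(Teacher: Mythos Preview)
Your proof is correct and follows the same route the paper takes: the paper simply writes ``Taking into account Theorem~\ref{iso-moduli-G-bundles-th}, Proposition~\ref{abstr-to-concrete} and Corollary~\ref{M{pbar Omega}{bar Omega}}, we obtain:'' and states the corollary, exactly the three-step composition you spell out. Your added checks --- well-definedness of the recipe under change of $\tau$ and under framed isomorphism, and the identification $h=e$ via the clutching map --- are details the paper leaves implicit; your caution about the orientation bookkeeping is warranted but does not point to a gap.
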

Choosing $\tau$ such that $\deg(f^+)=n$ in the definition of $\rho$, we obtain a homeomorphism
\begin{equation}
\Psi_{e,n}:  {\cal M}_S^{X}(e)\textmap{\simeq}  \big({{\cal C}^{\kappa+1}_n(S^+,\C^*)\times {\cal C}^{\kappa+1}_{n-e}(S^-,\C^*)}\big)/{H^{\kappa+1}_0(\bar \Omega,\C^*)},
\end{equation}
which is an analogue for elliptic curves of the homeomorphism (\ref{iso-quotients}) obtained by applying Theorem \ref{iso-moduli-G-bundles-th} to $\P^1_\C$. We are interested in an explicit formula for the inverse 
$$\Psi_{0,n}^{-1}:\big({{\cal C}^{\kappa+1}_n(S^+,\C^*)\times {\cal C}^{\kappa+1}_{n}(S^-,\C^*)}\big)/{H^{\kappa+1}_0(\bar \Omega,\C^*)}\to  {\cal M}_S^{X}(0)$$
corresponding to the special case $e=0$. Let $f^\pm\in {\cal C}^{\kappa+1}_n(S^\pm,\C^*)$ and let $\varphi\in {\cal C}^{\kappa+1}(S^+,\C)$ be such that for any  $z\in S^+$, we have $e^\varphi(z)=f^+(z)f^-(\alpha z)^{-1}$. With the notations introduced in the previous section, put 
$$
\Cg^{S^+}_-(\varphi)\edf C^{S^+}_-(\varphi)- C^{S^+}_-(\varphi)(0),$$
and  define $\psi:\bar\Omega\to\C^*$ by the formula:\footnote{The idea to define $\psi$ in this way and formula (\ref{psi(z)-psi(alpha z)}) are due to Alexander Borichev \cite{Bor}.}
$$ 
\psi(z)=\sum_{k=0}^\infty \Cg^{S^+}_-(\varphi)(\alpha^k z)+\sum_{k=1}^\infty   C^{S^+}_+(\varphi)(\alpha^{-k} z).
$$
Noting that $\Cg^{S^+}_-(\varphi)(0)=0$ by the definition of $\Cg^{S^+}_-(\varphi)$, and $C^{S^+}_+(\infty)=0$ by \cite[p. 23]{Mu}, it follows using Lemma \ref{MaxPrinc} proved below that both series in the definition of $\psi$ are normally convergent on $\bar\Omega$. Moreover, writing 
$$
\psi(z)=\Cg^{S^+}_-(\varphi)(z)+C^{S^+}_+(\varphi)(\alpha^{-1} z)+\sum_{k=1}^\infty \Cg^{S^+}_-(\varphi)(\alpha^k z)+\sum_{k=2}^\infty   C^{S^+}_+(\varphi)(\alpha^{-k} z),
$$
using the properties of the Cauchy transforms $C^{S^+}_\pm$ mentioned in the previous section, and noting that the two sums on the right extend holomorphically to a neighborhood of $\bar\Omega$, it follows that $\psi\in H^{\kappa+1}(\bar\Omega,\C)$. 

For any $z\in\bar\Omega$ we have
\begin{equation}\label{psi(z)-psi(alpha z)}
\psi(z)-\psi(\alpha z)
%
%
%
=\Cg^{S^+}_-(\varphi)(z)-C^{S^+}_+(\varphi)(z)=\varphi(z)-C^{S^+}_-(\varphi)(0),
\end{equation}
where, for the last equality, we used the Plemelj-Privalov formula (\ref{PlemPriv}). Putting $\lambda\edf e^{C^{S^+}_-(\varphi)(0)}\in\C^*$, $g^\pm\edf e^{-\psi}|_{S^\pm} f^\pm$, this implies
\begin{equation}\label{exp(psi)}
\forall z\in S^+,\ g^-(\alpha z)=\lambda g^+(z).
\end{equation}
Let $P_\lambda$ be the flat holomorphic $\C^*$-bundle over $X=\C^*/\langle\alpha\rangle$ defined by
$$
P_\lambda\edf \C^*\times\C^*/\langle(\alpha,\lambda)\rangle=\bar\Omega\times\C^*/\stackrel{\lambda}{\sim}, 
$$
where $\stackrel{\lambda}{\sim}$ is the equivalence relation generated by the set of pairs
$$\{((z^+,\zeta),(\alpha z^+,\lambda\zeta))|\ z^+\in S^+,\ \zeta\in\C^*\}.$$
 Formula (\ref{exp(psi)}) shows that $(g^+,g^-)$ defines a section $g\in \Gamma(S,P_\lambda)^{\kappa+1}$, and that
$$
\Psi_{0,n}([P_\lambda,g])= [g^+,g^-]_{H^{\kappa+1}_0(\bar\Omega,{\C}^*)}.
$$  
On the other hand, the definition of $g^\pm$  gives $(g^+,g^-)=e^{-\psi}(f^+,f^-)$, where $e^{-\psi}\in H^{\kappa+1}_0(\bar\Omega,\C^*)$, so
$$
[f^+,f^-]_{H^{\kappa+1}_0(\bar\Omega,\C^*)}=[g^+,g^-]_{H^{\kappa+1}_0(\bar\Omega,{\C}^*)}.
$$
Therefore
\begin{equation}\label{PSI{0n}-1}
\Psi_{0,n}^{-1}\big([f^+,f^-]_{H^{\kappa+1}_0(\bar\Omega,\C^*)}\big)=[P_\lambda,g].	
\end{equation}

\begin{lm} \label{MaxPrinc} Let $r>0$ and  $u$ be a holomorphic function defined on a neighborhood of the standard compact disk $\bar D_r\subset \C$ such that $u(0)=0$. 
\begin{enumerate}
	\item For any $z\in \bar D_r$ we have
$$|u(z)|\leq r^{-1}\sup_{\zeta\in   S_r}|u(\zeta)| |z|.$$
\item Let $\alpha\in\C^*$ with $|\alpha|<1$. For any $(z,k)\in\C\times\N$ such that $\alpha^kz\in \bar D_r$ we have 
$$|u(\alpha^kz)|\leq r^{-1} |\alpha|^k \sup_{\zeta\in   S_r}|u(\zeta)||z|.$$
\end{enumerate} 
\end{lm}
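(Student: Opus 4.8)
The plan is to recognize Lemma \ref{MaxPrinc} as a form of the Schwarz lemma and to deduce it from the maximum modulus principle. Since part (2) follows from part (1) by a trivial substitution, the entire content lies in part (1).

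For part (1), the key step is to pass from $u$ to the quotient $g(z)=u(z)/z$. As $u$ is holomorphic on a neighborhood of $\bar D_r$ and $u(0)=0$, its Taylor expansion at the origin has no constant term, $u(z)=\sum_{n\geq 1}a_nz^n$, so $g(z)=\sum_{n\geq 1}a_nz^{n-1}$ defines a holomorphic function on the same neighborhood of $\bar D_r$; in other words, the apparent singularity of $u(z)/z$ at the origin is removable. I would then apply the maximum modulus principle to $g$ on the compact disk $\bar D_r$: for every $z\in\bar D_r$,
$$
|g(z)|\leq \max_{\zeta\in S_r}|g(\zeta)|.
$$
On the bounding circle $S_r$ one has $|\zeta|=r$, hence $|g(\zeta)|=|u(\zeta)|/r$ and $\max_{\zeta\in S_r}|g(\zeta)|=r^{-1}\sup_{\zeta\in S_r}|u(\zeta)|$. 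Multiplying the resulting inequality by $|z|$ gives $|u(z)|=|z|\,|g(z)|\leq r^{-1}\sup_{\zeta\in S_r}|u(\zeta)|\,|z|$, which is exactly the asserted bound.

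For part (2), I would simply apply the inequality of part (1) to the point $\alpha^k z$, which lies in $\bar D_r$ by hypothesis. This yields
$$
|u(\alpha^k z)|\leq r^{-1}\sup_{\zeta\in S_r}|u(\zeta)|\,|\alpha^k z|=r^{-1}|\alpha|^k\sup_{\zeta\in S_r}|u(\zeta)|\,|z|,
$$
since $|\alpha^k z|=|\alpha|^k|z|$. This is precisely the claim.

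There is no serious obstacle in this lemma; the argument is entirely elementary. The only point requiring a word of justification is the removability of the singularity of $u(z)/z$ at the origin, which is immediate from $u(0)=0$. Once $g$ is known to be holomorphic across $0$, the maximum modulus principle does all the work, and part (2) is a one-line substitution. The lemma is included precisely because it supplies the geometric-series bounds needed for the normal convergence of the two series defining $\psi$ in the preceding construction.
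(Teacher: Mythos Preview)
Your proof is correct and follows exactly the same approach as the paper: apply the Maximum Principle to the holomorphic extension of $z\mapsto z^{-1}u(z)$ on $\bar D_r$ for (1), and deduce (2) directly from (1) by substituting $\alpha^k z$ for $z$.
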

\begin{proof}
For (1) apply the Maximum Principle	to the holomorphic extension of the function $z\mapsto z^{-1}u(z)$ on $\bar D_r$. (2) follows directly from (1).
\end{proof}

\subsection{\texorpdfstring{$S$}{S}-framed holomorphic \texorpdfstring{$\SL(2,\C)$}{SL}-bundles on \texorpdfstring{$\P^1_\C$}{P1}}

\label{framed-P1-SL(2,C)}

We come back to the decomposition $\P^1_\C=\bar U^-\cup \bar U^+$ associated with a closed curve $S\subset\C$ as considered in section \ref{framed-along-S-in-P1}. We are interested in the moduli space of $S$-framed $\SL(2,\C)$-bundles on $\P^1_\C$. We will use the vector bundle formalism, so in this section by $\SL(2,\C)$-bundle we mean a holomorphic vector bundle of rank 2 endowed with a trivialization of its determinant line bundle. By Grothendieck's classification theorem \cite{Gro}  the map
$$
\N\ni n\mapsto |{\cal O}(n)|\oplus |{\cal O}(-n)|
$$
is a bijection onto the set of isomorphism classes of $\SL(2,\C)$-bundles on $\P^1_\C$. In the above formula we used the notation $|{\cal L}|$ for the line bundle associated with an invertible sheaf ${\cal L}$.  Denoting by $\C[Z_0,Z_1]_{d}$ the space of homogeneous polynomials of degree $d$ in $Z_0$, $Z_1$, note that  

$$
\Aut(|{\cal O}(n)|\oplus |{\cal O}(-n)|)=\left\{ \begin{array}{ccc}
\SL(2,\C) &\rm if &n=0,\\
\left\{\begin{pmatrix} a & P\\ 0 & a^{-1}\end{pmatrix}\vline\ a\in\C^*,\ P\in \C[Z_0,Z_1]_{2n}\right\}&\rm if&n>0.
\end{array}\right.
$$
On the affine line $\C\subset\P^1_\C$ we trivialize  the line bundles $|{\cal O}(1)|$, $|{\cal O}(-1)|$ using respectively the linear form $\varphi_0$ defined in  section \ref{framed-along-S-in-P1} and the meromorphic section $\xi_0$ of the tautological line bundle $|{\cal O}(-1)|$ given by 
$$\P^1\ni\xi=[Z_0,Z_1]\mapsto \big(1,\frac{Z_1}{Z_0}\big)\in|{\cal O}(-1)|_\xi.$$
The matrix of 
$A=\begin{pmatrix} a & P\\ 0 & a^{-1}\end{pmatrix}$
with respect to the basis $(\varphi_0^{\otimes n},\xi_0^{\otimes n})$  is
$\Ag=\begin{pmatrix} a & p|_S\\ 0 & a^{-1}\end{pmatrix}$
where $p\in \C[z]_{\leq 2n}$, $p(z)=P(1,z)$ is the dehomogenization  of $P$ with respect to $Z_0$.  We obtain:
\begin{pr}
The moduli space ${\cal M}_S$ of $S$-framed $\SL(2,\C)$-bundles on $\P^1_\C$ admits a natural stratification ${\cal M}_S=\coprod_{n\in\N} {\cal M}_S^n$, where
$$
{\cal M}_S^0= {{\cal C}^{\kappa+1}(S,\SL(2,\C))}/{\SL(2,\C)},$$
$$ {\cal M}_S^n=  {{\cal C}^{\kappa+1}(S,\SL(2,\C))}\bigg/{\left\{\begin{pmatrix} a & p\\ 0 & a^{-1}\end{pmatrix}\vline\ a\in\C^*,\ p\in \C[z]_{\leq 2n}\right\}} \hb{ for }n\geq 1.
$$
For any $n\in\N$, ${\cal M}_S^n$ is open in $\overline{{\cal M}_S^n}=\union_{m\geq n}{\cal M}_S^m$.
\end{pr}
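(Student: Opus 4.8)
\emph{The plan is as follows.} I would first establish the decomposition at the level of sets, then identify each stratum explicitly, and finally analyse the closure relation; the genuine work lies in the last step.

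\textbf{Strata as sets and their description.} By Grothendieck's theorem \cite{Gro}, every holomorphic $\SL(2,\C)$-bundle on $\P^1_\C$ is isomorphic to exactly one bundle $E_n=|{\cal O}(n)|\oplus|{\cal O}(-n)|$, $n\in\N$. I would attach to a class $[({\cal P},\theta)]\in{\cal M}_S$ the integer $n$ with ${\cal P}\cong E_n$; this is an isomorphism invariant and yields ${\cal M}_S=\coprod_{n\in\N}{\cal M}_S^n$ as a set. Fixing the underlying bundle to be $E_n$ and trivialising $(E_n)_S$ by the frame $(\varphi_0^{\otimes n},\xi_0^{\otimes n})$, a framing becomes an element of ${\cal C}^{\kappa+1}(S,\SL(2,\C))$, and by Proposition \ref{abstr-to-concrete} (combined with Theorem \ref{iso-moduli-G-bundles-th}) two framings define isomorphic $S$-framed bundles precisely when they differ by left multiplication with the restriction to $S$ of an automorphism $g\in\Aut(E_n)$, i.e. by the matrix $\Ag$. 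Reading off $\Aut(E_0)=\SL(2,\C)$, and for $n\geq1$ the upper-triangular group whose restricted matrices are $\begin{pmatrix} a & p\\ 0 & a^{-1}\end{pmatrix}$ with $a\in\C^*$ and $p\in\C[z]_{\leq 2n}$, then produces the two stated quotient descriptions of ${\cal M}_S^n$.

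\textbf{The index is upper semicontinuous.} I would work in the gauge-theoretic model ${\cal M}_S={\cal I}^\kappa_P/{\cal G}_S^P$ furnished by Theorem \ref{iso-moduli-G-bundles-th}, where $P$ is the trivial rank-$2$ bundle. A short cohomology computation gives $h^0(\P^1_\C,E_n\otimes{\cal O}(-1))=n$ for every $n\geq 0$, so the stratifying index of a point $[\delta]$ equals $\dim_\C H^0(\P^1_\C,(E,\delta)\otimes{\cal O}(-1))$, the dimension of the kernel of the associated $\bar\partial$-operator twisted by ${\cal O}(-1)$. As $\delta$ varies in ${\cal I}^\kappa_P$ this kernel dimension is upper semicontinuous, and being gauge invariant it descends to ${\cal M}_S$; hence $\bigcup_{m\geq n}{\cal M}_S^m=\{\,\text{index}\geq n\,\}$ is closed. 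Consequently ${\cal M}_S^n=\{\text{index}\geq n\}\setminus\{\text{index}\geq n+1\}$ is open in this closed set, and $\overline{{\cal M}_S^n}\subseteq\bigcup_{m\geq n}{\cal M}_S^m$.

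\textbf{The closure contains all higher strata.} For the reverse inclusion it suffices, by transitivity of closures (monotonicity and idempotence), to prove the adjacent relation ${\cal M}_S^m\subseteq\overline{{\cal M}_S^{m-1}}$ for each $m\geq1$. Here the framing plays no role: in the model ${\cal I}^\kappa_P/{\cal G}_S^P$ the framing is the fixed $C^\infty$-trivialisation on $S$, which is untouched as $\delta$ varies, so it is enough to find, arbitrarily near any integrable $\delta_0$ with $(E,\delta_0)\cong E_m$, integrable operators of splitting type $m-1$. On the curve $\P^1_\C$ every $(0,2)$-form vanishes, so every Dolbeault operator is automatically integrable; I would deform $\delta_0$ to $\delta_0+t\,a$ with $[a]$ a nonzero class in $\Ext^1({\cal O}(m),{\cal O}(-m))=H^1(\P^1_\C,{\cal O}(-2m))$, for which the bundle $(E,\delta_0+t\,a)\cong E_{m-1}$ when $t\neq0$ is small, while $\delta_0+t\,a\to\delta_0$. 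Continuity of the projection ${\cal I}^\kappa_P\to{\cal M}_S$ then places $[\delta_0]$ in $\overline{{\cal M}_S^{m-1}}$; letting $\delta_0$ range over representatives exhausts ${\cal M}_S^m$. Combining both inclusions gives $\overline{{\cal M}_S^n}=\bigcup_{m\geq n}{\cal M}_S^m$, and with the openness from the previous step the proof is complete.

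\textbf{Main obstacle.} The identification of the strata is essentially bookkeeping on top of the quoted results. The delicate point is the reverse closure inclusion: upper semicontinuity by itself does not produce operators of strictly smaller index near a given one, so one must exhibit the genuine degeneration $E_m\rightsquigarrow E_{m-1}$. I expect this to be the crux, and I would settle it with the explicit extension (elementary-transformation) family above, invoking the classical upper semicontinuity of the splitting type to confirm that the generic nearby operator has type exactly $m-1$.
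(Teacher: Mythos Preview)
Your proof is correct and follows the same route the paper takes for the set-theoretic stratification and the identification of the strata: the paper states the proposition immediately after computing $\Aut(E_n)$ and its matrix $\Ag$ in the frame $(\varphi_0^{\otimes n},\xi_0^{\otimes n})$, so the first two parts are exactly the bookkeeping you describe (via Grothendieck and Proposition~\ref{abstr-to-concrete}; the reference to Theorem~\ref{iso-moduli-G-bundles-th} is unnecessary here). For the closure relation the paper gives no argument at all, treating it as a standard fact about the Grothendieck stratification; your two-step argument (upper semicontinuity of $h^0(E(-1))=n$ for one inclusion, and the explicit extension family realising $E_{m-1}$ as a nontrivial extension of ${\cal O}(m)$ by ${\cal O}(-m)$ for the adjacent inclusion ${\cal M}_S^m\subset\overline{{\cal M}_S^{m-1}}$) is the natural way to fill this in. Two minor points: once you have fixed $[a]$ so that the extension is $E_{m-1}$, this holds for \emph{every} $t\neq 0$ (extension classes differing by a scalar give isomorphic middle terms), so the final appeal to semicontinuity in your last paragraph is superfluous; and the existence of such an $[a]$ is clear since $E_{m-1}$ does surject onto ${\cal O}(m)$ (e.g.\ via a pair of sections of ${\cal O}(1)$ and ${\cal O}(2m-1)$ with no common zero).
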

Therefore any stratum ${\cal M}_S^n$ is an infinite dimensional homogeneous Banach manifold obtained by factorizing a Banach Lie group by a finite dimensional  affine algebraic subgroup. 
Theorem \ref{iso-moduli-G-bundles-th} gives a homeomorphism
$$
{\cal M}_S\textmap{\simeq} \begin{array}{c} {{\cal C}^{\kappa+1}(\p\bar U^-,\SL(2,\C))}/{H^{\kappa+1}(\bar U^-,\SL(2,\C))}\\ \bigtimes\\ {{\cal C}^{\kappa+1}(\p\bar U^+,\SL(2,\C))}/{H^{\kappa+1}(\bar U^+,\SL(2,\C))} 	
 \end{array}
$$
induced by the obvious restriction map. Combining this result with \cite[Theorem 1']{Do}  applied to the two factors on the right, we obtain:
\begin{co}
The product 	
$$\qmod{{\cal C}^{\kappa+1}(\p\bar U^-,\SU(2))}{\SU(2)}\times \qmod{{\cal C}^{\kappa+1}(\p\bar U^+,\SU(2))}{\SU(2)}$$
can be identified with the moduli space ${\cal M}_S=\coprod_{n\in\N} {\cal M}_S^n$ of $S$-framed holomorphic $\SL(2,\C)$-bundles on $\P^1_\C$.\end{co}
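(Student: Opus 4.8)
The plan is to read the statement off as the composition of two homeomorphisms that are already available: the complex-geometric gluing isomorphism of Theorem~\ref{iso-moduli-G-bundles-th}, and the gauge-theoretic factorization furnished by Donaldson's theorem on each of the two disks $\bar U^\pm$. The genuine mathematical content sits entirely in those two cited results, so the task is to check that their hypotheses are met for $G=\SL(2,\C)$ and $K=\SU(2)$, and then to compose.

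First I would apply Theorem~\ref{iso-moduli-G-bundles-th} with $G=\SL(2,\C)$, $X=\P^1_\C$ and the separating curve $S$. Since $\dim_\C(\P^1_\C)=1$, Remark~\ref{dim=1} makes the compatibility condition on the induced tangential almost complex structures void, so the fiber product over ${\cal I}$ collapses to an ordinary product; combined with the framed-bundle interpretation of Proposition~\ref{abstr-to-concrete} this yields precisely the homeomorphism
$$
{\cal M}_S\textmap{\simeq}\frac{{\cal C}^{\kappa+1}(\p\bar U^-,\SL(2,\C))}{H^{\kappa+1}(\bar U^-,\SL(2,\C))}\times\frac{{\cal C}^{\kappa+1}(\p\bar U^+,\SL(2,\C))}{H^{\kappa+1}(\bar U^+,\SL(2,\C))}
$$
recorded just above the statement, each factor being the moduli space of boundary framed formally holomorphic $\SL(2,\C)$-bundles on the disk $\bar U^\pm$.

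Next I would treat each factor separately. The group $\SL(2,\C)$ is reductive with maximal compact subgroup $K=\SU(2)$, and each $\bar U^\pm$ is a disk, so Remark~\ref{LoopsRem} (whose general, non-standard-disk case rests on \cite[Theorem 1']{Do}) supplies the factorization homeomorphisms
$$
\frac{{\cal C}^{\kappa+1}(\p\bar U^\pm,\SU(2))}{\SU(2)}\textmap{\simeq}\frac{{\cal C}^{\kappa+1}(\p\bar U^\pm,\SL(2,\C))}{H^{\kappa+1}(\bar U^\pm,\SL(2,\C))}.
$$
Substituting these into the product above and composing with the homeomorphism of the previous paragraph yields the asserted identification of the product of the two $\SU(2)$-quotients with ${\cal M}_S$.

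The only point needing care is bookkeeping rather than a genuine obstacle. Because $\SL(2,\C)$ is simply connected we have $H_1(\SL(2,\C),\Z)=0$, so every boundary framing lies in the single degree-zero homotopy class, and the disjoint unions over boundary degree appearing in the disk computation (Proposition~\ref{BundleACSOnDisk} and its corollary) reduce to one factor; this is why no degree index survives on either side. I would also flag that the Grothendieck stratification ${\cal M}_S=\coprod_{n\in\N}{\cal M}_S^n$ of the preceding Proposition is genuinely finer data than the product presentation displays: the index $n$ records the holomorphic splitting type $|{\cal O}(n)|\oplus|{\cal O}(-n)|$ of the glued bundle, a complex-analytic invariant subject to the usual upper semicontinuity, not a connected-component label of the parameter space. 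Thus the full product on the left is homeomorphic to the full moduli space, the strata ${\cal M}_S^n$ corresponding under the homeomorphism to the loci where the glued bundle jumps to splitting type $(n,-n)$.
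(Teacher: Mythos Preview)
Your argument is correct and follows exactly the paper's approach: the corollary is stated immediately after the displayed homeomorphism obtained from Theorem~\ref{iso-moduli-G-bundles-th}, and the paper's own justification is the single sentence ``Combining this result with \cite[Theorem 1']{Do} applied to the two factors on the right, we obtain'', which is precisely your second step via Remark~\ref{LoopsRem}. Your additional remarks on simple connectedness of $\SL(2,\C)$ and on the Grothendieck stratification are accurate clarifications that the paper leaves implicit.
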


\section{Appendix}

\subsection{Lipschitz spaces, spaces of maps and sections of class \texorpdfstring{${\cal C}^\kappa$}{kappa} }\label{Ckappa}

In this section we will introduce the spaces:
 $\mathrm{Lip}^\kappa(\R^n,T)$, $\mathrm{Lip}^\kappa_{\R^n}(F,T)$, ${\cal C}^\kappa(U,T)$, ${\cal C}^\kappa(\bar U,T)$, $\Gamma^\kappa(U,E)$, $\Gamma^\kappa(\bar U,E)$. 
 
 Let $T$ be a finite dimensional normed space, $k\in\N$ and $f\in {\cal C}^k(\R^n,T)$. The order $k$ remainder of $f$ is the map $\R^n\times\R^n\to T$  defined by 
 $$
  R_f^k(x,y)\edf  f(x)-\sum_{0\leq|l|\leq k} \frac{1}{l!}\p^lf(y) (x-y)^{l}.
 $$
 Using the integral formula  for the order $k-1$ Taylor remainder, we obtain
\begin{equation}
\begin{split}
R_f^k(x,y)&=R_f^{k-1}(x,y)-\sum_{|l|=k} \frac{1}{l!}\p^lf(y)(x-y)^{l}\\
&=k\int_0^1 (1-t)^{k-1}\sum_{|l|=k}\frac{1}{l!}\big[ \p^lf( y+t(x-y))- \p^lf(y)\big](x-y)^l dt,	
\end{split}	
\end{equation}
which gives the estimate
\begin{equation}\label{RemainderEstimate}
\big\|R_f^k(x,y)\big\|\leq c(n,k)\| \sup_{\substack{t\in[0,1]\\ |l|=k}}\|\p^lf(y+t(x-y))-\p^lf(y)\|\|x-y\|^k.	
\end{equation}
Applied to the ${\cal C}^{k-|j|}$ map $\partial^j f$ for $|j|\leq k$, formula (\ref{RemainderEstimate}) gives 
\begin{equation}\label{RemainderEstimate-j}
\big\|R_{\partial^j f}^{k-|j|}(x,y)\big\|\leq c(n,k-|j|)\| \sup_{\substack{t\in[0,1]\\ |l|=k-|j|}}\|\p^{j+l}f(y+t(x-y))-\p^{j+l}f(y)\|\|x-y\|^{k-|j|}	
\end{equation}

 Let now $\kappa\in (0,+\infty)\setminus\N$.   We  denote by $\mathrm{Lip}^\kappa(\R^n,T)$ the order $\kappa$ Lipschitz space in supremum norm, as defined in  \cite[p. 2]{JW},  \cite[p. 176]{St}:
 \begin{equation}\label{defLipkappa}
 \mathrm{Lip}^\kappa(\R^n,T) \edf \{f\in {\cal C}^{[\kappa]}(\R^n,T)|\ \| f\|_{\mathrm{Lip}^\kappa}<\infty\},
 \end{equation}
 where
\begin{equation}
\begin{split}\label{DefLipkappa-norm}
\| f\|_{\mathrm{Lip}^\kappa} \edf    \inf & \big\{  M\in\R_+|\ \sup_{\R^n}\|\p^j f\|\leq M, \hb{ for }|j|\leq [\kappa], \hb{ and}\\
 &\|\p^jf(x)-\p^jf(y)\|\leq M\|x-y\|^{\kappa-[\kappa]} \hb{ for } |j|=[\kappa],\  x,\ y\in\R^n\big\}.
\end{split}	
\end{equation}
Using formulae (\ref{RemainderEstimate}), (\ref{RemainderEstimate-j}) for $k=[\kappa]$, it follows that 
\begin{re}\label{EstimatesForLipkappa}
For any $f\in \mathrm{Lip}^\kappa(\R^n,T)$ and any $j\in\N^n$ with $|j|\leq[\kappa]$ we have an estimate of the form:
$$
\big\|R_{\partial^j f}^{[\kappa]-|j|}(x,y)\big\|\leq M_j \|x-y\|^{\kappa-|j|}.
$$	
\end{re}

This justifies the following definition (see \cite[p. 176]{St}, \cite[p. 22]{JW} for $\R$-valued functions, and \cite[Definition B1]{BBHM} for maps with values in a Banach space):
\begin{dt}
Let $\kappa\in (0,+\infty)\setminus\N$ and $F\subset\R^n$ be a closed subset.	A $T$-valued Whitney jet of order $\kappa$ on $F$ is a family $\fg=(f^{(j)})_{0\leq|j|\leq[\kappa]}$ of bounded continuous maps $f^{(j)}:F\to T$ such that, putting
$$
R_{j,\fg}^{[\kappa]}(x,y)\edf f^{(j)}(x)-\sum_{0\leq |l|\leq [\kappa]-|j|} \frac{1}{l!} f^{(j+l)}(y)(x-y)^l,
$$
we have estimates of the form $\|R_{j,\fg}^{[\kappa]}(x,y)\|\leq M_j \| x-y\|^{\kappa-|j|}$ on $F\times F$.  
\end{dt}

Endowed with the norm
$$
\|\fg\|_{\mathrm{Lip}^\kappa}\edf \inf\left \{M\in \R_+\vline\ \begin{array}{l}\|f^{(j)}(x)\|\leq M,\\ \|R_{j,\fg}^{[\kappa]}(x,y)\|\leq M \| x-y\|^{\kappa-|j|}\end{array} \begin{array}{c}\hb{for any }(x,y)\in F\times F, \\ j\in\N^n \hb{ with }|j|\leq[\kappa]\end{array}  \right\},
$$
the space  $\mathrm{Lip}^\kappa_{\R^n}(F,T)$ of  $T$-valued Whitney jets of order $\kappa$ on $F$ becomes a Banach space. The role of the subscript $_{\R^n}$ in our notation is to avoid confusion with the space $\mathrm{Lip}^\kappa(F,T)$ in the sense of (\ref{defLipkappa}) in the special case when $F$ is an affine subspace of $\R^n$ (in which case $F$ can be identified with a space $\R^m$ with $m\leq n$).

We refer to \cite[Theorem 4, p. 177]{St}, \cite[Theorem, p. 23]{JW} \cite[Theorem B.2]{BBHM} for the following fundamental:
\begin{thr}\label{WhitneyExt} (Whitney extension theorem for jets  of order $\kappa$)
  Let $\kappa\in (0,+\infty)\setminus\N$ and $F\subset\R^n$  a closed set. There exists a continuous  extension operator 
$${\cal E}_\kappa: \mathrm{Lip}^\kappa_{\R^n}(F,T)\to \mathrm{Lip}^\kappa(\R^n,T)$$
such that, putting $f\edf {\cal E}_\kappa(\fg)$, we have $\p^j f|_F=f^{(j)}$ for any $j\in\N^n$ with $|j|\leq [\kappa]$ and $f|_{\R^n\setminus F}\in{\cal C}^\infty(\R^n\setminus F,T)$.
\end{thr}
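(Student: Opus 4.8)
The plan is to follow the classical Whitney construction adapted to the Lipschitz (Hölder) scale, which is exactly the regime in which the extension can be taken linear and bounded. Write $k\edf[\kappa]$ and $\Omega\edf\R^n\setminus F$. First I would perform a Whitney decomposition of the open set $\Omega$ into a locally finite family of closed dyadic cubes $\{Q_i\}_i$ with pairwise disjoint interiors and side lengths comparable to their distance to $F$, namely $\mathrm{diam}(Q_i)\le\mathrm{dist}(Q_i,F)\le 4\,\mathrm{diam}(Q_i)$, together with a subordinate smooth partition of unity $\{\varphi_i\}$, $\sum_i\varphi_i\equiv 1$ on $\Omega$, satisfying the standard bounds $\|\p^\alpha\varphi_i\|\le C_\alpha\,\mathrm{diam}(Q_i)^{-|\alpha|}$ and bounded overlap of the (slightly enlarged) supports. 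For each $i$ I would fix a point $p_i\in F$ realizing $\mathrm{dist}(Q_i,F)$, and for $p\in F$ denote by $T_p^\fg(x)\edf\sum_{|l|\le k}\tfrac{1}{l!}f^{(l)}(p)(x-p)^l$ the Taylor polynomial at $p$ of the jet $\fg=(f^{(j)})_{|j|\le k}$. The extension is then defined by
$$
{\cal E}_\kappa(\fg)(x)\edf\begin{cases} f^{(0)}(x) & x\in F,\\ \sum_i\varphi_i(x)\,T^\fg_{p_i}(x) & x\in\Omega.\end{cases}
$$
Since the cubes, the partition of unity, and the points $p_i$ depend only on $F$, this assignment is manifestly linear in $\fg$, and on $\Omega$ it is a locally finite sum of polynomials times ${\cal C}^\infty$ cut-offs, so $f\edf{\cal E}_\kappa(\fg)$ is automatically ${\cal C}^\infty$ on $\R^n\setminus F$, which already yields the last assertion of the theorem.

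The heart of the matter is the family of estimates showing $f\in\mathrm{Lip}^\kappa(\R^n,T)$ with norm controlled by $\|\fg\|_{\mathrm{Lip}^\kappa}$. I would isolate two elementary lemmas. First, a comparison estimate for Taylor polynomials of the jet at two points $a,b\in F$,
$$
\|\p^\beta(T^\fg_a-T^\fg_b)(x)\|\le C\,\|\fg\|_{\mathrm{Lip}^\kappa}\big(\|x-a\|+\|x-b\|\big)^{\kappa-|\beta|},\qquad |\beta|\le k,
$$
obtained by regrouping the Whitney remainders $R^{k}_{\beta,\fg}$ of the excerpt. Second, the interior estimate that for $x\in\Omega$ with nearest point $q_x\in F$,
$$
\big\|\p^j f(x)-f^{(j)}(q_x)\big\|\le C\,\|\fg\|_{\mathrm{Lip}^\kappa}\,\mathrm{dist}(x,F)^{\kappa-|j|},\qquad |j|\le k .
$$
The second estimate is where the construction really pays off: differentiating $f=\sum_i\varphi_i T^\fg_{p_i}$ by Leibniz and using the cancellation $\sum_i\p^\alpha\varphi_i\equiv 0$ for $\alpha\ne 0$, one may replace each $T^\fg_{p_i}$ by the difference $T^\fg_{p_i}-T^\fg_{q_x}$, to which the first lemma applies; the geometry $\|p_i-q_x\|\lesssim\mathrm{dist}(x,F)$ for the finitely many $i$ active at $x$, together with the bounds on $\p^\alpha\varphi_i$, produces the stated power of $\mathrm{dist}(x,F)$.

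From these two estimates I would deduce that $\p^j f$ extends continuously across $F$ with boundary value $f^{(j)}$ for $|j|\le k$ (for $|j|\le k-1$ by letting $x\to q\in F$ in the interior estimate, and for $|j|=k$ using that $R^k_{j,\fg}(x,y)=f^{(j)}(x)-f^{(j)}(y)$), so $f\in{\cal C}^k(\R^n,T)$ with $\p^j f|_F=f^{(j)}$. The remaining top-order Hölder bound $\|\p^j f(x)-\p^j f(y)\|\le M\|x-y\|^{\kappa-k}$ for $|j|=k$ I would verify by a case analysis on the positions of $x,y$ relative to $F$: both in $F$ (immediate from the $|j|=k$ Whitney condition); both in $\Omega$ and mutually closer than half their distance to $F$ (integrate the interior bound for $|j'|=k+1$ along the segment); both in $\Omega$ but farther apart, and the mixed case $x\in F,\ y\in\Omega$ (route through nearest points of $F$ and invoke the two lemmas). \emph{This case analysis is the main obstacle}: one must track the correct power of $\|x-y\|$ uniformly in the geometry, the delicate regime being where $\|x-y\|$ is comparable to $\mathrm{dist}(x,F)$, so that neither the pure interior bound nor the pure boundary bound alone suffices and interpolation through $F$ is forced. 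Once all estimates are assembled, each carries a constant $C(n,\kappa)\,\|\fg\|_{\mathrm{Lip}^\kappa}$, whence $\|{\cal E}_\kappa(\fg)\|_{\mathrm{Lip}^\kappa}\le C(n,\kappa)\,\|\fg\|_{\mathrm{Lip}^\kappa}$; linearity of the construction then makes ${\cal E}_\kappa$ a bounded, hence continuous, linear operator. This is in essence the argument of \cite[Ch.~VI]{St}; the feature special to non-integer $\kappa$ is that the top-order modulus of continuity is a genuine Hölder power, which is precisely what makes the operator bounded and what is lost in the ${\cal C}^\infty$ limit.
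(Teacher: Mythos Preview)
Your sketch is the classical Whitney--Stein construction and is correct in outline; note, however, that the paper does not prove this theorem at all but simply quotes it from the literature (Stein \cite[Theorem~4, p.~177]{St}, Jonsson--Wallin \cite[p.~23]{JW}, and \cite[Theorem~B.2]{BBHM}), with only a pointer to \cite[p.~173,~179]{St} for the ${\cal C}^\infty$ regularity on $\R^n\setminus F$. So there is no discrepancy of approach to discuss: your proposal is precisely the argument behind the cited references.
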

For the ${\cal C}^\infty$ property of $f$ on  $\R^n\setminus F$, see the comments of \cite[p. 173, 179]{St}.

 Let $H\subset\R^n$ be an open half-space bounded by an affine hyperplane $S\subset \R^n$. We endow the space
 $$\mathrm{Lip}^\kappa(\bar H,T)\edf \{f\in {\cal C}^0(\bar H,T)|\ \exists \tilde f\in \mathrm{Lip}^\kappa(\R^n ,T) \hb{ such that } \tilde f|_{\bar H}=f\}$$
 with the quotient norm induced by the obvious linear isomorphism
 $$
 \qmod{\mathrm{Lip}^\kappa(\R^n ,T)}{\{\varphi\in \mathrm{Lip}^\kappa(\R^n ,T)|\ \varphi|_{\bar H}=0\}}\textmap{\simeq}\mathrm{Lip}^\kappa(\bar H,T).
 $$
 
 For  $f\in \mathrm{Lip}^\kappa(\bar H,T)$ and $j\in\N^n$ with $|j|\leq [\kappa]$ we put $\partial^jf \edf \p^j\tilde f|_{\bar H}$, where $\tilde f\in  \mathrm{Lip}^\kappa(\R^n ,T)$ is an extension of $f$ to $\R^n$.  Note that, by Whitney extension Theorem \ref{WhitneyExt} and \cite[Corollary 1 p. 42]{JW}), the space $\mathrm{Lip}^\kappa(\bar H,T)$ can also be identified with the space $\mathrm{Lip}^\kappa_{\R^n}(\bar H,T)$ of  Whitney jets of order $\kappa$ on $\bar H$ via the map 
 $$
 \mathrm{Lip}^\kappa_{\R^n}(\bar H,T)\ni \fg=(f^{(j)})_{|j|\leq [\kappa]}\mapsto f^{(0)}=f\in \mathrm{Lip}^\kappa(\R^n ,T). $$
Via this identification we have $\p^j f=f^{(j)}$. Similarly, for $m\in\N$, the Fréchet space ${\cal C}^m(\bar H,T)$ can be identified with the Fréchet space of  $T$-valued Whitney jets of class ${\cal C}^m$ on $\bar H$ (see \cite[section 1.1]{FJW} for the Fréchet structure on the space of Whitney jets of class ${\cal C}^m$). By Whitney extension Theorem for Lipschitz   spaces,  the original Whitney extension for ${\cal C}^k$-spaces (\cite{Wh}, \cite{FJW}) and Seeley's extension theorem \cite{See} for ${\cal C}^\infty$-spaces,  we have
 \begin{pr}\label{ExtFromHalfSpace}
 For $\kappa\in (0,+\infty)\setminus\N$ there exists a continuous extension operator $ \mathrm{Lip}^\kappa(\bar H,T)\to \mathrm{Lip}^\kappa(\R^n,T)$. For $m\in\N\cup\{\infty\}$ there exists a continuous extension operator $ {\cal C}^m(\bar H,T)\to {\cal C}^m(\R^n,T)$.
 \end{pr}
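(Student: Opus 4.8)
The plan is to handle the two regularity regimes separately, reducing each to a cited extension theorem via the jet descriptions of the relevant function spaces on $\bar H$ recorded just above the statement.

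For $\kappa\in(0,+\infty)\setminus\N$, I would first note that the assignment $\fg=(f^{(j)})_{|j|\le[\kappa]}\mapsto f^{(0)}$ is a Banach space isomorphism $\mathrm{Lip}^\kappa_{\R^n}(\bar H,T)\textmap{\simeq}\mathrm{Lip}^\kappa(\bar H,T)$: it is bounded since the $\mathrm{Lip}^\kappa(\bar H,T)$-norm of $f^{(0)}$ is dominated by the jet norm, and its inverse $f\mapsto(\p^jf)_{|j|\le[\kappa]}$ is bounded because, by the definition of the quotient norm on $\mathrm{Lip}^\kappa(\bar H,T)$, every $f$ has an $\R^n$-extension $\tilde f$ of comparable norm whose jet $(\p^j\tilde f|_{\bar H})_{|j|\le[\kappa]}$ realizes the Whitney estimates (Remark \ref{EstimatesForLipkappa}). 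With this identification in hand, I would apply Theorem \ref{WhitneyExt} with the closed set $F=\bar H$, obtaining a continuous operator ${\cal E}_\kappa:\mathrm{Lip}^\kappa_{\R^n}(\bar H,T)\to\mathrm{Lip}^\kappa(\R^n,T)$ with $\p^j({\cal E}_\kappa\fg)|_{\bar H}=f^{(j)}$. The composition of $f\mapsto(\p^jf)_{|j|\le[\kappa]}$ with ${\cal E}_\kappa$ is then a continuous linear map $\mathrm{Lip}^\kappa(\bar H,T)\to\mathrm{Lip}^\kappa(\R^n,T)$ whose value restricts (take $j=0$) to $f$ on $\bar H$, i.e. the sought extension operator.

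For $m\in\N$ I would proceed identically, replacing the Lipschitz jet identification by the identification of ${\cal C}^m(\bar H,T)$ with the Fréchet space of $T$-valued Whitney jets of class ${\cal C}^m$ on $\bar H$ recalled before the statement, and invoking the classical Whitney ${\cal C}^m$-extension theorem (\cite{Wh},\cite{FJW}), whose extension operator on the jet space is linear and continuous. For $m=\infty$, after an affine change of coordinates carrying $H$ to the standard half-space $\{x_n>0\}$ (harmless, since all the spaces and their topologies are affinely invariant), I would cite Seeley's theorem \cite{See} directly, which furnishes a continuous linear operator ${\cal C}^\infty(\{x_n\ge0\},T)\to{\cal C}^\infty(\R^n,T)$ splitting the restriction map; passing to a basis of $T$ reduces all three cases to the scalar statements in the references if one prefers.

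Since everything is assembled on top of the quoted theorems, the only genuine point is to produce a \emph{continuous linear} operator rather than merely to verify that each $f$ extends. In the Lipschitz and finite-${\cal C}^m$ regimes this continuity is built into the jet-extension theorems, so the crux reduces to the bicontinuity of the jet identification above. In the ${\cal C}^\infty$ regime, where Whitney extension from an arbitrary closed set famously need not be continuous, the half-space hypothesis is essential and is exactly what allows Seeley's construction to supply a continuous operator; this is a different mechanism from the extension-from-$S$ situation governing the $\kappa=\infty$ case of the main theorems (cf. Remark \ref{kappa=infty}), where no continuous ${\cal C}^\infty$ operator is available.
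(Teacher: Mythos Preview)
Your proposal is correct and follows exactly the route the paper indicates: the sentence immediately preceding the proposition already records the jet identifications $\mathrm{Lip}^\kappa(\bar H,T)\simeq\mathrm{Lip}^\kappa_{\R^n}(\bar H,T)$ and ${\cal C}^m(\bar H,T)\simeq\{\text{Whitney }{\cal C}^m\text{-jets on }\bar H\}$, and then invokes Theorem \ref{WhitneyExt}, Whitney's ${\cal C}^m$ theorem, and Seeley respectively. Your write-up supplies the bicontinuity check for the jet identification more explicitly than the paper does, but the argument is the same.
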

Put $\R^n_\pm\edf \R^{n-1}\times \R_\pm$, where $\R_\pm\edf \pm[0,+\infty)$.
\begin{lm}\label{glueLip-alpha}
Let $\alpha\in (0,1)$, $M_\pm\in\R_+$ and let $F_\pm:\R^{n}_\pm\to T$ be such that $F_+|_{\R^{n-1}\times\{0\}}=F_-|_{\R^{n-1}\times\{0\}}$ and 
$$
\|F_\pm(x)-F_\pm(y)\|\leq M_\pm \|x-y\|^\alpha \ \forall (x,y)\in (\R^n_\pm)^2.
$$
 Let $F$  be the mutual extension of $F_\pm$ to $\R^n$. 
  Then    %
  $$\|F(x)-F(y)\| \leq 2^{1-\alpha}\max(M_-,M_+)\|x-y\|^\alpha\ \forall (x,y)\in (\R^n)^2.
  $$
\end{lm}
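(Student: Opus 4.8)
The plan is to argue pointwise, distinguishing whether the two points lie in the same closed half-space or in opposite ones. Throughout I write a point of $\R^n$ as $x=(x',x_n)$ with $x'\in\R^{n-1}$, $x_n\in\R$, so that $\R^n_+=\{x_n\ge 0\}$, $\R^n_-=\{x_n\le 0\}$, and their common boundary is the hyperplane $\{x_n=0\}$; the hypothesis $F_+|_{\{x_n=0\}}=F_-|_{\{x_n=0\}}$ is exactly what makes the mutual extension $F$ well defined.

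First I would dispose of the easy case in which $x,y$ lie in the same half-space, say both in $\R^n_+$ (the case of $\R^n_-$ being identical). There $F(x)-F(y)=F_+(x)-F_+(y)$, so the assumed estimate gives $\|F(x)-F(y)\|\le M_+\|x-y\|^\alpha\le\max(M_-,M_+)\|x-y\|^\alpha$; since $\alpha<1$ we have $2^{1-\alpha}\ge 1$ and the claimed bound follows at once.

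The only case requiring an idea is that of $x,y$ in opposite half-spaces, say $x_n\ge 0\ge y_n$ (and not both $0$, else we are back in the previous case). Here I would insert the point $z\edf(1-t)x+ty$ with $t\edf x_n/(x_n-y_n)\in[0,1]$, i.e. the point where the segment $[x,y]$ crosses the hyperplane $\{x_n=0\}$; then $z_n=0$, so $z\in\R^n_+\cap\R^n_-$ and $F(z)=F_+(z)=F_-(z)$ is unambiguous. The decisive feature of this choice is that $z$ is \emph{collinear} with $x$ and $y$, which turns the triangle inequality into the exact equality $\|x-z\|+\|z-y\|=\|x-y\|$. Applying the triangle inequality in $T$ together with the two half-space Hölder bounds gives
$$\|F(x)-F(y)\|\le\|F_+(x)-F_+(z)\|+\|F_-(z)-F_-(y)\|\le M_+\|x-z\|^\alpha+M_-\|z-y\|^\alpha.$$

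To finish I would set $a\edf\|x-z\|$, $b\edf\|z-y\|$, so $a+b=\|x-y\|$, and invoke the elementary inequality $a^\alpha+b^\alpha\le 2^{1-\alpha}(a+b)^\alpha$, valid for $\alpha\in(0,1)$; this is just concavity of $t\mapsto t^\alpha$ on $[0,\infty)$ via Jensen, $a^\alpha+b^\alpha\le 2\big(\tfrac{a+b}{2}\big)^\alpha=2^{1-\alpha}(a+b)^\alpha$. Bounding $M_+,M_-$ by $\max(M_-,M_+)$ then yields $\|F(x)-F(y)\|\le 2^{1-\alpha}\max(M_-,M_+)\|x-y\|^\alpha$, as desired. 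I do not anticipate a genuine obstacle: the whole content is the geometric observation that one must use the \emph{intersection of the segment with the hyperplane} rather than an arbitrary boundary point, since only a collinear intermediate point makes $a+b=\|x-y\|$ and hence produces the sharp constant $2^{1-\alpha}$.
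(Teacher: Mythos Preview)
Your proof is correct and follows essentially the same approach as the paper: split into the same-half-space case (trivial) and the opposite-half-space case, where you insert the intersection point of the segment $[x,y]$ with the hyperplane and then bound $a^\alpha+b^\alpha$ by $2^{1-\alpha}(a+b)^\alpha$. The only cosmetic difference is that the paper phrases this last inequality as a comparison between the $\ell^1$ and $\ell^{1/\alpha}$ norms on $\R^2$, whereas you invoke concavity via Jensen; these are the same elementary fact.
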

\begin{proof} Note first that
$$
\sup_{\substack{(x,y)\in (\R^n_\pm)^2\\ x\ne y}} \frac{1}{\|x-y\|^\alpha} \| F(x)-F(y)\|\leq \max(M_-,M_+).
$$

It remains to estimate $\| F(x)-F(y)\|$ in terms of $\|x-y\|^\alpha$ when $x=x_+\in \R^n_+$ and $y=x_-\in \R^n_-$.
Let $x_0\in [x_-,x_+]\cap(\R^{n-1}\times\{0\})$. We have
$$
\|F(x_+)-F(x_-)\|\leq \|F(x_+)-F(x_0)\|+\|F(x_0)-F(x_-)\|\leq 
$$
$$
\leq M_+\|x_+-x_0\|^\alpha+M_-\|x_0-x_-\|^\alpha\leq \max(M_-,M_+)(\|x_+-x_0\|^\alpha+\|x_0-x_-\|^\alpha).
$$
Using standard estimates between the norms $\|\cdot\|_p$ ($1\leq p\leq+\infty$) on $\R^n$ we obtain:
$$
\|x_+-x_0\|^\alpha+\|x_0-x_-\|^\alpha=\|(\|x_+-x_0\|^\alpha,\|x_0-x_-\|^\alpha)\|_1\leq
$$
$$ 2^{1-\alpha}\|(\|x_+-x_0\|^\alpha,\|x_0-x_-\|^\alpha)\|_{\frac{1}{\alpha}}= 2^{1-\alpha}(\|x_+-x_0\|+\|x_0-x_-\|)^\alpha=2^{1-\alpha}\|x_+-x_-\|^\alpha.
$$
\end{proof}
\begin{pr}\label{glueLip-kappa}
Let $\kappa\in (0,\infty)\setminus\N$. Let $F_\pm\in \mathrm{Lip}^{\kappa}(\R^n_\pm,T)$  be such that 
$$\partial^j F_-|_{\R^{n-1}\times\{0\}}=	\partial^j F_+|_{\R^{n-1}\times\{0\}} \hb{ for }|j|\leq [\kappa],$$
and let $F$ be the mutual extension  of  $F_\pm$ to $\R^{n}$.  Then  $F\in \mathrm{Lip}^{\kappa}(\R^n)$, and 
$$\| F\|_{\mathrm{Lip}^{\kappa}}\leq  2^{1-\alpha} \max(\| F_-\|_{\mathrm{Lip}^{\kappa} },\| F_+\|_{\mathrm{Lip}^{\kappa}}).$$
\end{pr}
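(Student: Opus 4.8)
The plan is to verify $F\in\mathrm{Lip}^\kappa(\R^n,T)$ directly against the three requirements built into the definition (\ref{DefLipkappa-norm}): that $F$ is of class ${\cal C}^{[\kappa]}$, that the derivatives $\partial^jF$ are uniformly bounded for $|j|\le[\kappa]$, and that the top-order derivatives $\partial^jF$ with $|j|=[\kappa]$ are Hölder of exponent $\alpha=\kappa-[\kappa]$. Write $k=[\kappa]$. By definition, and by the identification of $\mathrm{Lip}^\kappa(\bar H,T)$ with spaces of Whitney jets recalled in Section \ref{Ckappa}, each $F_\pm$ is of class ${\cal C}^k$ up to the boundary, satisfies $\sup_{\R^n_\pm}\|\partial^jF_\pm\|\le\|F_\pm\|_{\mathrm{Lip}^\kappa}$ for $|j|\le k$, and has top-order derivatives obeying $\|\partial^jF_\pm(x)-\partial^jF_\pm(y)\|\le\|F_\pm\|_{\mathrm{Lip}^\kappa}\|x-y\|^\alpha$ for $|j|=k$. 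For each multi-index $j$ with $|j|\le k$ I introduce the function $g^{(j)}\colon\R^n\to T$ obtained by gluing $\partial^jF_-$ on $\R^n_-$ with $\partial^jF_+$ on $\R^n_+$; by the matching hypothesis $\partial^jF_-|_{\R^{n-1}\times\{0\}}=\partial^jF_+|_{\R^{n-1}\times\{0\}}$ each $g^{(j)}$ is well defined and continuous (the case $j=0$ being the statement that $F=g^{(0)}$ is continuous).

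The first and main step is to show that $F\in{\cal C}^k(\R^n)$ with $\partial^jF=g^{(j)}$ for all $|j|\le k$, which I would prove by induction on $|j|$. The only issue is the existence of the first partials of $g^{(j)}$ at a boundary point $x_0=(x_0',0)$. For a tangential direction $e_i$ with $i\le n-1$ the difference quotient moves $x_0$ only inside the boundary hyperplane, where $g^{(j)}$ coincides with $\partial^jF_+$, so $\partial_i g^{(j)}(x_0)=\partial_i\partial^jF_+(x_0)=g^{(j+e_i)}(x_0)$. For the normal direction $e_n$ the two one-sided difference quotients converge respectively to $\partial_n\partial^jF_+(x_0)$ and $\partial_n\partial^jF_-(x_0)$, which coincide by the matching hypothesis applied to $j+e_n$ (here $|j+e_n|\le k$ is used), so the two-sided derivative exists and equals $g^{(j+e_n)}(x_0)$. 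Since each $g^{(j+e_i)}$ is continuous on $\R^n$, $g^{(j)}$ is continuously differentiable with the expected partials, and the induction gives $F\in{\cal C}^k(\R^n)$ together with $\partial^jF=g^{(j)}$.

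With this secured the remaining estimates are immediate. For $|j|\le k$ one has $\sup_{\R^n}\|\partial^jF\|=\max(\sup_{\R^n_-}\|\partial^jF_-\|,\sup_{\R^n_+}\|\partial^jF_+\|)\le\max(\|F_-\|_{\mathrm{Lip}^\kappa},\|F_+\|_{\mathrm{Lip}^\kappa})$. For $|j|=k$ the function $\partial^jF=g^{(j)}$ is exactly the mutual extension of the two functions $\partial^jF_\pm$, which agree on $\R^{n-1}\times\{0\}$ and are $\alpha$-Hölder with constants $M_\pm\le\|F_\pm\|_{\mathrm{Lip}^\kappa}$; Lemma \ref{glueLip-alpha} applied with exponent $\alpha$ therefore yields $\|\partial^jF(x)-\partial^jF(y)\|\le 2^{1-\alpha}\max(M_-,M_+)\|x-y\|^\alpha$. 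Since $\alpha\in(0,1)$ gives $2^{1-\alpha}\ge1$, this same constant dominates the sup bounds as well, so taking the infimum defining the norm produces $\|F\|_{\mathrm{Lip}^\kappa}\le 2^{1-\alpha}\max(\|F_-\|_{\mathrm{Lip}^\kappa},\|F_+\|_{\mathrm{Lip}^\kappa})$.

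I expect the only genuinely delicate point to be the ${\cal C}^k$-gluing of the second paragraph, namely confirming that the glued normal derivatives are the actual derivatives of $F$ across the hyperplane and not merely one-sided derivatives; the hypothesis that \emph{all} derivatives up to order $k$ match on $\R^{n-1}\times\{0\}$ is exactly what closes this gap. Once $F\in{\cal C}^k$ is in place, the quantitative part of the statement reduces to a term-by-term application of Lemma \ref{glueLip-alpha} to the top-order derivatives, with no further computation needed.
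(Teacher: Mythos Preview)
Your proof is correct and follows the same overall strategy as the paper: glue the derivatives $\partial^jF_\pm$ into functions $g^{(j)}$, show $F\in{\cal C}^{[\kappa]}$ with $\partial^jF=g^{(j)}$, read off the sup bounds, and finish by applying Lemma \ref{glueLip-alpha} to the top-order derivatives. The only variation is in the differentiability step at boundary points: you argue via one-sided partial derivatives together with the standard ``continuous partials $\Rightarrow C^1$'' criterion, whereas the paper uses the Whitney-jet remainder estimates (available through the identification $\mathrm{Lip}^\kappa(\R^n_\pm,T)\simeq\mathrm{Lip}^\kappa_{\R^n}(\R^n_\pm,T)$) to obtain Fr\'echet differentiability directly from $\kappa-|j|>1$.
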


\begin{proof}
For   $j\in \N^n$ with $|j|\leq [\kappa]$ let   $F^j$ be the mutual extension of $\p^jF_\pm$ to $\R^n$. We prove  that
\begin{cl}
For any $j\in\N^n$ with $|j|\leq [\kappa]-1$,  $F^j$ is differentiable and $\p_i F^j=F^{j+e_i}$ for  $1\leq i\leq n$.	
\end{cl}
The claim is clear on $\R^n\setminus(\R^{n-1}\times \{0\})$, so let $y\in \R^{n-1}\times \{0\}$.  We know that
$$
\|F_\pm ^j(x_\pm)-\sum_{|j+l|\leq [\kappa]}\frac{1}{l!}F^{j+l}_\pm (y)(x_\pm-y)^l\|\leq M_\pm \|x_\pm- y\|^{\kappa-|j|}
$$
for $x_\pm\in\R^n_\pm$, where $M_\pm \edf\| F_\pm \|_{\mathrm{Lip}^\kappa}$.  Since $\kappa-|j|>1$, this implies
$$
\lim_{\substack{x_\pm\to y\\x_\pm \in \R^n_\pm}}\frac{1}{\|x_\pm- y\|}\|F_\pm ^j(x_\pm)-\sum_{i=1}^n F^{j+e_i}(y)(x_\pm-y)^i\|=0.
$$
Therefore 
$$
\lim_{x\to y }\frac{1}{\|x - y\|}\|F^j(x)-\sum_{i=1}^n F^{j+e_i}(y)(x-y)^i\|=0,
$$
which proves the claim.

Using the claim it follows by induction that $F$ is $[ \kappa]$ times differentiable, and $\p^j F=F^j$ for $0\leq |j|\leq [\kappa]$, in particular 
$$\sup_{\R^n}\|\p^j F\|\leq \max(\sup_{\R^n_-}\|\p^j F_-\|,\sup_{\R^n_+}\|\p^j F_+\|)\leq \max(M_+,M_-).$$
To complete the proof it suffices to apply Lemma
\ref{glueLip-alpha}  to the maps $\p^jF_\pm$ for $|j|=[\kappa]$.
\end{proof}

\begin{pr}\label{extensionProp-Lip}
Let $\kappa\in (0,+\infty)\setminus\N$. There exists   continuous operators
$$
E_\kappa: \bigoplus_{0\leq s\leq [\kappa]} \mathrm{Lip}^{\kappa-s}(\R^{n-1},T)\to \mathrm{Lip}^{\kappa}(\R^n,T)
$$
with the following property:  putting $A\edf E_\kappa((A_s)_{0\leq s\leq [\kappa]})$ we have
\begin{equation}\label{nabla-s-A}
\forall x'\in\R^{n-1},\ \p^s_{n} A(x',0)=A_s(x')
\end{equation}
for $0\leq s\leq [\kappa]$. Similarly, for any $k\in\N$ there exists a continuous operator
$$
F_k: \bigoplus_{0\leq s\leq k} {\cal C}^{k-s}(\R^{n-1},T)\to {\cal C}^{k}(\R^n,T)
$$
such that (\ref{nabla-s-A}) holds for $0\leq s\leq k$. In both cases   $A$ is $ {\cal C}^\infty$ on $\R^n\setminus(\R^{n-1}\times\{0\})$.

\end{pr}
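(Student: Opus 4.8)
The plan is to reduce both statements to the Whitney extension theorems already recorded in this appendix, by manufacturing out of the prescribed normal jets $(A_s)_{0\le s\le[\kappa]}$ a genuine Whitney jet on the hyperplane $F\edf\R^{n-1}\times\{0\}$ and then applying the continuous extension operator ${\cal E}_\kappa$ of Theorem \ref{WhitneyExt}. Write a multi-index $\mathbf{j}\in\N^n$ as $\mathbf{j}=(j',s)$ with $j'\in\N^{n-1}$ the tangential part and $s\in\N$ the normal component, and set $f^{(\mathbf{j})}\edf\partial^{j'}A_s$, regarded as a $T$-valued function on $F\simeq\R^{n-1}$. This is legitimate for all $|\mathbf{j}|\le[\kappa]$: since $\kappa\notin\N$ we have $[\kappa-s]=[\kappa]-s$, so $A_s\in\mathrm{Lip}^{\kappa-s}(\R^{n-1},T)$ is genuinely $[\kappa]-s$ times differentiable and $\partial^{j'}A_s$ is continuous whenever $|j'|\le[\kappa]-s$, i.e. whenever $|\mathbf{j}|\le[\kappa]$.

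First I would verify that $\mathfrak{f}=(f^{(\mathbf{j})})_{|\mathbf{j}|\le[\kappa]}$ is a $T$-valued Whitney jet of order $\kappa$ on $F$, with $\|\mathfrak{f}\|_{\mathrm{Lip}^\kappa}\lesssim\sum_{s}\|A_s\|_{\mathrm{Lip}^{\kappa-s}}$. The key observation is that for $x,y\in F$ one has $(x-y)^l=0$ unless $l$ is purely tangential, so the jet remainder $R_{\mathbf{j},\mathfrak{f}}^{[\kappa]}(x,y)$ collapses to the ordinary Taylor remainder $R_{\partial^{j'}A_s}^{[\kappa]-|j'|-s}(x',y')$ of the tangential derivative $\partial^{j'}A_s$ on $\R^{n-1}$, taken at order $[\kappa-s]-|j'|$. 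Remark \ref{EstimatesForLipkappa} applied to $A_s\in\mathrm{Lip}^{\kappa-s}(\R^{n-1},T)$ with the multi-index $j'$ then gives exactly the bound $\|R_{\mathbf{j},\mathfrak{f}}^{[\kappa]}(x,y)\|\le M_{j'}\|x-y\|^{(\kappa-s)-|j'|}=M_{j'}\|x-y\|^{\kappa-|\mathbf{j}|}$, with $M_{j'}$ controlled by $\|A_s\|_{\mathrm{Lip}^{\kappa-s}}$. This is the defining Whitney condition, and it simultaneously shows that the linear map $(A_s)\mapsto\mathfrak{f}$ is bounded into $\mathrm{Lip}^\kappa_{\R^n}(F,T)$.

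I would then set $E_\kappa\edf{\cal E}_\kappa\circ\big[(A_s)\mapsto\mathfrak{f}\big]$, a composition of continuous linear maps. By Theorem \ref{WhitneyExt}, $A\edf E_\kappa((A_s))$ lies in $\mathrm{Lip}^\kappa(\R^n,T)$, satisfies $\partial^{\mathbf{j}}A|_F=f^{(\mathbf{j})}$ for all $|\mathbf{j}|\le[\kappa]$, and is ${\cal C}^\infty$ on $\R^n\setminus F$; specializing to $\mathbf{j}=s\,e_n$ yields the required identity $\partial^s_n A(x',0)=A_s(x')$. The ${\cal C}^k$ statement is handled identically, replacing the $\mathrm{Lip}$-estimates by the little-$o$ Whitney conditions: here $f^{(j',s)}=\partial^{j'}A_s$ is defined for $|j'|\le k-s$ because $A_s\in{\cal C}^{k-s}(\R^{n-1},T)$, the jet remainders are $o(\|x-y\|^{k-|\mathbf{j}|})$ by Taylor's theorem together with the continuity of the top-order derivatives, and one invokes the classical Whitney extension operator (\cite{Wh}, \cite{FJW}), which is continuous for the Fréchet topologies on the spaces of jets of class ${\cal C}^k$.

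The routine computation underlying everything is the collapse of the mixed jet remainder on $F$ to a tangential Taylor remainder, which is what allows Remark \ref{EstimatesForLipkappa} to be applied verbatim. The only genuine points to keep an eye on are the index bookkeeping $[\kappa-s]=[\kappa]-s$, which makes the tangential derivatives $\partial^{j'}A_s$ available precisely up to the orders demanded by $|\mathbf{j}|\le[\kappa]$, and the continuity of the ${\cal C}^k$ Whitney operator with respect to the relevant Fréchet structures. I expect the main obstacle to be essentially notational — assembling the full jet from the pure normal data and confirming the remainder identity — rather than analytic, since the hard analysis is entirely absorbed into the already-cited extension theorems.
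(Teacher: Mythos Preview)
Your proposal is correct and follows essentially the same approach as the paper: define the Whitney jet on $\R^{n-1}\times\{0\}$ by $f^{(j',s)}=\partial^{j'}A_s$, use the vanishing of $(x-y)^l$ for $l$ with nonzero normal component to collapse the jet remainder to a tangential Taylor remainder controlled via Remark~\ref{EstimatesForLipkappa}, and then compose with the Whitney extension operator ${\cal E}_\kappa$. The paper's treatment of the ${\cal C}^k$ case is likewise identical to yours, invoking the Fr\'echet continuity of Whitney's original extension operator.
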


\begin{proof}
Let $(A_s)_{0\leq s\leq [\kappa]}\in \bigoplus_{0\leq s\leq [\kappa]} \mathrm{Lip}^{\kappa-s}(\R^{n-1},T)$. For any $j=(j_1,\dots,j_n)=(j',j_n)\in \N^n$ with $|j|=|j'|+j_n\leq [\kappa]$ let $a^{(j)}\in {\cal C}^{0}(\R^{n-1}\times\{0\},T)$ be given by  
\begin{equation}\label{a(j)}
a^{(j)}(x',0)\edf  \p^{j'} A_{j_n} (x').	
\end{equation}
 We  prove first that 
 \begin{cl}
 The system $\ag=(a^{(j)})_{0\leq |j|\leq [\kappa]}$ belongs to $\mathrm{Lip}^\kappa_{\R^n}(\R^{n-1}\times\{0\},T)$   and   
 $$(A_s)_{0\leq s\leq [\kappa]}\mapsto \ag\edf(a^{(j)})_{0\leq |j|\leq [\kappa]}$$
  defines a continuous operator
 $$
 L_\kappa:\bigoplus_{0\leq s\leq [\kappa]} \mathrm{Lip}^{\kappa-s}(\R^{n-1},T)\to \mathrm{Lip}^\kappa_{\R^n}(\R^{n-1}\times\{0\},T).
 $$	
 \end{cl}
Indeed, since $A_{j_n}\in \mathrm{Lip}^{\kappa-j_n}(\R^{n-1})$ by assumption, we have  the estimates 
\begin{equation}\label{sup|a(j)|}
 \sup_{\R^{n-1}\times\{0\}}\|a^{(j)}\| \leq \sup_{\R^{n-1}} \| \p^{j'} A_{j_n} \|\leq M_{j_n}\edf \| A_{j_n}\|_{\mathrm{Lip}^{\kappa-j_n}} \hb{ for }|j|\leq [\kappa].	
\end{equation}

On the other hand for any $x'$, $y'\in \R^{n-1}$ and $j=(j',j_n)$ with $|j'|+j_n\leq [\kappa]$ we have
 \begin{equation}\label{R[kappa]jag}
 \begin{split}
 R^{[\kappa]}_{j,\ag}((x',0),(y',0))&=a^{(j)}(x',0)-\sum_{|j+l|\leq [\kappa]}\frac{a^{(j+l)}(y')}{l!}(x'-y',0)^l\\
 &=\p^{j'} A_{j_n}(x')-\sum_{|j'|+|l'|\leq [\kappa]-j_n}
 \frac{1}{l'!} \p^{j'+l'}A_{j_n} (y')(x'-y')^{l'}\\
 &=R^{[\kappa]-j_n-|j'|}_{\p^{j'}A_{j_n}}(x',y').
 \end{split}	
 \end{equation}
For the second equality we took into account that     $(x'-y',0)^l=0$ for all $l=(l',l_n)$ with $l_n>0$. Since $A_{j_n}\in \mathrm{Lip}^{\kappa-j_n}(\R^{n-1},T)$, Remark \ref{EstimatesForLipkappa}  gives estimates of the form
 \begin{equation}
 \|R^{[\kappa]-j_n-|j'|}_{\p^{j'}A_{j_n}}(x',y')\|\leq M^{j_n}_{j'}\|x-y\|^{\kappa-j_n -|j'|}\,, 	
 \end{equation}
 which gives $\||R^{[\kappa]}_{j,\ag}((x',0),(y',0))\|\leq M^{j_n}_{j'}\|(x',0)-(y',0)\|^{\kappa -|j|}$. Therefore
 $$
 \|\ag\|_{\mathrm{Lip}^\kappa}\leq \max\{M^s_{j'}|\ 0\leq |j'|+s\leq [\kappa]\},
 $$
which proves the claim.

For the first statement it suffices to put $E_\kappa={\cal E}_\kappa\circ L_\kappa$, where 
$${\cal E}_\kappa:
\mathrm{Lip}^\kappa_{\R^n}(\R^{n-1}\times\{0\},T)\to \mathrm{Lip}^\kappa(\R^{n},T)$$
is Whitney's extension operator given by Theorem \ref{WhitneyExt}.

For the second statement we prove that formula (\ref{a(j)}) for $|j|\leq m$ defines a continuous operator from $\bigoplus_{0\leq s\leq m} {\cal C}^{m-s}(\R^{n-1},T)$ to the Fréchet space of Whitney jets of class ${\cal C}^m$ on $\R^{n-1}\times\{0\}$ (see \cite[section 1.1]{FJW}), and we use Whitney's original extension theorem for ${\cal C}^m$ jets.   Replacing $[\kappa]$ by $m$ in (\ref{sup|a(j)|}), (\ref{R[kappa]jag}), and using  (\ref{RemainderEstimate-j}), we obtain for any compact $K\subset\R^{n-1}$ estimates of the form:
$$
 \sup_{K\times\{0\}}\|a^{(j)}\|\leq \sup_{K} \|\p^{j'} A_{j_n}\| \hb{ for any } j=(j',j_n)\in\N^n \hb{ with }|j|\leq m,
$$
\begin{equation*}
\begin{split}
q_{m,t,K}(\ag)&\edf\sup \bigg\{ \frac{\|R^{m}_{j,\ag}((x',0),(y',0))\|}{\|x'-y'\|^{m-j}}\,\vline\ \begin{array}{l}
 x',\, y'\in K,\,0<\|x'-y'\|\leq  t,\\
 |j|\leq m	
 \end{array}
\bigg\}\\ 
&\leq c \sup\bigg\{\| \p^{j'}A_{s}(x')- \p^{j'}A_{s}(y') \|\,\vline\ \begin{array}{l} x',\, y'\in K,\,\|x'-y'\|\leq  t,\\0\leq s\leq m,\, |j'|=m-s\end{array} \bigg\}.
\end{split}	
\end{equation*}
This shows that $\lim_{t\to 0} q_{m,t,K}(\ag)=0$ and gives estimates for $ \sup_{K\times\{0\}}\|a^{(j)}\|$,  $\sup_{t>0}q_{m,t,K}$ in terms of $\sup_{K} \|\p^{j'} A_{s}\|$, $|j'|\leq m-s$.
 
 \end{proof}
 
Using Whitney extension theorem for ${\cal C}^\infty$ maps \cite{Wh}, we obtain in a similar way:  
 \begin{pr}\label{extensionProp-infty}
 For any $(A_s)_{s\in\N}\in {\cal C}^\infty(\R^{n-1},T)^{\N}$  there exists $A\in {\cal C}^\infty(\R^n,T)$ such that	
 \begin{equation}\label{nabla-infty-A}
 \p^s_{n} A(x',0)=A_s(x') \hb{ for  } x'\in\R^{n-1},\   s\in\N.
\end{equation}
 \end{pr}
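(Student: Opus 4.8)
The plan is to mimic the proof of Proposition~\ref{extensionProp-Lip}, replacing the finite-order Whitney extension operator by Whitney's extension theorem for ${\cal C}^\infty$ jets. Concretely, I would reorganize the prescribed data $(A_s)_{s\in\N}$ into a single ${\cal C}^\infty$ Whitney jet on the closed set $F\edf \R^{n-1}\times\{0\}$ and then extend it. For a multi-index $j=(j',j_n)\in\N^{n-1}\times\N$ define a continuous map $a^{(j)}:F\to T$ by
$$
a^{(j)}(x',0)\edf \p^{j'}A_{j_n}(x'),
$$
exactly as in formula~(\ref{a(j)}), and set $\ag\edf (a^{(j)})_{j\in\N^n}$. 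Taking $j=(0',s)$ recovers the normal data, since $a^{((0',s))}(x',0)=A_s(x')$.

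The one point to verify is that $\ag$ is a genuine ${\cal C}^\infty$ Whitney jet on $F$, i.e.\ that for every $m\in\N$ the truncation $(a^{(j)})_{|j|\le m}$ is a ${\cal C}^m$ Whitney field on $F$ in the sense of \cite[section 1.1]{FJW}. Here I would reuse verbatim the computation leading to (\ref{R[kappa]jag}): since two points of $F$ differ only in their first $n-1$ coordinates, one has $(x'-y',0)^l=0$ for every $l=(l',l_n)$ with $l_n>0$, so the order-$m$ remainder collapses to
$$
R^{m}_{j,\ag}\big((x',0),(y',0)\big)=R^{m-j_n-|j'|}_{\p^{j'}A_{j_n}}(x',y'),
$$
the ordinary Taylor remainder of the smooth function $\p^{j'}A_{j_n}$ on $\R^{n-1}$. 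By Taylor's theorem for ${\cal C}^\infty$ maps together with the uniform continuity of the derivatives of $A_{j_n}$ on compacta, this remainder is $o(\|x'-y'\|^{m-|j|})$ locally uniformly, which is precisely the defining estimate for a ${\cal C}^m$ Whitney field; since this holds for every $m$, the family $\ag$ is a ${\cal C}^\infty$ Whitney jet.

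Granting this, Whitney's extension theorem for ${\cal C}^\infty$ maps \cite{Wh} furnishes a map $A\in{\cal C}^\infty(\R^n,T)$ with $\p^j A|_F=a^{(j)}$ for all $j\in\N^n$ (and $A$ automatically ${\cal C}^\infty$ on $\R^n\setminus F$). Specializing to $j=(0',s)$ yields $\p_n^s A(x',0)=A_s(x')$ for all $s\in\N$, which is exactly (\ref{nabla-infty-A}). I expect no genuine analytic obstacle: the whole difficulty is packaged inside Whitney's theorem, and the only thing requiring care is the reduction of the multivariate remainder to the tangential one-variable Taylor remainder carried out above. I emphasize that, in contrast to the finite-order and Hölder cases of Proposition~\ref{extensionProp-Lip}, the ${\cal C}^\infty$ Whitney extension does \emph{not} come with a continuous extension operator, so here only the \emph{existence} of $A$ is asserted --- which is all the statement claims, and is consistent with Remark~\ref{kappa=infty}.
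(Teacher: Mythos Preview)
Your proposal is correct and is precisely the argument the paper has in mind: the paper's proof consists of the single sentence ``Using Whitney extension theorem for ${\cal C}^\infty$ maps \cite{Wh}, we obtain in a similar way,'' i.e.\ one repeats the jet construction (\ref{a(j)}) and the remainder identity (\ref{R[kappa]jag}) from Proposition~\ref{extensionProp-Lip} and feeds the resulting ${\cal C}^\infty$ Whitney jet into Whitney's original theorem. Your remark that no continuous extension operator is available in the ${\cal C}^\infty$ case is also exactly the point made in the paper immediately after the statement.
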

 
\begin{re} Proposition \ref{extensionProp-infty} gives a map 
 ${\cal C}^\infty(\R^{n-1},T)^{\N}\ni (A_s)_{s\in \N}\mapsto A\in {\cal C}^\infty(\R^n,T)$
  satisfying (\ref{nabla-s-A}), but such a map can no longer be given by a continuous operator \cite{FJW}.   
	
\end{re}

\begin{co}\label{CoroSums}
Let $(a_l)_{l\in\N}$ be a sequence of ${\cal C}^\infty(\R^n, T)$ such that for any $l\in \N_{\geq 1}$  and any $s\in \N$ with $s\leq l-1$ we have $\p_n^sa_l|_{\R^{n-1}\times\{0\}}=0$. 
There exists $a\in {\cal C}^\infty(\R^n, T)$ such that for any $m\in\N$ and any $s\in \N$ with $s\leq m$ we have $\p_n^s(a-\sum_{l=0}^m a_l)|_{\R^{n-1}\times\{0\}}=0$.
\end{co}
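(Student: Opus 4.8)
The plan is to reduce the whole family of jet conditions to a single prescribed normal-jet problem on the hyperplane $\R^{n-1}\times\{0\}$, and then to invoke Proposition \ref{extensionProp-infty}. The key observation is that the hypothesis on $a_l$ --- namely $\p_n^s a_l|_{\R^{n-1}\times\{0\}}=0$ for $s\le l-1$ --- says precisely that $a_l$ contributes nothing to the normal $s$-jet along the hyperplane whenever $s<l$. Consequently, for $s\le m$ the quantity $\p_n^s\big(\sum_{l=0}^m a_l\big)|_{\R^{n-1}\times\{0\}}$ does not depend on $m$: passing from $m$ to $m+1$ adds the term $\p_n^s a_{m+1}|_{\R^{n-1}\times\{0\}}$, which vanishes because $s\le m\le (m+1)-1$.

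First I would therefore define, for each $s\in\N$, the smooth map
$$
b_s\edf \sum_{l=0}^s \big(\p_n^s a_l\big)\big|_{\R^{n-1}\times\{0\}}\in{\cal C}^\infty(\R^{n-1},T),
$$
a finite sum of restrictions of smooth functions, and record that by the remark above one has $\p_n^s\big(\sum_{l=0}^m a_l\big)|_{\R^{n-1}\times\{0\}}=b_s$ for every pair $(m,s)$ with $s\le m$. In this way the assertion to be proved is reduced to the single requirement: find $a\in{\cal C}^\infty(\R^n,T)$ with $\p_n^s a(x',0)=b_s(x')$ for all $x'\in\R^{n-1}$ and all $s\in\N$.

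Second, I would apply Proposition \ref{extensionProp-infty} to the sequence $(b_s)_{s\in\N}$, obtaining an $a\in{\cal C}^\infty(\R^n,T)$ satisfying (\ref{nabla-infty-A}) with $A_s=b_s$. Then for any $m$ and any $s\le m$,
$$
\p_n^s\Big(a-\sum_{l=0}^m a_l\Big)(x',0)=b_s(x')-b_s(x')=0,
$$
which is exactly the desired conclusion.

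There is essentially no analytic obstacle here: the Borel-type summation is carried entirely by Proposition \ref{extensionProp-infty} (Whitney's extension theorem for ${\cal C}^\infty$ jets). The only point requiring care is the bookkeeping in the first step --- verifying that the contributions of the ``high'' terms $a_l$ with $l>s$ drop out of the $s$-th normal derivative on the hyperplane, so that the prescribed jet $(b_s)_{s\in\N}$ is well defined and, crucially, independent of the truncation level $m$. I would make sure to display explicitly the index inequality $s\le m\le (m+1)-1$ that forces $\p_n^s a_{m+1}|_{\R^{n-1}\times\{0\}}=0$, since this is what makes the reduction legitimate.
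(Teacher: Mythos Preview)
Your proof is correct and essentially identical to the paper's own argument: the paper also sets $A_s(x')\edf\sum_{l=0}^{s}\p_n^s a_l(x',0)$ (noting that the hypothesis kills all terms with $l>s$) and applies Proposition \ref{extensionProp-infty}. Your write-up is in fact slightly more explicit about why the truncated sums $\p_n^s\big(\sum_{l=0}^m a_l\big)|_{\R^{n-1}\times\{0\}}$ stabilize for $m\ge s$.
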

\begin{proof}
Apply Proposition \ref{extensionProp-infty}	to the sequence $(A_s)_{s\in\N}\in {\cal C}^\infty(\R^{n-1},T)^{\N}$, where
$$
A_s(x')\edf \sum_{l\geq 0} \p^s_n a_l(x',0)=\sum_{l= 0}^{s} \p^s_n a_l(x',0).
$$
\end{proof}
Propositions \ref{glueLip-kappa}, \ref{extensionProp-Lip}, \ref{extensionProp-infty} can be generalized for sections in vector bundles on manifolds as follows. Let $U$ be an $n$-dimensional differentiable  manifold and $E$  a ${\cal C}^\infty$ $\K$-vector bundle of rank $r$ on $U$, where $\K\in\{\R,\C\}$. Let ${\cal A}_U$ be the set of all charts (the maximal atlas) of $U$ and ${\cal T}_E$ the set of local trivializations of $E$. For $\theta: E_V\to V\times  \K^r\in {\cal T}_E$ we put   $\theta'\edf \mathrm{p}_{\K^r}\circ\theta:V\to \K^r$.

\begin{dt}\label{CkappaDef}
Let $\kappa\in (0,+\infty)\setminus\N$. We define
 \begin{alignat}{4}
 {\cal C}^\kappa(U,T)\edf & \big\{f\in {\cal C}^0(U,T)\big|&&\ (\chi f|_V)\circ h^{-1}\in \mathrm{Lip}^\kappa(\R^n,T)\hb{ for any}\nonumber\\
  &&&  V\stackrel{h}{\to} W\in {\cal A}_U,\      \chi\in {\cal C}^\infty_c(V,\R) \big\}. \nonumber\\
 \Gamma^\kappa(U,E)\edf &\big\{\sigma\in \Gamma^0(U,E)\big|&&\ (\chi \theta'\circ \sigma|_V)\circ h^{-1}\in \mathrm{Lip}^\kappa(\R^n,\K^r)\hb{ for any}\nonumber\\
  &&&  V\stackrel{h}{\to} W\in {\cal A}_U,\   E_V\stackrel{\theta}{\to}V\times\K^r\in {\cal T}_E,\    \chi\in {\cal C}^\infty_c(V,\R) \big\}.\nonumber 
   \end{alignat}	

 Similarly, for a manifold with boundary $\bar U$ and a ${\cal C}^\infty$ vector bundle  $E$  on $\bar U$, the spaces  ${\cal C}^\kappa(\bar U,T)$, $\Gamma^\kappa(\bar U,E)$ are defined by the same formulae, but using charts with values in open sets $W\subset \R^n_+$ and the Lipschitz spaces $\mathrm{Lip}^\kappa(\R^n_+,T)$ defined above.
\end{dt}
  
 Note that ${\cal C}^\kappa(U,T)$, ${\cal C}^\kappa(\bar U,T)$,  $\Gamma^\kappa(U,E)$, $\Gamma^\kappa(\bar U,E)$,  are naturally   Fréchet spaces; they become Banach spaces (in the sense that their topology can be defined by a single norm) when $U$, respectively $\bar U$ is compact. Definition \ref{CkappaDef} is in accordance with Palais' formalism \cite[section 7]{Pa} and with the definition of the spaces $\Lambda_\alpha$ for manifolds with boundary \cite[section 14.a]{GS}. In particular 
 
 \begin{re}\label{restr-to-compact-of-Ckappa}
 A section $\sigma\in \Gamma^0(U,E)$ ($\sigma\in \Gamma^0(\bar U,E)$) belongs to $\Gamma^\kappa(U,E)$ ($\Gamma^\kappa(\bar U,E)$) if and only if for every $x\in U$ ($x\in \bar U$) there exists a compact $n$-dimensional submanifold with boundary $\bar W\subset U$ ($\bar W\subset \bar U$) which is a neighborhood of $x$ in $U$ (in $\bar U$) such that $\sigma|_{\bar W}\in \Gamma^\kappa(\bar W,E)$.   
 \end{re}

 Let $S\subset U$ be a smooth, closed hypersurface and let $n^*_S\subset T^*_{U|S}$ be the conormal line bundle of $S$ in $U$; this line bundle coincides with the annihilator of $T_S$ in the restriction $T^{*}_{U|S}$ of the   contangent bundle $T_U^{*}$ of $U$ to $S$.

 Let $l$, $m\in \N$ with $l\leq m$. Let $\sigma\in \Gamma^m(U,E)$.  We'll say that order $l$ jet of $s$ along $S$ vanishes, and we write $j^l_S \sigma=0$, if the order $l$ jet $j^l_x\sigma$ of $\sigma$ at $x$ vanishes  for any $x\in S$. If this is the case and $l<m$, the intrinsic derivative $D_S^{l+1}\sigma\in \Gamma^{m-l-1}(S,n_S^{*\otimes (l+1)}\otimes E_S)$ or order $l+1$ is defined, and $D_S^{l+1}\sigma=0$ if and only if $j^{l+1}_S\sigma=0$ (section \ref {IntrinsicDiff} for details). 
\begin{co}\label{extensionCoroCkappa}
\begin{enumerate}
\item 	Let $\kappa\in [0,+\infty)$ and $m\in\N$ with $ m\leq [\kappa]$. There exists a continuous operator 	
$$
E^\kappa_{S,m}:\Gamma^{\kappa-m}(S,n_S^{*\otimes m}\otimes E_S)\to \Gamma^{\kappa}(U,E)
$$
such that, for any $b\in  \Gamma^{\kappa-m}(S,n_S^{*\otimes m}\otimes E_S)$, putting $\sigma\edf E^\kappa_{S,m}(b)$, we have
\begin{equation}\label{j(m-1)Dm}
j^{m-1}_S\sigma=0\ (\hb{if }m\geq 1),\ D^{m}_S\sigma=b,	
\end{equation}
and $\sigma|_{U\setminus S}\in \Gamma^\infty(U\setminus S,E)$.
\item Let  $m\in\N$. For any  $b\in \Gamma^\infty (S,n_S^{*\otimes m}\otimes E_S)$ there exists $\sigma\in \Gamma^\infty(U,E)$ such that (\ref{j(m-1)Dm}) holds.
\item Let $(a_l)_{l\in\N}$ be a sequence of $\Gamma^\infty(U,E)$ such that $j^{l-1}_S a_l=0$ for any $l\geq 1$. There exists $a\in  \Gamma^\infty(U,E)$ such that $j^m_S(a-\sum_{l=0}^m a_l)=0$ for any $m\in\N$.
\end{enumerate}
\end{co}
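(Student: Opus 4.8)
The plan is to prove all three statements by localizing along $S$, invoking the flat-model extension results of Propositions \ref{extensionProp-Lip} and \ref{extensionProp-infty} and Corollary \ref{CoroSums}, and then reassembling by a partition of unity. First I would fix a locally finite cover of a neighborhood of $S$ by charts $h_\alpha\colon V_\alpha\to W_\alpha\subset\R^n$ straightening $S$, i.e. with $h_\alpha(V_\alpha\cap S)=W_\alpha\cap(\R^{n-1}\times\{0\})$, together with trivializations $\theta_\alpha$ of $E$ over $V_\alpha$ and a subordinate partition of unity $(\chi_\alpha)$. In such an adapted chart the conormal line bundle $n_S^*$ is generated by $dx_n$, so $n_S^{*\otimes m}\otimes E_S$ is trivialized by $(dx_n)^{\otimes m}$ and $\theta_\alpha$, and a section $b$ is represented by a compactly supported $\K^r$-valued function $b_0^\alpha$ on $W_\alpha\cap(\R^{n-1}\times\{0\})$; moreover, by the description of the intrinsic differential recalled in section \ref{IntrinsicDiff}, for a section $\sigma$ with $j_S^{m-1}\sigma=0$ the intrinsic derivative $D_S^m\sigma$ is, in this chart, $(\p_n^m\tilde\sigma)|_S$ times $(dx_n)^{\otimes m}$, where $\tilde\sigma$ is the local representation of $\sigma$. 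Thus in the flat model the three required properties translate into prescribing the normal derivatives $\p_n^s\tilde\sigma|_{\R^{n-1}\times\{0\}}$.

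For (1), in each chart I would feed the data $A_s\edf\delta_{s,m}\,b_{0}^{\alpha}$ (all slots zero except the $m$-th, with $b_0^\alpha$ representing $\chi_\alpha b$) into the continuous operator $E_\kappa$ of Proposition \ref{extensionProp-Lip} when $\kappa\notin\N$, or into $F_k$ when $\kappa=k\in\N$; this is legitimate precisely because $m\le[\kappa]$ and $b_0^\alpha\in\mathrm{Lip}^{\kappa-m}$. The resulting $A^\alpha$ satisfies $\p_n^sA^\alpha|_{\R^{n-1}\times\{0\}}=0$ for $s<m$ — whence every mixed derivative $\p^{j'}\p_n^{j_n}A^\alpha$ with $j_n<m$ vanishes on $S$, giving $j_S^{m-1}=0$ — together with $\p_n^mA^\alpha|_{\R^{n-1}\times\{0\}}=b_0^\alpha$, and it is $\mathcal{C}^\infty$ off the hyperplane by the last clause of Proposition \ref{extensionProp-Lip}. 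Transporting $A^\alpha$ back through $h_\alpha,\theta_\alpha$, multiplying by a cutoff $\tilde\chi_\alpha$ equal to $1$ on $\mathrm{supp}\,\chi_\alpha$ and supported in $V_\alpha$, and extending by zero produces $\sigma^\alpha\in\Gamma^\kappa(U,E)$; I then set $\sigma\edf\sum_\alpha\sigma^\alpha$. Since multiplication by a smooth function preserves vanishing of the $(m-1)$-jet and acts on $D_S^m$ by pointwise scaling, the local contributions add up to $j_S^{m-1}\sigma=0$ and $D_S^m\sigma=\sum_\alpha\chi_\alpha b=b$. Continuity of $b\mapsto\sigma$ into the Fréchet space $\Gamma^\kappa(U,E)$ follows because each step (restriction to charts, the component map $b\mapsto b_0^\alpha$, the operators $E_\kappa$ or $F_k$, transport, cutoff, and the locally finite sum) is continuous, and by Remark \ref{restr-to-compact-of-Ckappa} it suffices to control seminorms on compacta, each of which involves only finitely many indices $\alpha$.

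Parts (2) and (3) follow the same localize-and-glue scheme with the smooth-category inputs. For (2) I would replace $E_\kappa$ by the $\mathcal{C}^\infty$ extension of Proposition \ref{extensionProp-infty} applied to the sequence $A_s=\delta_{s,m}b_0^\alpha$; no continuity is claimed, so the absence of a continuous extension operator in the $\mathcal{C}^\infty$ category is harmless, and the partition-of-unity sum yields $\sigma\in\Gamma^\infty(U,E)$ with the desired jet data. For (3) the flat model is exactly Corollary \ref{CoroSums}: in each adapted chart the hypotheses $j_S^{l-1}a_l=0$ become $\p_n^s(a_l)|_{\R^{n-1}\times\{0\}}=0$ for $s\le l-1$, so Corollary \ref{CoroSums} yields a local $a^\alpha$ with $j_S^m\bigl(a^\alpha-\sum_{l=0}^m a_l\bigr)=0$ for every $m$. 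Choosing $(\chi_\alpha)$ with $\sum_\alpha\chi_\alpha\equiv1$ near $S$ and setting $a\edf\sum_\alpha\chi_\alpha a^\alpha$, the identity $a-\sum_{l=0}^m a_l=\sum_\alpha\chi_\alpha\bigl(a^\alpha-\sum_{l=0}^m a_l\bigr)$ holds near $S$, and each summand has vanishing $m$-jet along $S$, so $j_S^m\bigl(a-\sum_{l=0}^m a_l\bigr)=0$ for all $m$.

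The routine but essential point that I expect to require the most care is the bridge between the analytic flat-model statements and the intrinsic, chart-independent objects $j_S^l$ and $D_S^l$: one must check, using the formalism of section \ref{IntrinsicDiff}, that the normal derivatives produced by the flat extension operators really represent the intrinsic derivative, that $D_S^m$ is $\mathcal{C}^\infty(S)$-linear on sections with vanishing $(m-1)$-jet (so that multiplying by $\chi_\alpha$ or $\tilde\chi_\alpha$ and summing behaves additively), and that the conormal trivializations $(dx_n)^{\otimes m}$ patch consistently with the transition cocycle of $n_S^*$. Once this compatibility is in place, no analytic input is needed beyond the three cited results, and the verification of the jet conditions, of smoothness off $S$, and of the continuity of $E^\kappa_{S,m}$ is a matter of bookkeeping with the partition of unity.
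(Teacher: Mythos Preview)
Your proposal is correct and follows essentially the same route as the paper: localize via adapted charts straightening $S$ and trivializing $E$, feed the data $(A_s)_s$ with all slots zero except the $m$-th into the flat-model operators $E_\kappa$/$F_k$ (resp.\ invoke Proposition~\ref{extensionProp-infty} and Corollary~\ref{CoroSums} for (2),(3)), and reassemble via a partition of unity on $S$ together with ambient cutoffs $\tilde\chi_\alpha\equiv1$ on $\mathrm{supp}\,\chi_\alpha$. The paper's proof is the same construction (its $\varphi_i,\chi_i$ play the role of your $\chi_\alpha,\tilde\chi_\alpha$), and the ``bridge'' you flag---compatibility of $D_S^m$ with bundle isomorphisms and diffeomorphic base changes---is exactly the point the paper singles out as the key verification.
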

\begin{proof}
(1) Put $E^\kappa_m\edf E^\kappa\circ e^\kappa_m$, $F^k_m\edf F^k\circ f^k_m$, where 
\begin{equation}
\begin{split}
e^\kappa_m:\mathrm{Lip}^{\kappa-m}(\R^{n-1},T)&\to \hspace{-1mm}	\bigoplus_{0\leq s\leq [\kappa]} \mathrm{Lip}^{\kappa-s}(\R^{n-1},T), \\ 
f^k_m:{\cal C}^{k-m}(\R^{n-1},T)&\to \hspace{-1mm}	\bigoplus_{0\leq s\leq k} {\cal C}^{k-s}(\R^{n-1},T)  	
\end{split}	
\end{equation}
are the obvious embeddings. 
Let $(V_i\stackrel{h_i}{\to}\R^n)_{i\in I}$ be a system of charts of $U$ and $(E_{V_i}\stackrel{\theta_i}{\to}V_i\times\K^n)_{i\in I}$ a system of trivializations of $E$ such that
\begin{enumerate}
\item The family of open sets $(V_i)_{i\in I}$ is locally finite and $\union_{i\in I} V_i\supset S$.
	\item   $\bar V_i$  is compact and $h(V_i\cap S)=\R^{n-1}\times\{0\}$ for any $i\in I$. 	
\end{enumerate}
Via the identifications provided by $h_i$ and $\theta_i$, the operators $E^\kappa_m$,  $F^k_m$ give operators

$$
E^\kappa_{S,m,i}:\Gamma^{\kappa-m}_c(S\cap V_i,n_S^{*\otimes m}\otimes E_S)\to \Gamma^{\kappa}(V_i,E)
$$
satisfying (\ref{j(m-1)Dm}). The point here is that the intrinsic derivative $D^m_S$, on sections  whose $m-1$ jet along $S$ vanishes, is compatible with vector bundle isomorphisms and diffeomorphic base changes. Let $(\varphi_i:S\to[0,1])_{i\in I}$ be a smooth partition of unity on $S$ which is subordinate  to the cover $(S\cap V_i)_{i\in I}$ and let, for any $i\in I$, $\chi_i:U\to [0,1]$ be a smooth function on $U$ such that $\sup(\chi_i)\subset V_i$ and  $\chi\equiv 1$ on a neighborhood of $\sup(\varphi_i)$ (which is compact) in $V_i$. It suffices to put
$$
E^\kappa_{S,m}(b)\edf \sum_{i\in I} \chi_i E^\kappa_{S,m,i}(\varphi_i b).
$$

 For (2) and (3)  we use  Proposition \ref{extensionProp-infty} respectively Corollary \ref{CoroSums} and a similar argument. 

\end{proof}

\begin{co}\label{glueCkappa}
Let $E$ be a ${\cal C}^\infty$ vector bundle on $U$, $S\subset U$  a separating closed real smooth hypersurface, and $U\setminus S= U^-\cup U^+$ a decomposition of $U\setminus S$ as    union of disjoint open subsets such that $\bar U^\pm=U^\pm\cup S$.	Put $E^\pm\edf E_{\bar U^\pm}$ and let $\kappa\in [0,+\infty]$.
\begin{enumerate}
\item  There exists a	continuous extension operator  $\Gamma^\kappa(\bar U^+, E^+)\to \Gamma^\kappa(U,E)$.

\item There exists a continuous operator
$$
{\cal E}_S:\{(\sigma_-,\sigma_+)\in \Gamma^\kappa(U ,E )\times\Gamma^\kappa(U ,E )|\ j_S^{[\kappa]}(\sigma_+-\sigma_-)=0\}\to \Gamma^\kappa(U ,E )
$$
with the property that, putting $\sigma={\cal E}_S(\sigma_-,\sigma_+)$, we have $\sigma|_{\bar U\pm}=\sigma_\pm|_{\bar U\pm}$.
\end{enumerate}
\end{co}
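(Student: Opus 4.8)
The plan is to prove both statements by localizing near $S$ and invoking the half-space results of the appendix, then globalizing with a partition of unity; away from $S$ nothing needs to be done, since there the relevant sections are already of class ${\cal C}^\kappa$. The genuine analytic content is already isolated in Propositions \ref{ExtFromHalfSpace} and \ref{glueLip-kappa}, so the remaining work is organizational.

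For (1), I would extend $\sigma\in\Gamma^\kappa(\bar U^+,E^+)$ across $S$ chart by chart. Choose a locally finite family of charts $(V_i\stackrel{h_i}{\to}W_i)_{i\in I}$ covering $S$, together with trivializations of $E$ over each $V_i$, so that $h_i(V_i\cap\bar U^+)=W_i\cap\R^n_+$ and $h_i(V_i\cap S)=W_i\cap(\R^{n-1}\times\{0\})$. Transporting $\sigma|_{V_i\cap\bar U^+}$ through $h_i$ and the trivialization turns it into an element of $\mathrm{Lip}^\kappa(\bar H,\K^r)$ (resp. of ${\cal C}^m(\bar H,\K^r)$ when $\kappa=m\in\N\cup\{\infty\}$), to which the continuous half-space extension operator of Proposition \ref{ExtFromHalfSpace} applies; transporting back gives $\sigma_i\in\Gamma^\kappa(V_i,E)$ with $\sigma_i|_{V_i\cap\bar U^+}=\sigma|_{V_i\cap\bar U^+}$. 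On the open set $V_\infty\edf U\setminus S=U^-\cup U^+$ I would set $\sigma_\infty$ equal to $\sigma$ on $U^+$ and to $0$ on $U^-$; this lies in $\Gamma^\kappa(V_\infty,E)$ because $U^\pm$ are disjoint open sets. Picking a partition of unity $(\rho_i)_{i\in I}\cup\{\rho_\infty\}$ subordinate to $(V_i)_{i\in I}\cup\{V_\infty\}$, I would define $\tilde\sigma\edf\rho_\infty\sigma_\infty+\sum_{i\in I}\rho_i\sigma_i$. A direct check gives $\tilde\sigma|_{\bar U^+}=\sigma$: at an interior point of $U^+$ each $\sigma_i$ and $\sigma_\infty$ equals $\sigma$, while at a point of $S$ one has $\rho_\infty=0$ and each $\sigma_i$ still equals $\sigma$ there, so the convex combination reproduces $\sigma$. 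Continuity of the operator $\Gamma^\kappa(\bar U^+,E^+)\to\Gamma^\kappa(U,E)$ then follows from continuity of the half-space operators of Proposition \ref{ExtFromHalfSpace}, of multiplication by the fixed functions $\rho_i$, and of locally finite summation in the Fréchet topology.

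For (2), the operator ${\cal E}_S$ is the tautological piecing-together: $\sigma\edf\sigma_-$ on $\bar U^-$ and $\sigma\edf\sigma_+$ on $\bar U^+$. Since $j_S^{[\kappa]}(\sigma_+-\sigma_-)=0$ forces in particular $\sigma_+|_S=\sigma_-|_S$, the section $\sigma$ is well defined and continuous, and on $U\setminus S$ it is visibly of class ${\cal C}^\kappa$. The only real content is ${\cal C}^\kappa$-regularity across $S$, which by Remark \ref{restr-to-compact-of-Ckappa} is a local question. In a half-space chart as above, the intrinsic condition $j_S^{[\kappa]}(\sigma_+-\sigma_-)=0$ becomes exactly $\partial^j F_-=\partial^j F_+$ on $\R^{n-1}\times\{0\}$ for $|j|\leq[\kappa]$, where $F_\pm$ are the local representatives of $\sigma_\pm$; thus Proposition \ref{glueLip-kappa} applies and shows that the mutual extension is $\mathrm{Lip}^\kappa$, together with the estimate $\|F\|_{\mathrm{Lip}^\kappa}\leq 2^{1-\alpha}\max(\|F_-\|_{\mathrm{Lip}^\kappa},\|F_+\|_{\mathrm{Lip}^\kappa})$. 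Read over an exhausting family of compact charts, that same estimate yields continuity of the linear operator ${\cal E}_S$.

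Finally I would dispose of the boundary values of $\kappa$. For integer $\kappa=m$ the mutual extension of two ${\cal C}^m$ representatives whose derivatives up to order $m$ match on the hyperplane is again ${\cal C}^m$, by the elementary matching-of-Taylor-expansions argument underlying the Claim in the proof of Proposition \ref{glueLip-kappa}, the top-order derivatives being continuous across $S$ precisely because they agree there; for $\kappa=\infty$ agreement to infinite order makes the mutual extension ${\cal C}^\infty$; and $\kappa=0$ needs only continuity, already noted. I expect the main obstacle to be organizational rather than analytic: it lies in verifying that the partition-of-unity construction in (1) reproduces $\sigma$ exactly on $\bar U^+$, and that the intrinsic jet condition in (2) transfers correctly to local coordinates and is unaffected by the chart and trivialization changes, so that the hypotheses of Proposition \ref{glueLip-kappa} are genuinely met in each chart.
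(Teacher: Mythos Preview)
Your proposal is correct and follows essentially the same approach as the paper: part (1) via Proposition \ref{ExtFromHalfSpace} plus a partition of unity, part (2) via Proposition \ref{glueLip-kappa} (and its ${\cal C}^m$ analogue for integer or infinite $\kappa$). The paper's proof says precisely this in two sentences; you have simply written out the localization and gluing details that the paper leaves implicit.
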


\begin{proof}
(1) follows from Proposition \ref{ExtFromHalfSpace} using a partition of unity. (2) follows from Proposition \ref{glueLip-kappa}	 for $\kappa\in  (0,+\infty)\setminus\N$ and from a similar gluing principle for ${\cal C}^m$ maps if $\kappa=m\in\N\cup\{\infty\}$.
\end{proof}

\subsection{The fiberwise exponential map}
\label{fiberwise-exp-section}
Let $M$ be a differentiable manifold, $G$ a Lie group and $p:P\to M$  a  ${\cal C}^\infty$  principal $G$-bundle on $M$. Let $\iota$ ($\Ad$) be the interior (adjoint) action of $G$ on itself (on its Lie algebra $\g$). Put $\iota(P)\edf P\times_\iota G$, $\Ad(P)\edf P\times_\Ad \g$.   Using Palais's formalism for spaces of sections in locally trivial fiber bundles \cite[p. 38]{Pa}, we have: 
\begin{pr}\label{fiberwise-exp}
	Let $\gamma\in[0,\infty]$. 
\begin{enumerate}
\item The  fiberwise exponential map $\exp:\Ad(P)\to \iota(P)$ maps $\Gamma^\gamma(M,\Ad(P))$ into $\Gamma^\gamma(M,\iota(P))$.
\item 	There exists an Euclidean  structure $h$ on $\Ad(P)$ such that $\exp$
 maps diffeomorphically the unit disk bundle $\Ad(P)_0\edf \{\xi\in \Ad(P)|\ \|\xi\|_h<1\}$ with respect to $h$ onto an  open neighborhood $\iota(P)_0$ of   the identity section $\id_P$ in $\iota(P)$. For any such $h$, the map $\exp$ induces a bijection  $\Gamma^\gamma(M,\Ad(P)_0)\to \Gamma^\gamma(M,\iota(P)_0)$.
\end{enumerate}

\end{pr}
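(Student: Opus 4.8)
The plan is to deduce everything from two facts: that the fiberwise exponential is a smooth morphism of fiber bundles over $M$, and that post-composition with a smooth bundle map respects the class ${\cal C}^\gamma$. First I would check that $\exp$ descends to a well-defined bundle map $\Ad(P)\to\iota(P)$: the Lie-group exponential satisfies $\exp\circ\Ad_g=\iota_g\circ\exp$ for every $g\in G$, so $\exp\times_G\id$ passes to the associated bundles and defines a fiber-preserving ${\cal C}^\infty$ map covering $\id_M$. Statement (1) then becomes the functoriality of $\Gamma^\gamma(M,\cdot)$ under smooth fiber-bundle morphisms, in the sense of Palais's formalism \cite{Pa}; locally, in trivializations of $P$, composing a section of class ${\cal C}^\gamma$ with the fixed smooth map $\exp$ on the left again yields a section of class ${\cal C}^\gamma$. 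For $\gamma\in\N\cup\{0,\infty\}$ this is the chain rule (respectively continuity, smoothness), while for $\gamma\in(0,\infty)\setminus\N$ it is the stability of the Lipschitz class $\mathrm{Lip}^\gamma$ (Definition \ref{CkappaDef}) under left composition with a smooth map, which follows from the Fa\`a di Bruno estimates on the derivatives of order $\leq[\gamma]$ together with the H\"older bound on the top-order term.

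For the first assertion of (2) I would use the inverse function theorem. Along the zero section the vertical differential of $\exp$ equals $d\exp_0=\id_\g$, an isomorphism of each fiber; since $\exp$ induces the fixed Lie-group exponential in every local trivialization, the hypotheses of the fiberwise inverse function theorem hold with smooth dependence on the base point, and $\exp$ restricts to a diffeomorphism from some open neighborhood $\mathcal O$ of the zero section of $\Ad(P)$ onto an open neighborhood of the identity section $\id_P$ of $\iota(P)$. It then remains to produce a fiber metric whose open unit disk bundle lies inside $\mathcal O$: starting from any reference metric $h_0$ and scaling it by a sufficiently large positive ${\cal C}^\infty$ function $\lambda$ on $M$ (chosen, via a locally finite trivializing cover, so that in each chart the unit ellipsoid of $\lambda h_0$ falls into an injectivity ball of $\exp$, hence into $\mathcal O$), one obtains $h$ with $\Ad(P)_0=\{\|\cdot\|_h<1\}\subset\mathcal O$. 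Setting $\iota(P)_0\edf\exp(\Ad(P)_0)$, the restriction $\exp|_{\Ad(P)_0}\colon\Ad(P)_0\to\iota(P)_0$ is the desired diffeomorphism onto an open neighborhood of $\id_P$.

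Finally, for the ``for any such $h$'' clause I would argue by symmetry. Given a metric $h$ for which $\exp|_{\Ad(P)_0}$ is a diffeomorphism onto $\iota(P)_0$, its inverse $\log\edf(\exp|_{\Ad(P)_0})^{-1}\colon\iota(P)_0\to\Ad(P)_0$ is again a ${\cal C}^\infty$ fiber-bundle morphism between the open subbundles $\iota(P)_0$ and $\Ad(P)_0$. Applying the functoriality used for (1) to both $\exp$ and $\log$ yields maps $\Gamma^\gamma(M,\Ad(P)_0)\to\Gamma^\gamma(M,\iota(P)_0)$ and $\Gamma^\gamma(M,\iota(P)_0)\to\Gamma^\gamma(M,\Ad(P)_0)$ that are mutually inverse, because $\exp$ and $\log$ are fiberwise mutually inverse; hence $\exp$ induces a bijection on ${\cal C}^\gamma$ sections. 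I expect the only genuinely technical point to be the composition property underlying (1)---the uniform preservation of the non-integer class $\mathrm{Lip}^\gamma$ under left composition with a smooth map---together with the (routine but, on non-compact $M$, non-vacuous) scaling argument that fits the disk bundle inside $\mathcal O$; the inverse-function and symmetry steps are then formal.
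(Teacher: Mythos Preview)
Your proof is correct and follows essentially the same route as the paper: both reduce (1) to Palais's functoriality result \cite[Theorem 13.4]{Pa} for smooth fiber-bundle morphisms, obtain the neighborhood in (2) via a relative/fiberwise inverse function theorem along the zero section and a shrinking of the metric, and deduce the bijection on ${\cal C}^\gamma$ sections by applying (1) to the inverse diffeomorphism. The paper is slightly terser---it cites \cite[Theorem 13.4]{Pa} and the relative Inverse Function Theorem \cite[Exercice 14, section 1.\S8]{GP} directly rather than spelling out the Fa\`a di Bruno/H\"older composition estimate or the scaling construction---but the argument is the same.
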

\begin{proof} (1) The map $\exp:\Ad(P)\to \iota(P)$ is    fiber bundle morphism between locally trivial fiber bundles in the sense of \cite[section 10]{Pa}.  The claim follows from \cite[Theorem 13.4]{Pa} taking as base manifold   closures $\bar M'\subset M$ of relatively compact open submanifolds $M'\subset M$ with smooth boundary.\vspace{2mm}\\ 
(2) 
The map $\exp$ maps diffeomorphically the zero section $0_{\Ad(P)}\subset \Ad(P)$ onto $\id_P\subset \iota(P)$ and is fiberwise locally invertible at the points of $0_{\Ad(P)}$. By the relative Inverse Function Theorem \cite[Exercice 14, section 1.§8]{GP}	we obtain an open neighborhood  $U$ of  $0_{\Ad(P)}$ in $\Ad(P)$ such that $\exp(U)$ is open in $\iota(P)$ and the induced map $U\to \exp(U)$ is a diffeomorphism. It suffices to choose an Euclidian structure $h$ on $\Ad(P)$ such that the unit disk bundle with respect to $h$ is contained in $U$. 

For the second claim of (2), note that $\exp:\Ad(P)_0\to \iota(P)_0$ becomes an isomorphism of ${\cal C}^\infty$ fiber bundles in the sense of \cite[section 10]{Pa}, so the claim follows again by \cite[Theorem 13.4]{Pa}.
\end{proof}
 \begin{co}\label{ExtensionOfGamma}
 Let  $S$ be a differentiable manifold and $P^\pm$   ${\cal C}^\infty$ principal $G$-bundles on $S\times\R$. Identify $S$ with $S\times\{0\}$  and let $\upsilon:P^-_{S}\to P^+_{S}$ be a bundle isomorphism of class ${\cal C}^\gamma$. There exists   a  bundle isomorphism extension $\tilde\upsilon:P^-\to P^+$  of class ${\cal C}^\gamma$ of $\upsilon$ which is ${\cal C}^\infty$ on $S\times\R^*$.	
 \end{co}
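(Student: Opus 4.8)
The plan is to reinterpret a bundle isomorphism $P^-\to P^+$ as a section of a fibre bundle with fibre $G$, and to reduce its extension to a \emph{linear} extension problem, to which Corollary~\ref{extensionCoroCkappa} applies, the nonlinearity being absorbed by the fibrewise exponential map of Proposition~\ref{fiberwise-exp}.

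First I would set up the reduction. Let $\pi\colon S\times\R\to S$ be the projection. Since $S\times\R$ deformation retracts onto $S\times\{0\}=S$, homotopy invariance of pullback bundles gives ${\cal C}^\infty$ bundle isomorphisms $P^\pm\to \pi^*(P^\pm_S)$ which are the identity over $S$; composing with these (they are ${\cal C}^\infty$, hence preserve both the class ${\cal C}^\gamma$ and ${\cal C}^\infty$-regularity off $S$, and leave $\upsilon$ unchanged) I may assume $P^\pm=\pi^*(P^\pm_S)$. The bundles $P^-_S$, $P^+_S$ are topologically, hence ${\cal C}^\infty$, isomorphic (they are identified by $\upsilon$), so fixing a ${\cal C}^\infty$ isomorphism $h\colon P^-_S\to P^+_S$ and putting $g\edf\pi^*h\colon P^-\to P^+$ I obtain a ${\cal C}^\infty$ isomorphism with $g_S=h$. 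A bundle isomorphism $P^-\to P^+$ is then the same datum as a section of the group bundle $\iota(P^-)$, via $f\mapsto g^{-1}\circ f$. Thus it suffices to extend the ${\cal C}^\gamma$ section $s\edf g_S^{-1}\circ\upsilon\in\Gamma^\gamma(S,\iota(P^-_S))$ to a section $\tilde s\in\Gamma^\gamma(S\times\R,\iota(P^-))$ which is ${\cal C}^\infty$ on $S\times\R^*$, and then to set $\tilde\upsilon\edf g\circ\tilde s$.

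The key step is to split off the smooth, topologically essential part of $s$. By a standard smoothing argument smooth sections of $\iota(P^-_S)$ are ${\cal C}^0$-dense, so I can choose $w\in\Gamma^\infty(S,\iota(P^-_S))$ so ${\cal C}^0$-close to $s$ that $s\cdot w^{-1}$ takes values in the identity neighbourhood $\iota(P^-_S)_0$ of Proposition~\ref{fiberwise-exp}; by that proposition $s\cdot w^{-1}=\exp(\lambda)$ for a unique $\lambda\in\Gamma^\gamma(S,\Ad(P^-_S)_0)$. Applying Corollary~\ref{extensionCoroCkappa} with $m=0$ to the ${\cal C}^\infty$ vector bundle $\Ad(P^-)$ over $U=S\times\R$ and the hypersurface $S$ (this covers every $\gamma\in[0,\infty]$) yields $\mu\in\Gamma^\gamma(S\times\R,\Ad(P^-))$ with $\mu|_S=\lambda$ and $\mu|_{(S\times\R)\setminus S}\in\Gamma^\infty$. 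Since the open disk bundle $\Ad(P^-)_0$ is fibrewise star-shaped about the zero section and contains $\mu(S)=\lambda(S)$, a smooth cutoff $\chi$ (smooth Urysohn, \cite[Lemma 1.3.2]{Pe}) with $\chi\equiv 1$ near $S$ and $\mathrm{supp}(\chi)\subset\mu^{-1}(\Ad(P^-)_0)$ makes $\chi\mu\in\Gamma^\gamma(S\times\R,\Ad(P^-)_0)$, still equal to $\lambda$ on $S$ and ${\cal C}^\infty$ off $S$. Finally I put $\tilde s\edf\exp(\chi\mu)\cdot\pi^*w$: it is ${\cal C}^\gamma$ by Proposition~\ref{fiberwise-exp}(1), it is ${\cal C}^\infty$ on $S\times\R^*$ because $\chi\mu$ is ${\cal C}^\infty$ there and $\pi^*w$ is ${\cal C}^\infty$, and $\tilde s|_S=\exp(\lambda)\cdot w=s$, as required; then $\tilde\upsilon\edf g\circ\tilde s$ solves the problem.

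The main obstacle, and the reason the statement is not a triviality, is the requirement that the extension be ${\cal C}^\infty$ away from $S$ while $\upsilon$ is only ${\cal C}^\gamma$: the naive extension $\pi^*\upsilon$ is merely ${\cal C}^\gamma$ everywhere. The decomposition $s=\exp(\lambda)\cdot w$ is precisely what decouples the two requirements — the topology is carried by the genuinely smooth factor $w$ (whose pullback is smooth off $S$), while the low regularity is confined to $\lambda$, whose Whitney-type extension $\mu$ is automatically ${\cal C}^\infty$ off $S$ by Corollary~\ref{extensionCoroCkappa}. The only remaining care is to keep the extension inside the domain $\Ad(P^-)_0$ on which $\exp$ is a diffeomorphism, which is handled by the cutoff $\chi$.
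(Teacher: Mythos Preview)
Your proof is correct and follows essentially the same approach as the paper's: reduce to a single bundle $P_S\times\R$ (the paper does this via parallel transport along the $\R$-direction, you via homotopy invariance), approximate the ${\cal C}^\gamma$ section of $\iota(P)$ by a smooth one, write the correction as $\exp$ of a ${\cal C}^\gamma$ section of $\Ad(P)$, and extend that linear datum with Corollary~\ref{extensionCoroCkappa}. One small remark: the cutoff $\chi$ is harmless but unnecessary, since Proposition~\ref{fiberwise-exp}(1) applies to all of $\Ad(P)$, not just to $\Ad(P)_0$; the paper simply exponentiates the Whitney extension as is.
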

 
 \begin{proof}  Let  $A^\pm$ be a connection of class ${\cal C}^\infty$ on $P^\pm$. Parallel transport with respect to $A^\pm$ alongs  paths of the form $t\mapsto (u, t)$, $u\in S$, gives ${\cal C}^\infty$ bundle isomorphisms $f^\pm: P^\pm\textmap{\simeq}P^\pm_{S}\times\R$. 
 
 The bundles $P^-_{S}$, $P^+_{S}$ on $S$ are topologically isomorphic, so they are also ${\cal C}^ \infty$ isomorphic.  Therefore we may suppose $P^-=P^+=P_S\times\R\eqcolon P$ (regarded as bundle on $S\times\R$), where $P_S$ is a ${\cal C}^\infty$ principal $G$-bundle on $S$. 
 
 The bundle isomorphism $\upsilon$ can then be regarded as an element of $\Gamma^\gamma(S,\iota(P_S))$.  Let $\iota(P_S)_0$ be an an open neighborhood of $\id$ in $\iota(P_S)$ as in Proposition \ref{fiberwise-exp}. There exists a smooth section $\sigma\in \Gamma^\infty(S\times\R,\iota(P))$  such that $\sigma|_S$ takes values in the disk bundle neighborhood $\iota(P_S)_0\upsilon$ of $\upsilon$. This follows using the density of ${\cal C}^\infty$ with respect to the strong ${\cal C}^0$-topology (see \cite[section 2.1]{Hir}, \cite[Theorem 2.6]{Hir}, \cite[Exercice 3 p. 56]{Hir}).

 Therefore we have
$\upsilon =\phi^{-1}\,\sigma|_S$, where $\phi\in \Gamma^\gamma(S,\iota(P_S)_0)$, because $\upsilon$ is of class ${\cal C}^\gamma$ and $\sigma|_S$ of class ${\cal C}^\infty$.
Making use of Proposition \ref{fiberwise-exp}, let  $\psi\in {\cal  C}^\gamma(S,\Ad(P_S)_0)$ be such that $\phi=\exp(\psi)$. By Corollary  \ref{extensionCoroCkappa} there exists an extension $\tilde\psi\in \Gamma^\gamma(S\times\R,\Ad(P))$ of $\psi$ which is ${\cal C}^\infty$ on $S\times\R^*$.  It suffices to put $\tilde\upsilon=\exp(-\tilde\psi)\sigma$.
 \end{proof}

\subsection{Gluing  bundles along a hypersurface}

Let $U$ be a differentiable manifold, $S\subset U$  a separating closed real smooth hypersurface, and $U\setminus S= U^-\cup U^+$ a decomposition of $U\setminus S$ as    union of disjoint open subsets such that $\bar U^\pm=U^\pm\cup S$. Let $P^\pm$ be a ${\cal C}^\infty$ principal $G$ bundle on $\bar U^\pm$,  $\gamma\in[0,\infty]$,  $\upsilon:P^-_S\to P^+_S$ an isomorphism of class ${\cal C}^{\gamma}$, and let $P^\upsilon\edf P^-\coprod_\upsilon P^+$ be the topological bundle obtained by gluing $P^\pm$ along $S$ via $\upsilon$. $P^\upsilon$ comes with obvious identifications  $P^\pm\to P^\upsilon|_{\bar U^\pm}$.
\begin{dt}\label{DefAdmiss} A ${\cal C}^\infty$ structure $\Sg$ on 
$P^\upsilon\edf P^-\coprod_\upsilon P^+$
 will be called {\it admissible} if, denoting by $P^\upsilon_\Sg$ the corresponding ${\cal C}^\infty$ principal $G$-bundle, the obvious identifications $P^\pm\to P^\upsilon_\Sg|_{\bar U^\pm}$ become bundle isomorphisms of class $ {\cal C}^{\gamma}$ on   $\bar U^\pm$. 
 \end{dt}

Let $\Aut^0(P^\upsilon)\simeq \Gamma^0(U,P^\upsilon\times_\iota G)$ be the gauge group  of the topological bundle $P^\upsilon$ and $\Aut^0(P^\upsilon)_a$ be the subgroup of $\Aut^0(P^\upsilon)$ whose elements are the bundle automorphisms $F\in\Aut^0(P^\upsilon)$ which induce automorphisms of class ${\cal C}^{\gamma}$ on $P^\pm$.

 \begin{pr}\label{CinftyEGamma}
The set $\mathscr{S}_a$ of admissible ${\cal C}^\infty$ structures on $P^\upsilon$ is non-empty. The group $\Aut^0(P^\upsilon)_a$ acts transitively on $\mathscr{S}_a$.  The stabilizer of an element $\Sg\in \mathscr{S}_a$ coincide with the gauge group $\Aut^\infty(P^\upsilon_\Sg)\simeq\Gamma^\infty(U,P^\upsilon_\Sg\times_\iota G)$ of the ${\cal C}^\infty$ bundle $P^\upsilon_\Sg$.
 \end{pr}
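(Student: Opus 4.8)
The plan is to treat the three assertions in turn, the only substantial one being the non-emptiness of $\mathscr{S}_a$; once a single admissible structure has been produced, both the transitivity and the stabilizer computation will follow formally from the fact (already invoked in this paper) that the differentiable and topological classifications of principal $G$-bundles coincide. For non-emptiness I would argue locally near $S$, since the structures over $U^\pm$ are already pinned down by those of $P^\pm$. Fix a tubular neighbourhood $\nu\colon S\times(-1,1)\hookrightarrow U$ compatible with the decomposition, with $\nu(S\times(-1,0])\subset\bar U^-$ and $\nu(S\times[0,1))\subset\bar U^+$. Trivialising $P^\pm$ in the normal direction by parallel transport (as in the proof of Corollary \ref{ExtensionOfGamma}), I may regard the restrictions of $P^\pm$ to the collar as restrictions of ${\cal C}^\infty$ bundles $\bar Q^\pm$ on $S\times(-1,1)$. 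Corollary \ref{ExtensionOfGamma} then supplies a bundle isomorphism $\tilde\upsilon\colon\bar Q^-\to\bar Q^+$ of class ${\cal C}^\gamma$ which extends $\upsilon$ over $S$ and is ${\cal C}^\infty$ away from $S$.

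Using $\tilde\upsilon$ I would build a continuous bundle isomorphism $\Theta\colon P^\upsilon|_{\mathrm{collar}}\to\bar Q^-|_{\mathrm{collar}}$, defined as $\id$ on the $P^-$-part and as $\tilde\upsilon^{-1}$ on the $P^+$-part; the two prescriptions agree over $S$ precisely because $\tilde\upsilon|_S=\upsilon$ and the topological gluing defining $P^\upsilon$ is by $\upsilon$, so $\Theta$ is well defined, of class ${\cal C}^\gamma$, and ${\cal C}^\infty$ off $S$. Declaring $\Theta$ to be a smooth chart, i.e. pulling back the ${\cal C}^\infty$ structure of $\bar Q^-$, and using the structures of $P^\pm$ over $U\setminus S$, yields a ${\cal C}^\infty$ structure $\Sg$ on $P^\upsilon$: the two recipes are compatible on the overlaps, since on the $U^-$-part $\Theta=\id$ reproduces the structure of $P^-$, whereas on the $U^+$-part $\Theta=\tilde\upsilon^{-1}$ is a ${\cal C}^\infty$ diffeomorphism and hence reproduces that of $P^+$. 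Reading the obvious identifications $o^\pm\colon P^\pm\to P^\upsilon$ through the chart $\Theta$ shows that $o^-$ becomes ${\cal C}^\infty$ and $o^+$ becomes $\tilde\upsilon^{-1}$, of class ${\cal C}^\gamma$ up to $S$, so $\Sg$ is admissible in the sense of Definition \ref{DefAdmiss} and $\mathscr{S}_a\ne\emptyset$.

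For the action I would use the pushforward $F\cdot\Sg\edf F_*\Sg$, where $F_*\Sg$ is the unique ${\cal C}^\infty$ structure making $F\colon P^\upsilon_\Sg\to P^\upsilon_{F_*\Sg}$ a diffeomorphism. That this preserves $\mathscr{S}_a$ and acts transitively both rest on one elementary observation: any composition of ${\cal C}^\gamma$ bundle isomorphisms with ${\cal C}^\infty$ ones, all covering $\id_U$, is again ${\cal C}^\gamma$, since the associated $G$-valued functions multiply pointwise and pointwise products preserve class ${\cal C}^\gamma$ for every $\gamma\in[0,\infty]$. Concretely, given $\Sg,\Sg'\in\mathscr{S}_a$, the smooth bundles $P^\upsilon_\Sg$ and $P^\upsilon_{\Sg'}$ have the same underlying topological bundle $P^\upsilon$, hence are topologically, and therefore ${\cal C}^\infty$-ly, isomorphic over $\id_U$; any such smooth isomorphism $F$ induces on $P^\pm$ the maps $(o^\pm_{\Sg'})^{-1}\circ F\circ o^\pm_\Sg$, which are ${\cal C}^\gamma$ by the observation, so $F\in\Aut^0(P^\upsilon)_a$, while by construction $F_*\Sg=\Sg'$.

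Finally, an element $F\in\Aut^0(P^\upsilon)_a$ stabilises $\Sg$ exactly when $F\colon P^\upsilon_\Sg\to P^\upsilon_\Sg$ is a diffeomorphism, i.e. when $F\in\Aut^\infty(P^\upsilon_\Sg)\simeq\Gamma^\infty(U,P^\upsilon_\Sg\times_\iota G)$; conversely every such smooth gauge transformation induces ${\cal C}^\gamma$ maps on $P^\pm$ and hence lies in $\Aut^0(P^\upsilon)_a$, which gives the asserted description of the stabilizer. I expect the main obstacle to be the non-emptiness step: it is precisely there that one must genuinely upgrade the merely ${\cal C}^\gamma$ gluing datum $\upsilon$ to a smoothing chart, and the argument hinges on Corollary \ref{ExtensionOfGamma} (itself built on the Whitney/Seeley extension theorems) producing a $\tilde\upsilon$ that is simultaneously ${\cal C}^\gamma$ up to $S$ and ${\cal C}^\infty$ away from it.
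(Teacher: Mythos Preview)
Your proof is correct and follows essentially the same strategy as the paper's: for non-emptiness you use a tubular neighbourhood, trivialise $P^\pm$ normally by parallel transport, and invoke Corollary~\ref{ExtensionOfGamma} to extend $\upsilon$ to a $\tilde\upsilon$ of class ${\cal C}^\gamma$ that is ${\cal C}^\infty$ off $S$; the paper does exactly this. The only cosmetic difference is packaging: the paper glues $\tilde P^-_{\tilde U^-}$ with $P^+_{U^+}$ along the open overlap $N\cap U^+$ (where $\tilde\upsilon$ is ${\cal C}^\infty$) and then reads off the identification with $P^\upsilon$, whereas you pull back the structure of $\bar Q^-$ through your chart $\Theta$; these are the same construction written two ways. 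For transitivity and the stabilizer you actually give more detail than the paper, which dispatches both with the single sentence ``the other statements follow taking into account that $\Aut^0(P^\upsilon)$ acts transitively on the set of ${\cal C}^\infty$ structures on $P^\upsilon$.''

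One small imprecision worth tightening: when you write that $\Theta$ ``is of class ${\cal C}^\gamma$'', this does not quite parse, since the domain $P^\upsilon|_{\mathrm{collar}}$ is at this stage only a topological bundle. What you actually use (and what suffices) is that the restrictions $\Theta|_{P^-}=\id$ and $\Theta|_{P^+}=\tilde\upsilon^{-1}$ are ${\cal C}^\gamma$ as maps out of the ${\cal C}^\infty$ bundles $P^\pm$.
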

\begin{proof}
Let 
$$
S\times\R\textmap{\nu\simeq }N\hookrightarrow U
$$
be a ${\cal C}^\infty$ tubular neighborhood of $S$ in $U$ such that $\nu(S\times \R_{\pm})=N \cap \bar U^\pm$. Let $q:N\to S$, $q_\pm: N \cap \bar U^\pm\to S$ be the projections induced by the obvious projections $S\times\R\to S$, $S\times\R_\pm\to S$.

Put $\tilde U^\pm\edf U^\pm\cup N$, and let $\tilde P^\pm$ be a ${\cal C}^\infty$ bundle on $\tilde U^\pm$ which extends $P^\pm$. One obtains easily such an extension by choosing a  connection $A^\pm$ of class ${\cal C}^\infty$ on $P^\pm$ and noting that parallel transport alongs  paths of the form 
$$\R_\pm\ni t\mapsto \nu(u, t),\ u\in S$$
 gives   ${\cal C}^\infty$ bundle  isomorphisms $\eta^\pm:q_\pm^*(P^\pm_S) \textmap{\simeq}P^\pm_{N\cap \bar U^{\pm}}$. Therefore it suffices to put $\tilde P^\pm\edf P^\pm \coprod_{\eta^\pm} q^*(P^\pm_S)$.
 
By Corollary \ref{ExtensionOfGamma} there exists an   extension $\tilde\upsilon:\tilde P^-_{N}\to \tilde P^+_{N}$ of class ${\cal C}^\gamma$ of $\upsilon$ which is ${\cal C}^\infty$ on $N\setminus S$. Put   $\tilde\upsilon^\pm\edf \tilde\upsilon|_{N\cap U^\pm}$. We obtain obvious bundle isomorphisms
$$
\begin{tikzcd}
\tilde P^-_{\tilde U^-}\coprod_{\tilde\upsilon^+} P^+_{U^+}\ar[r, "b", "\simeq"']&\tilde P^-_{\tilde U^-}\coprod_{\tilde \upsilon}\tilde P^+_{\tilde U^+}\ar[r, "a", "\simeq"']& P^-\coprod_\upsilon P^+=P^\upsilon
\end{tikzcd}
$$
over $U$, where $\tilde P^-_{\tilde U^-}\coprod_{\tilde\upsilon^+} P^+_{U^+}$ is naturally a ${\cal C}^\infty$ bundle, $\tilde P^-_{\tilde U^-}\coprod_{\tilde \upsilon}\tilde P^+_{\tilde U^+}$ is naturally a ${\cal C}^\gamma$ bundle,  $b$ is a bundle isomorphism of class ${\cal C}^\gamma$ and $a$ is a topological bundle isomorphism. The ${\cal C}^\infty$ structure on $P^\upsilon$ induced via $a\circ b$ is obviously admissible.

The other statements follow taking into account that $\Aut^0(P^\upsilon)$ acts transitively on the set of ${\cal C}^\infty$ structures on $E^\upsilon$.
\end{proof}

\subsection{An extension theorem}

The following extension result  plays a fundamental role in this article. Since I could not find it in standard complex analysis textbooks or articles, I give below a detailed proof based on the regularity of the $\bp$ operator. My colleagues  Alexandre Boritchev and Karl Oeljeklaus suggested different proofs, which use   Morera Theorem (for $\dim(U)=1$) combined with the well known theorem on separately holomorphic functions (for $\dim(U)>1$). Another argument, suggested by Christine Laurent-Thiébaut, uses  the  Hartogs-Bochner extension theorem.

\begin{thr}
	\label{acrossS}
Let $U$, $F$ be complex manifolds and  $S\subset U$  a  closed, smooth  real hypersurface.  Let $f:U\to F$ be a continuous map whose restriction  $f|_{U\setminus S}$ is holomorphic. Then $f$ is holomorphic.	
\end{thr}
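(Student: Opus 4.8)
The plan is to reduce the statement to a local, scalar-valued removable-singularity problem and then to show that $f$ is a distributional solution of the homogeneous Cauchy--Riemann system across $S$, so that elliptic regularity forces it to be holomorphic. Since holomorphy of a map into a complex manifold is a local condition on both source and target, I would first compose with holomorphic charts: around a point $x\in S$ choose a holomorphic chart of $U$ identifying a neighborhood with an open set $\Omega\subset\C^n$ on which $S=\{\rho=0\}$ for a smooth defining function $\rho$ with $d\rho\neq 0$, and a holomorphic chart of $F$ around $f(x)$. Expressing the components of $f$ in these charts reduces everything to the case of a continuous function $f:\Omega\to\C$ which is holomorphic on $\Omega\setminus S$; it then suffices to prove that each such $f$ is holomorphic.

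Second, I would prove that $\bar\partial f=0$ in the sense of distributions on $\Omega$. Regarding $f$ as a $(0,0)$-current, for a test $(n,n-1)$-form $\eta$ with compact support in $\Omega$ one has $\langle \bar\partial f,\eta\rangle=-\int_\Omega f\,\bar\partial\eta$, and I would split this integral over $U^+=\{\rho>0\}$ and $U^-=\{\rho<0\}$. On each side $f$ is holomorphic, so $\bar\partial(f\eta)=f\,\bar\partial\eta=d(f\eta)$ (the $\partial$-part vanishes for bidegree reasons); the difficulty that $f$ is only continuous up to $S$ I would circumvent by integrating over $\{\pm\rho\ge\varepsilon\}$, whose boundary $\{\rho=\pm\varepsilon\}$ is smooth for almost every small $\varepsilon>0$ and lies in the open set where $f$ is holomorphic. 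Stokes' theorem then gives $\int_{\{\rho\ge\varepsilon\}}f\,\bar\partial\eta=\int_{\{\rho=\varepsilon\}}f\,\eta$, and letting $\varepsilon\to 0^+$ (dominated convergence on the left; smooth convergence of the level sets $\{\rho=\varepsilon\}$ to $S$ together with continuity of $f$ on the right) yields $\int_{U^+}f\,\bar\partial\eta=\int_S f|_S\,\eta$, with the analogous identity for $U^-$ carrying the opposite boundary orientation. Since $f$ is continuous, its two traces on $S$ coincide, so the two boundary integrals cancel and $\int_\Omega f\,\bar\partial\eta=0$. As $\eta$ was arbitrary, $\partial f/\partial\bar z_j=0$ for every $j$ in the distributional sense.

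Finally, I would invoke elliptic regularity. Applying $\sum_j\partial/\partial z_j$ to the relations $\partial f/\partial\bar z_j=0$ gives $\sum_j \partial^2 f/\partial z_j\partial\bar z_j=\tfrac14\Delta f=0$, so $f$ is distributionally harmonic, hence (by Weyl's lemma) a smooth function on $\Omega$. With $f$ now $C^\infty$, the equations $\partial f/\partial\bar z_j=0$ hold classically and $f$ is holomorphic on all of $\Omega$; undoing the chart identifications shows $f$ is holomorphic near $x$, and since $x\in S$ was arbitrary (and $f$ is already holomorphic off $S$) the proof is complete. I expect the only genuinely delicate point to be the justification of the boundary-term cancellation in the middle step: one must argue carefully that, although $f$ is merely continuous across $S$, the approximation by the regular level sets $\{\rho=\pm\varepsilon\}$ is legitimate and that the two limiting surface integrals are literally $\pm\int_S f|_S\,\eta$, so that continuity of $f$ alone produces the cancellation.
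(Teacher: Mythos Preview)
Your proposal is correct and follows essentially the same strategy as the paper: reduce to the scalar case, prove $\bar\partial f=0$ distributionally by splitting the pairing with a compactly supported $(n,n-1)$-form across the two sides of $S$, applying Stokes on shrinking one-sided neighborhoods, and cancelling the matching boundary integrals by continuity of $f$, then conclude by elliptic regularity. The only cosmetic differences are that the paper straightens $S$ via a diffeomorphism and uses half-balls rather than sublevel sets of $\rho$, and invokes ``regularity of $\bar\partial$'' directly where you pass through Weyl's lemma.
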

\begin{proof}
It suffices to prove that statement when $F=\C$ and $U$ is open in $\C^n$, so suppose we are in this case. 
We will show that $\bp f=0$ in the weak sense  around any  point $x\in S$; the claim will follow by the regularity property of the $\bp$ operator.

 Let $B_R\subset\R^{2n}$  be the radius $R$ ball with center $0_{\R^{2n}}$, and 
 $$\bar B_R^\pm\edf \{x\in  \bar B_R|\ \pm x_{2n}\geq 0\}.$$ 
 For  $t\in (-R,\,R)$, $\varepsilon>0$ put:
$$
\bar B_R^t\edf \{x\in\bar B_R|\ x_{2n}=t\},\ \bar B_{R,\varepsilon}^\pm\edf \{x\in  \bar B_R^\pm|\ |x_{2n}|\geq \varepsilon\},\ \bar B_{R,\varepsilon}\edf \{x\in  \bar B_R^\pm|\ |x_{2n}|\leq \varepsilon\}.$$

Let $r>0$ be sufficiently small such that $B(x,r)\subset U$ and there exists a diffeomorphism $\Psi: B(x,r)\to \R^{2n}$ with $\Psi(x)=0$ and $\Psi(S\cap B(x,r))=\R^{2n-1}\times\{0\}$.
 Let $\phi\in A^{n,n-1}(B(x,r))$	 be a type $(n,n-1)$-form with compact support $K\subset B(x,r)$, and let $R>0$ be sufficiently large such that $\Psi(K)\subset B_R$. Then
 \begin{equation}\label{int-on-ball}
 \int_{B(x,r)} f\bar\partial \phi = \int_{\Psi^{-1}(\bar B_R)}f\bar\partial \phi=\lim_{\varepsilon\searrow 0} \int_{\Psi^{-1}(\bar B_{R, \varepsilon}^+)}f\bar\partial \phi+\lim_{\varepsilon\searrow 0} \int_{\Psi^{-1}(\bar B_{R, \varepsilon}^-)}f\bar\partial \phi.
 \end{equation}
\begin{figure}[h]
\includegraphics[scale=0.6]{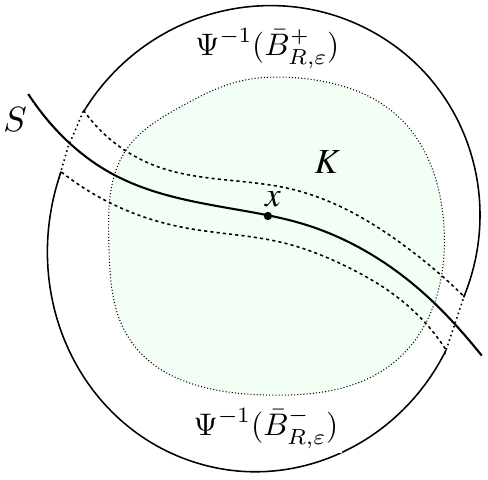}
\caption{$\Psi^{-1}(\bar B_R)$.}
\label{BR}
\end{figure}

We have used: the measure of $\Psi^{-1}(\bar B_{R,\varepsilon})$ (with respect to any Riemannian metric on $U$) tends to 0 as $\varepsilon\to 0$. 
Applying Stokes Theorem to the form $f\phi$ on the manifold with corners $\Psi^{-1}(\bar B_{R, \varepsilon}^\pm)$ (on which $f$ is smooth), we obtain
 $$
 \int_{\Psi^{-1}(\bar B_{R, \varepsilon}^\pm)}f\bar\partial \phi= -\int_{\Psi^{-1}(\bar B_{R, \varepsilon}^\pm)}\bp f \wedge   \phi+\int_{\partial  \Psi^{-1}(\bar B_{R, \varepsilon}^\pm)} f\phi = \int_{  \Psi^{-1}(\p \bar B_{R, \varepsilon}^\pm)} f\phi\,,
 $$
 because $\bp f=0$ on $ \Psi^{-1}(\bar B_{R, \varepsilon}^\pm)\subset U\setminus S$. Endowing $B_{R}^{t}$ with the orientation induced from $\R^{2n-1}\times\{0\}$ regarded as boundary of $\R^{2n-1}\times[0,+\infty)$, and noting that  $\phi$ vanishes on $\Psi^{-1}(\p \bar B_R)$, we obtain (see Fig. \ref{BR}):
 
 $$\lim_{\varepsilon\searrow 0}\int_{\Psi^{-1}(\bar B_{R, \varepsilon}^\pm)}f\bar\partial \phi=\pm \lim_{\varepsilon\searrow 0}\int_{  \Psi^{-1}(\bar B_{R}^{\pm \varepsilon})} f\phi=\pm \int_{  \Psi^{-1}(\bar B_{R}^0)} f\phi\,,
 $$
 so, by (\ref{int-on-ball}), we get $\int_{B(x,r)} f\bar\partial \phi=0$. Therefore $\bp f=0$ around $x$ in distribution sense.
\end{proof}

\begin{co}\label{ContSect}
Let $U$, $F$ be   complex manifolds and $p:\Fg\to U$	 a holomorphic locally trivial fiber bundle with standard fiber $F$. Let $S\subset U$ be a closed, smooth real hypersurface, and $\fg:U\to \Fg$ a continuous section which is holomorphic on $U\setminus S$. Then $\fg$ is holomorphic.
\end{co}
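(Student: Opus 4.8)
The plan is to exploit the purely local nature of holomorphy, combined with local holomorphic trivializations of the bundle $p:\Fg\to U$, in order to reduce the fibre-bundle statement to the map-valued extension Theorem \ref{acrossS}, which already carries out all of the analytic work (the weak-$\bp$ computation and the regularity of $\bp$).

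Since holomorphy of a section is a local condition, it suffices to prove that $\fg$ is holomorphic in a neighborhood of each point $x_0\in U$. If $x_0\notin S$ there is nothing to prove, because $\fg$ is holomorphic on $U\setminus S$ by hypothesis; so I would focus on the case $x_0\in S$. First I would choose an open neighborhood $V\subset U$ of $x_0$ over which $\Fg$ is holomorphically trivial, and fix a fibre-preserving biholomorphism $\Phi:\Fg|_V\textmap{\simeq}V\times F$. Writing $\Phi\circ(\fg|_V)=(\id_V,f)$ determines a map $f:V\to F$, and since $\Phi$ is a biholomorphism commuting with the projections to $V$, the section $\fg|_V$ is continuous (respectively holomorphic) if and only if $f$ is continuous (respectively holomorphic). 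Hence $f$ is continuous on $V$ and holomorphic on $V\setminus S$.

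Next I would apply Theorem \ref{acrossS} to the map $f:V\to F$ with the hypersurface $S\cap V\subset V$: this set is closed in $V$ (because $S$ is closed in $U$) and is still a smooth real hypersurface, so the hypotheses of the theorem are satisfied. The theorem then gives that $f$ is holomorphic on all of $V$, whence $\fg|_V=\Phi^{-1}\circ(\id_V,f)$ is holomorphic on $V$, in particular near $x_0$. As $x_0\in S$ was arbitrary, $\fg$ is holomorphic on $U$.

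As for the main obstacle: there is essentially none beyond Theorem \ref{acrossS} itself, since every substantive analytic ingredient is already contained in that theorem. The only points demanding care are the two-sided equivalence ``$\fg|_V$ holomorphic $\iff f$ holomorphic'' supplied by the \emph{holomorphic} trivialization $\Phi$, and the verification that $S\cap V$ remains a closed smooth real hypersurface in $V$; both are routine, so the real content of the corollary lies entirely in the preceding theorem.
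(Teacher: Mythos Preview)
Your proof is correct and follows exactly the paper's approach: the paper's proof is the single sentence ``This follows from Theorem \ref{acrossS} using local trivializations around the points of $S$,'' and you have simply spelled out that reduction in detail.
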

\begin{proof}
This follows from	Theorem \ref{acrossS} using local trivializations around the points of $S$.
\end{proof}

\subsection{Dolbeault operators and bundle almost complex structures}
\label{DolbeaultBundleACS}

We begin by recalling the well known formalism of Dolbeault operators (semi-connections) on complex vector bundles.

\subsubsection{Dolbeault operators on complex vector bundles}\label{DolbeaultSect}
Let $U$ be a complex manifold and $E$  a differentiable complex vector bundle of rank $r$ on $U$. A  Dolbeault operator (semi-connection)  on $E$ is a first order differential operator
$$\delta: A^0(U,E)\to A^{0,1}(U,E)$$
satisfying the Leibniz rule $\delta(f \sigma)=\bp f \sigma+f\delta\sigma$ (see for instance \cite[section 2.2.2]{DK}, \cite{LO}, \cite{LT}, \cite{Te2}).  Such an operator has  natural extensions $A^{0,q}(U,E)\to A^{0,q+1}(U,E)$; the square $\delta^2:A^0(U,E)\to A^{0,2}(U,E)$ is an order 0 operator, so it corresponds to an endomorphism valued form $F_\delta\in A^{0,2}(U,\End(E))$.  
By the bundle version of the Nirenberg-Newlander theorem (see Griffiths \cite[Proposition p. 419]{Gri} (see also \cite[Theorem 5.1]{AHS}, \cite[Proposition I.3.7]{Ko}, \cite[Theorem 2.1.53]{DK}) the $\End(E)$-valued (0,2)-form $F_\delta$ is the obstruction to the integrability of $\delta$. More precisely $F_\delta=0$ if and only if around any point $x\in U$ there exists a local frame $(\theta_1,\dots,\theta_r)$  with $\delta \theta_i=0$. If this is the case, $\delta$ defines a a holomorphic structure $\hg_\delta$ on $E$ characterized by the condition:  a local section $s$ of $E$ is $\hg_\delta$-holomorphic if and only if  $\delta\sigma=0$.

Let now $U^+\subset U$ be an open set whose closure $\bar U^+$ is a smooth submanifold  with boundary, i.e. $\bar U^+=U^+\cup S$, where $S$ is an oriented real hypersurface of $U$ and $\p \bar U^+=S$. Put $E^+\edf E|_{\bar U^+}$, $E_S\edf E|_S$.  

A Dolbeault operator $\delta: A^0(\bar U^+,E^+)\to A^{0,1}(\bar U^+,E^+)$ on $E^+$ and its associated form $F_\delta\in A^{0,2}(\bar U^+,\End(E))$ are defined in the same way as for bundles on manifolds without boundary, but,  in general, the analogue of the Newlander-Nirenberg Theorem   {\it does not} hold at boundary points  \cite[Proposition 1.5, Corollary 2.3]{Te}.   For this reason a Dolbeault operator $\delta$ on $E^+$ satisfying the condition $F_\delta=0$ will be called {\it formally} integrable (not integrable). Similarly, for a formally integrable Dolbeault operator $\delta$ on $E^+$ and an open set $V\subset \bar U^+$, a section $\sigma\in \Gamma (V,E^+)$ will be called   {\it formally}  $\delta$-holomorphic if $\delta\sigma=0$. This condition implies holomorphy  at interior points, but, in general, not at boundary points (not even at boundary points around which a formally $\delta$-holomorphic frame exists).

In this article we make use of a refinement of the above Newlander-Nirenberg for Dolbeault operators with coefficients in ${\cal C}^\kappa$ for $\kappa\in (0,+\infty]\setminus\N$.  This result is a special case of the Newlander-Nirenberg theorem for bundle ACS of class  ${\cal C}^\kappa$ on principal bundles \cite{Te2} which will be recalled briefly in the next section.

\subsubsection{Bundle almost complex structures on principal bundles}\label{ACSsection}

Let $G$ be a complex Lie group, $\g$ its Lie algebra and $\theta\in A^1(G,\g)$ the canonical left invariant $\g$-valued 1-form on $G$ \cite[p. 41]{KN}.
Let $p:P\to U$ be a  principal $G$-bundle of class ${\cal C}^\infty$ on $U$. Let $\kappa\in[0,+\infty]$.
%
%
%
\begin{dt}\label{BdACS}
A bundle almost complex structure  (bundle ACS) of class ${\cal C}^\kappa$ on $P$ is an almost complex structure  $J$ of class ${\cal C}^\kappa$ on $P$ which makes the $G$-action $P\times G\to P$  and the map $p:P\to U$ pseudo-holomorphic. 
 \end{dt}
 
 Let ${\cal J}^\kappa_P$ be the space of bundle ACS of class ${\cal C}^\kappa$ on $P$ and let ${\cal A}^\kappa_P$ be the space of sections $\alpha\in \Gamma^\kappa(P,p_*^{-1}(T^{0,1}_U)^* \otimes\g^{1,0})$ satisfying the conditions:
 
 \begin{enumerate}[(Pa)]
 	\item $\alpha$ is invariant with respect to the $G$ action $g\to \trp{R}_{g*}\otimes \Ad_{g}$ on   $p_*^{-1}(T^{0,1}_U)^* \otimes\g^{1,0}$. 
 	\item  $\alpha(a^\#_y)=a^{1,0}$  for any $y\in P$ and $a\in \g^\C\edf\g\otimes_\R\C=\g^{1,0}\oplus\g^{0,1}$.
 \end{enumerate}       
Here we used the notation $a^\#$ for the vertical vector field associated with $a$.  For any $J\in {\cal J}^\kappa_P$ there exists a unique $\alpha_J\in {\cal A}^\kappa_P$ such that $T^{0,1}_{P,J}=\ker(\alpha_J)$ and the map 
$${\cal J}^\kappa_P\ni J\mapsto \alpha_J\in {\cal A}^\kappa_P$$
is a bijection \cite{Te2}. Via this bijection ${\cal J}^\kappa_P$ gets the natural structure of an affine space with model space $A^{0,1}_{\Ad}(P,\g^{1,0})_\kappa$, the space of $\g^{1,0}$-valued tensorial forms of type $\Ad$ \cite[section II.5]{KN}, class ${\cal C}^\kappa$ and bidegree (0,1) on $P$. This space can be further identified \cite[p. 76]{KN} with the space  $A^{0,1}(U,P\times_\Ad\g^{1,0})_\kappa$ of  forms of class ${\cal C}^\kappa$ and bidegree $(0,1)$ with values in the associated vector bundle $P\times_\Ad\g^{1,0}$. Identifying $\g^{1,0}$ with $\g$ in the standard way, we conclude that ${\cal J}^\kappa_P$ is naturally an affine space with model space $A^{0,1}(U,P\times_\Ad\g)_\kappa=A^{0,1}(U,\Ad(P))_\kappa$.

Let $J\in {\cal J}^\kappa_P$ with  $\kappa\geq 1$. The map
$$
\Gamma(P,T^{0,1}_{P,J})^2\ni (A,B)\stackrel{\fg_J}{\mapsto} -\alpha_J([A,B]) 
$$
defines a $\g^{1,0}$-valued tensorial  form of type $(0,2)$ and class ${\cal C}^{\kappa-1}$ on $P$ hence an element  $\fg_J\in A^{0,2}_{\Ad}(P,  \g^{1,0})_{\kappa-1}=A^{0,2}_{\Ad}(P,  \g)_{\kappa-1}=A^{0,2} (U,\Ad(P))_{\kappa-1}$.

  The behavior of the map $J\mapsto\fg_J$ with respect to translations in the affine space ${\cal J}^\kappa_P$ is given by the formula
 \begin{equation}\label{fg-(J+b)}
 \fg_{J+b}=\fg_J+\bar\kg_J(b),	
 \end{equation}
where $\bar \kg_J:A^{0,1}_\Ad(P,\g^{1,0})_\kappa \to  A^{0,2}_\Ad(P,\g^{1,0})_{\kappa-1}$  is defined by
$$
\bar\kg_J(b)=\bp_J b+\frac{1}{2}[b\wedge b]$$
(see \cite[Proposition 2.9]{Te2}). Here $\bp_J$ stands for the Dolbeault operator on the vector bundle $P\times_\Ad\g^{1,0}\simeq\Ad(P)$ associated with $J$.

Let $W\subset U$ be an open subset, and $\tau\in \Gamma(W,P)$ be a local section of class ${\cal C}^\infty$ of $P$. Putting 
$$\alpha_J^\tau\edf \tau^*(\alpha)\in A^{0,1}(W,\g)_\kappa$$
 and, denoting by $\fg_J^\tau\in A^{0,2}(W,\g)_{\kappa-1}$ the form associated with $\fg_J$ with respect to $\tau$, we have (see \cite{Te2}):
$$
\fg_J^\tau=\bp\alpha_J^\tau+\frac{1}{2}[\alpha_J^\tau\wedge\alpha_J^\tau].
$$
 This formula shows that $\fg_J$ can be also  defined for $\kappa\in[0,1)$ as an $\Ad(P)$-valued form of type $(0,2)$ on $U$ with distribution coefficients.

 We refer to \cite{Te2} for the following principal bundle version of the Nirenberg-Newlander theorem:
 \begin{thr}[The Nirenberg-Newlander theorem for principal bundles]  \label{NNG}
 Let $G$ be a complex Lie group  and $p:P\to U$ a differentiable principal $G$-bundle on $U$. Let $J$ be a bundle ACS of class ${\cal C}^\kappa$ on $P$ with $\kappa\in (0,+\infty]\setminus\N$. The following conditions are equivalent:
 \begin{enumerate}
 	\item $\fg_J=0$.
 	\item $J$ is integrable in the following sense: for any point $x\in U$ there exists an open neighborhood $W$ of $x$ and a $J$-pseudo-holomorphic section $s\in \Gamma^{\kappa+1}(W,P)$.
 \end{enumerate}
 If this is the case, $J$ defines a bundle holomorphic reduction $\hg_J$ of the underlying ${\cal C}^{\kappa+1}$-bundle of $P$. For an open set $W\subset U$, a section $s\in \Gamma^{1}(W,P)$ is holomorphic with respect to $\hg_J$ if and only if it is $J$-pseudo-holomorphic; if this is the case then $s\in \Gamma^{\kappa+1}(W,P)$. 
\end{thr}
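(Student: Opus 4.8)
The plan is to prove the two implications separately and then read off the holomorphic reduction. The implication $(2)\Rightarrow(1)$ I would dispatch at once: if $s\in\Gamma^{\kappa+1}(W,P)$ is $J$-pseudo-holomorphic then $s_*(T^{0,1}_U)\subset T^{0,1}_{P,J}=\ker\alpha_J$, so the pulled-back form $\alpha_J^s=s^*(\alpha_J)$ vanishes identically on $W$; the local formula $\fg_J^s=\bp\alpha_J^s+\frac12[\alpha_J^s\wedge\alpha_J^s]$ recalled in section \ref{ACSsection} then forces $\fg_J=0$ over $p^{-1}(W)$, and since such sections are assumed to exist near every point, $\fg_J=0$ globally.

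The substance is the converse $(1)\Rightarrow(2)$, which I would treat locally. Fixing $x\in U$ and a ${\cal C}^\infty$ section $\tau\in\Gamma^\infty(W,P)$ trivializes $P|_W\simeq W\times G$ and presents $J$ through the ${\cal C}^\kappa$ form $\alpha\edf\alpha_J^\tau\in A^{0,1}(W,\g)_\kappa$. Producing a $J$-pseudo-holomorphic section amounts to finding, on a smaller neighborhood $W'\ni x$, a gauge transformation $g:W'\to G$ for which $\tau\cdot g$ has vanishing tangential form, that is, to solving the nonlinear first order $\bp$-system $\bp g=-\alpha\, g$ (interpreted through the canonical form $\theta$). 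By the same local formula, the hypothesis $\fg_J=0$ reads as the Maurer-Cartan equation $\bp\alpha+\frac12[\alpha\wedge\alpha]=0$, which is precisely the Frobenius-type compatibility condition that makes this overdetermined system locally solvable.

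To solve it and to control its regularity, I would first reduce to a linear problem: for a matrix group $G\subset\GL(r,\C)$ one is solving the integrability problem for the Dolbeault operator $\bp+\alpha$ on the trivial vector bundle, while for a general complex Lie group one transports the equation into $\g$ via an exponential chart. The solution itself I would obtain by a standard continuity or fixed-point scheme built on the elliptic regularity of $\bp$ in Hölder spaces, after shrinking $W'$ so that $\alpha$ is small. The decisive analytic point, and the step I expect to be the main obstacle, is the sharp gain of one derivative --- that the solution lies in $\Gamma^{\kappa+1}$ when $\alpha$ is only of class ${\cal C}^\kappa$ --- which rests on the Schauder estimates for $\bp$ that are valid exactly because $\kappa\in(0,+\infty]\setminus\N$; this is the same non-integrality hypothesis exploited throughout the article. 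An alternative route, which I would keep in reserve, is to observe that $\fg_J=0$ is equivalent to the vanishing of the Nijenhuis tensor of $J$ on the total space $P$, invoke the Hölder Newlander-Nirenberg theorem on the manifold $P$, and then use that the $G$-action and the projection $p$, being pseudo-holomorphic for an integrable structure, are holomorphic; a holomorphic slice through a point of $P_x$ would then yield the section.

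Finally I would assemble the global conclusion. Two local pseudo-holomorphic sections differ by a $G$-valued pseudo-holomorphic, hence holomorphic, transition map, so the local sections glue to a holomorphic reduction $\hg_J$ of the underlying ${\cal C}^{\kappa+1}$ bundle of $P$. A section is $\hg_J$-holomorphic if and only if it is $J$-pseudo-holomorphic, since holomorphy into a complex manifold coincides with pseudo-holomorphy once the target structure is integrable, and the regularity assertion $s\in\Gamma^{\kappa+1}$ then follows by comparing $s$ with a local $\hg_J$-holomorphic frame.
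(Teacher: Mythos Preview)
The paper does not prove this theorem; it is stated in the appendix with the sentence ``We refer to \cite{Te2} for the following principal bundle version of the Nirenberg--Newlander theorem,'' so there is no in-text argument to compare against.

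Your sketch is nonetheless a correct outline of how such a result is established, and it is in the spirit of what \cite{Te2} does. The implication $(2)\Rightarrow(1)$ is exactly as you say. For $(1)\Rightarrow(2)$, both routes you propose are viable; the second one (apply the H\"older Newlander--Nirenberg theorem to the almost complex manifold $(P,J)$, then use that $p$ and the $G$-action become holomorphic) is the cleaner one for an abstract complex Lie group and is closer to the argument in \cite{Te2}. Two points deserve a bit more care than you give them: you need the sharp H\"older version of Newlander--Nirenberg producing ${\cal C}^{\kappa+1}$ complex charts from a ${\cal C}^\kappa$ integrable almost complex structure (this is available for $\kappa\in(0,\infty]\setminus\N$, which is precisely the hypothesis), and the passage from ``holomorphic slice'' to ``holomorphic section'' requires observing that $p$ is a holomorphic submersion, so the implicit function theorem yields local holomorphic sections of the required regularity. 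Your assembly of $\hg_J$ and the final regularity bootstrap are correct.
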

For $\kappa\in (0,1)$ the condition $\fg_J=0$ is meant in distribution sense.     We also refer to \cite{Te2} for the following regularity result:

\begin{co}
 \label{kappa-regularity}
 Let $U$ be a complex manifold, $G$ a complex Lie group, and $P$ a principal bundle of class ${\cal C}^\infty$ on $P$. Let $J$ be an integrable bundle	 ACS of class ${\cal C}^\kappa$  on $P$ with $\kappa\in (0,+\infty]\setminus\N$, and let $G\times F\to F$ be a holomorphic action of $G$ on a complex manifold $F$. The sheaf of $J$-holomorphic sections of the associated bundle $P\times_G F$ is contained in the sheaf of  sections of class ${\cal C}^{\kappa+1}$.
\end{co}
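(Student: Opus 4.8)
The plan is to reduce the assertion, which is local and concerns the sheaf of $J$-holomorphic sections, to a computation in a holomorphic trivialization produced by the principal-bundle Newlander--Nirenberg Theorem \ref{NNG}, and then to invoke the stability of the Hölder class ${\cal C}^{\kappa+1}$ under composition with smooth maps. The conceptual point I want to exploit is that, although the $\hg_J$-holomorphic trivializations of $P$ supplied by Theorem \ref{NNG} are only of class ${\cal C}^{\kappa+1}$, a section read in such a trivialization becomes a \emph{genuinely holomorphic} $F$-valued map, hence of class ${\cal C}^\infty$ (indeed real-analytic). All the loss of smoothness is thereby confined to the trivialization and is exactly of order ${\cal C}^{\kappa+1}$.

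Concretely, I would fix a point $x_0\in U$ and a $J$-holomorphic section $\phi$ of $P\times_G F$ defined near $x_0$. First I apply Theorem \ref{NNG} to obtain a neighborhood $W\ni x_0$ and a $J$-pseudo-holomorphic section $\tau\in\Gamma^{\kappa+1}(W,P)$ which is holomorphic for $\hg_J$. Since the $G$-action on $F$ is holomorphic, $\tau$ induces a trivialization
\begin{equation*}
\bar\Theta_\tau:W\times F\to (P\times_G F)|_W,\qquad (x,\xi)\mapsto [\tau(x),\xi],
\end{equation*}
which is of class ${\cal C}^{\kappa+1}$ (the bracket map $P\times F\to P\times_G F$ being ${\cal C}^\infty$ and $\tau$ being of class ${\cal C}^{\kappa+1}$) and is, moreover, a holomorphic trivialization of the fiber bundle $(P\times_G F)|_W$ endowed with the holomorphic structure induced by $J$. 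Consequently $\phi$ can be written uniquely as $\phi(x)=\bar\Theta_\tau(x,f(x))$ for a map $f:W\to F$, and the hypothesis that $\phi$ is $J$-holomorphic is equivalent to $f$ being holomorphic; in particular $f\in{\cal C}^\infty(W,F)$.

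It then remains to prove that $\phi=\bar\Theta_\tau\circ(\id_W,f)$ is of class ${\cal C}^{\kappa+1}$, and this composition step is where the real work lies. Here $(\id_W,f):W\to W\times F$ is ${\cal C}^\infty$ while $\bar\Theta_\tau$ is only ${\cal C}^{\kappa+1}$, so I invoke the composition principle that the composite of a ${\cal C}^{\kappa+1}$ map with a ${\cal C}^\infty$ map is again of class ${\cal C}^{\kappa+1}$. For finite non-integer $\kappa$, setting $k\edf[\kappa]$ and $\alpha\edf\kappa-k\in(0,1)$, one has ${\cal C}^{\kappa+1}={\cal C}^{k+1,\alpha}$, and the Faà di Bruno formula expresses every derivative of $\phi$ of order $\le k+1$ as a finite sum of products of derivatives of $\bar\Theta_\tau$ (of order $\le k+1$, evaluated at the smooth argument $(x,f(x))$) with smooth factors coming from the derivatives of $(\id_W,f)$; the sole term carrying the $\alpha$-Hölder derivative $D^{k+1}\bar\Theta_\tau$ remains $\alpha$-Hölder after composition with the locally Lipschitz map $(\id_W,f)$, whence $\phi\in{\cal C}^{k+1,\alpha}={\cal C}^{\kappa+1}$. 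The case $\kappa=\infty$ is immediate, since then $\bar\Theta_\tau$ and $f$ are both ${\cal C}^\infty$. The main obstacle is thus not conceptual but the careful bookkeeping of Hölder exponents in this composition estimate, together with the verification that $\bar\Theta_\tau$ has exactly the regularity ${\cal C}^{\kappa+1}$ and is a holomorphic trivialization; both are routine once Theorem \ref{NNG} is granted.
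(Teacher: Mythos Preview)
Your argument is correct. The paper itself does not prove this corollary but refers to \cite{Te2} for it; your sketch is precisely the natural proof one expects there: take an $\hg_J$-holomorphic local section $\tau\in\Gamma^{\kappa+1}(W,P)$ supplied by Theorem~\ref{NNG}, read the given section $\phi$ in the induced trivialization as a genuinely holomorphic (hence ${\cal C}^\infty$) map $f:W\to F$, and recover $\phi$ as a composition of a ${\cal C}^{\kappa+1}$ map with a ${\cal C}^\infty$ map.

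One small remark on presentation: rather than invoking Faà di Bruno on the composition $\bar\Theta_\tau\circ(\id_W,f)$ with the abstract target $(P\times_G F)|_W$, it is cleaner to pass to a ${\cal C}^\infty$ local section $\sigma\in\Gamma^\infty(W,P)$, write $\tau=\sigma\cdot g$ with $g\in{\cal C}^{\kappa+1}(W,G)$, and observe that in the ${\cal C}^\infty$ trivialization $\bar\Theta_\sigma$ the section $\phi$ is represented by $x\mapsto g(x)\cdot f(x)$. This is the composition of the ${\cal C}^\infty$ action map $G\times F\to F$ with the ${\cal C}^{\kappa+1}$ map $x\mapsto(g(x),f(x))$, and the stability of ${\cal C}^{\kappa+1}$ under post-composition by ${\cal C}^\infty$ maps (an easy induction using the chain rule, exactly as you indicate) gives the result directly in the sense of Palais' formalism for sections of class ${\cal C}^{\kappa+1}$ used throughout the paper. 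This avoids having to discuss the regularity of maps into the total space of the associated bundle.
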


Let $\iota:G\to\Aut(G)$ be the morphism which maps any $g\in G$ to the interior automorphism $\iota_g$. An equivariant map $\sigma\in {\cal C}^{\kappa+1}_\iota(P,G)$ defines a gauge transformation $\tilde\sigma:P\to P$ of class ${\cal C}^{\kappa+1}$ of $P$ and the map 
$${\cal C}^{\kappa+1}_\iota(P,G)\ni \sigma\mapsto\tilde\sigma\in  {\cal G}^{\kappa+1}_P$$
 is an isomorphism onto the gauge group ${\cal G}^{\kappa+1}_P$ of $P$. The group ${\cal C}^{\kappa+1}_\iota(P,G)$ acts on the space ${\cal J}^\kappa_P$ from the right by the formula
 $$
 J\cdot \sigma\edf \tilde\sigma^{-1}_*\circ J\circ \tilde\sigma_*
 $$
and the corresponding action on ${\cal A}^\kappa_P$ is 
 $$
 \alpha\cdot \sigma=\alpha\circ \sigma_*,
 $$
from which we infer the behavior of the integrability obstruction $\fg_J$ with respect to the gauge symmetry of the space ${\cal J}^\kappa_P$:
 \begin{equation}\label{fg-Jsigma}
 \fg_{J\cdot\sigma}=\Ad_{\sigma^{-1}}(\fg_J).	
 \end{equation}
 We have the following formula (see \cite[Proposition 2.10]{Te2}) relating the affine space structure of ${\cal J}^\kappa_P$ to its gauge symmetry:
 \begin{equation}\label{Jsigma-J=lgsigma}
 J\cdot\sigma=J+\bar\lg_J(\sigma),	
 \end{equation}
 where the map
 $$\bar\lg_J:{\cal C}^{\kappa+1}_\iota(P,G)\to A^{0,1}_{\Ad}(P,\g^{1,0})_\kappa=A^{0,1}(U,P\times_\Ad\g^{1,0})_\kappa\simeq A^{0,1}(U,\Ad(P))_\kappa $$
  is defined by
 $$
 \bar\lg_J(\sigma)\edf \sigma^*(\theta^{1,0})^{0,1}_J.
 $$
Here $\theta^{1,0}$ is the holomorphic (1,0)-form on $G$ defined as the composition 
$$\theta\otimes\id_\C: T_G^\C\to \g^\C=\g^{1,0}\oplus\g^{0,1}\to \g^{1,0}.$$
It is useful to have an explicit formula for $\bar\lg_J$ with respect to a local trivialization (or, equivalently, local section) of $P$.  For a local section $\tau\in \Gamma(W,P)$ of class ${\cal C}^\infty$ of $P$ put 
$$\bar\lg_J^\tau(\sigma)\edf \tau^*(\bar\lg_J(\sigma))\in A^{0,1}(W,\g^{1,0})_\kappa.$$
We have (see \cite[Lemma 2.8]{Te2}):
\begin{equation}\label{bar-lg-sigma}
\bar \lg^ \tau_J(\sigma)=\sigma_\tau^*(\theta^{1,0})^{0,1}+(\Ad_{\sigma_\tau^{-1}}-\id)(\alpha_J^\tau),	
\end{equation}
where $\sigma_\tau\edf\sigma\circ\tau\in {\cal C}^{\kappa+1}(W,G)$.
Note the following useful formula for the composition $\bar\kg_J\circ\bar\lg_J$  associated with a bundle ACS $J$ of class ${\cal C}^1$. For any $\sigma\in {\cal C}^2_\iota(P,G)$ we have  \cite[Corollary 2.11]{Te2}:
\begin{equation}
\bar\kg_J\circ\bar\lg_J(\sigma)=(\Ad_{\sigma^{-1}}-\id)(\fg_J).	
\end{equation}

Let $J$, $J'\in {\cal J}^\kappa_P$ and $\sigma \in {\cal C}^{\kappa+1}_\iota(P,G)$. We have
$$
\alpha_{J'\cdot\sigma}-\alpha_{J\cdot\sigma}=\alpha_{J'}\circ\tilde\sigma_*-\alpha_{J}\circ\tilde\sigma_*=(\alpha_{J'}-\alpha_J)\circ\tilde\sigma_*.
$$
Since $\alpha_{J'}-\alpha_J$ is a tensorial form of type $\Ad$ (hence it vanishes  on vertical tangent and is  $\Ad$-equivariant) we obtain the formula
\begin{equation}\label{J'sigma-Jsigma}
J'\cdot\sigma-J\cdot\sigma=\Ad_{\sigma^{-1}}(J'-J),	
\end{equation}
which shows that the group ${\cal C}^{\kappa+1}_\iota(P,G)$ acts on ${\cal J}^\kappa_P$ by affine transformations and the induced linear action on the model vector space $A^{0,1}_{\Ad}(P,\g^{1,0})_\kappa$ is 
$$
(\beta,\sigma)\mapsto \Ad_{\sigma^{-1}}(\beta).
$$
We will need:
\begin{lm}\label{kgJlgJ}  Let $J$ be a bundle ACS of class ${\cal C}^1$ on $P$. Then
\begin{enumerate}
\item For any $\sigma_0$, $\sigma_1\in {\cal C}^1_\iota(P,G)$ we have
\begin{equation}\label{lJ-sigma1-sigma0}
\bar\lg_J(\sigma_1\sigma_0)=\Ad_{\sigma_0^{-1}}(\bar\lg_J(\sigma_1))+\bar\lg_J(\sigma_0).	
\end{equation}

\item For any $\sigma\in \Gamma^2(U,\iota(P))$ and $\beta\in \Gamma^1(U,\extp^{0,1}\otimes\Ad(P))$ we have: 
\begin{equation}
\bar\kg_J(\Ad_\sigma(\beta-\bar\lg_J(\sigma))=\Ad_\sigma(\bar\kg_J(\beta))+(\Ad_\sigma-\id)(\fg_J).	
\end{equation}
\end{enumerate}
\end{lm}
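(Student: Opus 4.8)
The plan is to deduce both identities purely algebraically from the local formula (\ref{bar-lg-sigma}) together with the structural relations (\ref{fg-(J+b)}), (\ref{fg-Jsigma}), (\ref{Jsigma-J=lgsigma}) and the affine transformation law (\ref{J'sigma-Jsigma}) already recorded in the excerpt; no new analytic input is needed, and the regularity hypotheses ($J$ of class ${\cal C}^1$, $\beta\in\Gamma^1$, $\sigma\in\Gamma^2$) are exactly what is required for $\fg$ and $\bar\kg_J$ to be defined throughout.

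For (1), I would fix a local ${\cal C}^\infty$ section $\tau\in\Gamma(W,P)$ and abbreviate $s_i\edf\sigma_i\circ\tau:W\to G$, noting that $\iota$-equivariance gives $(\sigma_1\sigma_0)\circ\tau=s_1 s_0$ as $G$-valued maps on $W$. The classical cocycle identity for the left-invariant Maurer-Cartan form reads $(s_1s_0)^*\theta=\Ad_{s_0^{-1}}(s_1^*\theta)+s_0^*\theta$; since $G$ is a complex Lie group, $\Ad$ is $\C$-linear and preserves $\g^{1,0}$, so the same identity survives after projecting the target to $\g^{1,0}$ and then extracting the base $(0,1)$-part, both operations commuting with the pointwise action of $\Ad_{s_0^{-1}}$. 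Substituting this and the factorization $\Ad_{(s_1s_0)^{-1}}=\Ad_{s_0^{-1}}\Ad_{s_1^{-1}}$ into (\ref{bar-lg-sigma}), the two occurrences of $\Ad_{s_0^{-1}}(\alpha_J^\tau)$ cancel and one reads off $\bar\lg^\tau_J(\sigma_1\sigma_0)=\Ad_{s_0^{-1}}(\bar\lg^\tau_J(\sigma_1))+\bar\lg^\tau_J(\sigma_0)$. As $\tau$ is arbitrary and $\tau^*(\Ad_{\sigma_0^{-1}}(\bar\lg_J(\sigma_1)))=\Ad_{s_0^{-1}}(\bar\lg^\tau_J(\sigma_1))$ by tensoriality, this proves (1).

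For (2), the key observation is to recognize the compound expression $\Ad_\sigma(\beta-\bar\lg_J(\sigma))$ as a single translation vector in the affine space ${\cal J}^\kappa_P$. Writing $\sigma^{-1}$ for the inverse gauge transformation, (\ref{J'sigma-Jsigma}) applied to $J'=J+\beta$ gives $(J+\beta)\cdot\sigma^{-1}=J\cdot\sigma^{-1}+\Ad_\sigma(\beta)$, while (\ref{Jsigma-J=lgsigma}) gives $J\cdot\sigma^{-1}=J+\bar\lg_J(\sigma^{-1})$. Part (1) (with $\sigma_1=\sigma^{-1}$, $\sigma_0=\sigma$, using $\bar\lg_J(e)=0$) yields $\bar\lg_J(\sigma^{-1})=-\Ad_\sigma(\bar\lg_J(\sigma))$, whence
\[
(J+\beta)\cdot\sigma^{-1}=J+\Ad_\sigma\big(\beta-\bar\lg_J(\sigma)\big).
\]
Now I compute the obstruction $\fg_{(J+\beta)\cdot\sigma^{-1}}$ in two ways. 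Directly, (\ref{fg-(J+b)}) gives $\fg_{(J+\beta)\cdot\sigma^{-1}}=\fg_J+\bar\kg_J(\Ad_\sigma(\beta-\bar\lg_J(\sigma)))$. Through the gauge law (\ref{fg-Jsigma}) followed by (\ref{fg-(J+b)}) it gives $\fg_{(J+\beta)\cdot\sigma^{-1}}=\Ad_\sigma(\fg_{J+\beta})=\Ad_\sigma(\fg_J)+\Ad_\sigma(\bar\kg_J(\beta))$. Equating and cancelling $\fg_J$ yields exactly $\bar\kg_J(\Ad_\sigma(\beta-\bar\lg_J(\sigma)))=\Ad_\sigma(\bar\kg_J(\beta))+(\Ad_\sigma-\id)(\fg_J)$.

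The routine part is (1), which is a careful bookkeeping of which projections ($\g^{1,0}$ on the target, $(0,1)$ on the base) commute with $\Ad$ and with pullback. The one conceptual point, which I expect to be the main obstacle to state cleanly, is the reinterpretation in (2): seeing that $\Ad_\sigma(\beta-\bar\lg_J(\sigma))$ is precisely the vector translating $J$ to $(J+\beta)\cdot\sigma^{-1}$, so that the whole identity reduces to a two-way evaluation of the single obstruction $\fg_{(J+\beta)\cdot\sigma^{-1}}$. Once this identification is in hand, (2) is immediate from the cited formulas.
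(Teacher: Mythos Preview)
Your proof is correct. Part~(2) is essentially identical to the paper's argument: both compute the obstruction $\fg_{(J+\beta)\cdot\sigma^{-1}}$ in two ways, once via (\ref{fg-(J+b)}) and once via (\ref{fg-Jsigma}), and equate. The only cosmetic difference is that the paper obtains $J\cdot\sigma^{-1}-J=-\Ad_\sigma(\bar\lg_J(\sigma))$ directly from (\ref{J'sigma-Jsigma}) applied to the pair $(J,J\cdot\sigma)$ and $\sigma^{-1}$, whereas you derive it from part~(1) via $\bar\lg_J(\sigma^{-1})=-\Ad_\sigma(\bar\lg_J(\sigma))$; these are equivalent one-line manipulations.

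For part~(1) you take a genuinely different route. You work locally through the explicit formula (\ref{bar-lg-sigma}) and the Maurer--Cartan cocycle identity $(s_1s_0)^*\theta=\Ad_{s_0^{-1}}(s_1^*\theta)+s_0^*\theta$, checking that the relevant projections commute with $\Ad$. The paper instead argues intrinsically from the affine structure: writing $\bar\lg_J(\sigma_1\sigma_0)=J\cdot(\sigma_1\sigma_0)-J$, splitting as $(J\cdot\sigma_1)\cdot\sigma_0-J\cdot\sigma_0+J\cdot\sigma_0-J$, and applying (\ref{J'sigma-Jsigma}) and (\ref{Jsigma-J=lgsigma}). The paper's proof is coordinate-free and slightly shorter; your local computation has the virtue of making explicit where the complex structure on $G$ enters (the $\C$-linearity of $\Ad$ needed for the projections to commute).
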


\begin{proof}
(1) By (\ref{Jsigma-J=lgsigma}) and (\ref{J'sigma-Jsigma}) we have
$$
\lg_J(\sigma_1\sigma_0)=J\cdot(\sigma_1\sigma_0)-J=J\cdot(\sigma_1\sigma_0)-J\cdot\sigma_0+J\cdot\sigma_0-J=
$$
$$
=(J\cdot\sigma_1)\cdot\sigma_0-J\cdot\sigma_0+\bar\lg_J(\sigma_0)=\Ad_{\sigma_0^{-1}}(J\cdot\sigma_1-J)+\bar\lg_J(\sigma_0)=\Ad_{\sigma_0^{-1}}(\bar\lg_J(\sigma_1))+\bar\lg_J(\sigma_0).
$$
(2) Using (\ref{fg-(J+b)}), (\ref{fg-Jsigma}), and (\ref{J'sigma-Jsigma}) we obtain:
\begin{equation}\label{TTT}
\begin{split}
\Ad_\sigma(\bar\kg_J(\beta))+(\Ad_\sigma-\id)(\fg_J)&=\Ad_\sigma(\bar\kg_J(\beta)+\fg_J)-\fg_J=\Ad_\sigma(\fg_{J+\beta})-\fg_J\\
&=\fg_{(J+\beta)\cdot\sigma^{-1}}-\fg_J=\bar\kg_J((J+\beta)\cdot\sigma^{-1}-J)\\
&=\bar\kg_J((J+\beta)\cdot\sigma^{-1}-J\cdot\sigma^{-1}+J\cdot\sigma^{-1}-J)\\
&=\bar\kg_J(\Ad_\sigma(\beta)+J\cdot\sigma^{-1}-J).	
\end{split}	
\end{equation}
On the other hand:
$$
J\cdot\sigma^{-1}-J=J\cdot\sigma^{-1}-(J\cdot\sigma)\cdot\sigma^{-1}=
-\Ad_\sigma(\bar\lg_J\sigma).
$$
We used  (\ref{J'sigma-Jsigma}) with $J'= J\cdot\sigma$ and (\ref{Jsigma-J=lgsigma}). Taking into account (\ref{TTT}), this completes the proof.

\end{proof}

 \subsubsection{The formal integrability condition on manifolds with boundary}\label{FIC}
 
The definitions above  generalize  in an obvious way for a ${\cal C}^\infty$ principal $G$-bundle on a manifold with boundary. The regularity class of a bundle ACS $J^+$ on a bundle on a manifold with boundary $\bar U^+$ is defined taking into account the regularity class of the associated forms $\alpha^\tau_{J^+}$ in the sense of Definition \ref{CkappaDef}.

 Let now $S\subset U$ be a separating, oriented smooth real hypersurface in $U$ and $U=\bar U^-\cup \bar U^+$ the corresponding decomposition of $U$ as union of manifolds with boundary. Let $P$ be a principal $G$-bundle of class ${\cal C}^\infty$ on $U$ and let $P^\pm$, $P_S$ be the restrictions of $P$ to $\bar U^\pm$, $S$ respectively. Let $\kappa\in (0,+\infty]\setminus\N$, $k\edf[\kappa]$,  $J$ be a bundle ACS of class ${\cal C}^\kappa$ on $P$, and $J^\pm$ be the restriction of $J$ to $P^\pm$.

Our problem: express the integrability condition on $J$ in terms of its restrictions  $J^\pm$ to $P^\pm$. By the Newlander-Nirenberg theorem for principal bundles (Theorem \ref{NNG}), the answer is obvious in the case $\kappa > 1$:  

 \begin{re}\label{fgJpm=0->J-is-int}
 Suppose $\kappa> 1$.  $J$ is integrable if and only if the forms 	$\fg_{J^\pm}\in A^{0,2}(\bar U^\pm, \Ad(P))_{\kappa-1}$ vanish. 
 \end{re}
The case $\kappa\in(0,1)$ is more delicate. In this case one can considering the restrictions $\cringle{J}^\pm$ of $J^\pm$ to the bundles $P_{U^\pm}$ over the open sets $U^\pm$ and the corresponding distributions $\fg_{\cringle{J}^\pm}$ on $U^\pm$, but one cannot expect the vanishing of these distributions to imply the integrability of $J$ (i.e. the vanishing of the distribution $\fg_J$ on $U$). The key observation here is:
\begin{re}\label{fg-J+forJC0}
Let $J^+$ be a continuous bundle ACS on $P^+$. Then the distribution $\fg_{\cringle{J}^+}\in {\cal D}'(U^+,\extp^{0,2}_{\;U^+}\otimes \Ad(P))$ extends as a continuous linear functional on the space $\Gamma^1_c(\bar U^+,\extp^{n,n-2}_{\;\bar U^+}\otimes\Ad(P)^*)$ of compactly supported sections of class ${\cal C}^1$ in the indicated bundle. If $J^+$ is of class ${\cal C}^1$, this extension coincides with the functional associated with the continuous form  $\fg_{J^+}$ on $\bar U^+$. 
\end{re}

 \begin{proof}
 
 Suppose first that  $J^+$ is of class ${\cal C}^1$. In this case $\fg_{J^+}$ is a continuous form on $\bar U^\pm$, and the associated linear functional on $\Gamma^1_c(\bar U^+,\extp^{n,n-2}_{\;\bar U^+}\otimes\Ad(P)^*)$ acts by
 $$
 \langle \fg_{J^+}, \varphi\rangle=\int_{\bar U^+} \langle \fg_{J^+}\wedge \varphi\rangle. 
 $$ 
 Let $W\stackrel{\hb{\tiny open}}{\subset} U$  and $\tau:\bar W^+\edf W\cap\bar U^+\to P^+$  be a local section of class ${\cal C}^2$ of $P^+$.  The associated form $\alpha_{J^+}^\tau$  belongs to $\Gamma^1(\bar W^+,\extp^{0,1}_{\;\bar W^+}\otimes\g)$.  
 
For any $\varphi\in\Gamma^1_c(\bar W^+, \extp^{n,n-2}_{\;\bar U^+}\otimes\Ad(P)^*)$ let $\varphi^\tau\in \Gamma^1_c(\bar W^+, \extp^{n,n-2}_{\;\bar U^+}\otimes\g^*)$ be the $\g^*$-valued form associated with $\varphi$ with respect to $\tau$. Using Stokes theorem,
\begin{align}\label{formula-for-fJ+}
\langle \fg_{J^+}, \varphi\rangle& = \langle \fg_{J^+}^\tau, \varphi^\tau\rangle=\int_{\bar W^+}\big \langle  \big(\bp \alpha_{J^+}^\tau+\frac{1}{2}[\alpha_{J^+}^\tau\wedge\alpha_{J^+}^\tau]\big)\wedge \varphi^\tau\big\rangle\nonumber \\
&=	\int_{\bar W^+} d\langle\alpha_{J^+}^\tau\wedge\varphi^\tau\rangle +\int_{\bar W^+}\big(\langle \alpha_{J^+}^\tau\wedge\bp \varphi^\tau\rangle +\frac{1}{2}\langle [\alpha_{J^+}^\tau\wedge\alpha_{J^+}^\tau] \wedge \varphi^\tau\rangle\big) \nonumber \\
&=\int_{\p \bar W^+} \langle\alpha_{J^+}^\tau\wedge\varphi^\tau\rangle +\int_{\bar W^+}\big(\langle \alpha_{J^+}^\tau\wedge\bp \varphi^\tau\rangle +\frac{1}{2}\langle [\alpha_{J^+}^\tau\wedge\alpha_{J^+}^\tau] \wedge \varphi^\tau\rangle\big).	
\end{align}
The right hand expression in (\ref{formula-for-fJ+}) has obviously sense and is continuous with respect to $\varphi^\tau$ (in the ${\cal C}^1$-topology) even if $J^+$ is only of class ${\cal C}^0$ and $\tau$ is only of class ${\cal C}^1$, because under these weaker assumptions the form $\alpha^\tau_{J^+}$ remains continuous.  Moreover, for $J^+$ of class ${\cal C}^0$ fixed, this expression gives a well defined (independent of $\tau$) linear functional on $\Gamma^1_c(\bar W^+,\extp^{n,n-2}_{\;\bar U^+}\otimes\Ad(P)^*)$. Indeed, we claim that for any $\tau$, $\tau'\in\Gamma^1(\bar W^+,P^+)$  we have
\begin{equation}\label{tau-tauf}
\begin{split}
&\int_{\p \bar W^+} \langle\alpha_{J^+}^\tau\wedge\varphi^\tau\rangle +\int_{\bar W^+}\big(\langle \alpha_{J^+}^\tau\wedge\bp \varphi^\tau\rangle +\frac{1}{2}\langle [\alpha_{J^+}^\tau\wedge\alpha_{J^+}^\tau] \wedge \varphi^\tau\rangle\big)\\
=&\int_{\p \bar W^+} \langle\alpha_{J^+}^{\tau'}\wedge\varphi^{\tau'}\rangle +\int_{\bar W^+}\big(\langle \alpha_{J^+}^{\tau'}\wedge\bp \varphi^{\tau'}\rangle +\frac{1}{2}\langle [\alpha_{J^+}^{\tau'}\wedge\alpha_{J^+}^{\tau'}] \wedge \varphi^{\tau'}\rangle\big).	
\end{split}	
\end{equation}
By (\ref{formula-for-fJ+}), this equality is clear when $J^+$ is of class ${\cal C}^1$ and $\tau$, $\tau'$ are of class ${\cal C}^2$. Fixing $\varphi$ and writing 
$$J^+=\lim_{n\to\infty}J^+_n\hb{ (in the ${\cal C}^0$-topology)},\ \tau=\lim_{n\to\infty}\tau_n,\ \tau'=\lim_{n\to\infty}\tau'_n \hb{ (in the ${\cal C}^1$-topology)}$$
  with $J_n^+$, $\tau_n$, $\tau'_n$ of class ${\cal C}^\infty$, we conclude  that  (\ref{tau-tauf}) also holds for  $J^+$  of class ${\cal C}^0$ and $\tau$, $\tau'$ of class ${\cal C}^1$. The same formula can be used to show that the linear functionals associated with two sections   $\tau\in\Gamma^1(\bar W^+,P^+)$,  $\tau'\in\Gamma^1(\bar W'^+,P^+)$ agree on  
$$\Gamma^1_c(\bar W^+\cap \bar W'^+,\extp^{n,n-2}_{\;\bar U^+}\otimes\Ad(P)^*),$$
so we obtain a well defined linear functional on $\Gamma^1_c(\bar U^+,\extp^{n,n-2}_{\;\bar U^+}\otimes\Ad(P)^*)$, obviously extending the distribution $\fg_{\cringle{J}^+}$.
\end{proof}

For a bundle ACS $J^+$ of class ${\cal C}^0$ on $P^+$  we will use the notation $\fg_{J^+}$ for the linear functional provided by Remark \ref{fg-J+forJC0}. Note that $\fg_{J^+}$ can be regarded as an element of the  space $\dot{{\cal D}}'(\bar U^+,\extp^{0,2}_{\;\bar U^+}\otimes \Ad(P))$ of $\extp^{0,2}_{\;\bar U^+}\otimes \Ad(P)$-valued distributions supported by $\bar U^+$  (see \cite[section I.1]{Me}).
The map $J^+\mapsto\fg_{J^+}$ is functorial with respect to ${\cal C}^1$-isomorphisms of principal bundles on $\bar U^+$, in particular:
\begin{re}\label{equiv-formula-for-C0}
The equivariance formula (\ref{fg-Jsigma})  generalizes to a bundle ACS $J^+$ of class ${\cal C}^0$ on $P^+$ and a gauge transformation $\sigma\in \Gamma^1(\bar U^+,\iota(P^+))$.
\end{re}

\begin{dt}\label{DefFormIntegrC0} Let  $P^+$ be a principal $G$-bundle on $\bar U^+$.
A bundle ACS $J^+$ of class ${\cal C}^1$ on $P^+$  will be called formally integrable, if $\fg_{J^+}=0$ in the space of $\Ad(P)$-valued continuous (0,2) forms on $\bar U^+$.

More generally, a bundle ACS $J^+$ of class ${\cal C}^0$  on  $P^+$  will be called formally integrable, if $\fg_{J^+}=0$ in the space of $\extp^{0,2}_{\;\bar U^+}\otimes \Ad(P)$-valued distributions supported by $\bar U^+$.	 \end{dt}
 With Definition \ref{DefFormIntegrC0} we have the following   generalization of Remark \ref{fgJpm=0->J-is-int}:

\begin{pr}  \label{(J-,J+)->Jintegrable}
Let $J$ be a bundle ACS of class ${\cal C}^0$ on $P$ and  $J^\pm$  its restriction to $P^\pm$. \begin{enumerate}
	\item If  $J^\pm$   are formally integrable, then $\fg_J=0$ in distribution sense. 
	\item Suppose $J\in {\cal J}^\kappa_P$ with $\kappa\in (0,+\infty]\setminus\N$. Then $J$ is integrable iff and only if $J^\pm$   are formally integrable.
\end{enumerate}
  \end{pr}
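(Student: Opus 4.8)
The plan is to establish the two parts separately, the crucial inputs being the integral representation of $\fg_{J^\pm}$ provided by Remark \ref{fg-J+forJC0} and the principal-bundle Newlander--Nirenberg theorem (Theorem \ref{NNG}).

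For (1) I would first record the distributional identity
$$\langle \fg_J,\varphi\rangle=\langle \fg_{J^+},\varphi\rangle+\langle \fg_{J^-},\varphi\rangle,$$
valid for every test form $\varphi\in\Gamma^1_c(U,\extp^{n,n-2}_{\;U}\otimes\Ad(P)^*)$. To prove it, reduce by a partition of unity to the case where $\varphi$ is supported in the domain of a ${\cal C}^\infty$ section $\tau$ of $P$; since $U$ has no boundary, the distribution $\fg_J$ is represented by the bulk integral $\int_U\big(\langle\alpha_J^\tau\wedge\bp\varphi^\tau\rangle+\frac{1}{2}\langle[\alpha_J^\tau\wedge\alpha_J^\tau]\wedge\varphi^\tau\rangle\big)$, and I would split $\int_U=\int_{\bar U^+}+\int_{\bar U^-}$. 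Applying the formula of Remark \ref{fg-J+forJC0} on each piece (noting $\alpha_J^\tau|_{\bar U^\pm}=\alpha_{J^\pm}^\tau$) rewrites each bulk integral as $\langle\fg_{J^\pm},\varphi\rangle$ minus a boundary integral over $\p\bar U^\pm=S$. The two boundary contributions share the same restriction $\alpha_J^\tau|_S$ and the same $\varphi^\tau$, but $S=\p\bar U^+$ and $S=\p\bar U^-$ carry opposite orientations, so they cancel, leaving the claimed identity. If $\fg_{J^\pm}=0$ this yields $\fg_J=0$ in the distribution sense, proving (1).

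For (2), the implication ``$J^\pm$ formally integrable $\Rightarrow J$ integrable'' is then immediate: part (1) gives $\fg_J=0$ (in distribution sense when $\kappa<1$), and since $J\in{\cal J}^\kappa_P$ with $\kappa\in(0,+\infty]\setminus\N$, Theorem \ref{NNG} yields integrability. For the converse I would argue directly rather than through the identity above (which only controls the sum $\fg_{J^+}+\fg_{J^-}$). Assuming $J$ integrable, Theorem \ref{NNG} furnishes, around every $x\in S$, a $J$-pseudo-holomorphic section $s\in\Gamma^{\kappa+1}(W,P)$; its restriction to $\bar W^+=W\cap\bar U^+$ is $J^+$-pseudo-holomorphic, so the associated connection form vanishes, $\alpha_{J^+}^s=0$. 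Since $\kappa+1>1$, $s$ is of class ${\cal C}^1$ and hence admissible in the integral formula of Remark \ref{fg-J+forJC0}; substituting $\tau=s$ and $\alpha_{J^+}^s=0$ makes both the boundary and the bulk terms vanish, whence $\langle\fg_{J^+},\varphi\rangle=0$ for all $\varphi$ supported in $\bar W^+$. A partition of unity over a cover of $S$ gives $\fg_{J^+}=0$, and symmetrically $\fg_{J^-}=0$, i.e. $J^\pm$ are formally integrable.

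The main obstacle is precisely this last direction: the orientation cancellation of part (1) pins down only $\fg_{J^+}+\fg_{J^-}$, so one cannot conclude the separate vanishing of the a priori $S$-supported, possibly singular distributions $\fg_{J^\pm}$ from $\fg_J=0$ alone. The resolution is to exploit the extra regularity of the pseudo-holomorphic frames supplied by Theorem \ref{NNG}; the one point to check carefully is that a merely ${\cal C}^{\kappa+1}$ (hence ${\cal C}^1$) section is legitimate in the distributional formula of Remark \ref{fg-J+forJC0}, which is exactly the generality in which that remark was proved. For $\kappa>1$ the converse is of course trivial, since then $\fg_{J^\pm}=\fg_J|_{\bar U^\pm}$ are continuous forms (Remark \ref{fgJpm=0->J-is-int}) and vanish together with $\fg_J$.
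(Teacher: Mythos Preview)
Your proposal is correct and follows essentially the same approach as the paper: for (1) you split the distributional pairing over $\bar U^+$ and $\bar U^-$ and exploit the cancellation of the boundary terms from opposite orientations, and for (2) you use Theorem \ref{NNG} in both directions, invoking the pseudo-holomorphic local section (with $\alpha_{J^\pm}^s=0$) to kill $\fg_{J^\pm}$ locally. The only minor imprecision is that your ``partition of unity over a cover of $S$'' should also account for test forms supported in the interior $U^\pm$, but there $\fg_{J^\pm}$ agrees with the restriction of $\fg_J$ and hence vanishes trivially.
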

\begin{proof}
(1) Let $\tau\in \Gamma^1(W,P)$ be a local section of $P$ and $\varphi\in\Gamma^1_c( W, \extp^{n,n-2}_{\;  U}\otimes\Ad(P)^*)$.	Put $\bar W^\pm\edf W\cap\bar U^\pm$, $\varphi_\pm\edf \varphi|_{\bar W^\pm}$. We have
\begin{equation}\label{fgJ-+fgJ+}
\begin{split}
\langle \fg_{J^-}&,\varphi_-\rangle+\langle \fg_{J^+},\varphi_+\rangle=\\
&=\int_{\p \bar W^-} \langle\alpha_{J}^\tau\wedge\varphi^\tau_-\rangle +\int_{\bar W^-}\big(\langle \alpha_{J}^\tau\wedge\bp \varphi^\tau_-\rangle +\frac{1}{2}\langle [\alpha_{J}^\tau\wedge\alpha_{J}^\tau] \wedge \varphi^\tau_-\rangle\big)\\
&+\int_{\p \bar W^+} \langle\alpha_{J}^\tau\wedge\varphi^\tau_+\rangle +\int_{\bar W^+}\big(\langle \alpha_{J}^\tau\wedge\bp \varphi^\tau_+\rangle +\frac{1}{2}\langle [\alpha_{J}^\tau\wedge\alpha_{J}^\tau] \wedge \varphi^\tau_+\rangle\big)	.
\end{split}	
\end{equation}

We obviously have $\varphi_-^\tau|_{S\cap W}=\varphi_+^\tau|_{S\cap W}=\varphi^\tau|_{S\cap W}$. Taking into account that the oriented boundaries $\p \bar W^-$, $\p \bar W^+$ coincide with $S\cap W$ endowed with opposite orientations, it follows that 
$\int_{\p \bar W^-} \langle\alpha_{J}^\tau\wedge\varphi^\tau_-\rangle +\int_{\p \bar W^+} \langle\alpha_{J}^\tau\wedge\varphi^\tau_+\rangle=0$.
By (\ref{fgJ-+fgJ+}),
$$
\langle \fg_{J^-},\varphi_-\rangle+\langle \fg_{J^+},\varphi_+\rangle=\int_{ W}\big(\langle \alpha_{J}^\tau\wedge\bp \varphi^\tau\rangle +\frac{1}{2}\langle [\alpha_{J}^\tau\wedge\alpha_{J}^\tau] \wedge \varphi^\tau\rangle\big)=\langle \fg_J,\varphi\rangle,
$$
so the vanishing of $\fg_{J^\pm}$  as distributions supported by $\bar U^\pm$ implies the vanishing of the distribution $\fg_J$.
\vspace{2mm}\\
(2) If $J^\pm$ are formally integrable, then $\fg_J=0$ in distribution sense, so $J$ is integrable by Theorem \ref{NNG}. Conversely, if $J$ is integrable, then around any point $x\in U$ there exists a local section $\tau:W\to P$ of class ${\cal C}^{\kappa+1}$ which is $J$-pseudo-holomorphic. Therefore $\alpha^\tau_J=0$. Put $\tau^\pm\edf\tau|_{W\cap\bar U^\pm}:W\cap\bar U^\pm\to P^\pm$. We have $\alpha_{J^\pm}^{\tau^\pm}=\alpha^\tau_J|_{W\cap\bar U^\pm}=0$, so the restriction of $\fg_{J^\pm}$ (regarded as distribution supported by $\bar U^\pm$) to $W\cap\bar U^\pm$ vanishes. Therefore $\fg_{J^\pm}=0$, so $J^\pm$ are formally integrable.
\end{proof}
\begin{re}
In the special case $G=\GL(r,\C)$ we obtain the formal integrability condition for Dolbeault operators  on a vector bundle $E^+$ on   $\bar U^+$: A Dolbeault operator $\delta^+$ of class ${\cal C}^1$ on $E^+$ is formally integrable if the continuous $\End(E)$-valued form $F_\delta$ vanishes on $\bar U^+$.  A Dolbeault operator $\delta^+$ of class ${\cal C}^0$ on $E^+$ is formally integrable if $F_\delta=0$ in the space of $\extp^{0,2}_{\;\bar U^+}\otimes\End(E^+)$-valued distributions supported by $\bar U^+$. With this definition, the analogue for vector bundles of Proposition \ref{(J-,J+)->Jintegrable} holds.
\end{re}

 \subsection{Intrinsic higher order  differentials}
 \label{IntrinsicDiff}

Let  $k\in\N$ and $l\in\Z$ with $l\leq k$. Let $M$ be a differentiable manifold, $F$ a finite dimensional real vector space, $f\in {\cal C}^{k}(M,F)$, and $x\in M$. The condition
$$
\begin{array}{c}
\hb{\it With respect to a local chart around $x$, all partial derivatives of order $\leq l$ }	\\
\hb{\it   of $f$ at $x$ vanish}
\end{array}
$$
is independent of the chosen chart. This follows from the composition formula \cite[Section I.6]{Ma}. If this condition is satisfied, we we'll say that the order $l$ jet of $f$ at $x$ vanishes, and we shall write $j^l_xf=0$.  For negative $l$ the condition $j^l_xf=0$ becomes superfluous (satisfied by any $f\in {\cal C}^{k}(M,F)$). Note that, for $l\geq 0$ we have $j^l_xf=0$ iff and only if $f(x)=0$ and $j^{l-1}_xdf=0$.

\begin{lm} \label{ABLemma}
Let $F_1$, $F_2$, $F$ be finite dimensional real vector spaces, $k\in \N$ and $l$, $l_1$, $l_2\in\Z$,  such that $l\leq k$, $l_1+l_2+1\leq k$. Let $x\in M$.
\begin{enumerate}
\item Let   $b:F_1\times F_2\to F$ be a bilinear map and    $f_i\in 	{\cal C}^k(M,F_i)$ with $j^{l_i}_x f_i=0$. Then $j^{l_1+l_2+1}_xb(f_1,f_2)=0$. In particular, for $l_1=-1$, we have the implication 
$$
j^{l}_x f_2=0\Rightarrow j^{l}_x\, b(f_1,f_2)=0.
$$

\item Let $f\in {\cal C}^k(M, F_1)$ with $j^l_x f=0$, $V_1$ an open neighborhood of $\im(f)$ in $F_1$ and $g\in {\cal C}^k(V_1, F_2)$ such that $g(0)=0$. Then $j^l_x(g\circ f)=0$.
\item Let  $f_i\in {\cal C}^k(M, F_i)$ and $\Phi\in {\cal C}^k(F_1\times F_2, F)$. If  $j^l_{f_1(x)}(y_1\mapsto \Phi(y_1,0))=0$ and $j_{x}^l f_2=0$, then $j_{x}^l\Phi(f_1,f_2)=0$.
\end{enumerate}

\end{lm}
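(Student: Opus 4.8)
The plan is to reduce everything to local coordinates and then read all three assertions off the generalised Leibniz and Fa\`a di Bruno formulas, the only genuine content being a count of differentiation orders. Since the vanishing-jet conditions are local and chart-independent (by the composition formula \cite[Section I.6]{Ma}), I would work in a chart with $x=0$, $M\subset\R^n$ open and $F_1,F_2,F$ Euclidean, and use throughout the characterisation $j^l_x f=0\iff\partial^\alpha f(x)=0$ for all $|\alpha|\le l$, a condition $j^{l'}_x=0$ with $l'<0$ being void. All the formulas invoked are legitimate because $f_i,\Phi\in{\cal C}^k$ and every jet order appearing is $\le k$.

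For (1) I would expand by Leibniz: for $|\alpha|\le l_1+l_2+1$ one has $\partial^\alpha b(f_1,f_2)(x)=\sum_{\beta\le\alpha}\binom{\alpha}{\beta}\,b(\partial^\beta f_1(x),\partial^{\alpha-\beta}f_2(x))$. In any summand one cannot have simultaneously $|\beta|>l_1$ and $|\alpha-\beta|>l_2$, since integrality would then force $|\alpha|=|\beta|+|\alpha-\beta|\ge(l_1+1)+(l_2+1)>l_1+l_2+1\ge|\alpha|$; hence one of the two factors is a derivative of order within the vanishing range and the summand is zero. This gives $j^{l_1+l_2+1}_x b(f_1,f_2)=0$, the stated special case $l_1=-1$ being immediate since then $|\alpha-\beta|\le|\alpha|\le l=l_2$ always.

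For (2) and (3) I would apply the Fa\`a di Bruno formula directly, which keeps the regularity under control. In (2), $f(x)=0$ gives $(g\circ f)(x)=g(0)=0$, while for $1\le|\alpha|\le l$ every monomial of $\partial^\alpha(g\circ f)(x)$ contains a factor $\partial^\beta f_m(x)$ with $1\le|\beta|\le|\alpha|\le l$, which vanishes. In (3), set $\Psi:=\Phi(\cdot,0)$, so the hypotheses read $j^l_{f_1(x)}\Psi=0$, $j^l_xf_2=0$, and $f_2(x)=0$; applying Fa\`a di Bruno to $\Phi\circ(f_1,f_2)$, a monomial of $\partial^\alpha[\Phi(f_1,f_2)](x)$ with $|\alpha|\le l$ pairs a derivative $(\partial^{(\mu_1,\mu_2)}\Phi)(f_1(x),0)$ with factors that are positive-order derivatives of components of $f_1$ and $f_2$ whose orders sum to $\alpha$, exactly $|\mu_2|$ of them differentiating $f_2$. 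If $|\mu_2|\ge1$ one such factor $\partial^\beta(f_2)_m(x)$ has $1\le|\beta|\le l$ and vanishes; if $|\mu_2|=0$ the coefficient equals $(\partial^{\mu_1}\Psi)(f_1(x))$ with $|\mu_1|\le|\alpha|\le l$, which vanishes. Either way the monomial dies, so $j^l_x\Phi(f_1,f_2)=0$.

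The one place demanding care is (3). A naive Hadamard splitting $\Phi(f_1,f_2)-\Phi(f_1,0)=\sum_m (f_2)_m\,h_m(f_1,f_2)$ would reduce it to (1), but the $h_m$ are only of class ${\cal C}^{k-1}$, so this costs a derivative and breaks down precisely in the extremal case $l=k$. The Fa\`a di Bruno argument circumvents this by differentiating only the given ${\cal C}^k$ data and never past order $l\le k$. The structural fact I must pin down is that in each Fa\`a di Bruno monomial the number of factors differentiating $f_2$ coincides with the $y_2$-differentiation order $|\mu_2|$ of the associated derivative of $\Phi$; this is exactly what lets the two hypotheses of (3) act, respectively, on the monomials with $|\mu_2|\ge1$ and those with $|\mu_2|=0$.
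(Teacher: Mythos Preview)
Your proof is correct. Part (1) is the same Leibniz count as in the paper. For (2) and (3) you go directly via Fa\`a di Bruno, whereas the paper proceeds by induction on $l$: for (2) it writes $d(g\circ f)=dg(f(\cdot))\,df$ and invokes (1) with $l_1=-1$; for (3) it expands $d\Phi(f_1,f_2)=\partial_1\Phi(f_1,f_2)\,df_1+\partial_2\Phi(f_1,f_2)\,df_2$, applies the induction hypothesis to the triple $(f_1,f_2,\partial_1\Phi)$ to kill the first summand, and uses (1) together with $j^{l-1}_x df_2=0$ for the second. Both routes handle the regularity issue you flag in exactly the same way---each inductive step drops both $k$ and $l$ by one, so $\partial_1\Phi\in{\cal C}^{k-1}$ and the needed jet order $l-1\le k-1$ stay in range. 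Your one-shot Fa\`a di Bruno argument is tidier and makes the structural role of the $|\mu_2|=0$ versus $|\mu_2|\ge 1$ dichotomy in (3) very transparent; the paper's inductive version is more elementary in that it only uses the first-order chain rule and part (1), never the full combinatorics of Fa\`a di Bruno.
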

\begin{proof} We may suppose $M=\R^n$, $F_i=\R^{m_i}$, $F=\R^m$. \vspace{2mm}\\
(1) The claim follows easily using the Leibniz rule.
\vspace{2mm}\\
(2) The claim follows by induction using the formula $d(g\circ f)(y)=dg(f(y))\, df(y)$ and (1) taking $l_1=l$ and $l_2=-1$.
\vspace{2mm}\\
(3) Induction with respect to $l$: 
For $l=0$, taking into account the assumptions, we have $\Phi(f_1(x),f_2(x))=\Phi(f_1(x),0)=0$.
Let $l\geq 1$ and suppose that the statement is true for $l-1$. For $u\in M=\R^n$ we have:
$$
d\Phi(f_1,f_2)(u)=\partial_{1}\Phi(f_1(u),f_2(u))\,  df_1(u)+\partial_{2} \Phi (f_1(u),f_2(u))\,  df_2(u).$$
The assumption  $j^l_{f_1(x)}(y_1\mapsto \Phi(y_1,0))=0$ gives $j^{l-1}_{f_1(x)}(y_1\mapsto \partial_{1}\Phi(y_1,0))=0$. We also have $j_x^{l-1} f_2=0$ (because $j_x^l f_2=0$), so, the induction hypothesis applied to  $(f_1,f_2,\partial_{1}\Phi)$ gives $j_{x}^{l-1}\partial_1\Phi(f_1,f_2))=0$. Therefore $j^{l-1}_x(\partial_{1}\Phi(f_1 ,f_2 )\, df_1)=0$ by (1). On the other hand, the hypothesis  $j_x^l f_2=0$ implies  $j_x^{l-1} df_2=0$, so again by  (1) we obtain  $j^{l-1}_x(\partial_{2} \Phi (f_1,f_2)\, df_2)=0$. Therefore $j^{l-1}_x(d\Phi(f_1,f_2))=0$, so, since $\Phi(f_1(x),f_2(x))=0$, we have $j^{l}_x\Phi(f_1,f_2)=0$ as claimed.

\end{proof}

Suppose that $f\in {\cal C}^{k}(M,F)$  with $j^l_xf=0$ where $0\leq l<k$. Using the same composition formula cited above it follows that, for tangent vectors $v_1,\dots,v_{l+1}\in T_xM$  the element
$$
d^{l+1}_{h(x)}(f\circ h^{-1})(h_*(v_1),\dots,h_*(v_{l+1})) 
$$
(where $x\in W\textmap{h} W'\subset\R^n$ is a chart around $x$) of $F$ depends only on	$v_1,\dots,v_{l+1}$, not on $h$. Therefore, if $j^l_xf=0$, we obtain a well defined symmetric $(l+1)$-linear map
$$
D_x^{l+1}f:T_{M,x}^{l+1}\to F
$$
which will be called the {\it intrinsic differential of order} $(l+1)$ of $f$ at $x$. 

Let now $S\subset M$ be smooth hypersurface. If $j^l_xf=0$ for any $x\in S$ we'll say that the  order $l$ jet of $f$ along $S$ vanishes, and we'll write $j^l_Sf=0$. 

\begin{re}
Suppose that $j^l_Sf=0$, where $0\leq l<k$,  and let $x\in S$. Then $D^{l+1}_xf(v_1,\dots,v_{l+1})=0$  if one of the tangent vectors $v_i$ belongs to $T_xS$.
\end{re}
\begin{proof}   

We may suppose $M=\R^n$, $S=\R^{n-1}\times\{0\}$. It suffices to prove that $\p^\alpha f (x',0)$ for any $x'\in\R^{n-1}$ and  any $\alpha\in \N^n$ with $|\alpha|=l+1$ for which there exists $i\in\{1,\dots,n-1\}$ with $\alpha_i>0$. Let $\alpha\in \N^n$ with $|\alpha|=l+1$ and let  $i\in\{1,\dots,n-1\}$ with $\alpha_i>0$.  Denote by $(e_1,\dots,e_n)$ be the canonical basis of $\R^n$.   Putting $\beta\edf \alpha-e_i$ we have $\beta\in\N^n$, $|\beta|=l$ and
$$
\p^\alpha f(x',0)=\p_i (\p^\beta f)(x',0).
$$
The right hand term vanishes because, since we assumed  $j^l_{y}f=0$ for any $y\in S$, we have  $\p^\beta f(x'+te_i,0)=0$ for any $t\in\R$.
\end{proof}

Therefore, if $j^l_Sf=0$ and $x\in S$, then $D^{l+1}_xf(v_1,\dots,v_{l+1})$ depends only  on the images of $v_i$ in the normal line $n_{S,x}=T_{M,x}/T_{S,x}$, so the family $(D_x^{l+1}f)_{x\in S}$   defines a section
$$
D^{l+1}_Sf\in \Gamma^0(S,n_S^{*\otimes(l+1)}\otimes F),
$$
which will be called the {\it intrinsic differential of order} $(l+1)$ of $f$ along $S$.

Le now $E$ be a real vector bundle of rank $r$ and class ${\cal C}^\infty$ on $M$, and let $\sigma\in \Gamma^{l+1}(M,E)$ be a section of class ${\cal C}^{l+1}$ of $E$. Let $x\in M$. For a local trivialization $\theta:E_W\to W\times\R^r$ put $\sigma^\theta\edf p_{\R^r}\circ\theta\in {\cal C}^{l+1}(W,\R^r)$. The condition  
$$
\begin{array}{c}
\hb{\it With respect to a local trivialization $\theta$ around $x$ we have $j^l_x(\sigma^\theta)=0$ }	
\end{array}
$$
is independent of $\theta$. If this condition is satisfied,  we'll say that the order $l$ jet of $\sigma$ at $x$ vanishes, and  we'll write $j^l_x\sigma=0$. If this is the case (and $l\geq 0$), we obtain a well defined intrinsic differential  $D_x^{l+1}\sigma:T_{M,x}^{l+1}\to E_x$ of  order $(l+1)$.

If $j^l_x\sigma=0$ for any $x\in S$, we'll say that  the  order $l$ jet of $\sigma$ along $S$ vanishes,   we'll write $j^l_Sf=0$, and (if $l\geq 0$) we obtain a well defined intrinsic order $(l+1)$ differential 
$$
D^{l+1}_S\sigma\in \Gamma^0(S,n_S^{*\otimes(l+1)}\otimes E_S)
$$
of $\sigma$ along $S$. If $E$ is a complex vector bundle, we can regard $D^{l+1}_S\sigma$ as an element of $\Gamma^0(S,\eta_S^{\otimes(l+1)}\otimes E_S)$,  where $\eta_S$ is the complexified  conormal line bundle of $S$.

\begin{lm}\label{Dls0-3}

Let $U$, $V$, $F$ be finite dimensional complex  vector spaces, $S\subset U$  a smooth real hypersurface, and $f\in  {\cal C}^{k}(U,V)$ be such that $j_S^{l}f=0$, where $l<k$. 

Put $U_\C\edf U\otimes_\R\C$. For $0\leq s\leq l+1$ regard the  order $s$ differential $d^{s}f$ of $f$ on $U$ as a map $U\to U_\C^{*\otimes s}\otimes V$ of class ${\cal C}^{l+1-s}$ which   takes values in  $L^{s}_{\rm sym}(U_\C,V)\subset U_\C^{*\otimes s}\otimes V$.

Let $\omega \in A^{1,0}(V,F)$ be an $F$-valued $(1,0)$ form of class ${\cal C}^\infty$ on $V$ regarded as element in  ${\cal C}^\infty(V,\Hom_\C(V,F))$ and put 
$$\omega^f\edf\omega\circ f\in{\cal C}^{k}(U,\Hom_\C(V,F)).$$
The $F$-valued forms $f^*(\omega)$,  $f^*(\omega)^{0,1}$ on $U$ will be regarded as 
elements of the spaces  ${\cal C}^{k-1}(U,U^*_\C\otimes F)$,  ${\cal C}^{k-1}(U,U^{*0,1}_\C\otimes F)$ respectively. 

Then $j^{l-1}_S(d f)=0$, $j^{l-1}_S(\bp f)=0$, $j^{l-1}_S(f^*(\omega))=0$, 	$j^{l-1}_S(f^*(\omega)^{0,1})=0$   and  the intrinsic order $l$ differentials of $df$, $\bp f$, $f^*(\omega)$ and $f^*(\omega)^{0,1}$ along $S$ are given  by the following formulae:
\begin{alignat}{4}
\label{Dls0}
D^l_S(d f)&=\big(\id_{\eta_S}^{\otimes l}\otimes(\id_{\eta_S}\otimes\id_V)\big)(D^{l+1}_Sf)&&\in  \Gamma^0(U,\eta_S^{\otimes l}\otimes (\eta_S\otimes V)) \\
  &&&\subset  \Gamma^0(U,\eta_S^{\otimes l}\otimes (U^*_\C\otimes V)),	\nonumber\\
\label{Dls1}
 D^l_S(\bp f) &=\big(\id_{\eta_S}^{\otimes l}\otimes(\psi_S\otimes\id_V)\big)(D^{l+1}_Sf)&&\in    \Gamma^0(U,\eta_S^{\otimes l}\otimes (\eta_S^{0,1}\otimes V))\\
&&&\subset  \Gamma^0(U,\eta_S^{\otimes l}\otimes (U_\C^{*0,1}\otimes V)),	\nonumber\\
\label{Dls2}
D^l_S(f^*(\omega)) &=  \big(\id_{\eta_S}^{\otimes l}\otimes\omega^f_S\cdot(\id_{\eta_S}\otimes  \id_V) \big)(D^{l+1}_Sf)&&\in    \Gamma^0(S,\eta_S^{\otimes l}\otimes(\eta_S\otimes F)) \\
&&&\subset  \Gamma^0(S,\eta_S^{\otimes l}\otimes (U^*_\C\otimes F)),\nonumber \\
\label{Dls3}
D^l_S(f^*(\omega)^{0,1}) &=  \big(\id_{\eta_S}^{\otimes l}\otimes\omega^f_S\cdot(\psi_S\otimes  \id_V) \big)(D^{l+1}_Sf)&&\in  \Gamma^0(S,\eta_S^{\otimes l}\otimes(\eta_S^{0,1}\otimes F))\\
&&&\subset \Gamma^0(S,\eta_S^{\otimes l}\otimes (U^{*01}_\C\otimes F)),\nonumber
\end{alignat}
where, on the right:
\begin{itemize}
\item $\eta_S$ ($\eta_S^{0,1}$) is   regarded as a line subbundle of the trivial bundle with fibre $U_\C^{*}$ (respectively $U_\C^{*0,1}$)  on $S$,
\item $\omega^f_S\cdot$ denotes the  morphism 
$$S\times \Hom(U_\C,V)\textmap{}  S\times \Hom(U_\C,F)$$
of trivial bundles on $S$ defined by pointwise composition with $\omega^f$, and also the induced bundle morphisms on $S$:
$$
\eta_S\otimes V\to \eta_S\otimes F,\ \eta^{0,1}_S\otimes V\to \eta^{0,1}_S\otimes F.
$$	
\end{itemize}
  
\end{lm}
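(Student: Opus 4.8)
The plan is to establish the ``master formula'' (\ref{Dls0}) first, by a direct computation in a local chart flattening $S$, and then to deduce (\ref{Dls1}), (\ref{Dls2}), (\ref{Dls3}) from it by naturality of the intrinsic differential. Since all four assertions are local on $S$ and concern intrinsic differentials, which are chart-independent, I may fix $x_0\in S$ and choose a chart carrying a neighbourhood of $x_0$ onto an open set of $\R^{2n}$ in which $S=\{x_{2n}=0\}$; writing $t\edf x_{2n}$, the function $\rho\edf t$ is a real defining function for $S$ and $\eta_S=\C\,d\rho|_S$.

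The vanishing statements come from the fact that differentiation lowers the order of vanishing along $S$ by one. Indeed $j_S^l f=0$ means $\partial^\alpha f|_S=0$ for $|\alpha|\le l$, so every derivative of $df$ of order $\le l-1$ is a derivative of $f$ of order $\le l$, whence $j_S^{l-1}(df)=0$. Since $\bar\partial f=\pi^{0,1}(df)$ is obtained from $df$ by the fixed $\C$-linear projection $\pi^{0,1}\colon U_\C^*\to U_\C^{*0,1}$ acting on the form index, and since $f^*(\omega)=\omega^f\cdot df$ and $f^*(\omega)^{0,1}=\omega^f\cdot\bar\partial f$ are obtained by pointwise composition with the non-vanishing factor $\omega^f$, Lemma \ref{ABLemma} (1) (with $l_1=-1$, $l_2=l-1$) yields the vanishing of the $(l-1)$-jets along $S$ of all of these as well.

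For (\ref{Dls0}), the intrinsic differential $D_S^l(df)$ depends only on the normal direction, by the remark preceding the present lemma, so it is read off from $\partial_t^l(df)|_S=\sum_a(\partial_t^l\partial_a f|_S)\,dx_a$. For a tangential index $a$ one has $\partial_t^l\partial_a f=\partial_a(\partial_t^l f)$, which vanishes on $S$ because $\partial_t^l f|_S=0$; only the normal index survives, giving $\partial_t^l(df)|_S=(\partial_t^{l+1}f|_S)\,d\rho$. Hence $D_S^l(df)=(d\rho)^{\otimes l}\otimes(d\rho\otimes\partial_t^{l+1}f|_S)$, which is exactly the image of $D_S^{l+1}f=(d\rho)^{\otimes(l+1)}\otimes\partial_t^{l+1}f|_S$ under the reshuffling $\eta_S^{\otimes(l+1)}=\eta_S^{\otimes l}\otimes\eta_S\hookrightarrow\eta_S^{\otimes l}\otimes U_\C^*$; in particular $D_S^l(df)$ lands in $\eta_S^{\otimes l}\otimes(\eta_S\otimes V)$. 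This step uses only the real differential structure and the real conormal, so the distortion of the complex structure by the flattening chart is harmless.

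The three remaining formulae follow by applying to (\ref{Dls0}) two operations that commute with the intrinsic differential. Post-composing the value space with the constant projection $\pi^{0,1}$ and using $\pi^{0,1}|_{\eta_S}=\psi_S$, which is the very definition of $\psi_S$, turns (\ref{Dls0}) into (\ref{Dls1}). For (\ref{Dls2}) I will use the Leibniz principle that, for $g$ of class ${\cal C}^l$ and $h$ with $j_S^{l-1}h=0$ paired by a fixed bilinear map, $D_S^l(g\cdot h)=g|_S\cdot D_S^l h$: in $\partial_t^l(\omega^f\cdot df)|_S=\sum_{j}\binom{l}{j}(\partial_t^j\omega^f)\cdot(\partial_t^{l-j}df)|_S$ every term with $j\ge 1$ drops out, since $\partial_t^m df|_S=0$ for $m\le l-1$, leaving $\omega^f_S\cdot D_S^l(df)$; together with (\ref{Dls0}) and the equality $\omega^f_S=\omega\circ f|_S$, which is constant because $f|_S=0$, this gives (\ref{Dls2}). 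Finally (\ref{Dls3}) follows by applying $\pi^{0,1}$ to (\ref{Dls2}) — the maps $\pi^{0,1}$ and $\omega^f_S$ act on different tensor factors and hence commute — or equivalently by the same Leibniz argument starting from (\ref{Dls1}). The only genuinely substantial point is the coordinate computation for (\ref{Dls0}); establishing the naturality of the intrinsic differential under constant linear maps and the Leibniz rule above, and carrying out the tensor-factor bookkeeping, is routine.
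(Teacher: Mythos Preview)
Your proof is correct and follows essentially the same strategy as the paper's: derive (\ref{Dls0}) from the identity $d^s(df)=d^{s+1}f$ (you compute this in a flattening chart, the paper uses the recursive definition of higher differentials directly in the linear coordinates of the vector space $U$), obtain (\ref{Dls1}) by applying the projection $p^{0,1}$ to (\ref{Dls0}), and deduce (\ref{Dls2}), (\ref{Dls3}) from the factorizations $f^*(\omega)=\omega^f\cdot df$, $f^*(\omega)^{0,1}=\omega^f\cdot\bar\partial f$ together with the Leibniz rule. The only cosmetic difference is that, since $U$ is already a vector space, the paper avoids your flattening chart and the attendant remark about ``distortion of the complex structure''; your version would survive verbatim on a manifold, which is a small bonus but not needed here.
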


\begin{proof}
The recursive definition of the higher order differentials gives for  $0\leq s\leq l$
\begin{equation}\label{ds(df)}
d^s(df)=\big(\id_{U^*_\C}^{\otimes s}\otimes (\id_{U^*_\C}\otimes\id_V)\big)(d^{s+1}f)\in {\cal C}^0(U,U_\C^{*\otimes s}\otimes (U_\C^{*}\otimes V)).	
\end{equation}
This implies
\begin{equation}\label{dlbpf}
d^s(\bp f)=\big(\id_{U^*_\C}^{\otimes s}\otimes (p^{0,1}\otimes\id_V)\big)(d^{s+1}f)\in {\cal C}^0(U,U_\C^{*\otimes s}\otimes (U_\C^{*0,1}\otimes V)	),
\end{equation}
where $p^{0,1}:U_\C^*\to U_\C^{*0,1}$ is the obvious projection,  shows that the condition $j^l_Sf=0$  implies  $j^{l-1}_S(df)=0$, $j^{l-1}_S(\bp f)=0$, and proves formulae (\ref{Dls0}), (\ref{Dls1}).

The  forms  $f^*(\omega)$, $f^*(\omega)^{0,1}$   are given by
\begin{equation}\label{Omegaf-0-1}
f^*(\omega)=\omega^f\cdot df \in {\cal C}^l(U,U^*_\C\otimes F),\ 
f^*(\omega)^{0,1}=  \omega^f\cdot \bp f\in {\cal C}^l(U,U^{*0,1}_\C\otimes F),
\end{equation}
where $\omega^f\cdot$ denotes the  morphism 
$$U\times \Hom(U_\C,V)\textmap{}  U\times \Hom(U_\C,F)$$
of trivial bundles on $U$ defined by pointwise composition with $\omega^f$.
Since $j^{l-1}_S(df)=0$, $j^{l-1}_S(\bp f)=0$,  we obtain  $j^{l-1}_S(f^*(\omega))=0$, $j^{l-1}_S(f^*(\omega)^{0,1})=0$ and formulae (\ref{Dls2}), (\ref{Dls3})  follow from  (\ref{Omegaf-0-1}) using the Leibniz rule noting that $\omega^f_S$ is induced by $\omega^f$.

\end{proof}

\begin{lm}\label{Dl-delta-beta}
1. Let $U$, $F$ be complex vector spaces, $S\subset U$  a smooth real hypersurface, and $\beta$ an $F$-valued $(0,q)$ form with coefficients  ${\cal C}^{l+1}$ on $U$, regarded as element in ${\cal C}^{l+1}(U, U_\C^{*0,q}\otimes F)$. Suppose that   $j^l_S(\beta)=0$. Then $j^{l-1}_S(\bp\beta)=0$ (if $l\geq 1$) and the intrinsic order $l$ differential of $\bp\beta$ along $S$ is given by
\begin{equation}\label{Dl-bp-beta-new}
\begin{split}
D^l_S(\bp\beta)&=	\big(\id_{\eta_S}^{\otimes l}\otimes\wedge(\psi_S\otimes\id_{ U^{*0,q}\otimes F})\big)(D^{l+1}_S\beta)\\  
&\in\Gamma^0(U,\eta_S^{\otimes l}\otimes(\eta^{0,1}_S\wedge U_\C^{*0,q})\otimes   F))\subset \Gamma^0(U,( \eta_S^{\otimes l}\otimes (U_\C^{*0,q+1}\otimes   F)),
\end{split}		
\end{equation}
where, on the right, $\wedge$ denotes the bundle morphism   %
$$\eta^{0,1}_S\otimes (U^{*0,q}\otimes F) \to (\eta^{0,1}_S\wedge U^{*0,q})\otimes F\hookrightarrow S\times (U^{*0,q+1}\otimes F)$$
 on $S$ induced by the wedge product $\wedge: U_\C^{*0,1}\otimes (U^{*0,q}\otimes F)\to  U^{*0,q+1}\otimes F$.

2. More generally, let $U$ be a complex manifold, $E$  a complex vector bundle on $U$, $\delta$ a (not necessarily integrable) Dolbeault operator with coefficients in ${\cal C}^l$ on $E$, and $\beta\in \Gamma^{l+1}(U, \extp^{0,q}_{\; U}\otimes E)$ with $j^l_S(\beta)=0$. Then  $j^{l-1}_S(\delta\beta)=0$ (if $l\geq 1$), and
\begin{equation}\label{Dl-delta-beta-eq}
\begin{split}
 D^l_S(\delta\beta)=&	\big(\id_{\eta_S}^{\otimes l}\otimes\wedge(\psi_S\otimes\id_{\extp^{0,q}_{\,U|S}\otimes E_S})\big)(D^{l+1}_S\beta)\\
 \in\,& \Gamma^0(S, \eta_S^{\otimes l}\otimes (\eta_S^{0,1}\wedge \extp^{0,q}_{\;U|S})\otimes E_S)) \subset \Gamma^0(S,  \eta_S^{\otimes l}\otimes (\extp^{0,q+1}_{\;U|S}\otimes E_S)).
 \end{split}	
\end{equation}
\end{lm}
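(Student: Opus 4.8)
The plan is to establish Part (1) as a direct consequence of formula (\ref{Dls1}) of Lemma \ref{Dls0-3}, and then to obtain Part (2) by localizing along $S$ and discarding the zeroth--order part of $\delta$.

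For Part (1) I would set $V\edf U_\C^{*0,q}\otimes F$ and regard $\beta$ as a $V$-valued function of class ${\cal C}^{l+1}$ on $U$. The point is the elementary identity $\bp\beta=\wedge\circ(\bp_V\beta)$, where $\bp_V\beta\in{\cal C}^l(U,U_\C^{*0,1}\otimes V)$ is $\bp$ applied to $\beta$ viewed as a $V$-valued function and $\wedge\colon U_\C^{*0,1}\otimes V=U_\C^{*0,1}\otimes U_\C^{*0,q}\otimes F\to U_\C^{*0,q+1}\otimes F$ is the fixed $\C$-linear map induced by the wedge product; writing $\beta=\sum_{|I|=q}\beta_I\,d\bar z^I$ in linear coordinates makes this transparent. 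Since the hypothesis $D^l_S\beta=0$ is the same as $j^l_S\beta=0$, formula (\ref{Dls1}) applied to $f=\beta$ gives both $j^{l-1}_S(\bp_V\beta)=0$ and $D^l_S(\bp_V\beta)=(\id_{\eta_S}^{\otimes l}\otimes(\psi_S\otimes\id_V))(D^{l+1}_S\beta)$. It then remains only to push this through the fixed linear map $\wedge$: because the intrinsic differential is natural with respect to such a map (immediate from its definition in a chart, $\partial^\gamma(\wedge\circ g)=\wedge(\partial^\gamma g)$), I conclude $j^{l-1}_S(\bp\beta)=0$ and $D^l_S(\bp\beta)=(\id_{\eta_S}^{\otimes l}\otimes\wedge)(D^l_S(\bp_V\beta))$, which is precisely (\ref{Dl-bp-beta-new}).

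For Part (2) I would first observe that the claim is local with respect to $S$, so I may take $U$ open in $\C^n$ and $E=U\times\C^r$ trivial; in such a chart and trivialization $\delta\beta=\bp\beta+(-1)^q\alpha\wedge\beta$ for some fixed $\alpha\in{\cal C}^l(U,\extp^{0,1}_{\;U}\otimes\gl(r,\C))$. The term $\bp\beta$ is handled by Part (1). For the zeroth--order term, since $\alpha$ is fixed of class ${\cal C}^l$ and $j^l_S\beta=0$, the composition Lemma \ref{ABLemma} (1) applied to the bilinear pairing $(\alpha,\beta)\mapsto\alpha\wedge\beta$ (with $l_1=-1$, $l_2=l$, $k=l$, so that $l_1+l_2+1=l\le k$) yields $j^l_S(\alpha\wedge\beta)=0$, hence $D^l_S(\alpha\wedge\beta)=0$ and $j^{l-1}_S(\alpha\wedge\beta)=0$. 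Additivity of $j^{l-1}_S$ and of $D^l_S$ then gives $j^{l-1}_S(\delta\beta)=0$ and $D^l_S(\delta\beta)=D^l_S(\bp\beta)$, which by Part (1) equals the right-hand side of (\ref{Dl-delta-beta-eq}). Since $\eta_S$, $\psi_S$, the wedge morphism and $D^{l+1}_S\beta$ are all intrinsically defined, this local identity is independent of the chosen chart and trivialization, and therefore patches to the asserted global formula.

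I do not expect a genuine obstacle: the lemma is essentially a careful bookkeeping of tensor factors resting on the two already-proved ingredients, formula (\ref{Dls1}) of Lemma \ref{Dls0-3} and the composition Lemma \ref{ABLemma}. The one point requiring attention is the regularity accounting in Part (2): as $\alpha$ is only of class ${\cal C}^l$, the product $\alpha\wedge\beta$ is merely ${\cal C}^l$, so Lemma \ref{ABLemma} (1) must be invoked with $k=l$ and one must verify that $D^l_S(\alpha\wedge\beta)$ is still well defined (which it is, since $j^{l-1}_S(\alpha\wedge\beta)=0$) and vanishes. One should also keep the identifications $F\leftrightarrow E_S$ and $U_\C^{*0,q}\leftrightarrow\extp^{0,q}_{\;U|S}$ consistent under the local trivialization so that the intrinsic right-hand side assembles correctly.
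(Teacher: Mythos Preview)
Your proposal is correct and follows essentially the same approach as the paper. The only cosmetic difference is that for Part (1) the paper writes $\bp\beta=\wedge\circ(p^{0,1}\otimes\id_{U_\C^{*0,q}\otimes F})\circ d\tilde\beta$ and invokes formula (\ref{Dls0}) for $D^l_S(d\tilde\beta)$, whereas you absorb the projection $p^{0,1}$ into $\bp_V\beta$ and invoke (\ref{Dls1}) directly; for Part (2) the paper simply asserts $D^l_S(\alpha^\tau\wedge\beta)=0$ from $j^l_S\beta=0$, while you make explicit the appeal to Lemma \ref{ABLemma} (1) with $k=l$ and track the regularity.
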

\begin{proof}  1. Regard  $\beta$ as an element   $\tilde\beta\in {\cal C}^{l+1}(U,U^{*0,q}_\C\otimes F)$. The explicit formula in coordinates for the operator $\bp$ on $(0,q)$ forms gives:
$$\bp (\sum_I \beta_Id\bar z^I)=\sum_I \bp\beta_I\wedge d\bar z^I=\wedge\big(\sum_I (d\beta^I)^{0,1}\otimes d\bar z^I\big)=$$
$$=\wedge(p^{0,1}\otimes \id_{U_\C^{*0,q}\otimes F})\big(\sum_I d\beta^I\otimes d\bar z^I\big)=\wedge(p^{0,1}\otimes \id_{U_\C^{*0,q}\otimes F})(d\tilde\beta),
$$
and (\ref{Dl-bp-beta-new}) follows from (\ref{Dls0}) applied to $\tilde\beta$.

2. For (\ref{Dl-delta-beta-eq}) we use the formula of the operator $\delta$ with respect to a local trivialization $\tau:E_W\to W\times\C^r$ of $E$. Identifying $\Gamma^{l+1}(W,E)$ with ${\cal C}^{l+1}(W,\C^r)$ via $\tau$, we have
$$
\delta(\beta)=\bp\beta+\alpha^\tau\wedge \beta 
$$
with $\alpha^\tau\in \Gamma^l(W,\extp^{0,1}_{\; U}\otimes \End(E))$. Since we assumed $j^l_S(\beta)=0$ we have $D^l_S(\alpha^\tau\wedge \beta)=0$, so (\ref{Dl-delta-beta-eq}) follows from (\ref{Dl-bp-beta-new}).

\end{proof}


\begin{thebibliography}{BBHM}



\bibitem[AHS]{AHS} M. Atiyah, N. Hitchin,  I. Singer, Self-duality in four-dimensional Riemannian geometry, Proc. R . Soc. Lond. A. 362 (1978), 425-461.


\bibitem[At]{At} M.  Atiyah, Vector bundles over an elliptic curve, Proc. London Math. Soc, 7 (1957), 414-452.

\bibitem[BGS]{BGS} R. Beals, P. C. Greiner, N. K. Stanton, $L^p$ and Lipschitz Estimates
for the $\bp$-Equation and the $\bp$-Neumann Problem, Math. Ann. 277 (1987), 185-196.

\bibitem[BBHM]{BBHM} P. Baldi, M. Berti, E. Haus, R. Montalto, Time quasi-periodic gravity water waves in finite depth, Invent. math. 214 (2018),  739–911.

\bibitem[Be]{Be} S. R. Bell, The Cauchy Transform, Potential Theory and Conformal Mapping, 2nd edition, CRC Press, Taylor\&Francis Group, Boca Raton, London, New York (2016).


\bibitem [Bor]{Bor} A. Borichev, private email, September 15, 2023.

\bibitem [Bot]{Bo} Th. Bothner, On the origins of Riemann–Hilbert
problems in mathematics, Nonlinearity 34 (2021) R1–R73.




\bibitem[Do]{Do}  S. Donaldson, Boundary value problems for Yang-Mills fields, Journal of Geometry and Physics 8 (1992) 89-122.

\bibitem[DK]{DK}   S. Donaldson, P. Kronheimer, The Geometry of Four-Manifolds, Oxford University Press (1990).

\bibitem[Fo]{Fo} G. B. Folland, The tangential Cauchy-Riemann complex on spheres, Transactions of the AMS, Vol. 171 (1972), 83-133.

\bibitem[FJW]{FJW} L. Frerick, E. Jordá, J, Wengenroth, Whitney extension operators without loss of derivatives, Rev. Mat. Iberoam. 32 (2016), no. 2, 377–390.

\bibitem[FK]{FK} G. B. Folland, J. J. Kohn, The Neumann Problem for the Cauchy-Riemann Complex, Annals of Mathematics Studies 75,  Princeton University Press, Year (1972).




\bibitem[Gra]{Gra} H. Grauert, Analytische Faserungen über holomorph-vollständigen Räumen. Math. Ann. 135 (1958),  263–273.

\bibitem[Gri]{Gri} Ph. Griffiths, The extension problem in complex analysis II. Embeddings with positive normal bundle, American Journal of Mathematics, Vol. 88 (1966),   366-446.


\bibitem[Gro]{Gro}  A. Grothendieck, Sur la classification des fibrés holomorphes sur La sphère de Riemann, American Journal of Mathematics, Vol. 79  (1957), 121-138.







\bibitem[GP]{GP} V. Guillemin, A. Pollack, Differential Topology, American Mathematical Society, Providence, Rhode Island (1974).

\bibitem[GS]{GS} P. C. Greiner, E. M. Stein, Estimates for the $\bp$-Neumann problem, Princeton Univ. Press, Princeton (1977).
 


\bibitem[Hil]{Hil} D. Hilbert, Grundzüge einer allgemeinen Theorie der linearen Integralgleichungen, Leipzig und Berlin Druck und Verlag  von B. G.Teubner (1912).



\bibitem[Hir]{Hir} M. W. Hirsch, Differential Topology, Graduate Texts in Mathematics 33, Springer-Verlag (991).

\bibitem[HiNa]{HiNa} C.  Hill, M. Nacinovich,
A collar neighborhood theorem for a complex manifold,
Rendiconti del Seminario Matematico della Università di Padova, tome 91 (1994),  23-30.

\bibitem[Hö]{Ho} L. Hörmander, Differential Operators of Principal Type, Math. Annalen 140, (1960) 124-146. 

\bibitem[It]{It} A. R. Its, The Riemann-Hilbert
Problem and Integrable Systems, Notices of the AMS, Vol. 50, No. 11 (2003), 1389-1400.


\bibitem[JW]{JW} A. Jonsson, H. Wallin, Function Spaces on Subsets of $\R^n$, Mathematical Reports Vol. 2, Part 1, Harwood Academic Publishers, Chur, London, Paris, Utrecht, New York (1984).

\bibitem[Ko]{Ko} S. Kobayashi, Differential Geometry of Complex Vector Bundles. Publications of the Mathematical Society of Japan 15, Princeton University Press, Princeton, NJ, (1987).

\bibitem[KN]{KN} S. Kobayashi, K. Nomizu, Foundations of Differential Geometry I, Interscience Publ., New York, Vol. I (1963).






\bibitem[LM]{LM} I. Lieb, J. Michel, The Cauchy-Riemann Complex. Integral formulae and Neumann problem, Aspects of Mathematics Vol. E34, Friedr. Vieweg \& Sohn, Braunschweig/Wiesbaden (2002).

\bibitem[LT]{LT} M. Lübke, A. Teleman, The Kobayashi-Hitchin correspondence,    World Scientific Publishing Co.   (1995).

\bibitem[LO]{LO} M. Lübke, Ch. Okonek, Moduli Spaces of Simple Bundles and Hermitian-Einstein Connections,  Math. Ann. 276   (1987), 663-674.


\bibitem[Ma]{Ma} B. Malgrange, Ideals of Differentiable Functions, Tata Institute of Fundamental Research, Bombay, Oxford University Press (1966). 


\bibitem[Me]{Me} R. B. Melrose, Transformation of boundary value problems, Acta Math. 147 (1981), 149-236.

\bibitem[Mu]{Mu} N. I. Muskhelishvili, Singular Intergral Equations. Boundary Problems of Function Theory and their Applications to Mathematical Physics, P. Noordhoff N. V., Groningen-Holland, 1953. 



\bibitem[Pa]{Pa} R. Palais, Foundations of Global Non-Linear Analysis, W. A. Benjamin, Inc., New York, Amsterdam (1968).

\bibitem[Ple]{Ple} J. Plemelj,  Riemannsche Funktionenscharen mit gegebener Monodromiegruppe, Monatsh. f. Mathematik und Physik 19 (1908), 211-45.

\bibitem[PS]{PS} A. Pressley and G. Segal, Loop Groups, Oxford University Press, Oxford, (1986).

\bibitem[Pe]{Pe} Peter Petersen, Manifold Theory, UCLA, \url{https://www.math.ucla.edu/~petersen/manifolds.pdf}.




%

\bibitem[St]{St} E. M. Stein, Singular Integrals and Differentiability Properties of Functions, Princeton University Press, Princeton (1970).

\bibitem[See]{See} R. T. Seeley, Extension of ${\cal C}^\infty$ functions defined in a half space, Proc. Amer. Math. Soc. 15 (1964), 625-626. 


\bibitem[Te1]{Te} A. Teleman, Holomorphic bundles on complex manifolds with boundary, \url{https://hal.science/hal-03614586v1}, to appear in Math. Res. Letters.

\bibitem[Te2]{Te2} A. Teleman, The Newlander-Nirenberg theorem for principal bundles, 
Math. Z. 306, 20 (2024), \url{https://doi.org/10.1007/s00209-023-03413-4}.

\bibitem[TT]{TT} A. Teleman, M. Toma, Holomorphic bundles framed along a real hypersurface, in preparation.


\bibitem[Ve]{Ve} Vekua, I. N., Generalized Analytic Functions,  Pergamon Press, Oxford-London-New York-Paris, 1962.

\bibitem[Wh]{Wh} H. Whitney, Analytic extensions of differentiable functions defined in closed sets, Transactions of the American Mathematical Society, American Mathematical Society, 36 (1) (1934), 63–89.

\bibitem[Xi]{Xi} Z. Xi, 
Hermitian–Einstein metrics on holomorphic vector bundles over Hermitian manifolds, Journal of Geometry and Physics 53 (2005) 315-335.

\end{thebibliography}
\end{document}